\numberwithin{figure}{section}
\theoremstyle{plain}
\newtheorem{thm}{\protect\theoremname}
  \theoremstyle{remark}
  \newtheorem{rem}[thm]{\protect\remarkname}
  \theoremstyle{definition}
  \newtheorem{defn}[thm]{\protect\definitionname}
  \theoremstyle{plain}
  \newtheorem{prop}[thm]{\protect\propositionname}
  \theoremstyle{plain}
  \newtheorem{lem}[thm]{\protect\lemmaname}
  \theoremstyle{plain}
  \newtheorem{cor}[thm]{\protect\corollaryname}
\DeclareFontFamily{OT1}{pzc}{}
\DeclareFontShape{OT1}{pzc}{m}{it}{<-> s * [1.10] pzcmi7t}{}
\DeclareMathAlphabet{\mathpzc}{OT1}{pzc}{m}{it}
\newcommand{\rr}{\mathbb{R}}
\newcommand{\pp}{\mathbb{P}}
\newcommand{\ff}{\mathcal{F}}
\newcommand{\gc}{\mathcal{G}}
\newcommand{\kk}{\mathcal{K}}
\newcommand{\cc}{\mathbb{C}}
\newcommand{\zz}{\mathbb{Z}}
\newcommand{\lc}{\mathcal{L}}
\newcommand{\xx}{\mathcal{X}}
\newcommand{\yy}{\mathcal{Y}}
\newcommand{\mm}{\mathcal{M}}
\newcommand{\bb}{\mathcal{B}}
\newcommand{\im}{\operatorname{Im}}
\newcommand{\coker}{\operatorname{coker}}
\newcommand{\cl}{\mathcal{C}}
\newcommand{\nn}{\mathbb{N}}
\newcommand{\bu}{\mathfrak{bu}}
\newcommand{\manc}{\textbf{Man}^c}
\newcommand{\tb}{\mathbb{T}}
\newcommand{\zc}{\mathcal{Z}}
\newcommand{\sfr}{\mathfrak{s}}
\newcommand{\basic}{\mathfrak{b}}
\newcommand{\tc}{\mathcal{T}}
\newcommand{\ts}{\mathscr{T}}
\newcommand{\aru}[2]{\ar[#1]^-{#2}}
\newcommand{\ard}[2]{\ar[#1]_-{#2}}
\newcommand{\lf}{\mathsf{l}}
\newcommand{\kf}{\mathsf{k}}
\newcommand{\wc}{\mathcal{W}}
\newcommand{\fibp}[2]{\tensor[_{#1\thinspace}]{\times}{_{\thinspace #2}}}
\newcommand{\ev}{\operatorname{ev}}
\newcommand{\evb}{\operatorname{evb}}
\newcommand{\evi}{\operatorname{evi}}
\newcommand{\Sym}{\operatorname{Sym}}
\newcommand{\For}{\operatorname{For}}
\newcommand{\Or}{\operatorname{Or}}
\newcommand{\ed}{\operatorname{ed}}
\newcommand{\id}{\operatorname{id}}
\newcommand{\pt}{\operatorname{pt}}
\newcommand{\pr}{\operatorname{pr}}
\newcommand{\sstar}{\smallstar}
\newcommand{\sgn}{\operatorname{sgn}}
\newcommand{\inv}{\operatorname{inv}}
\newcommand{\jc}{\mathcal{J}}
\newcommand{\no}{n_\text{odd}}
\newcommand{\cnt}{\operatorname{cnt}}
  \providecommand{\corollaryname}{Corollary}
  \providecommand{\definitionname}{Definition}
  \providecommand{\lemmaname}{Lemma}
  \providecommand{\propositionname}{Proposition}
  \providecommand{\remarkname}{Remark}
\providecommand{\theoremname}{Theorem}
\begin{document}

\author{Amitai Netser Zernik}\thanks{Hebrew University, \url{amitai.zernik@mail.huji.ac.il}}

\title{Equivariant Open Gromov-Witten Theory of $\rr\pp^{2m}\hookrightarrow\cc\pp^{2m}$}
\begin{abstract}
We define equivariant open Gromov-Witten invariants for $\rr\pp^{2m}\hookrightarrow\cc\pp^{2m}$
as sums of integrals of equivariant forms over resolution spaces,
which are blowups of products of moduli spaces of stable disc-maps
modeled on trees. These invariants encode the quantum deformation
of the equivariant cohomology of $\rr\pp^{2m}$ by holomorphic discs
in $\cc\pp^{2m}$ and, for $m=1$, specialize to give Welschinger's
signed count of real rational planar curves in the non-equivariant
limit. 

This paper prepares the ground for \cite{fp-loc-OGW} in which we
prove a fixed-point formula computing these invariants. 
\end{abstract}

\maketitle
\tableofcontents{}

\section{\label{sec:Main results}Introduction}

This paper is concerned with equivariant invariants obtained from
stable maps of holomorphic discs to $\cc\pp^{2m}$ with boundary on
the Lagrangian $\rr\pp^{2m}$. The pair $\left(X,L\right)=\left(\cc\pp^{2m},\rr\pp^{2m}\right)$
is a real homogeneous variety (see \cite[Definition 2]{mod2hom},
and Example 3 \emph{ibid.}). This alleviates many of the difficulties
involved with the construction of the virtual fundamental chain. In
fact, the moduli space of stable disc maps $\overline{\mm}_{0,k,l}\left(X,L,\beta\right)$
is an orbifold with corners, constructed from the moduli of closed
genus zero maps, $\overline{\mm}_{0,k+2l}\left(X,\beta\right)$. It
carries a relative orientation by a result of Solomon \cite{JT}.
Roughly speaking, this means $\overline{\mm}_{0,k,l}\left(X,L,\beta\right)$
defines a smooth singular chain $\mm$ in $L^{k}\times X^{l}$. 

Of course in general, the chain $\mm$ is not a cycle, $\partial\mm\neq0$,
so capping cohomology classes against $\mm$ will \emph{not }produce
invariants. In particular, if we try to count disc-map configurations
subject to constraints, as we modify the constraints through a cobordism
some configurations may ``fall off the boundary'', making the count
ill-defined.

This type of problem can be overcome in various ways. Open invariants
are defined and computed in the works of Katz and Liu \cite{katz+liu},
Pandharipande, Solomon and Walcher \cite{disc-enumeration}, Georgieva
and Zinger \cite{penka+zinger} and Tehrani and Zinger \cite{tehrani+zinger}\footnote{The last two works study \emph{real }invariants, but these are closely
related to disc invariants using the reflection principle, cf. Remark
\ref{rem:welschinger}.}.

We will focus on an approach taken by Fukaya \cite{fukaya-superpotential},
making use of the recursive structure of the boundary and extracting
invariants from the $A_{\infty}$ algebra associated with $L\hookrightarrow X$.
In a similar vein, Solomon and Tukachinsky \cite{jake+sara} consider
the potential $\Phi\left(w\right)$ of a weak bounding cochain $w$
for the Fukaya $A_{\infty}$ algebra of $L$, satisfying certain conditions.
In case $\dim L$ is odd and certain obstructions vanish, they prove
that such a $w$ exists and $\Phi\left(w\right)$ is independent of
all choices. 

The key result in this paper is the construction of a complex $\left(\Omega_{\basic},D\right)$
of \emph{extended equivariant forms}. These extended forms behave
as if they're equivariant differential forms on a closed manifold\emph{
}equipped with a torus action: there's an integration map
\[
\int_{\basic}:\Omega_{\basic}\to\rr\left[\vec{\lambda}\right]
\]
satisfying Stokes' theorem, $\int_{\basic}D\upsilon=0$. This allows
us to define equivariant open Gromov-Witten invariants by
\[
I\left(k,\vec{l},\beta\right)=\int_{\basic}\omega\left(k,\vec{l},\beta\right)
\]
for some $\omega\left(k,\vec{l},\beta\right)\in\Omega_{\basic}$.
Stokes' theorem also plays a key role in \cite{fp-loc-OGW} where
we obtain a fixed-point formula, simplifying considerably the computation
of $\int_{\basic}\omega$ for any $\omega\in\Omega_{\basic}$ and
specializing to give a closed formula for $\int_{\basic}\omega\left(k,\vec{l},\beta\right)$.

Although seemingly quite different, this integration approach to open
Gromov-Witten theory is in fact based on \cite{jake+sara}. The open
Gromov-Witten invariants $\int_{\basic}\omega\left(k,\vec{l},\beta\right)$
are also the coefficients of the potential $\Phi\left(w\right)$ of
a kind of weak bounding cochain $w$. We can use the homological perturbation
lemma (see \cite{twA8}) to reduce the choices involved in the construction
of $w$ to selecting a single homotopy retraction operator. Moreover,
we can interpret $\Phi\left(w\right)$ as encoding the quantum deformation
class of the equivariant cohomology of $\rr\pp^{2m}$. The relationship
between the $A_{\infty}$ perspective and the integration perspective
is explained in \cite[\S 1.6]{fp-loc-OGW}.

It is worth noting that for $m=1$, the $\left(\cc\pp^{2},\rr\pp^{2}\right)$
invariants we define are an equivariant extension of Welschinger's
signed count \cite{welschinger} of real rational planar curves passing
through $k$ points in $\rr\pp^{2}$ and $l$ conjugation-invariant
pairs of points in $\cc\pp^{2}$, see Remark \ref{rem:welschinger}.

Let us now provide a more detailed overview of the contents of this
paper.

\subsection{\label{subsec:resolutions and stokes}Resolutions and Stokes' theorem.}

Fix a positive integer $m$ and let\linebreak{}
${\left(X,L\right)=\left(\cc\pp^{2m},\rr\pp^{2m}\right)}$. Let $\tb=U\left(1\right)^{m}$
denote the rank $m$ torus group. Let $b=\left(k,l,\beta\right)$
be a 3-tuple of non-negative integers such that $k+2l+3\beta\geq3$
and $k+\beta=1\mod2$, and denote 
\[
\tb_{b}^{r}=\tb\times\Sym\left(k\right)\times\Sym\left(l\right)\times\Sym\left(r\right),
\]
where $\Sym\left(n\right)$ is the symmetric group on $n$ elements.
In Sections \ref{sec:Resolutions} and \ref{sec:Extended-forms} we
construct, for $r\geq0$ a diagram of $\tb_{b}^{r}$-orbifolds with
corners 
\[
\widetilde{\mm}_{b}^{r}\xrightarrow{\bu_{b}^{r}}\mm_{b}^{r}\xrightarrow{\For_{b}^{r}}\check{\mm}_{b}^{r},
\]
where $\For_{b}^{r}$ is a kind of forgetful map and $\bu_{b}^{r}$
is obtained from a spherical blow up construction. Let us first explain
the construction of $\mm_{b}^{r}$ and the forgetful map. We set
$\mm_{b}^{0}=\overline{\mm}_{0,k,l}\left(X,L,\beta\right)$. We construct
$\mm_{b}^{1}$ from $\mm_{b}^{0}$ by replacing some of the fiber
products in 
\[
\partial\mm_{b}^{0}=\coprod\overline{\mm}_{0,k_{1}+1,l_{1}}\left(X,L,\beta_{1}\right)\times_{L}\overline{\mm}_{0,k_{2}+1,l_{2}}\left(X,L,\beta_{2}\right)
\]
by products. This can be thought of as a kind of resolution, allowing
the two components of the stable disc to move independently of one
another. By Leibniz' rule, $\partial\mm_{\basic}^{1}$ is also a disjoint
union of fibered products, and $\mm_{\basic}^{2}$ is obtained by
replacing some of these fiber products by products. The spaces $\mm_{b}^{r}$
for $r\geq3$ are defined in a similar, recursive, fashion, so that
$\mm_{b}^{r}$ resolves a clopen component of $\partial^{r}\overline{\mm}_{0,k,l}\left(X,L,\beta\right)$,
see (\ref{eq:bd^r as pullback}), and is a disjoint union of products
of moduli spaces indexed by certain labeled trees (see Lemma \ref{lem:labeled trees alternative def}).
The group $\tb_{b}^{r}$ acts by translating maps and relabeling markings,
making $\partial^{r}\overline{\mm}_{0,k,l}\left(X,L,\beta\right)\to\mm_{b}^{r}$
a $\Sym\left(r\right)$-equivariant map. The spaces $\check{\mm}_{b}^{r}$
are obtained from $\mm_{b}^{r}$ by forgetting one of each of the
$r$ pairs of marked points corresponding to the nodes. We denote
the forgetful map by $\For_{b}^{r}:\mm_{b}^{r}\to\check{\mm}_{b}^{r}$. 

Let us now sketch the blow up construction, $\widetilde{\mm}_{b}^{r}\to\mm_{\basic}^{r}$.
Let $\bu_{\Delta}:\widetilde{L\times L}\to L\times L$ denote the
spherical blow up of $L\times L$ along the diagonal $\Delta\subset L\times L$
(cf. $\S$\ref{subsec:Resolution-blow-ups}). Evaluation at the node
markings produces a map $\ed_{b}^{r}:\mm_{b}^{r}\to\left(L\times L\right)^{r}$,
which is transverse to $\Delta^{r}\subset\left(L\times L\right)^{r}$,
so we can define the blow up of $\mm_{b}^{r}$ using the cartesian
square
\[
\xymatrix{\widetilde{\mm}_{b}^{r}\ar[r]^{\bu_{b}^{r}}\ar[d]_{\widetilde{\ed}_{b}^{r}} & \mm_{b}^{r}\ar[d]^{\ed_{b}^{r}}\\
\left(\widetilde{L\times L}\right)^{r}\ar[r]_{\bu_{\Delta}^{r}} & \left(L\times L\right)^{r}
}
.
\]

In Definition \ref{def:extended form} we define an \emph{extended
form} $\omega$ for $b$ as a collection of $\tb$-equivariant forms 

\[
\omega=\left\{ \check{\omega}_{r}\in\Omega\left(\check{\mm}_{b}^{r};\check{\mathcal{E}}_{b}^{r}\otimes_{\zz}\rr\left[\vec{\lambda}\right]\right)^{\tb}\right\} _{r\geq0}\mbox{ for }\check{\mathcal{E}}_{b}^{r}:=\bigotimes_{i=1}^{k}\left(\evb{}_{i}^{r}\right)^{*}\mbox{Or}\left(TL\right),
\]
satisfying a boundary compatibility condition and $\Sym\left(r\right)$-invariance.
The set of extended forms is a complex, $\left(\Omega_{b},D\right)$,
with $D$ acting level-wise by the Cartan-Weil differential $D=d-\sum_{j=1}^{m}\lambda_{j}\iota_{\xi_{j}}$.
Here $\left\{ \xi_{j}\right\} _{j=1}^{m}$ are vector fields generating
the $\tb$-action and $\iota_{\xi_{j}}$ denotes contraction with
$\xi_{j}$. In Definition \ref{def:integration} we introduce an
$\rr\left[\vec{\lambda}\right]$-linear, $\Sym\left(k\right)\times\Sym\left(l\right)$-invariant
integration map 
\[
\int_{b}:\Omega_{b}\to\rr\left[\vec{\lambda}\right],
\]
by the finite sum of integrals 
\begin{equation}
\int_{b}\omega=\sum_{r\geq0}\frac{1}{r!}\int_{\widetilde{\mm}_{b}^{r}}\left(\For_{b}^{r}\circ\bu_{b}^{r}\right)^{*}\check{\omega}_{r}\cdot\left(\widetilde{\ed}_{b}^{r}\right)^{*}\Lambda^{\boxtimes r}.\label{eq:extended integral}
\end{equation}
Here $\Lambda\in\Omega\left(\widetilde{L\times L};\tilde{\mbox{pr}}_{2}^{*}\left(Or\left(TL\right)\right)\otimes_{\zz}\rr\left[\vec{\lambda}\right]\right)^{\tb}$
is an \emph{equivariant homotopy kernel}, see $\S$\ref{subsec:Equivariant-homotopy-kernel}.
The integrals are defined using local system isomorphisms 
\begin{equation}
\widetilde{\mathcal{J}}_{b}^{r}:\Or\left(T\widetilde{\mm}_{b}^{r}\right)\to\check{\mathcal{E}}_{b}^{r}\otimes\left(\left(\tilde{\pr}_{2}\right)^{\times r}\circ\widetilde{ed}_{b}^{r}\right)^{-1}\Or\left(TL\right)^{\otimes r},\label{eq:local system map}
\end{equation}
see (\ref{eq:tilde J}).

Clearly, for $r=0$ we have $\widetilde{\mm}_{b}^{0}=\mm_{b}^{0}=\check{\mm}_{b}^{0}$,
so the first summand in (\ref{eq:extended integral}) is just

\[
\int_{\overline{\mm}_{0,k,l}\left(X,L,\beta\right)}\omega_{0}.
\]
One may think of the summands for $r\geq1$ as corrections accounting
for the boundary and corners of $\overline{\mm}_{0,k,l}\left(X,L,\beta\right)$.

A central result of this paper is Stokes' theorem, Theorem \ref{thm:stokes' theorem},
which states that

\begin{equation}
\int_{b}D\upsilon=0\label{eq:stokes}
\end{equation}
for any $\upsilon\in\Omega_{b}$. In the next subsection we will use
this version of Stokes' theorem to define equivariant open Gromov-Witten
invariants. Equation (\ref{eq:stokes}) also plays an important role
in the proof of the fixed-point formula in \cite{fp-loc-OGW}.

\subsection{\label{subsec:equiv open GW invts}Equivariant open Gromov-Witten
invariants.}

For any pair of non-negative integers $k,\beta$ and tuple of non-negative
integers $\vec{l}=\left(l_{0},...,l_{2m}\right)\in\zz_{\geq0}^{2m+1}$,
the\emph{ equivariant open Gromov-Witten invariant} 
\[
I\left(k,\vec{l},\beta\right)\in\rr\left[\vec{\lambda}\right]
\]
is defined as follows. Let $l=l_{0}+\cdots+l_{2m}$. If $k+2l+3\beta<3$
or $k+\beta=0\mod2$, we set $I\left(k,\vec{l},\beta\right)=0$, so
suppose this is not the case.

We define 

\emph{
\[
I\left(k,\vec{l},\beta\right)=\int_{b}\omega,
\]
}where the extended form $\omega$ is constructed as follows. Fix
an $l$-tuple $\left(d_{1},...,d_{l}\right)\in\left\{ 0,...,2m\right\} ^{l}$
so that for all $0\leq d\leq2m$ we have 
\[
\#\left\{ 1\leq i\leq l|d_{i}=d\right\} =l_{d}.
\]
We define an extended form $\omega=\left\{ \check{\omega}_{r}\right\} \in\Omega_{b}$
by 
\[
\check{\omega}_{r}=\prod_{1\leq j\leq l}\left(\evi{}_{j}^{r}\right)^{*}\eta^{d_{j}}\,\cdot\,\prod_{1\leq i\leq k}\left(\evb{}_{i}^{r}\right)^{*}\rho_{0}.
\]
where $\rho_{0}\in\Omega\left(L;\mbox{Or}\left(TL\right)\otimes\rr\left[\vec{\lambda}\right]\right)^{\tb}$
is a $D$-closed form Poincaré dual to the (unique) fixed point $p_{0}\in L$,
and $\eta\in\Omega\left(X;\rr\left[\vec{\lambda}\right]\right)^{\tb}$
is a $D$-closed form representing the equivariant hyperplane class
$H$ (cf. (\ref{eq:cohomology of CP^2m})). Clearly, $D\omega=0$,
and by (\ref{eq:stokes}), this is independent of the representatives
$\eta$ and $\rho$. Independence of the tuple $\left(d_{1},...,d_{l}\right)$
representing $l$ follows from $\Sym\left(l\right)$-equivariance
of the evaluation maps.
\begin{rem}
The same argument shows that, in fact, 
\begin{equation}
I\left(k,\vec{l},\beta\right)=\int_{b}\prod_{1\leq j\leq l}\left(\evi{}_{j}^{r}\right)^{*}\eta_{j}\,\cdot\,\prod_{1\leq i\leq k}\left(\evb{}_{i}^{r}\right)^{*}\rho_{i}\label{eq:more freedom}
\end{equation}
for any $\left\{ \rho_{i}\right\} ,\left\{ \eta_{j}\right\} $ with
$\left[\rho_{i}\right]=\mbox{pt}$ for $1\leq i\leq k$ and $\left[\eta_{j}\right]=H{}^{d_{j}}$
for $1\leq j\leq l$.
\end{rem}
\begin{rem}
\cite[Theorem 27]{fp-loc-OGW} can be restated as saying that for
every $\omega\in\Omega_{b}$ with $D\omega=0$ we have 
\[
\int_{b}\omega=\sum_{r,C}\operatorname{Cont}_{C}\left(\omega\right)
\]
where for $r\geq0$, $C\subset E^{\tb}$ ranges over the connected
components of $E^{\tb}$, the $\tb$-fixed-points of a clopen component
(or disjoint union of connected components) $E\subset\check{\mm}_{b}^{r}$,
and where $\operatorname{Cont}_{C}\left(\omega\right)$ is given by
a certain integral over $C$ which depends only on the cohomology
class of $\omega\in\Omega_{b}$. In this sense, the invariants can
be refined. Moreover, the explicit form of $\operatorname{Cont}_{C}\left(\omega\right)$
shows that it does not depend on the choice of equivariant homotopy
kernel $\Lambda$. It may be useful to find a direct proof of the
invariance of $\int_{b}\omega$ on the choice of $\Lambda$ that does
not require the fixed-point localization argument.
\end{rem}

We have 
\begin{multline}
\deg I\left(k,\vec{l},\beta\right)=\deg\omega-\dim\overline{\mm}_{0,k,l}\left(\beta\right)=\\
=\sum_{j=0}^{2m}\left(2j-1\right)l_{j}+\left(2m-1\right)\cdot k-\left(2m+1\right)\left(\beta+1\right)+4.\label{eq:invariant degree}
\end{multline}

\begin{rem}
\label{rem:geometric invariants}In Remark \ref{rem:non-equivariant}
we note that the same construction can be used to define \emph{non}-equivariant
extended forms and their integrals, which satisfy $\int d\upsilon=0$.
In particular, open Gromov-Witten invariants with $\deg I\left(k,\vec{l},\beta\right)=0$
can be computed using any $d$-closed forms $\eta_{i}$, $\rho_{j}$
representing their cohomology classes. This can be used to offer a
more geometric interpretation of the invariants, as follows.

Choose for $1\leq i\leq l$ a submanifold $W_{i}\subset X$ Poincaré
dual to $H^{d_{i}}$ and $k$ points $\left\{ z_{j}\right\} _{j=1}^{k}\subset L$
such that ${\left(\evb^{r}\times\evi^{r}\right)\circ\For_{b}^{r}\circ\bu_{b}^{r}}$
is b-transverse\footnote{See Definition \ref{def:properties of orbi-maps}. This means that
${\left(\evb^{r}\times\evi^{r}\right)\circ\For_{b}^{r}\circ\bu_{b}^{r}}\circ\iota_{\widetilde{\mm}_{b}^{r}}^{\partial^{c}}$
is transverse to $W_{1}\times\cdots\times W_{l}\times\left\{ z_{1}\right\} \times\cdots\times\left\{ z_{k}\right\} $
for all $c\geq0$, where $\iota_{\widetilde{\mm}_{b}^{r}}^{\partial^{c}}:\partial^{c}\widetilde{\mm}_{b}^{r}\to\widetilde{\mm}_{b}^{r}$
is the structure map.} to $W_{1}\times\cdots\times W_{l}\times\left\{ z_{1}\right\} \times\cdots\times\left\{ z_{k}\right\} \subset X^{l}\times L^{k}$
for all $r\geq0$, so the inverse image $\mathcal{W}^{r}\subset\widetilde{\mm}_{b}^{r}$
is an embedded suborbifold. We can then take $\eta_{i}$ and $\rho_{j}$
in (\ref{eq:more freedom}) to be Thom forms for $W_{i}$, $z_{j}$.
Shrinking the support of these forms we obtain, in the limit, 
\[
I\left(k,\vec{l},\beta\right)=\sum_{r\geq0}\frac{1}{r!}\int_{\mathcal{W}^{r}}\left(\widetilde{\ed}_{\basic}^{r}\right)^{*}\Lambda^{\boxtimes r}.
\]
In particular, the $r=0$ term is just $\int_{\mathcal{W}^{0}}1$:
a signed, isotropy-weighted count of the discs satisfying the constraints. 
\end{rem}
\begin{rem}
\label{rem:welschinger}When $m=1$, the invariants with $\deg I\left(k,\vec{l},\beta\right)=0$
for $\left(\cc\pp^{2},\rr\pp^{2}\right)$ overlap with those defined
in \cite{JT}. Indeed, the sign-of-conjugation argument used there
to cancel the contributions of the boundary, can be used to show that
the $r>0$ terms in (\ref{eq:extended integral}) vanish and $I\left(k,\vec{l},\beta\right)$
reduces to a disc count (see Remark \ref{rem:geometric invariants}).
As in \cite{JT}, Schwarz reflection associates to every stable disc-map
a real rational planar curve passing through $k$ real points and
$l$ conjugation-invariant pairs of complex points, and this can be
used to identify disc counts with twice Welschinger's signed count
of such curves \cite{welschinger}. This was verified with a computer
for all invariants of degree $\leq6$, using the fixed-point formula
\cite{fp-loc-OGW}. 
\end{rem}
\begin{rem}
Invariants with $\deg I\left(k,\vec{l},\beta\right)<0$ must vanish,
so the fixed-point formula \cite{fp-loc-OGW} produces non-trivial
relations involving descendent integrals on discs. See also Remark
\ref{rem:Pin factor verification} for a simple example of this.
\end{rem}

\textbf{Acknowledgments.} I am deeply grateful to my teacher, Jake
Solomon for contributing important ideas, advice, and encouragement.
I\textquoteright d also like to thank Mohammed Abouzaid, Rahul Pandharipande,
Ran Tessler and Lior Yanovski for engaging conversations and useful
suggestions. I was partially supported by ERC starting grant 337560,
ISF Grant 1747/13 and NSF grant DMS-1128155.

\section{\label{sec:Resolutions}Resolutions of Moduli Spaces}

Throughout the paper, $m$ will be some fixed positive integer, and
we will consider the open Gromov-Witten theory of $\left(X,L\right)=\left(\cc\pp^{2m},\rr\pp^{2m}\right)$. 

\subsection{The pair $\left(\cc\pp^{2m},\rr\pp^{2m}\right)$, torus actions and
moduli spaces.}

We begin by introducing notation and reviewing some results. We assume
the reader is familiar with the notion of a real homogeneous pair
\cite[Definition 1]{mod2hom}. 

Let $G_{X}=U\left(2m+1\right)$ denote the unitary group, with $\tb^{\cc}=U\left(1\right)^{2m+1}<U\left(2m+1\right)$
the subgroup of diagonal matrices. Projectivizing the standard action
on $V=\cc^{2m+1}$ defines a transitive group action 
\[
\alpha_{U,X}:U\left(2m+1\right)\times X\to X.
\]
Restricting to $\tb^{\cc}$ and decomposing $V=V_{0}\oplus\cdots\oplus V_{2m}$
into complex irreducible representations we find 
\[
X=\cc\pp^{2m}=\pp_{\cc}\left(V_{0}\oplus V_{1}\oplus\cdots\oplus V_{2m}\right).
\]
where $\left(u_{0},...,u_{2m}\right)\in\tb^{\cc}$ acts on $V_{i}$
by $z\mapsto u_{i}\cdot z$. 

$\mathbb{T}^{\cc}$-equivariant cohomology is defined over the ring
\[
H_{\mathbb{T}^{\cc}}^{\bullet}=H_{\mathbb{T}^{\cc}}^{\bullet}\left(\mbox{pt}\right)=\rr\left[\alpha_{0},...,\alpha_{2m}\right],\;\deg\alpha_{i}=2.
\]
The total space of the tautological line bundle $\tau$ on $\pp_{\cc}\left(V\right)$
is the blow up of $V$, so there's a natural lift of the $\tb^{\cc}$
action to $\tau$. Let $H=-c_{1}^{\tb^{\cc}}\left(\tau\right)\in H_{\mathbb{T}^{\cc}}^{2}\left(X\right)$
denote minus the equivariant Chern class of $\tau$. $H$ is an equivariant
extension of the hyperplane class. We have 
\begin{equation}
H_{\tb^{\cc}}^{\bullet}\left(X\right)=\rr\left[H,\alpha_{0},....,\alpha_{2m}\right]/\left(\prod_{i=0}^{2m}\left(H-\alpha_{i}\right)\right).\label{eq:cohomology of CP^2m}
\end{equation}
In particular, $H_{\tb^{\cc}}^{\bullet}\left(X\right)$ is generated
as an $H_{\mathbb{T}^{\cc}}^{\bullet}$-module by $H^{0},H^{1},...,H^{2m}$.

We now introduce a conjugation action. Let $c_{V}:V\to V$ be the
involution given by 
\begin{equation}
\left(z_{0},...,z_{2m}\right)\mapsto\left(\overline{z}_{0},\overline{z}_{2m},\overline{z}_{2m-1},...,\overline{z}_{1}\right),\label{eq:affine conjugation}
\end{equation}
let $c_{X}:X\to X$ denote its projectivization, and let ${c_{G}:U\left(2m+1\right)\to U\left(2m+1\right)}$
be defined by 
\begin{equation}
g\mapsto c_{V}\circ g\circ c_{V}^{-1}.\label{eq:G^C conjugation}
\end{equation}
Clearly $c_{G}\left(\tb^{\cc}\right)=\tb^{\cc}$ and $O\left(2m+1\right)$
and $\tb$ are the $c_{G}$-fixed subgroups of $U\left(2m+1\right)$
and $\tb^{\cc}$, respectively, where $\tb\simeq U\left(1\right)^{m}$
is the image of 
\[
U\left(1\right)^{m}\ni\left(u_{1},...,u_{m}\right)\mapsto\mbox{diag}\left(1,u_{1},....,u_{m},\overline{u}_{m},...,\overline{u}_{1}\right)\in\tb<\tb^{\cc}<U\left(2m+1\right).
\]
The monomorphism $\tb\to\tb^{\cc}$ corresponds to the map
\begin{equation}
H_{\tb^{\cc}}^{\bullet}=\rr\left[\alpha_{0},...,\alpha_{2m}\right]\overset{\rho_{\tb}}{\longrightarrow}H_{\tb}^{\bullet}=\rr\left[\lambda_{1},...,\lambda_{m}\right]\label{eq:weight loss}
\end{equation}
given by $\alpha_{0}\mapsto0$ and, for $1\leq i\leq m$, $\alpha_{i}\mapsto\lambda_{i}$
and $\alpha_{2m+1-i}\mapsto-\lambda_{i}$. Let $W_{0}=\rr$ denote
the trivial representation of $\tb$, and for $1\leq i\leq m$ let
$W_{i}=\rr^{2}$ denote the real representations of $\tb$, on which
$\left(e^{\sqrt{-1}t_{1}},...,e^{\sqrt{-1}t_{m}}\right)$ acts by
\[
\begin{pmatrix}\cos t_{i} & -\sin t_{i}\\
\sin t_{i} & \cos t_{i}
\end{pmatrix}.
\]
As $\zz/2\times\tb$ representations we have 
\[
V_{0}\simeq\cc\otimes W_{0}
\]
and for $1\leq i\leq m$,
\[
V_{i}\oplus V_{2m+1-i}\simeq\cc\otimes W_{i},
\]
where the $\zz/2$ action is the usual conjugation action on the $\cc$
factor. 

We find that
\[
\left(X,\omega,J,G_{X}=U\left(2m+1\right),\alpha_{U,X},c_{G},c_{X}\right)
\]
is a real homogeneous variety (cf. \cite[Definition 1]{mod2hom}),
where $\omega$ is the Fubini-Study symplectic form and $J$ is the
standard complex structure. Let $\left(X,L=\rr\pp^{2m}\right)$ be
the associated real homogeneous pair. As usual, the induced action
of $G_{X}^{\zz/2}=O\left(2m+1\right)$ on $L$ is transitive. It restricts
to a $\tb$-action on $L$ specified by the equivariant identification
\[
L=\pp_{\rr}\left(W_{0}\oplus W_{1}\oplus\cdots\oplus W_{m}\right).
\]
Recall the map
\begin{equation}
H_{2}\left(X,L\right)\to H_{2}\left(X\right)\label{eq:doubling in homology}
\end{equation}
sends a singular chain $\sigma\in C_{2}\left(X\right)$ with $\partial\sigma\in C_{1}\left(L\right)$
to the class represented by the cycle $c_{X*}\sigma+\sigma$. We identify
$H_{2}\left(X\right)=\zz$ using the complex structure and fix an
isomorphism $H_{2}\left(X,L\right)\simeq\zz$ so that $H_{2}\left(X\right)\to H_{2}\left(X,L\right)$
corresponds to multiplication by $+2$. The map (\ref{eq:doubling in homology})
then becomes $\id_{\zz}$. We see that an integer $\beta\in\zz$ can
either represent an element of $H_{2}\left(X,L\right)$ or its image
in $H_{2}\left(X\right)$ under (\ref{eq:doubling in homology}),
depending on the context. 

Let $\left(k,l,\beta\right)$ be non-negative integers with $k+2l+3\beta\geq3$,
and write 
\[
G_{k,l}=O\left(2m+1\right)\times\Sym\left(k\right)\times\Sym\left(l\right)
\]
for the product of the orthogonal group with the symmetric groups
on $k$ and $l$ elements. By \cite[Theorem 2]{mod2hom} the moduli
space
\[
\overline{\mm}_{0,k,l}\left(X,L,\beta\right)
\]
is a $G_{k,l}$-orbifold with corners admitting a $G_{k,l}$-equivariant
map to the moduli space $\overline{\mm}_{0,k+2l}\left(X,\beta\right)$,
and there's a $G_{k,l}$-equivariant evaluation map
\[
\overline{\mm}_{0,k,l}\left(X,L,\beta\right)\to L^{k}\times X^{l}
\]
induced from the closed evaluation map $\overline{\mm}_{0,k+2l}\left(\beta\right)\to X^{k+2l}$.

We will often use general sets as labels. For instance, we may consider
the moduli spaces $\overline{\mm}_{0,\kf,\lf}\left(\beta\right)$
where $\kf,\lf$ are finite disjoint sets.

\subsection{Moduli specifications and an overview.}

Let $\nn=\left\{ 1,2,...\right\} $. For any $S\subset\nn$, we denote
$\sstar'_{S}=\left\{ \sstar'_{i}\right\} _{i\in S}$ and similarly
for $\sstar''_{S}$ and $\star'_{S},\star''_{S},\star'''_{S},...$
Hollow stars will be used to denote markings related to boundary nodes,
and solid stars will be used for interior nodes. 
\begin{defn}
\label{def:moduli specifications}A \emph{pre-moduli specification}
$\mathfrak{b}$ is a 3-tuple $\left(\kf,\lf,\beta\right)$ where 

\begin{itemize}
\item $\kf\subset\nn\coprod\sstar''_{\nn}$ and $\lf\subset\nn$ are finite
subsets. Elements of $\kf$ and of $\lf$ are called \emph{orienting
labels }and \emph{interior labels, }respectively.
\item $\beta$ is a non-negative integer, \emph{the degree}, which we think
of as an element of $H_{2}\left(X,L\right)$.
\end{itemize}
A \emph{basic moduli specification }is a pre-moduli specification
$\mathfrak{b}=\left(\kf,\lf,\beta\right)$ that is 

\begin{itemize}
\item \emph{stable,} meaning $k+2l+3\beta\geq3$; henceforth we use standard
Roman letters to denote the sizes of sets labeled by the corresponding
Serif letters, so $k=\left|\kf\right|$ and $l=\left|\lf\right|$.
\item \emph{orientable}, meaning 
\begin{equation}
k+\beta=1\mod2.\label{eq:orientability of moduli specification}
\end{equation}
\end{itemize}
A \emph{moduli specification }$\sfr$ is a pair $\sfr=\left(\mathfrak{b},\sigma\right)$
where $\mathfrak{b}=\left(\kf,\lf,\beta\right)$ is an orientable
pre-moduli specification and $\sigma\subset\sstar'_{\nn}$ is a finite
subset such that $k+\left|\sigma\right|+2l+3\beta\geq3$. We call
$\sigma$ the \emph{superfluous (boundary) labels,} and $\tilde{\kf}=\kf\coprod\sigma$
the \emph{boundary labels}. 

A\emph{ }moduli specification\emph{ }$\sfr=\left(\mathfrak{b},\sigma\right)$
is called \emph{sturdy }if $\mathfrak{b}$ is stable and \emph{wobbly
}otherwise.
\end{defn}
Let $\sfr=\left(\mathfrak{b},\sigma\right)$ be a moduli specification.
If $\sfr$ is sturdy, then $\mathfrak{b}$ is a basic moduli specification;
if it is wobbly, it is necessarily of the form 
\begin{equation}
\left(\left(\kf,\emptyset,0\right),\sigma\right)\text{ with \ensuremath{\left|\kf\right|}=1 and \ensuremath{\left|\sigma\right|\geq}2.}\label{eq:wobbly spec}
\end{equation}
Either way, the \emph{combined moduli specification }$\tilde{\sfr}:=\left(\tilde{\kf},\lf,\beta\right)$
is a basic moduli specification. 

A 3-tuple of non-negative integers $b=\left(k,l,\beta\right)$ with
$k+2l+3\beta\geq3$ and $k+\beta=1\mod2$ may be used in place of
a basic moduli specification, taking $\basic=\left(\left[k\right],\left[l\right],\beta\right)$
where we denote $\left[k\right]=\left\{ 1,2,...,k\right\} $. 

The following proposition collects the main properties of resolutions
that are proved in this section, and which we will need when we discuss
extended forms and integration in Section \ref{sec:Extended-forms}.
We refer the reader to the Appendix, $\S$\ref{sec:appendix}, for
the definition of the category of orbifolds with corners and related
notions. 

Let $\basic=\left(\kf,\lf,\beta\right)$ be a basic moduli specification.
Let $\tb_{\basic}=\tb\times\Sym\left(\kf\right)\times\Sym\left(\lf\right)$
and $\tb_{\basic}^{r}=\tb_{\basic}\times\Sym\left(r\right)$. 
\begin{prop}
\label{thm:resolution summary}For every non-negative integer $r\geq0$
(the number of resolved nodes) and subset $S\subset\nn$ (the subset
of node tails we do \emph{not }forget) there exist 

\begin{itemize}
\item $\tb_{\basic}^{r}$-orbifolds with corners $\acute{\mm}_{\basic}^{r,S}$.
We set $\check{\mm}_{\basic}^{r}=\acute{\mm}_{\basic}^{r,\emptyset}$
and $\mm_{\basic}^{r}=\acute{\mm}_{\basic}^{r,\nn}$.
\end{itemize}

\begin{itemize}
\item $\tb_{\basic}^{r}$-equivariant b-fibrations (\emph{the forgetful
maps})
\[
\acute{\For}_{\basic}^{r,S}:\mm_{\basic}^{r}\to\acute{\mm}_{\basic}^{r,S}
\]
and 
\[
\grave{\For}_{\basic}^{r,S}:\acute{\mm}_{\basic}^{r,S}\to\check{\mm}_{\basic}^{r}.
\]
We set 
\[
\For_{\basic}^{r}=\acute{\For}_{\basic}^{r,\emptyset}:\mm_{\basic}^{r}\to\check{\mm}_{\basic}^{r}.
\]
$\For_{\basic}^{r}$ induces a $\tb_{\basic}^{r}$-equivariant decomposition
of the boundary, denoted 
\[
\partial\mm_{\basic}^{r}=\partial_{+}\mm_{\basic}^{r}\coprod\partial_{-}\mm_{\basic}^{r},
\]
see (\ref{eq:b-normal bdry decomposition}),
\item boundary evaluation maps
\begin{eqnarray*}
\acute{\ev}_{x}^{\basic,r,S}:\acute{\mm}_{\basic}^{r,S}\to L & \mbox{ for } & x\in\kf\coprod\left\{ \sstar''_{i}\right\} _{i=1}^{r}\coprod\left\{ \sstar'_{i}\right\} _{i\in\left[r\right]\cap S},\mbox{ and}
\end{eqnarray*}
\item interior evaluation maps
\begin{eqnarray*}
\acute{\ev}_{x}^{\basic,r,S}:\acute{\mm}_{\basic}^{r,S}\to X & \mbox{ for } & x\in\lf.
\end{eqnarray*}
\end{itemize}
\begin{itemize}
\item a $\tb_{\basic}^{r}$-equivariant involution $\inv_{\basic}^{r}:\partial_{+}\mm_{\basic}^{r}\to\partial_{+}\mm_{\basic}^{r}$,
and
\item local system maps 
\[
\ff_{\basic}^{r}:\Or\left(T\mm_{\basic}^{r}\right)\to\Or\left(T\check{\mm}_{\basic}^{r}\right)
\]
lying over $\For_{\basic}^{r}$ and 
\[
\check{\mathcal{J}}_{\basic}^{r}:Or\left(T\check{\mm}_{\basic}^{r}\right)\to Or\left(TL\right)^{\boxtimes\left(k+r\right)}
\]
lying over ${\prod_{x\in\kf\coprod\left(\sstar''_{i}\right)_{i=1}^{r}}\check{\ev}{}_{x}^{\basic,r}:\check{\mm}_{\basic}^{r}\to L^{k+r}}$.
We set 
\[
\jc_{\basic}^{r}:=\check{\jc}_{\basic}^{r}\circ\ff_{\basic}^{r}.
\]
\end{itemize}
These satisfy the following properties.

(a) $\mm_{\basic}^{0}=\overline{\mm}_{0,\kf,\lf}\left(\beta\right)$
and $\mm_{\basic}^{r}=\emptyset$ \textup{for sufficiently large $r$.} 

(b) there's a cartesian square 
\[
\xymatrix{\partial_{-}\mm_{\basic}^{r}\ar[d]\ar[r]^{g_{\basic}^{r+1}} & \mm_{\basic}^{r+1}\ar[d]^{\ev{}_{\sstar_{r+1}'}^{\basic,r+1}\times\ev{}_{\sstar_{r+1}''}^{\basic,r+1}}\\
L\ar[r]_{\Delta_{L}} & L\times L
}
\]
where the right and bottom maps are b-transverse (see Definition \ref{def:properties of orbi-maps}).
This induces a local system map 
\[
\mathcal{G}_{\basic}^{r+1}:\Or\left(T\partial_{-}\mm_{\basic}^{r}\right)\to\Or\left(T\mm_{\basic}^{r+1}\right)\otimes\left(ev_{\sstar''_{r+1}}^{\basic,r+1}\right)^{-1}\Or\left(TL\right)
\]
lying over $g_{\basic}^{r+1}$.

(c)
\[
\mathcal{J}_{\basic}^{r}\circ\left(\iota_{\mm_{\basic}^{r}}^{\partial}|_{\partial_{+}\mm_{\basic}^{r}}\right)\circ\Or\left(d\inv_{\basic}^{r}\right)=\left(-1\right)\mathcal{J}_{\basic}^{r}\circ\left(\iota_{\mm_{\basic}^{r}}^{\partial}|_{\partial_{+}\mm_{\basic}^{r}}\right)
\]

(d) For all $x\in\underline{\kf}\coprod\lf\coprod\left\{ \sstar'_{i},\sstar''_{i}\right\} _{i=1}^{r}$
we have 
\[
\ev_{x}^{\basic,r+1}\circ g_{\basic}^{r+1}=\ev_{x}^{\basic,r}\circ i_{\mm_{\basic}^{r}}^{\partial}
\]

(e) Let ${\left(\jc_{\basic}^{r+1}\right)^{\to}:\mm_{\basic}^{r+1}\otimes\left(ev_{\sstar''_{r+1}}^{\basic,r+1}\right)^{-1}Or\left(TL\right)^{\vee}\to Or\left(TL\right)^{\boxtimes\left(\left|k\right|+r\right)}}$
be the local system map derived from ${\mathcal{J}_{\basic}^{r+1}:\mm_{\basic}^{r+1}\to Or\left(TL\right)^{\boxtimes\left(\left|k\right|+\left(r+1\right)\right)}}$.
Then 
\[
\left(\jc_{\basic}^{r+1}\right)^{\to}\circ\mathcal{G}_{\basic}^{r+1}=\mathcal{J}_{\basic}^{r}\circ\iota_{\mm_{\basic}^{r}}^{\partial}|_{\partial_{-}\mm_{\basic}^{r}}.
\]

(f) The maps 
\[
\prod_{x\in\kf\coprod\lf\coprod\left\{ \sstar'_{i},\sstar''_{i}\right\} _{i=1}^{r}}\ev_{x}:\mm_{\basic}^{r}\to L^{\kf}\times X^{\lf}\times\left(L\times L\right)^{r}
\]
and 
\[
\prod_{x\in\kf\coprod\lf\coprod\left\{ \sstar''_{i}\right\} _{i=1}^{r}}\check{\ev}_{x}:\check{\mm}_{\basic}^{r}\to L^{\kf}\times X^{\lf}\times L^{r}
\]
are $\tb_{\basic}^{r}$-equivariant (see Remark \ref{rem:legal technicalities}).
In both cases, $\tb$ acts diagonally on the codomain and $\Sym\left(\kf\right)\times\Sym\left(\lf\right)\times\Sym\left(r\right)$
acts by shuffling factors. 

(g) $\mathcal{J}_{\basic}^{r}$ is $\tb\times\Sym\left(\kf\right)\times\Sym\left(\lf\right)<\tb_{\basic}^{r}$
invariant, but the action of $\tau\in\Sym\left(r\right)<\tb_{\basic}^{r}$
involves a sign: if we let $\tau.^{\mm_{\basic}^{r}}:\mm_{\basic}^{r}\to\mm_{\basic}^{r}$
denote the diffeomorphism induced by the action of $\tau$ then
\begin{equation}
\mathcal{J}_{\basic}^{r}\circ\Or\left(d\tau.^{\mm_{\basic}^{r}}\right)=\operatorname{sgn}\left(\tau\right)\cdot\mathcal{J}_{\basic}^{r},\label{eq:J commutes with Sym(r)}
\end{equation}
where $\operatorname{sgn}\left(\tau\right)\in\left\{ \pm1\right\} $
is the sign of $\tau$.

(h) We have $\acute{\ev}_{x}^{\basic,r,\nn}=\acute{\ev}_{x}^{\basic,r,S}\circ\acute{\For}_{\basic}^{r,S}$
and $\acute{\ev}_{y}^{\basic,r,S}=\acute{\ev}_{y}^{\basic,r,\emptyset}\circ\grave{\For}_{\basic}^{r,S}$
whenever both sides are defined.
\end{prop}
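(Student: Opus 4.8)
The plan is to construct every object in the statement by a single recursion on $r$ and then read properties (a)--(h) off the construction; the proof is thus the content of the remainder of this section, and what follows is a roadmap. \textbf{The spaces.} Set $\mm_{\basic}^{0}=\overline{\mm}_{0,\kf,\lf}\left(\beta\right)$, which is a $G_{k,l}$-orbifold with corners carrying the required evaluation maps by \cite[Theorem 2]{mod2hom}, the $\tb$-action being the restriction of the $O\left(2m+1\right)$-action on $L$. The codimension-one boundary of $\mm_{\basic}^{0}$ is a disjoint union of fibered products of two smaller disc moduli spaces over the diagonal $\Delta_{L}\subset L\times L$, the two glued boundary points carrying fresh node-tail labels $\sstar'_{1},\sstar''_{1}$; normalizing which component of each such product carries $\sstar'_{1}$ picks out a clopen sub-boundary, which we call $\partial_{-}\mm_{\basic}^{0}$, with complement $\partial_{+}\mm_{\basic}^{0}$. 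We obtain $\mm_{\basic}^{1}$ by \emph{resolving}: replace each fibered product in $\partial_{-}\mm_{\basic}^{0}$ by the corresponding honest product, so that $\mm_{\basic}^{1}$ is a disjoint union of products $\overline{\mm}_{0,\dots}\left(\beta_{1}\right)\times\overline{\mm}_{0,\dots}\left(\beta_{2}\right)$ with the pair $\sstar'_{1},\sstar''_{1}$ now free. Iterating --- using Leibniz' rule to see that $\partial_{-}\mm_{\basic}^{r}$ is again a disjoint union of fibered products over $\Delta_{L}$, and replacing them by products to form $\mm_{\basic}^{r+1}$ with a further node-tail pair $\sstar'_{r+1},\sstar''_{r+1}$ --- produces all $\mm_{\basic}^{r}$. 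An induction identifies $\mm_{\basic}^{r}$ with a disjoint union, indexed by a finite set of combinatorial types of $r$-edged decorated trees as in Lemma \ref{lem:labeled trees alternative def}, of products of moduli spaces; since the number of edges of such a tree is bounded in terms of $\basic$, $\mm_{\basic}^{r}=\emptyset$ for $r$ large, which with $\mm_{\basic}^{0}=\overline{\mm}_{0,\kf,\lf}\left(\beta\right)$ gives (a), and (\ref{eq:bd^r as pullback}) exhibits $\mm_{\basic}^{r}$ as resolving a clopen component of $\partial^{r}\overline{\mm}_{0,\kf,\lf}\left(\beta\right)$. Finally $\acute{\mm}_{\basic}^{r,S}$ is obtained from $\mm_{\basic}^{r}$ by forgetting, in each product factor, the tails $\sstar'_{i}$ with $i\notin S$ and stabilizing, with $\check{\mm}_{\basic}^{r}=\acute{\mm}_{\basic}^{r,\emptyset}$; the $\tb_{\basic}^{r}$-action translates disc maps, relabels $\kf$ and $\lf$, and permutes the $r$ node-tail pairs together with the tree structure, descending to $\acute{\mm}_{\basic}^{r,S}$ when $S$ is permutation-stable, in particular for $S=\emptyset,\nn$.

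\textbf{Maps and the cartesian square.} The maps $\acute{\For}_{\basic}^{r,S}$ and $\grave{\For}_{\basic}^{r,S}$ drop the relevant tails and stabilize; they are $\tb_{\basic}^{r}$-equivariant b-fibrations because forgetful maps of stable disc-map moduli are b-submersions with compact (disc- or sphere-bundle) fibers, cf.\ $\S$\ref{sec:appendix}, and (h) is the evident functoriality of forgetting disjoint sets of tails. The evaluation maps $\acute{\ev}_{x}^{\basic,r,S}$ are induced factor by factor from the closed evaluation map of \cite{mod2hom}, so (f) follows from $U\left(2m+1\right)$-equivariance of the latter and from the symmetric groups acting by permuting markings. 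For (b): by construction $\partial_{-}\mm_{\basic}^{r}$ is exactly the disjoint union of the fibered products replaced to form $\mm_{\basic}^{r+1}$, so $g_{\basic}^{r+1}$ is the inclusion of that fibered product, the two node evaluations $\ev_{\sstar'_{r+1}}^{\basic,r+1}\times\ev_{\sstar''_{r+1}}^{\basic,r+1}$ restrict along it to a map into $\Delta_{L}$, and b-transversality of these maps to $\Delta_{L}$ holds since boundary evaluation maps are b-submersions (\cite{mod2hom}); the induced $\mathcal{G}_{\basic}^{r+1}$ is the local system map attached to this cartesian square by the fibered-product orientation convention of $\S$\ref{sec:appendix}, the $\Or\left(TL\right)$-twist being the conormal of $\Delta_{L}$. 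Property (d) holds because $g_{\basic}^{r+1}$ carries the markings in $\kf\coprod\lf\coprod\left\{ \sstar'_{i},\sstar''_{i}\right\} _{i=1}^{r}$ across unchanged.

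\textbf{Local systems, the involution, and signs.} Assemble $\check{\jc}_{\basic}^{r}$ from Solomon's relative orientation of $\overline{\mm}_{0,\kf,\lf}\left(X,L,\beta\right)$ \cite{JT}, which trivializes $\Or\left(T\overline{\mm}_{0,\kf,\lf}\left(X,L,\beta\right)\right)$ against $\bigotimes_{i\in\kf}\ev_{i}^{*}\Or\left(TL\right)$ and a point, tensored over the product factors and the extra tails $\sstar''_{i}$; take $\ff_{\basic}^{r}$ from the canonical fibrewise orientation of $\For_{\basic}^{r}$ and put $\jc_{\basic}^{r}=\check{\jc}_{\basic}^{r}\circ\ff_{\basic}^{r}$. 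The involution $\inv_{\basic}^{r}$ on $\partial_{+}\mm_{\basic}^{r}$ swaps $\sstar'\leftrightarrow\sstar''$, i.e.\ the two components meeting at the recorded node, and (c) says this swap reverses $\jc_{\basic}^{r}$ --- the standard sign for transposing the two sides of a fibered product of disc moduli in the conventions of \cite{JT}. Property (e) is the compatibility of $\jc_{\basic}^{r}$ restricted along $g_{\basic}^{r+1}$ with $\jc_{\basic}^{r+1}$ after the $\Or\left(TL\right)$-twist, i.e.\ that the relative orientation of $\mm_{\basic}^{r+1}$ restricts to that of $\mm_{\basic}^{r}$ across the fibered product; and (g) is that transposing two of the $r$ node-tail pairs merely permutes two of the last $r$ factors of $\Or\left(TL\right)^{\boxtimes\left(k+r\right)}$ (no sign) while reversing the orientation of the product $\mm_{\basic}^{r}$ by the relevant parity of dimensions, the net discrepancy being $\sgn\left(\tau\right)$, which one checks by the degree bookkeeping of $\S$\ref{sec:appendix} and induction on $r$. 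I expect the orientation and local-system sign bookkeeping in (c), (e) and (g) --- pinning down exactly the $-1$, the $\Or\left(TL\right)$-twist and $\sgn\left(\tau\right)$ --- together with verifying b-transversality and the b-fibration property inside the category of orbifolds with corners, to be the main obstacle; properties (a), (d), (f) and (h), and the set-level construction, are essentially formal once the recursion is set up.
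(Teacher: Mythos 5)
Your recursive roadmap correctly captures the broad shape of the construction (build $\mm_{\basic}^{r+1}$ by freeing the fibered products in $\partial_{-}\mm_{\basic}^{r}$; get $\acute{\mm}_{\basic}^{r,S},\check{\mm}_{\basic}^{r}$ by forgetting tails; assemble $\check{\jc}$ from Solomon's relative orientation and $\ff$ from the fiber orientation of the forgetful map), and the remarks about (a), (d), (f), (h), b-transversality via the transitive $O(2m+1)$-action, and the tree combinatorics are all on the right track. However, there is a genuine gap in the two items you yourself flag as the crux.

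First, you never identify what $\partial_{-}\mm_{\basic}^{r}$ and $\partial_{+}\mm_{\basic}^{r}$ actually are. Your description ("normalizing which component of each product carries $\sstar'_{1}$ picks out a clopen sub-boundary") is vacuous: the orientability constraint $k+\beta\equiv 1$ already fixes which factor carries $\sstar'_{1}$ on \emph{every} boundary component, so this doesn't select a proper subset. In the paper, the decomposition is read off from the forgetful map $\For_{\basic}^{r}$: $\partial_{-}$ (the sturdy boundary) consists of the faces that $\For_{\basic}^{r}$ sends to $\partial\check{\mm}_{\basic}^{r}$, and $\partial_{+}$ (the wobbly boundary) consists of the faces it sends to the interior; concretely, a face is wobbly precisely when one of the two bubble factors becomes \emph{unstable} after the superfluous $\sstar'$-labels are forgotten (a bubble of the shape $((\kf,\emptyset,0),\sigma)$ with $|\kf|=1$, $|\sigma|\geq 2$). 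The bookkeeping device that makes this work is the notion of moduli specifications with ``superfluous'' boundary labels $\sigma\subset\sstar'_{\nn}$, which your proposal omits entirely.

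Second, and consequently, the involution is mis-identified. You describe $\inv_{\basic}^{r}$ as swapping $\sstar'\leftrightarrow\sstar''$, i.e.\ exchanging the two sides of the recorded node, and derive (c) from the ``standard sign for transposing a fibered product''. That map is not well-defined as a self-map of $\partial_{+}\mm_{\basic}^{r}$ (it changes which side is the head and which the tail, hence which factor is wobbly), and the sign you want is not the Koszul sign of transposing the fibered product. In the paper $\inv_{\basic}^{r}$ fixes the underlying fibered product and acts only on the wobbly factor, by transposing the first two superfluous labels on it; the sign in (c) then comes from tracking this transposition through the fiber orientation of $\For_{\basic}^{r}$ (Eq.\ (\ref{eq:F anti commutes with tau_sfr}) and (\ref{eq:upsilon anti commutes with tau})), not from \cite{JT}. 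Your argument for (c) would not go through as written.

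Third, your sketch of (g) by ``direct degree bookkeeping'' is unlikely to close. The local system maps $\ff_{\basic}^{r}$ and $\check{\jc}_{\basic}^{r}$ are \emph{not} the naive $\coprod\boxtimes$ of single-vertex maps; they carry tree-dependent sign corrections $\delta_{\sfr}^{\rho}$, $\zeta_{\sfr}^{\rho}$ (Propositions \ref{prop:F definition and coherence} and \ref{prop:check J def and coherence}), chosen precisely so that the coherence identity (e) holds. Their $\Sym(r)$-equivariance with sign $\sgn(\tau)$ is then \emph{deduced} from (e), via iteration to the full corner map $g_{\sfr}^{[r],r}$ (Eq.\ (\ref{eq:iterated J coherence})) and a connectedness argument; it is not obtained by staring at the dimensions of the factors. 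A direct computation along the lines you suggest would have to re-derive $\delta$ and $\zeta$, at which point one is redoing the coherence argument.

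So the set-level construction and (a), (d), (f), (h) are fine, but the proposal as written cannot establish (c), and the route suggested for (g) misses that the signs in $\jc_{\basic}^{r}$ are themselves non-trivially defined and that equivariance is a corollary of coherence.
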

\begin{rem}
\label{rem:legal technicalities}Let $\xx$ be an orbifold and $M$
be a manifold. The maps $\xx\to M$ are equivalent to a set, so it
makes sense to say that two maps $f_{1},f_{2}:\xx\to M$ are equal.
If $\xx,M$ are equipped with an action of a group $H$, then the
$H$-equivariant maps form a subset of the set of all maps, so we
may treat $H$-equivariance as a property (in contrast, if $\yy$
is a general $H$-orbifold the forgetful functor between $H$-equivariant
maps $\xx\to\yy$ and ordinary maps is not full and faithful). Similar
remarks hold for maps of local systems whose codomain is a local system
over a manifold.
\end{rem}

\subsection{\label{subsec:wobbly involution}Wobbly boundary involution.}

\subsubsection{\label{subsec:b-normal boundary decomp}Boundary decomposition by
a b-normal map}

Following Joyce \cite{joyce-generalized}, we can use a b-normal map
$\xx\xrightarrow{f}\yy$ to decompose the boundary $\partial\xx$
into a horizontal $\partial_{+}^{f}\xx$ and a vertical $\partial_{-}^{f}\xx$
part.

As we discuss in the appendix, if $\xx\xrightarrow{f}\yy$ is a
smooth map of orbifolds with corners, we have an induced interior
map of l-orbifolds
\[
C\left(\xx\right):=\coprod_{k\geq0}C_{k}\left(\xx\right)\xrightarrow{C\left(f\right)}C\left(\yy\right)=\coprod_{l\geq0}C_{l}\left(\yy\right).
\]
It follows from \cite[Proposition 2.11]{joyce-generalized} that a
smooth map $f:\xx\to\yy$ is b-normal\emph{ }if and only if 
\[
C\left(f\right)\left(C_{r}\left(\xx\right)\right)\subset\coprod_{r'\leq r}C_{r'}\left(\yy\right)
\]
for all $r\geq0$. 

\begin{defn}
(cf. \cite{joyce-generalized}, below Definition 4.2). Let $f:\xx\to\yy$
be a b-normal map. We denote 
\[
\partial_{+}^{f}\xx:=C\left(f\right)|_{C_{1}\left(\xx\right)}^{-1}\left(C_{0}\left(\yy\right)\right)
\]
and 
\[
\partial_{-}^{f}\xx:=C\left(f\right)|_{C_{1}\left(\xx\right)}^{-1}\left(C_{1}\left(\yy\right)\right)
\]
so the boundary of $\xx$ is a disjoint union 
\begin{equation}
\partial\xx=C_{1}\left(\xx\right)=\partial_{+}^{f}\xx\coprod\partial_{-}^{f}\yy,\label{eq:b-normal bdry decomposition}
\end{equation}
and $C\left(f\right)|_{C_{1}\left(\xx\right)}$ is given by the b-normal
maps 

\[
f_{+}:\partial_{+}^{f}\xx\to\yy
\]
and
\[
f_{-}:\partial_{-}^{f}\xx\to\partial\yy.
\]
\end{defn}

\begin{lem}
\label{lem:composition decomposition}Let $h:\xx\to\zc$ denote the
composition $\xx\overset{f}{\longrightarrow}\yy\overset{g}{\longrightarrow}\zc$
of two b-normal maps. Then 

(a) $h$ is b-normal.

(b) we have 
\[
\partial_{+}^{h}\xx=\partial_{+}^{f}\xx\coprod f_{-}^{-1}\left(\partial_{+}^{g}\yy\right)\mbox{ and }\partial_{-}^{h}\xx=f_{-}^{-1}\left(\partial_{-}^{g}\xx\right).
\]

(c) we have
\[
h_{-}=g_{-}\circ f_{-}\mbox{, }h_{+}|_{\partial_{+}^{f}\xx}=g\circ f_{+}\mbox{ and }h_{+}|_{f_{-}^{-1}\left(\partial_{+}^{g}\yy\right)}=\left(g_{+}\circ f_{-}\right).
\]
\end{lem}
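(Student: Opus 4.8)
The strategy is to reduce everything to the corner-functor description of b-normality recalled just before the lemma, namely that a smooth map $f:\xx\to\yy$ is b-normal iff $C(f)(C_r(\xx))\subset\coprod_{r'\le r}C_{r'}(\yy)$, together with the fact that $C$ is a functor on the category of orbifolds with corners (with interior maps), so $C(h)=C(g)\circ C(f)$. Part (a) is then immediate: applying $C(g)$ to $C(f)(C_r(\xx))\subset\coprod_{r'\le r}C_{r'}(\yy)$ and using the b-normality of $g$ level by level gives $C(h)(C_r(\xx))\subset\coprod_{r''\le r}C_{r''}(\zc)$. The only thing to check carefully is that the relevant maps are interior so that $C$ is actually functorial on them — this is part of the setup in the appendix and I would just cite it.

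For (b) and (c) I would work entirely inside $C_1(\xx)$ and chase where points go under $C(h)|_{C_1(\xx)}=C(g)|\circ C(f)|_{C_1(\xx)}$. By definition $C(f)|_{C_1(\xx)}$ lands in $C_0(\yy)\coprod C_1(\yy)$, the two pieces being exactly $\partial_+^f\xx$ and $\partial_-^f\xx$, with the restricted maps $f_+:\partial_+^f\xx\to\yy=C_0(\yy)$ and $f_-:\partial_-^f\xx\to\partial\yy=C_1(\yy)$. Now post-compose with $C(g)$. On $\partial_+^f\xx$ we land in $C_0(\yy)$, and $C(g)$ restricted to $C_0(\yy)$ is just $g$ itself, which maps $C_0(\yy)$ to $C_0(\zc)$; hence $\partial_+^f\xx\subset C(h)|^{-1}(C_0(\zc))=\partial_+^h\xx$, and on this piece $h_+=g\circ f_+$. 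On $\partial_-^f\xx$ we land in $C_1(\yy)=\partial\yy$ via $f_-$, and then apply $C(g)|_{C_1(\yy)}$, which by the definition above splits $\partial\yy$ as $\partial_+^g\yy\coprod\partial_-^g\yy$ with maps $g_+:\partial_+^g\yy\to\zc=C_0(\zc)$ and $g_-:\partial_-^g\yy\to\partial\zc=C_1(\zc)$. Pulling this splitting back along $f_-$ decomposes $\partial_-^f\xx$ as $f_-^{-1}(\partial_+^g\yy)\coprod f_-^{-1}(\partial_-^g\yy)$; the first sits inside $C(h)|^{-1}(C_0(\zc))$, hence inside $\partial_+^h\xx$, with $h_+=g_+\circ f_-$ there, and the second sits inside $C(h)|^{-1}(C_1(\zc))$, hence equals $\partial_-^h\xx$, with $h_-=g_-\circ f_-$. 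Assembling the two pieces gives precisely the stated formulas for $\partial_+^h\xx$, $\partial_-^h\xx$, $h_+$ on its two components, and $h_-$.

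The one genuinely delicate point — and the part I'd expect to take the most care — is keeping the identifications honest: $C_1(\xx)=\partial\xx$, $C_0(\yy)\cong\yy$, and the claim that $C(g)$ restricted to the $C_0$ stratum is literally $g$ and to the $C_1$ stratum is $C(g)|_{C_1}=g_+\coprod g_-$ in the sense just recalled. All of this is built into the construction of the corner l-orbifold $C(\yy)=\coprod_k C_k(\yy)$ and the functor $C$ from \cite{joyce-generalized} and the appendix, so the "proof" is really a bookkeeping argument once those identifications are in hand; there is no hard analysis. I would therefore write it as: (a) by functoriality of $C$ and the corner characterization of b-normality; (b),(c) by restricting the functoriality identity $C(h)|_{C_1(\xx)}=C(g)\circ C(f)|_{C_1(\xx)}$ to the two pieces of $\partial_-^f\xx\sqcup\partial_+^f\xx$ and unwinding the definitions of $f_\pm$, $g_\pm$, $h_\pm$.
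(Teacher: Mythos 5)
Your proposal is correct and follows precisely the same route as the paper's own one-line proof, which simply invokes the functoriality identity $C(h)=C(g)\circ C(f)$ from \cite[Definition 2.10]{joyce-generalized}. You have merely unwound that identity on $C_1(\xx)$ and spelled out the resulting bookkeeping; the underlying idea is identical.
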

\begin{proof}
This follows from $C\left(h\right)=C\left(g\right)\circ C\left(f\right)$
(see \cite[Definition 2.10]{joyce-generalized}).
\end{proof}
More generally, given a chain 
\[
\xx_{r}\overbrace{\overset{f_{r}}{\longrightarrow}\cdots\overset{f_{j+1}}{\longrightarrow}}^{f_{>j}}\xx_{j}\underbrace{\overset{f_{j}}{\longrightarrow}\cdots\overset{f_{1}}{\longrightarrow}}_{f_{\leq j}}\xx_{0}
\]
of b-normal maps we define $f_{>j}:\xx_{r}\to\xx_{j}$ and $f_{\leq j}:\xx_{j}\to\xx_{0}$
for $j=1,...,r$ by the indicated compositions, and we have
\begin{equation}
\partial_{+}^{f_{\leq r}}\xx_{r}=\coprod_{j=1}^{r}\partial_{\leq j}\xx_{r}:=\coprod_{j=1}^{r}\left(f_{>j}\right)_{-}^{-1}\left(\partial_{+}^{f_{j}}\xx_{j}\right).\label{eq:multi composition decomposition}
\end{equation}

\subsubsection{\label{subsec:The-forgetful-map}The forgetful map boundary decomposition}

For a sturdy moduli specification $\sfr=\left(\basic,\sigma\right)=\left(\left(\kf,\lf,\beta\right),\sigma\right)$
and $S\subset\nn$, we define 
\[
\acute{\mm}_{\sfr}^{S}=\mm_{\left(\kf\coprod\left(\sigma\cap\sstar'_{S}\right),\lf,\beta\right)}.
\]
We will mostly be interested in $\mm_{\sfr}:=\acute{\mm}_{\sfr}^{\nn}$
and $\check{\mm}_{\sfr}:=\acute{\mm}_{\sfr}^{\emptyset}$. 

Let $\For_{\sfr}:\mm_{\sfr}\to\check{\mm}_{\sfr}$ be the map that
forgets the markings $\sstar'_{\sigma}$. For any $S\subset\nn$ we
have a decomposition 
\begin{equation}
\mm_{\sfr}\xrightarrow{\acute{\For}_{\sfr}^{S}}\acute{\mm}_{\sfr}^{S}\xrightarrow{\grave{\For}_{\sfr}^{S}}\check{\mm}_{\sfr}\label{eq:For decomposition}
\end{equation}
where $\acute{\For}_{\sfr}^{S}$ (respectively, $\grave{\For}_{\sfr}^{S}$)
is the map that forgets the markings $\sigma\backslash\sstar'_{S}$
(resp. $\sigma\cap\sstar'_{S}$). 
\begin{lem}
The maps $\For_{\sfr},\acute{\For}_{\sfr}^{S},\grave{\For}_{\sfr}^{S}$
are well-defined b-fibrations. 
\end{lem}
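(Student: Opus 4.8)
The plan is to verify the three claims in turn, reducing everything to well-understood properties of the moduli spaces $\mm_{(\kf,\lf,\beta)}$ and of forgetting marked points on stable disc maps. The key structural fact is that each of $\For_{\sfr}$, $\acute{\For}_{\sfr}^{S}$, $\grave{\For}_{\sfr}^{S}$ is, up to relabeling, an iterated forgetful map: it forgets the boundary markings indexed by a subset of $\sigma\subset\sstar'_{\nn}$, and forgetting such markings one at a time exhibits it as a composite of elementary forgetful maps $\overline{\mm}_{0,\kf'\coprod\{\sstar'_j\},\lf}(\beta)\to\overline{\mm}_{0,\kf',\lf}(\beta)$, each of which drops a single boundary marking. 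Because $\sfr$ is sturdy, $\basic=(\kf,\lf,\beta)$ is already stable, so at every intermediate stage $(\kf\coprod(\sigma'\cap\sstar'_S),\lf,\beta)$ the specification is stable and hence the moduli space is a genuine $\tb_\basic^0$-orbifold with corners; thus each elementary forgetful map is between honest orbifolds with corners, with no stabilization phenomena to worry about. This is exactly the setting in which \cite{mod2hom} constructs the spaces and in which the single-marking forgetful map is known to be a b-fibration (it is the analog, in the real homogeneous setting, of the universal curve / forgetful morphism, and its b-fibration property is part of the orbifold-with-corners package recorded in Proposition~\ref{thm:resolution summary}, applied with $r=0$).

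First I would make the reduction to the elementary case precise: choose an ordering of the finite set $\sigma\setminus\sstar'_S$ (for $\acute{\For}_{\sfr}^{S}$), resp. $\sigma\cap\sstar'_S$ (for $\grave{\For}_{\sfr}^{S}$), resp. all of $\sigma$ (for $\For_{\sfr}$), and write the map as a composite of maps each forgetting one boundary marking. Well-definedness is then immediate: forgetting a marked point is a canonical operation on stable disc maps (possibly contracting an unstable component and re-stabilizing), independent of the chosen ordering, and it is $\tb$- and symmetric-group-equivariant in the relevant variables, so it descends to the orbifolds. Next I would invoke that each elementary forgetful map is a b-fibration — this is the content of the $r=0$ instance of Proposition~\ref{thm:resolution summary} together with the general structure theory of $\overline{\mm}_{0,\kf,\lf}(\beta)$ from \cite{mod2hom}, where the forgetful map is shown to be a submersion in the interior and b-normal along the corners (the boundary strata of the moduli of disc maps are fibered products of smaller such moduli, and forgetting a marking is compatible with this stratification). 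Finally, since b-fibrations are closed under composition — this is standard, e.g. \cite[Proposition 2.11]{joyce-generalized} gives closure under composition of b-normal maps, and surjectivity of b-differentials composes in the evident way — the composites $\For_{\sfr}$, $\acute{\For}_{\sfr}^{S}$, $\grave{\For}_{\sfr}^{S}$ are again b-fibrations. The factorization $\For_{\sfr}=\grave{\For}_{\sfr}^{S}\circ\acute{\For}_{\sfr}^{S}$ of (\ref{eq:For decomposition}) then holds on the nose because both sides forget exactly the markings $\sstar'_\sigma$, and on stable disc maps the order of forgetting does not matter.

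The one point requiring genuine care — the main obstacle — is the b-normality of the elementary forgetful map along the corners, i.e.\ checking that forgetting a boundary marking does not increase corner codimension. The subtlety is that forgetting a marked point on a nodal disc can contract a ghost component, and one must verify that this contraction is compatible with the corner structure: a disc bubble carrying only the forgotten marking together with one node contracts to a smooth boundary point, so codimension drops rather than rises, and no new corners are created. In the language of \S\ref{subsec:b-normal boundary decomp}, this amounts to checking $C(\For)(C_r(\mm_\sfr))\subset\coprod_{r'\leq r}C_{r'}(\check\mm_\sfr)$, which one reads off from the explicit description of the boundary strata of $\overline{\mm}_{0,\kf,\lf}(\beta)$ as iterated fibered products indexed by trees: forgetting a marking either leaves the tree type unchanged or prunes a bivalent (or univalent-stabilizing) vertex, in both cases not increasing the number of edges. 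Once this is in hand, surjectivity of the differential (hence "b-fibration" and not merely "b-normal") is the usual statement that the fibers are the moduli of the disc with one extra boundary marking, which are positive-dimensional and vary smoothly, so I would cite the corresponding statement in \cite{mod2hom} rather than reprove it.
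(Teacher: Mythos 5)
Your overall strategy --- factor each map as a composition of single-boundary-marking forgetful maps and invoke closure of b-fibrations under composition --- is exactly the paper's, and is correct. But there is a genuine gap in the middle step, and the argument you give to fill it is circular.

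The statement ``the map forgetting a single boundary marked point is a b-fibration'' is \emph{not} unconditionally true in this setting. \cite[Lemma 8]{mod2hom}, which is the result the paper actually cites, asserts it only under the hypothesis that the codomain contains no E-type nodal configurations. So the crux of the lemma is verifying that for every intermediate specification $(\kf\coprod(\sigma'\cap\sstar'_S),\lf,\beta)$ appearing along your composition chain, the corresponding moduli space has no E-type nodes. This is where the orientability hypothesis $k+\beta=1\bmod 2$ enters: either the set of boundary markings $\kf\coprod(\sigma\cap\sstar'_S)$ is nonempty, or $k=0$ so $\beta$ is odd, hence $\beta$ is not in the image of the doubling map $D=\id+(c_X)_*$ (which is multiplication by $2$), and then \cite[Remark 7]{mod2hom} rules out E-nodes. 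Your proposal never mentions E-type nodes and never uses orientability, so it silently assumes the hard part.

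Separately, the place where you try to justify the elementary b-fibration claim --- ``this is the content of the $r=0$ instance of Proposition~\ref{thm:resolution summary}'' --- is circular: the lemma under discussion is an ingredient in the construction underlying that proposition, not a consequence of it. The subsequent paragraph about b-normality and contracting ghost components is not wrong as a heuristic, but it is also not where the actual subtlety lives; that discussion is already packaged into \cite[Lemma 8]{mod2hom}, \emph{conditional on the E-node hypothesis}, which is the thing you need to check and do not.
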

\begin{proof}
\cite[Lemma 8]{mod2hom} says that the map forgetting a single boundary
marked point is a b-fibration, whenever its codomain does not contain
any E-type nodes. Since each of $\For_{\sfr},\acute{\For}_{\sfr}^{S},\grave{\For}_{\sfr}^{S}$
can be written as a composition of maps forgetting a single boundary
marking, and b-fibrations are closed under composition, we reduce
to showing that for any moduli specification $\sfr=\left(\left(\kf,\lf,\beta\right),\sigma\right)$
and set $S$ the configurations parameterized by the space $\acute{\mm}_{\sfr}^{S}$
do not have any E-type nodes. 

To show this, note that the map ${D:=\id_{H_{2}\left(X\right)}+\left(c_{X}\right)_{*}:H_{2}\left(X\right)\to H_{2}\left(X\right)}$
is just multiplication by 2, so (by the orientability assumption,
$\left|\kf\right|+\beta=1\mod2$), we either have $\kf\coprod\left(\sigma\cap\sstar'_{S}\right)\neq\emptyset$
or $\beta$ is not in the image of $D$, which implies there are no
E-nodes by \cite[Remark 7]{mod2hom}.
\end{proof}
\begin{rem}
The forgetful map is \emph{not }a submersion; it is not even strongly
smooth.
\end{rem}
We write 
\[
\partial\mm_{\sfr}=\partial_{+}\mm_{\sfr}\coprod\partial_{-}\mm_{\sfr}
\]
with $\partial_{\pm}\mm_{\sfr}=\partial_{\pm}^{For_{\sfr}}\mm_{\sfr}$.
We call $\partial_{-}\mm_{\sfr}$ the \emph{sturdy boundary} and $\partial_{+}\mm_{\sfr}$
the \emph{wobbly boundary}. The boundary also decomposes as 

\begin{equation}
\partial\mm_{\sfr}=\coprod\mm_{\sfr'}\underset{\ev{}_{\sstar'_{r}}^{\sfr'}\qquad\ev{}_{\sstar''_{r}}^{\sfr''}}{\times}\mm_{\sfr''}\label{eq:boundary decomposition}
\end{equation}
where the disjoint union is taken over all pairs of moduli specifications
\[
{\sfr'=\left(\left(\kf',\lf',\beta'\right),\sigma'\coprod\sstar'_{r}\right)}\quad\text{and}\quad{\sfr''=\left(\left(\left(\sstar''_{r}\right)\coprod\kf'',\lf'',\beta''\right),\sigma''\right)}
\]
for
\begin{gather}
\sigma=\sigma'\coprod\sigma'',\;\kf=\kf'\coprod\kf'',\;\lf=\lf'\coprod\lf'',\;\mbox{and }\mbox{\ensuremath{\beta}=\ensuremath{\beta}'+\ensuremath{\beta}''},\label{eq:decomposing moduli specifications}
\end{gather}
and where $r$ is any sufficiently large integer, so $\sstar'_{r},\sstar_{r}''$
denote two new boundary markings (representing the special boundary
points identified by the node). We emphasize that the orientability
condition specifies the order of the fiber factors. Using the $O\left(2m+1\right)$
action, we see that the restriction of $d\ev_{\sstar'_{r}}^{\sfr'}$
to the codimension $k$ strata, 
\[
d\ev_{\sstar'_{r}}^{\sfr'}:TS^{k}\left(\mm_{\sfr'}\right)\to TZ,
\]
is surjective for every $k\geq0$, so the evaluation maps $\ev_{\sstar'_{r}}^{\sfr'},\ev_{\sstar''_{r}}^{\sfr''}$
are b-transverse (see the proof of Lemma \ref{lem:ev product is b-transverse to diag}
below, which generalizes this).

In terms of (\ref{eq:boundary decomposition}), the sturdy boundary
$\partial_{-}\mm_{\sfr}\subset\partial\mm_{\sfr}$ consists of those
components where $\sfr',\sfr''$ are both sturdy. The wobbly boundary
$\partial_{+}\mm_{\sfr}\subset\partial\mm_{\sfr}$ consists of those
components where precisely one of $\sfr'$ or $\sfr''$ is wobbly.

\subsubsection{Wobbly boundary involution}

We construct a fixed-point free involution 
\[
{\inv_{\sfr}:\partial_{+}\mm_{\sfr}\to\partial_{+}\mm_{\sfr}}
\]
as follows. If $\sfr_{0}=\left(\left(k_{0},l_{0},\beta_{0}\right),\sigma_{0}\right)$
is any moduli specification and $S\subset\sigma_{0}$ is a two element
subset, we abuse notation and denote by $S\in\Sym\left(\sigma_{0}\right)$
the permutation that swaps the elements in $S$, and by $S.:\mm_{\sfr_{0}}\to\mm_{\sfr_{0}}$
the induced diffeomorphism. We let $\inv_{\sfr}$ acts on ${\bb=\mm_{\sfr'}\underset{ev_{\sstar'}\qquad ev_{\sstar''}}{\times}\mm_{\sfr''}\subset\partial_{+}\mm_{\sfr}}$
as follows. Precisely one of $\sfr',\sfr''$ is a wobbly boundary
specification, and we denote it by $\sfr_{0}=\left(\left(\kf_{0},\lf_{0},\beta_{0}\right),\sigma_{0}\right)$.
It follows that $\left|\sigma_{0}\right|\geq2$. We take $S_{0}$
to be the first two elements of $\sigma_{0}$, where $\sigma'\coprod\sstar'_{r},\sigma''$
are ordered as subsets of $\sstar'_{\nn}$. Let 
\[
\inv_{\sfr}|_{\bb}=\begin{cases}
\left(S_{0}\right).\times\mbox{id} & \text{if }\sfr_{0}=\sfr'\\
\mbox{id}\times\left(S_{0}\right). & \text{otherwise}
\end{cases}.
\]

Clearly, 
\begin{equation}
\left(\For{}_{\sfr}\right)_{+}\circ\inv_{\sfr}=\left(\For{}_{\sfr}\right)_{+}.\label{eq:For commutes w tau}
\end{equation}
In the next subsection we will see that the wobbly boundary is inessential,
in the sense that $\inv_{\sfr}$ defines an orientation-reversing
involution on $\partial_{+}\mm_{\sfr}$. 

\subsection{\label{subsec:Orientations}Orienting moduli spaces.}

Our goal in this subsection is to prove the following.
\begin{prop}
\label{prop:relative orientation}Let $\sfr=\left(\left(k,l,\beta\right),\sigma\right)$
be a sturdy moduli specification

(a) There exists an isomorphism of local systems
\begin{equation}
\mathcal{J}_{\sfr}:Or\left(T\mm_{\sfr}\right)\to Or\left(TL\right)^{\boxtimes k},\label{eq:relative orientation}
\end{equation}
lying over $\prod_{x\in k}ev_{x}:\mm_{\sfr}\to L^{\times k}$.

(b) The local system isomorphism 
\[
\Or\left(d\inv_{\sfr}\right):\Or\left(T\partial_{+}\mm_{\sfr}\right)\to\Or\left(T\partial_{+}\mm_{\sfr}\right)
\]
satisfies
\begin{equation}
\mathcal{J}_{\sfr}\circ\iota_{\mm_{\sfr}}^{\partial_{+}}\circ\Or\left(d\inv_{\sfr}\right)=\left(-1\right)\mathcal{J}_{\sfr}\circ\iota_{\mm_{\sfr}}^{\partial_{+}},\label{eq:J anti commutes with tau}
\end{equation}
where $\iota_{\mm_{\sfr}}^{\partial_{+}}=\iota_{\mm_{\sfr}}^{\partial}|_{\partial_{+}\mm_{\sfr}}$
is the boundary local system map (see (\ref{eq:boundary ls iso})).
\end{prop}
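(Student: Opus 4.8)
The plan is to build $\mathcal{J}_{\sfr}$ recursively on the number of boundary markings $|\sigma|$, using the forgetful maps of $\S$\ref{subsec:The-forgetful-map} to reduce to the basic case $\sigma=\emptyset$, where $\sfr$ becomes a basic moduli specification $\basic$. For the base case we invoke Solomon's relative orientation result \cite{JT} (cited in the introduction): $\overline{\mm}_{0,k,l}\left(X,L,\beta\right)$ carries a relative orientation, which for the real homogeneous pair $\left(\cc\pp^{2m},\rr\pp^{2m}\right)$ can be packaged precisely as a local system isomorphism $\Or\left(T\mm_{\basic}\right)\to\Or\left(TL\right)^{\boxtimes k}$ lying over $\prod_{x\in\kf}\ev_x$. (Concretely, one uses the short exact sequences relating $T\overline{\mm}_{0,k,l}\left(X,L,\beta\right)$ to the tangent complex of the closed moduli space $\overline{\mm}_{0,k+2l}\left(X,\beta\right)$, the canonical complex orientations downstairs, and the spin/Pin structure on $L=\rr\pp^{2m}$ coming from its description as $\pp_\rr(W_0\oplus\cdots\oplus W_m)$; the orientability hypothesis $k+\beta\equiv1\bmod 2$ is exactly what makes the signs consistent.) Then for $|\sigma|\geq1$, pick one superfluous marking and let $\For:\mm_{\sfr}\to\mm_{\sfr'}$ forget it, where $\sfr'$ has one fewer superfluous label; since $\For$ is a b-fibration (the Lemma just proved), its relative tangent bundle is a line bundle, and one checks this line is canonically oriented (it is the "position of the forgotten boundary point on an arc", an oriented $1$-manifold), giving $\Or\left(T\mm_{\sfr}\right)\cong\Or\left(T\mm_{\sfr'}\right)$ over $\For$. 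Composing with $\mathcal{J}_{\sfr'}$ and noting $\ev_x\circ\For=\ev_x$ for $x\in\kf$ yields $\mathcal{J}_{\sfr}$. This proves (a).

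For (b), recall $\partial_+\mm_{\sfr}$ decomposes into components $\bb=\mm_{\sfr'}\times_L\mm_{\sfr''}$ where exactly one of $\sfr',\sfr''$ is wobbly, hence of the form $\left(\left(\kf_0,\emptyset,0\right),\sigma_0\right)$ with $|\kf_0|=1$, $|\sigma_0|\geq2$, and $\inv_{\sfr}$ acts on that factor by the transposition $S_0$ swapping the first two superfluous markings. The strategy is: (i) identify $\mm_{\left(\left(\kf_0,\emptyset,0\right),\sigma_0\right)}$ explicitly — it is (an open part of) a moduli of stable discs with no interior marked points, one orienting marking and $|\sigma_0|$ ordered boundary markings and $\beta=0$, i.e. a configuration of points on $S^1=\partial D$; (ii) show $\mathcal{J}$ on this wobbly piece is, up to the canonical orientations, just the standard orientation of the ordered configuration space of $S^1$ (a product of oriented intervals); (iii) observe that swapping two adjacent boundary markings reverses exactly one interval factor, hence reverses orientation. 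More precisely, using the boundary local system map $\iota^{\partial_+}_{\mm_{\sfr}}$ (which contributes the usual outward-normal sign uniformly on both $\partial_+$ components related by $\inv_{\sfr}$, so cancels in the comparison), the sign of $\Or(dS_0)$ relative to $\mathcal{J}_{\sfr}$ is $-1$. Assembling this over all components $\bb$ of $\partial_+\mm_{\sfr}$ gives \eqref{eq:J anti commutes with tau}.

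The main obstacle is step (ii)–(iii) of part (b): tracking the sign through the recursive definition of $\mathcal{J}_{\sfr}$ and the gluing that produces $\bb=\mm_{\sfr'}\times_L\mm_{\sfr''}$. One must be careful that the orientation on the forgotten-marking line bundles, the fiber-product orientation convention (the order $\sfr'$ before $\sfr''$ fixed by orientability), and the boundary outward-normal convention all interact so that the \emph{only} net effect of $\inv_{\sfr}$ is the single interval flip. A clean way to organize this is to first prove the analogous statement for the "local model" — the forgetful tower over a single wobbly moduli space $\mm_{\left(\left(\kf_0,\emptyset,0\right),\sigma_0\right)}$ with its explicit description as an ordered configuration of points on an interval/circle, where swapping the first two markings manifestly flips orientation — and then transport it along the b-fibration $\mm_{\sfr}\to\check{\mm}_{\sfr}$ and the gluing map, using that both $\iota^{\partial_+}$ and the relative orientation of the gluing are $S_0$-equivariant in the evident sense so that no further sign is introduced. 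I would also double-check that $\inv_{\sfr}$ is genuinely fixed-point-free (immediate, since $S_0$ is a nontrivial permutation of distinct markings) so that $\partial_+\mm_{\sfr}$ really is "inessential" — pairs of orientation-reversing-identified cells — which is the use to which \eqref{eq:J anti commutes with tau} will be put in Section \ref{sec:Extended-forms}.
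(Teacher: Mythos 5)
Part (a) of your proposal takes essentially the same route as the paper: $\mathcal{J}_{\sfr}$ is defined as $\check{\mathcal{J}}_{\sfr}\circ\ff_{\sfr}$, where $\ff_{\sfr}$ comes from the naturally oriented fibers of the forgetful map (whether you forget the superfluous markings one at a time or all at once is only a matter of organization, cf. the decomposition (\ref{eq:F ls decomposition})) and $\check{\mathcal{J}}_{\sfr}$ is Solomon's relative orientation from \cite{JT}.

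In part (b), however, there is a genuine gap in your justification. You claim that $\iota^{\partial_+}_{\mm_{\sfr}}$ ``contributes the usual outward-normal sign uniformly on both $\partial_+$ components related by $\inv_{\sfr}$, so cancels in the comparison,'' and that the whole $-1$ is carried by swapping a marking-interval on the wobbly configuration factor. This is correct in only one of two cases that must be distinguished according to where the swapped pair $S_0$ sits. If $S_0\subset\sigma=\sigma'\coprod\sigma''$, then $\inv_{\sfr}|_{\bb}$ is the restriction of the actual permutation $(S_0).$ of $\mm_{\sfr}$, so $\iota^{\partial}$ genuinely commutes with the involution (cf.\ (\ref{eq:tau extends})) and the sign comes from (\ref{eq:sign of sigma permutation on F}), i.e.\ from the orientation-reversal of the $\For_{\sfr}$-fiber under a transposition of $\sigma$---your picture is accurate here. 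But when $\sstar'_{r}\in S_0$ (so one of the swapped markings is the \emph{node} marking, which is not a marking of $\mm_{\sfr}$), the involution does not extend to $\mm_{\sfr}$, and the paper's (\ref{eq:upsilon anti commutes with tau}) shows that the $-1$ is produced precisely because $\iota^{\partial}$ and $\Or(d\inv_{\sfr})$ \emph{anti}-commute relative to the intermediate map $\acute{\ff}_{\sfr}^{S}$ for $S=\sigma\setminus\{i\}$: the two boundary faces in $\bb$ are the two endpoints of the interval fiber of the map forgetting just $s=\sstar'_{i}$, they are identified under $\acute{\For}_{\sfr}^{S}\circ i^{\partial}$, and they carry \emph{opposite} outward-normal orientations. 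So the outward-normal contribution does not cancel; it \emph{is} the sign in this case. A related difficulty with your proposed ``local model'' approach is that $\mathcal{J}$ is defined only for sturdy specifications, so the wobbly factor $\mm_{\sfr_0}$ never receives a $\mathcal{J}$-orientation of its own; the sign computation has to happen on $\mm_{\sfr}$ itself, factored through the partial forgetful maps as the paper does.
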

The proof of this proposition will occupy the remainder of this subsection.
By Lemma \ref{lem:ls extensions}, it suffices to construct local
systems and maps of local systems on interior (depth zero) points
of orbifolds with corners. We will make repeated use of this fact.

\subsubsection{\label{subsec:J Construction}Construction of $\mathcal{J}_{\sfr}$}

If $m$ is even (respectively odd), there are two $Pin^{+}$ (resp.,
$Pin^{-}$) structures on $\rr\pp^{2m}$. Provisionally, let $\mathfrak{p}$
denote one of them. Consider some sturdy moduli specification $\sfr=\left(\basic,\sigma\right)=\left(\left(\kf,\lf,\beta\right),\sigma\right)$.
We will shortly define $\mathcal{J}_{\sfr}$ to be the composition
\[
\Or\left(T\mm_{\sfr}\right)\xrightarrow{\ff_{\sfr}}\Or\left(T\check{\mm}_{\sfr}\right)\xrightarrow{\check{\mathcal{J}}_{\sfr}}\Or\left(TL\right)^{\boxtimes k}
\]
where $\check{\mathcal{J}}_{\sfr}$ is the local system map constructed
by Solomon \cite{JT} (we will review the definition below) and $\ff_{\sfr}:\Or\left(T\mm_{\sfr}\right)\to\Or\left(T\check{\mm}_{\sfr}\right)$
is defined using the natural orientation of the fibers of $\For_{\sfr}$.
Reversing the choice of $\mathfrak{p}$ corresponds to replacing $\check{\jc}_{\sfr}$
by $-\check{\jc}_{\sfr}$ (see \cite[Lemma 2.10]{JT}), and so we
may fix $\mathfrak{p}$ by requiring that $\check{\jc}_{\sfr_{1}=\left(\left(\left\{ 1,2\right\} ,0,1\right),\emptyset\right)}$
gives positive orientation to both points of any generic fiber of
$\ev_{1}^{\sfr_{1}}\times\ev_{2}^{\sfr_{1}}$.

We set

\begin{equation}
\mathcal{F}_{\sfr}:\Or\left(T\mm_{\sfr}\right)\to\Or\left(T\check{\mm}_{\sfr}\right)\label{eq:ls iso forget}
\end{equation}
to be the local system map over $\For_{\sfr}$, which is defined over
the interior of $\left(\check{\mm}_{\sfr}\right)^{\circ}$ using the
ordered direct sum decomposition 
\[
T\mm_{\sfr}=T\check{\mm}_{\sfr}\oplus\ker\left(d\For_{\sfr}\right).
\]
The fiber of $\For_{\sfr}|_{\left(\mm_{\sfr}\right)^{\circ}}$ over
an interior point of $\left(\check{\mm}_{\sfr}\right)^{\circ}$, represented
by $\left(\Sigma,\nu,\kappa,\lambda,u\right)$, is naturally identified
with an open subset of $\left(\partial\Sigma\right)^{\sigma}$ which
is oriented; here we use the order on $\sigma\subset\sstar'_{\nn}$
to order the product.

For any $S\subset\mathbb{N}$ we have a decomposition 
\begin{equation}
\ff_{\sfr}=\grave{\ff}_{\sfr}^{S}\circ\acute{\ff}_{\sfr}^{S}\label{eq:F ls decomposition}
\end{equation}
lying over $\For_{\sfr}=\grave{\For}_{\sfr}^{S}\circ\acute{\For}_{\sfr}^{S}$;
the maps $\grave{\ff}_{\sfr}^{S},\acute{\ff}_{\sfr}^{S}$ are defined
similarly to $\ff_{\sfr}$, using the orientation on $\left(\partial\Sigma\right)^{\sigma\cap\sstar'_{S}}$
and $\left(\partial\Sigma\right)^{\sigma\backslash\sstar'_{S}}$ respectively,
except we twist $\acute{\ff}_{\sfr}^{S}$ by a suitable shuffle sign
so that (\ref{eq:F ls decomposition}) holds.

Next we review the construction of $\check{\mathcal{J}}_{\sfr}$ following
\cite{JT}, beginning with $\check{\mathcal{J}}_{\sfr}^{main}=\check{\mathcal{J}}_{\sfr}|_{\check{\mm}_{\sfr}^{main}}$
where $\check{\mm}_{\sfr}^{main}\subset\check{\mm}_{\sfr}$ is the
clopen component which is the closure of points represented by $\left(\Sigma=D^{2},\nu,\kappa,\lambda,u\right)$
in which $\kappa:\kf\to S^{1}$ preserves the cyclic order; henceforth
we consider $\kf\subset\nn\coprod\sstar''_{\nn}$ as ordered by putting
the elements of $\nn$ first, in order, then the elements of $\sstar''_{\nn}$,
in order.

Let $\widetilde{\mm}^{\text{reg}}\left(\beta\right)$ denote the space
of holomorphic maps $\left(D^{2},\partial D^{2}\right)\to\left(X,L\right)$
of degree $\beta$, where $D^{2}\subset\cc$ is the standard unit
disc. Let $\delta$ be a tuple of $\max\left(0,2-\left|\kf\right|\right)$
dummy markings, so that $\delta\coprod\kf$ has length $k_{+}\geq2$,
and set $\hat{\kf}$ to be the tuple obtained by omitting the first
two elements of $\delta\coprod\kf$. Consider the subspace

\[
{\widehat{\mm}_{\basic}\subset\widetilde{\mm}^{\mbox{reg}}\left(\beta\right)\times\left(\partial D^{2}\right)^{\hat{\kf}}\times\left(D^{2}\right)^{\lf}}
\]
in which the components $\left(z_{3},...,z_{k_{+}}\right)\in\left(\partial D^{2}\right)^{\hat{\kf}}$
are such that 
\[
\left(z_{1},z_{2},...,z_{k_{+}}\right):=\left(+1,-1,z_{3},...,z_{k_{+}}\right)
\]
is cyclically ordered. Using similar notation to \cite[Chapter 8]{FOOO},
our orientation convention is summarized by the following equality
of oriented bases for $T_{p}\widehat{\mm}_{\basic}$: 
\begin{equation}
\mm_{\basic_{+}}\times\rr_{\beta}=\widetilde{\mm}^{\mbox{reg}}\left(\beta\right)\times\left(\partial D^{2}\right)^{\hat{\kf}}\times\left(D^{2}\right)^{\lf}.\label{eq:group orientation convention}
\end{equation}
Here $D^{2}$ and $\partial D^{2}$ are oriented using the complex
structure.  Letting $\basic_{+}=\left(\left(\kf,\lf,\beta\right),\delta\right)$,
$\mm_{\basic_{+}}$ stands for the pullback of a local oriented base
for $T\mm_{\basic_{+}}$ under the map $q_{1,2}:\widehat{\mm}_{\sfr}\to\mm_{\basic_{+}}$
\[
q_{1,2}\left(u,z_{3},...,z_{k_{+}},\boldsymbol{w}\right)\mapsto\left[\Sigma=D^{2},\nu=\emptyset,\kappa,\lambda,u\right]
\]
for $\kappa\left(\delta\coprod\kf\right)=\left(+1,-1,z_{3},...,z_{k_{+}}\right)$
and $\lambda\left(\lf\right)=\boldsymbol{w}$. $\rr_{\beta}$ is
an oriented real line bundle on $\widetilde{\mm}^{\text{reg}}\left(\beta\right)$
representing the action of the subgroup of $PSL_{2}\left(\rr\right)$
fixing the points $\pm1$, with the positive direction corresponding
to the flow from $+1$ to $-1$ (cf. \cite[Convention 8.3.1]{FOOO}).

Now assume without loss of generality that $\kf=\left(1,2,...,k\right)$.
Let ${u:\widehat{\mm}_{\basic}\to\widetilde{\mm}^{\mbox{reg}}\left(\beta\right)}$
and $z\left(j\right):\widehat{\mm}_{\basic}\to\partial D^{2}$, for
$1\leq j\leq k$, denote the projections. Let $u\left(\cdot\right):\widehat{\mm}_{\basic}\times D^{2}\to X$
be the corresponding evaluation map. By (\ref{eq:group orientation convention}),
to construct $\check{\mathcal{J}}_{\sfr}$ it suffices to produce
a section $u^{-1}\Or\left(T\widetilde{\mm}^{\mbox{reg}}\left(\beta\right)\right)\otimes\bigotimes_{j=1}^{k}u\left(z\left(j\right)\right)^{-1}\Or\left(TL\right)^{\vee}$.
We choose an arbitrary orientation for $\left(u\left(z\left(1\right)\right)\right)^{-1}TL$
and transport it along $u|_{\partial D^{2}}$, obtaining orientations
for $\left(u\left(z\left(j\right)\right)\right)^{-1}TL$ for $2\leq j\leq k$
and an orientation for $u|_{\partial D^{2}}^{-1}TL$ if orientable
(that is, if $\beta=0\mod2$). Using the $Pin^{\pm}$ structure $\mathfrak{p}$
and the orientation for $u|_{\partial D^{2}}^{*}TL$ if $\beta=0\mod2$
we obtain an orientation for $\Or\left(T\widetilde{\mm}^{\text{reg}}\left(\beta\right)\right)$
using \cite[Proposition 2.8]{JT}. Reversing the initial choice of
orientation for $\left(u\left(z\left(1\right)\right)\right)^{-1}TL$
reverses the orientations $\left(u\left(z\left(j\right)\right)\right)^{-1}TL$
for all $1\leq j\leq k$ and, if $\beta=0$, also the orientation
for $\Or\left(T\widetilde{\mm}^{\text{reg}}\left(\beta\right)\right)$
(see \cite[Lemma 2.9]{JT}). Since $k+\beta=1\mod2$ it follows that
the section of the tensor product is independent of the initial choice
of orientation. This completes the definition of $\check{\mathcal{J}}_{\sfr}^{main}:\Or\left(T\check{\mm}_{\sfr}^{main}\right)\to\Or\left(TL\right)^{\otimes\kf}$. 

Let $\tau.:\check{\mm}_{\sfr}\to\check{\mm}_{\sfr}$ denote the action
of the permutation $\tau\in\Sym\left(\kf\right)$. Let $\tau_{0}$
denote the cyclic shift, which generates the stabilizer subgroup $\left\{ \tau|\tau.\check{\mm}_{\sfr}^{main}\subset\check{\mm}_{\sfr}^{main}\right\} $.
We have
\begin{equation}
\check{\mathcal{J}}_{\sfr}^{main}\circ\Or\left(d\tau_{0}.\right)=\check{\mathcal{J}}_{\sfr}^{main}.\label{eq:J cyclic invariance}
\end{equation}
Indeed, the shift introduces a sign of $\left(-1\right)^{k-1}=\left(-1\right)^{\beta}$.
On the other hand, we need to compare the orientation transport $OT\left(1\right)$
beginning at $z\left(1\right)$, with the orientation transport $OT\left(2\right)$
beginning at $z\left(2\right)$. We can choose the initial orientation
for $OT\left(2\right)$ so it agrees with $OT\left(1\right)$ at all
of the points $z\left(2\right),z\left(3\right),...,z\left(k\right)$.
With this choice, $OT\left(1\right)$ and $OT\left(2\right)$ also
agree at $z\left(1\right)$ if and only if $\beta=0\mod2$. Eq (\ref{eq:J cyclic invariance})
follows. We extend the definition of $\check{\mathcal{J}}_{\sfr}$
to $\check{\mm}_{\sfr}$ by setting 
\[
\check{\mathcal{J}}_{\sfr}|_{\tau.^{-1}\left(\check{\mm}_{\sfr}^{main}\right)}:=\check{\mathcal{J}}_{\sfr}^{main}\circ\Or\left(d\tau.\right).
\]
This is well defined by (\ref{eq:J cyclic invariance}), and shows
that 
\begin{equation}
\check{\mathcal{J}}_{\sfr}\circ\Or\left(d\tau.\right)=\check{\mathcal{J}}_{\sfr}\label{eq:J Sym(k)-invariance}
\end{equation}
for all $\tau\in\Sym\left(\kf\right)$. Using this it is straightforward
to check that 

\begin{equation}
\mathcal{J}_{\sfr}\circ\Or\left(d\tau.\right)=\mathcal{J}_{\sfr}\label{eq:sign of k permutation}
\end{equation}
for $\tau\in\Sym\left(\kf\right)$, whereas for $\tau\in\Sym\left(\sigma\right)$
we have 
\begin{equation}
\ff_{\sfr}\circ\Or\left(d\tau.\right)=\sgn\left(\tau\right)\cdot\ff_{\sfr}\label{eq:sign of sigma permutation on F}
\end{equation}
and hence
\begin{equation}
\mathcal{J}_{\sfr}\circ\Or\left(d\tau.\right)=\operatorname{sgn}\left(\tau\right)\cdot\mathcal{J}_{\sfr}.\label{eq:sign of sigma permutation}
\end{equation}

\subsubsection{Checking $\inv_{\sfr}$ reverses the orientation.}

We turn to proving part (b) of Proposition \ref{prop:relative orientation}.
We will prove that 
\begin{equation}
\ff_{\sfr}\circ\iota_{\mm_{\sfr}}^{\partial_{+}}\circ\Or\left(d\inv_{\sfr}\right)=\left(-1\right)\ff_{\sfr}\circ\iota_{\mm_{\sfr}}^{\partial_{+}}.\label{eq:F anti commutes with tau_sfr}
\end{equation}
which clearly implies (\ref{eq:J anti commutes with tau}).

Consider a wobbly boundary component 
\[
{\bb=\mm_{\sfr'}\underset{ev_{\sstar'}\qquad ev_{\sstar''}}{\times}\mm_{\sfr''}\subset\partial_{+}\mm_{\sfr}}.
\]
Recall $\inv_{\sfr}|_{\bb}$ was defined by swapping $S_{0}$, the
first two elements of $\sigma'\coprod\left\{ \sstar'_{r}\right\} $
or the first two elements of $\sigma''$, depending on which side
is wobbly. If $S_{0}\subseteq\sigma=\sigma'\coprod\sigma''$, $\inv_{\sfr}|_{\bb}$
is the restriction of $\left(S_{0}\right).:\mm_{\sfr}\to\mm_{\sfr}$,
\begin{equation}
i_{\mm_{\sfr}}^{\partial_{+}}\circ\inv_{\sfr}|_{\bb}=\left(S_{0}\right).\circ i_{\mm_{\sfr}}^{\partial_{+}}|_{\bb}\label{eq:tau extends}
\end{equation}
so (\ref{eq:F anti commutes with tau_sfr}) holds over $\bb$ by (\ref{eq:sign of sigma permutation on F}). 

In case $\sstar_{r}'\in S_{0}$ we have 
\[
\mm_{\sfr'}=\mm_{\left(\left(\left(x\right),\emptyset,0\right),\left\{ s,\sstar_{r}'\right\} \right)}=\left\{ \left(\sstar'_{r},x,s\right),\left(\sstar_{r}',s,x\right)\right\} 
\]
for some $x\in\kf,s=\sstar'_{i}\in\sigma'$, $i<r$. On the right,
each 3-tuple specifies the cyclic order of the markings on the boundary
of the disc. In this case, 
\[
\inv_{\sfr}|_{\bb}=\mbox{sw}\times\mbox{id}_{\mm_{\sfr''}}:\bb\to\bb
\]
where $\mbox{sw}$ swaps the two tuples. By (\ref{eq:F ls decomposition})
we can write
\begin{equation}
\ff_{\sfr}=\grave{\ff}_{\sfr}^{S}\circ\acute{\ff}_{\sfr}^{S}\label{eq:F factors through forgetful map}
\end{equation}
for $S=\sigma\backslash\left\{ i\right\} $. In other words, we forget
$s=\sstar'_{i}$ first, then the rest of $\sigma$.

Since the local system map $\iota_{\mm_{\sfr}}^{\partial}:\Or\left(T\partial\mm_{\sfr}\right)\to\Or\left(T\mm_{\sfr}\right)$
was defined using the outward normal orientation convention, we have
\begin{equation}
\acute{\ff}_{\sfr}^{S}\circ\iota_{\mm_{\sfr}}^{\partial}\circ\Or\left(d\inv_{\sfr}\right)|_{\bb}=-\acute{\ff}_{\sfr}^{S}\circ\iota_{\mm_{\sfr}}^{\partial}\label{eq:upsilon anti commutes with tau}
\end{equation}
(over each fiber of $\acute{\For}_{\sfr}^{S}$, $\inv_{\sfr}|_{\bb}$
swaps two interval endpoints which are identified by $\acute{\For}_{\sfr}^{S}\circ i_{\mm_{\sfr}}^{\partial}|_{B}$).
So (\ref{eq:F anti commutes with tau_sfr}) holds over $\bb$ by (\ref{eq:F factors through forgetful map})
and (\ref{eq:upsilon anti commutes with tau}).

\subsection{\label{subsec:resolutions}Resolutions}

In this subsection we construct resolutions of moduli spaces, which
are orbifolds with corners $\mm_{\basic}^{\rho}$ where $\sfr$ is
a sturdy moduli specification and $\rho\subset\nn$ is a finite subset.
We will see that these are simply products of moduli spaces modeled
on trees. Ultimately, we will be interested in $\rho=\left[r\right]$
and $\sfr=\left(\basic,\emptyset\right)$ for some basic moduli specification
$\basic$, setting 
\[
\mm_{\basic}^{r}=\mm_{\basic}^{\left[r\right]}=\mm_{\left(\basic,\emptyset\right)}^{\left[r\right]}.
\]
The use of sets is needed to write cleaner recursive definitions.
The discussion in this section may seem pedantic; the added detail
is needed only for precise orientation computations.
\begin{defn}
\label{def:recursive definitions}Given a sturdy moduli specification
\[
\sfr=\left(\basic,\sigma\right)=\left(\left(\kf,\lf,\beta\right),\sigma\right)\text{ and }S\subset\nn
\]
we define recursively for every finite subset $\rho\subset\nn$ such
that 
\[
\sstar''_{\rho}\cap\kf=\sstar'_{\rho}\cap\sigma=\emptyset
\]
\begin{itemize}
\item An orbifold with corners $\acute{\mm}_{\sfr}^{\rho,S}$,
\item a pair of forgetful maps 
\[
\mm_{\sfr}^{\rho}:=\acute{\mm}_{\sfr}^{\rho,\nn}\xrightarrow{\acute{\For}_{\sfr}^{\rho,S}}\acute{\mm}_{\sfr}^{\rho,S}\xrightarrow{\grave{\For}_{\sfr}^{\rho,S}}\check{\mm}_{\sfr}^{\rho}=\check{\mm}_{\sfr}^{\rho,\emptyset},
\]
\item a finite set of\emph{ sturdy $\left(\rho,\sfr\right)$-labeled trees,}
$\ts_{\sfr}^{\rho}$, and
\item a locally constant map $\check{\pi}_{\sfr}^{\rho}:\check{\mm}_{\sfr}^{\rho}\to\ts_{\sfr}^{\rho}$,
\end{itemize}
as follows. For $\rho=\emptyset$ we set ${\acute{\mm}_{\sfr}^{\emptyset,S}=\mm_{\left(\basic,\sigma\cap\sstar'_{S}\right)}}$,
$\acute{\For}{}_{\sfr}^{\emptyset,S}=\acute{\For}_{\sfr}^{S}$ and
$\grave{\For}{}_{\sfr}^{\emptyset,S}=\grave{\For}_{\sfr}^{S}$ (cf.
$\S$\ref{subsec:The-forgetful-map}). We take $\ts_{\sfr}^{\emptyset}$
to be a set with one element, which should be thought of as representing
a tree with a single vertex labeled $\sfr$. We set $\check{\pi}_{\sfr}^{\emptyset}$
to be the unique map to a point.

If $\rho\neq\emptyset$, let $r$ denote the maximal element of $\rho$
and let $\hat{\rho}=\rho\backslash\left\{ r\right\} $. We then define
\begin{eqnarray}
\acute{\mm}_{\sfr}^{\rho,S} & = & \coprod'''_{\sfr,\rho}\acute{\mm}_{\sfr'}^{\rho',S}\times\acute{\mm}_{\sfr''}^{\rho'',S}\label{eq:resolution recursively}\\
\acute{\For}{}_{\sfr}^{\rho,S} & = & \coprod'''_{\sfr,\rho}\acute{\For}{}_{\sfr'}^{\rho',S}\times\acute{\For}{}_{\sfr''}^{\rho'',S}\\
\grave{\For}{}_{\sfr}^{\rho,S} & = & \coprod'''_{\sfr,\rho}\grave{\For}{}_{\sfr'}^{\rho',S}\times\grave{\For}{}_{\sfr''}^{\rho'',S}\\
\ts_{\sfr}^{\rho} & = & \coprod'''_{\sfr,\rho}\ts_{\sfr'}^{\rho'}\times\ts_{\sfr''}^{\rho''}\label{eq:sturdy trees recursion}\\
\check{\pi}_{\sfr}^{\rho} & = & \coprod'''_{\sfr,\rho}\pi_{\sfr'}^{\rho'}\times\pi_{\sfr''}^{\rho''}.
\end{eqnarray}
Henceforth, $\coprod'''_{\sfr,\rho}$ denotes a disjoint union over
all partitions $\hat{\rho}=\rho'\coprod\rho''$ and pairs of \emph{sturdy}
moduli specifications $\sfr'=\left(\left(\kf',\lf',\beta'\right),\sigma'\coprod\left\{ \sstar_{r}'\right\} \right)$
and $\sfr''=\left(\left(\kf''\coprod\left\{ \sstar''_{r}\right\} ,\lf'',\beta''\right),\sigma''\right)$
such that 
\[
\sigma=\sigma'\coprod\sigma'',\;\kf=\kf'\coprod\kf'',\;\lf=\lf'\coprod\lf'',\;\mbox{and }\mbox{\ensuremath{\beta}=\ensuremath{\beta}'+\ensuremath{\beta}''}.
\]

We introduce some more notation. Define $\For{}_{\sfr}^{\rho}=\acute{\For}_{\sfr}^{\rho,\emptyset}$,
$\pi_{\sfr}^{\rho}:\mm_{\sfr}^{\rho}\to\ts_{\sfr}^{\rho}$ by 
\[
\pi_{\sfr}^{\rho}=\check{\pi}_{\sfr}^{\rho}\circ\For{}_{\sfr}^{\rho},
\]
and set $\check{\mm}_{\tc}=\left(\check{\pi}_{\sfr}^{\rho}\right)^{-1}\left(\tc\right)$
and $\mm_{\tc}=\left(\pi_{\sfr}^{\rho}\right)^{-1}\left(\tc\right)$.
\end{defn}

For 
\[
x\in\kf\coprod\sstar''_{\rho}\coprod\left(\left(\sigma\coprod\sstar'_{\rho}\right)\cap\sstar'_{S}\right)
\]
we have boundary evaluation maps
\[
\acute{\ev}_{x}^{\sfr,\rho,S}:\acute{\mm}_{\sfr}^{\rho,S}\to L
\]
defined in the obvious way. Similarly, for $x\in\lf$ there are interior
evaluation maps $\acute{\evi}_{x}^{\sfr,\rho,S}:\acute{\mm}_{\sfr}^{\rho,S}\to X$.

For $j\in\rho\cap S$ we define $\acute{\ed}{}_{j}^{\sfr,\rho,S}:\acute{\mm}_{\sfr}^{\rho,S}\to L\times L$
by 
\[
\acute{\ed}{}_{j}^{\sfr,\rho,S}=\acute{\ev}{}_{\sstar'_{j}}^{\sfr,\rho,S}\times\acute{\ev}{}_{\sstar''_{j}}^{\sfr,\rho,S}
\]
and $\ed{}_{\sfr}^{\rho}:=\prod_{j\in\rho}\ed{}_{j}^{\sfr,\rho}:\mm_{\sfr}^{\rho}\to\left(L\times L\right)^{\rho}$,
where the product is ordered by the natural order on $\rho\subset\nn$.

We will mostly be interested in $\ev_{x}^{\sfr,\rho}=\acute{\ev}_{x}^{\sfr,\rho,\nn}$
and $\check{\ev}_{x}^{\sfr,\rho}=\acute{\ev}_{x}^{\sfr,\rho,\emptyset}$.
We have 
\[
\ev_{x}^{\sfr,\rho}=\acute{\ev}_{x}^{\sfr,\rho,S}\circ\acute{\For}_{\sfr}^{\rho,S}
\]
for any $x$ such that both sides of the equality make sense, and
similarly
\[
{\acute{\ev}_{x}^{\sfr,\rho,S}=\check{\ev}_{x}^{\sfr,\rho}\circ\grave{\For}_{\sfr}^{\rho,S}.}
\]

All the spaces are equipped with natural actions of the group ${O_{\basic}=O\left(2m+1\right)\times\Sym\left(\kf\right)\times\Sym\left(\lf\right)}$
making the maps $O_{\basic}$-equivariant. We now want to extend this
to an action of $O_{\basic}^{\rho}:=O_{\basic}\times\Sym\left(\rho\right)$.
To define this, we prove the following lemma, which provides an alternative
definition of resolution using trees.
\begin{lem}
\label{lem:labeled trees alternative def}Let $\sfr=\left(\left(k,l,\beta\right),\sigma\right)$
be a sturdy moduli specification and $\rho$ be a tuple.

(a) The set $\ts_{\sfr}^{\rho}$ is in natural bijection with isomorphism
types of \emph{$\left(\sfr,\rho\right)$-labeled trees} $\tc$. These
are trees with set of oriented edges $\tc_{1}=\rho$, set of vertices
$\tc_{0}$, and maps $\sfr_{\tc}$ assigning to each vertex a sturdy
moduli specification,
\[
\sfr_{\tc}\left(v\right)=\left(\left(\kf_{\tc}\left(v\right),\lf_{\tc}\left(v\right),\beta_{\tc}\left(v\right)\right),\sigma_{\tc}\left(v\right)\right),
\]
such that:

(i) The head (respectively, the tail) of the edge $j\in\tc_{1}$ is
the vertex $v$ if and only if $\sstar''_{j}\in k_{\tc}\left(v\right)$
(resp. $\sstar'_{j}\in\sigma_{\tc}\left(v\right)$)

(ii) We have
\[
\coprod_{v\in\tc_{0}}\kf_{\tc}\left(v\right)=\kf\coprod\left\{ \sstar''_{j}\right\} _{j\in\rho},\coprod\lf_{\tc}\left(v\right)=\lf,\sum\beta_{\tc}\left(v\right)=\beta
\]
and $\coprod\sigma_{\tc}\left(v\right)=\sigma\coprod\left\{ \sstar'_{j}\right\} _{j\in\rho}$

(b) There's a natural order on the vertex set $\tc_{0}$ of each $\tc\in\ts_{\sfr}^{\rho}$
and we have, for any $S\subset\nn$,
\[
\acute{\mm}_{\sfr}^{\rho,S}=\coprod_{\tc\in\ts_{\sfr}^{\rho}}\acute{\mm}_{\tc}^{S}\text{ for }\acute{\mm}_{\tc}^{S}=\prod_{v\in\tc_{0}}\acute{\mm}_{\sfr_{\tc}\left(v\right)}^{S};
\]
and 
\[
\acute{\For}{}_{\sfr}^{\rho,S}=\coprod_{\tc\in\ts_{\sfr}^{\rho}}\prod_{v\in\tc_{0}}\acute{\For}{}_{\sfr_{\tc}\left(v\right)}^{S},\;\grave{\For}{}_{\sfr}^{\rho,S}=\coprod_{\tc\in\ts_{\sfr}^{\rho}}\prod_{v\in\tc_{0}}\grave{\For}{}_{\sfr_{\tc}\left(v\right)}^{S}
\]
and the map $\check{\pi}_{\sfr}^{\rho}$ is defined by $\check{\mm}_{\tc}:=\left(\check{\pi}_{\sfr}^{\rho}\right)^{-1}\left(\tc\right)\simeq\prod_{v\in\tc_{0}}\check{\mm}_{\sfr_{\tc}\left(v\right)}$.

(c) $\Sym\left(\rho\right)$ acts on $\mm_{\sfr}^{\rho},\check{\mm}_{\sfr}^{\rho},\ts_{\sfr}^{\rho}$
in a way which commutes with the $O_{\basic}$ action and makes all
the maps $O_{\basic}^{\rho}$-equivariant in the obvious sense.
\end{lem}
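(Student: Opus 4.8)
The plan is to establish (a), (b) and (c) simultaneously by induction on $|\rho|$, by unwinding the recursive definitions (\ref{eq:resolution recursively})--(\ref{eq:sturdy trees recursion}). The base case $\rho=\emptyset$ is immediate: $\ts_{\sfr}^{\emptyset}$ is the one-point set representing the single-vertex tree labeled $\sfr$, $\acute{\mm}_{\sfr}^{\emptyset,S}=\acute{\mm}_{\sfr}^{S}$ by construction, the forgetful maps and $\check{\pi}_{\sfr}^{\emptyset}$ are the evident ones, and $\Sym(\emptyset)$ acts trivially.

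For the inductive step of (a), put $r=\max\rho$ and $\hat{\rho}=\rho\setminus\{r\}$. A summand of $\coprod'''_{\sfr,\rho}$ consists of a partition $\hat{\rho}=\rho'\coprod\rho''$ together with sturdy specifications $\sfr'=((\kf',\lf',\beta'),\sigma'\coprod\{\sstar'_{r}\})$ and $\sfr''=((\kf''\coprod\{\sstar''_{r}\},\lf'',\beta''),\sigma'')$ satisfying $\kf=\kf'\coprod\kf''$, $\lf=\lf'\coprod\lf''$, $\sigma=\sigma'\coprod\sigma''$, $\beta=\beta'+\beta''$. Given a $(\sfr',\rho')$-labeled tree $\tc'$ and a $(\sfr'',\rho'')$-labeled tree $\tc''$, I would glue them along a new edge labeled $r$ whose tail is the vertex of $\tc'$ carrying $\sstar'_{r}$ in its $\sigma$-part and whose head is the vertex of $\tc''$ carrying $\sstar''_{r}$ in its $\kf$-part; properties (i)--(ii) for the resulting tree follow directly from the displayed identities among $\kf,\lf,\beta,\sigma$. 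Conversely, deleting the edge $r=\max\rho$ from a $(\sfr,\rho)$-labeled tree $\tc$ disconnects it into the subtree $\tc''$ through the head of $r$ and the subtree $\tc'$ through its tail. The one point that is not purely formal is that the total specifications of $\tc'$ and $\tc''$ are again sturdy: summing the per-vertex inequalities $k_{v}+2l_{v}+3\beta_{v}\geq3$ and the congruences $k_{v}+\beta_{v}\equiv1\bmod2$ over the vertices of a connected subtree, and using that a tree on $n$ vertices has $n-1$ edges, gives for $\tc'$ the bounds $k'+2l'+3\beta'\geq2|\rho'|+3\geq3$ and $k'+\beta'\equiv1\bmod2$, and similarly for $\tc''$. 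These two constructions are mutually inverse, so $\ts_{\sfr}^{\rho}=\coprod'''_{\sfr,\rho}\ts_{\sfr'}^{\rho'}\times\ts_{\sfr''}^{\rho''}$ is in natural bijection with the set of $(\sfr,\rho)$-labeled trees; and since the vertexwise marking sets $\kf_{\tc}(v)$ are pairwise disjoint and, together with the incident edges, determine each vertex, a $(\sfr,\rho)$-labeled tree has no nontrivial automorphisms, so ``isomorphism type'' is unambiguous.

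For (b) I would take the natural order on $\tc_{0}$ to be the one produced by the same recursion: with $\tc_{0}=\tc'_{0}\coprod\tc''_{0}$ as above, place the vertices of the tail-side subtree $\tc'$ (ordered by the inductive hypothesis) before those of the head-side subtree $\tc''$; this is well defined because the recursion always removes $\max\rho$. Feeding the inductive hypotheses for (a) and (b) into the right-hand sides of (\ref{eq:resolution recursively})--(\ref{eq:sturdy trees recursion}) and regrouping the product factors along $\tc_{0}=\tc'_{0}\coprod\tc''_{0}$ then yields $\acute{\mm}_{\sfr}^{\rho,S}=\coprod_{\tc}\acute{\mm}_{\tc}^{S}$ with $\acute{\mm}_{\tc}^{S}=\prod_{v\in\tc_{0}}\acute{\mm}_{\sfr_{\tc}(v)}^{S}$, together with the stated formulas for $\acute{\For}_{\sfr}^{\rho,S}$, $\grave{\For}_{\sfr}^{\rho,S}$ and $\check{\pi}_{\sfr}^{\rho}$; the boundary and interior evaluation maps match factor by factor by their inductive definition.

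For (c), the $\Sym(\rho)$-action is invisible in the recursion, which privileges $\max\rho$, so I would read it off the tree description of (a)--(b). A permutation $\tau\in\Sym(\rho)$ sends a $(\sfr,\rho)$-labeled tree $\tc$ to the tree $\tau\cdot\tc$ with the same underlying graph and vertex set but with the edge $j$ and the node-markings $\sstar'_{j},\sstar''_{j}$ replaced everywhere in the vertex specifications by $\tau(j),\sstar'_{\tau(j)},\sstar''_{\tau(j)}$; on $\acute{\mm}_{\sfr}^{\rho,S}=\coprod_{\tc}\acute{\mm}_{\tc}^{S}$ it acts via the product over $v\in\tc_{0}$ of the canonical isomorphisms $\acute{\mm}_{\sfr_{\tc}(v)}^{S}\xrightarrow{\sim}\acute{\mm}_{\sfr_{\tau\cdot\tc}(v)}^{S}$ induced by these bijections of label sets (the moduli spaces being functorial in their finite label sets, as for the $\Sym(\kf)$- and $\Sym(\lf)$-actions already in hand). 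Functoriality of the label-set bijections gives the cocycle identity, so this is a group action; it commutes with $O_{\basic}$ since $O(2m+1)$ acts by the group action on each vertex factor while $\Sym(\kf)\times\Sym(\lf)$ relabels only the $\kf$- and $\lf$-markings, disjointly from the $\sstar$-relabeling; and the forgetful maps, $\check{\pi}_{\sfr}^{\rho}$ and the evaluation maps are $\Sym(\rho)$-equivariant because each is built factorwise out of operations (forgetting markings, evaluating at markings) that are themselves natural in the label sets. I expect the bookkeeping in (c) to be the main obstacle: because the recursion is asymmetric in $\rho$ one must pass to the tree picture even to see the symmetry, and then track the canonical relabeling isomorphisms through the nested product-and-coproduct structure carefully enough to be sure they assemble into an honest action compatible with all the maps; parts (a)--(b) are a routine induction whose only substantive ingredient is the sturdiness count above.
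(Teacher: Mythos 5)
Your proposal is correct and follows essentially the same blueprint as the paper's proof: induction on $|\rho|$, with the gluing/deletion bijection along the edge $r=\max\rho$ for (a), the recursive order $\tc_{0}=\tc_{0}'\coprod\tc_{0}''$ for (b), and the relabeling action read off from the tree picture for (c). The one small divergence is in the sturdiness check for $\tc'$ and $\tc''$: you sum the vertexwise inequalities $k_{v}+2l_{v}+3\beta_{v}\geq 3$ over $\tc_{0}'$ and use $|\tc_{0}'|=|\rho'|+1$, obtaining $k'+2l'+3\beta'\geq 2|\rho'|+3$; the paper instead picks a source vertex $u$ (no incoming edges, so $\kf_{\tc'}(u)\subset\kf'$, $\lf_{\tc'}(u)\subset\lf'$, $\beta_{\tc'}(u)\leq\beta'$) and deduces stability by monotonicity. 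Both are valid; yours even gives a slightly stronger quantitative bound, while the paper's is a bit more economical. The orientability parity count is the same in both. One thing worth tightening in (c): your ``product over $v\in\tc_{0}$ of canonical isomorphisms'' gives a map $\prod_{v\in\tc_{0}}\acute{\mm}^{S}_{\sfr_{\tc}(v)}\to\prod_{v\in\tc_{0}}\acute{\mm}^{S}_{\sfr_{\tau\cdot\tc}(v)}$ in the \emph{source} order on $\tc_{0}$, whereas the target component $\acute{\mm}^{S}_{\tau\cdot\tc}$ carries the recursive order on $(\tau\cdot\tc)_{0}$, which is generally different (the recursion peels off $\max\rho$ of the relabeled edge set). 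You allude to this in your closing remarks, but the action must explicitly include the shuffle of product factors, as the paper spells out (``and then permuting factors so that the order of the factors agrees with the order specified in part (b)''); this is harmless at the level of diffeomorphisms but is precisely the source of the sign $\sgn(\tau)$ in (\ref{eq:J commutes with Sym(r)}) later on, so it is worth stating as part of the definition rather than leaving it as bookkeeping.
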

\begin{proof}
We prove (a) by induction on $\rho$. For $\rho=\emptyset$ the claim
is trivial. If $r=\max\rho$ and $\hat{\rho}=\rho\backslash r$, it
suffices to give a bijection of isomorphism types
\[
\left\{ \left(\sfr,\rho\right)\mbox{ trees}\right\} \simeq\coprod'''_{\sfr,\rho}\left\{ \left(\sfr',\rho'\right)\mbox{ trees}\right\} \times\left\{ \left(\sfr'',\rho''\right)\mbox{ trees}\right\} .
\]
One direction is immediate: given a sturdy $\left(\sfr',\rho'\right)$-labeled
tree $\tc'$ and a sturdy $\left(\sfr'',\rho''\right)$-labeled tree
$\tc''$, we construct $\tc$ by connecting the unique vertex $v'\in\tc_{0}'$
such that $\sstar'_{r}\in\sfr_{\tc'}\left(v\right)$ with the unique
vertex $v''\in\tc_{0}''$ such that $\sstar''_{r}\in\sfr_{\tc''}\left(v\right)$.

For the other direction, given a sturdy $\left(\sfr,\rho\right)$-labeled
tree $\tc$, let $\tc_{-}$ denote $\tc$ after we remove the interior
of the edge $e_{r}$ labeled by $r$. We obtain an ordered pair of
trees $\left(\tc',\tc''\right)$ with $\tc'$ (respectively, $\tc''$)
corresponding to the connected component of $\tc_{-}$ containing
the tail $v'$ (resp. the head $v''$) of the edge $e_{r}$. Let $\rho'\subset\rho$
and $\tc_{0}'\subset\tc_{0}$ be the subset of edges and vertices
belonging to $\tc'$, and set $\sfr_{\tc'}=\sfr_{\tc}|_{\tc_{0}'}$.
 We claim 
\[
\sfr'=\left(\left(\coprod_{v\in\tc_{0}'}\kf_{\tc'}\left(v\right)\backslash\left\{ \sstar''_{z}\right\} _{z\in\rho'},\coprod_{v\in\tc_{0}'}\lf_{\tc'}\left(v\right),\sum_{v\in\tc_{0}'}\beta_{\tc'}\left(v\right)\right),\coprod_{v\in\tc_{0}'}\sigma_{\tc'}\left(v\right)\backslash\left\{ \sstar_{z}'\right\} _{z\in\rho'}\right).
\]
is a sturdy moduli specification. To see this, write $\sfr'=\left(\left(\kf',\lf',\beta'\right),\sigma'\right)$.
We check stability: any oriented tree has a vertex with no incoming
edges, and we let $u$ be such a vertex for $\tc'$, so $\kf_{\tc'}\left(u\right)\subset\kf'$.
Since we always have $\lf_{\tc'}\left(u\right)\subset\lf'$ and $\beta_{\tc'}\left(u\right)\leq\beta'$,
stability of $\left(\kf',\lf',\beta'\right)$ follows from stability
of $\left(\kf{}_{\tc'}\left(u\right),\lf_{\tc'}\left(u\right),\beta_{\tc'}\left(u\right)\right)$.
Orientability follows from the following computation mod 2:
\[
\left|\coprod_{v\in\tc_{0}'}\kf_{\tc'}\left(v\right)\backslash\left\{ \sstar''_{z}\right\} _{z\in\rho'}\right|=\sum\left|\kf_{\tc'}\left(v\right)\right|-\left|\rho'\right|=\sum\beta_{\tc'}\left(v\right)+\left|\tc_{0}'\right|-\left|\rho'\right|=\beta'+1.
\]
Clearly, $\tc'$ is an $\left(\sfr',\rho'\right)$-labeled tree. Similarly,
labeling $\tc''$ by $\sfr_{\tc}|_{\tc''_{0}}$ we obtain a $\left(\sfr'',\rho''\right)$-labeled
tree for the appropriate $\left(\sfr'',\rho''\right)$, and one checks
that $\sfr',\rho',\sfr'',\rho''$ satisfy (\ref{eq:boundary decomposition}).
The result follows. 

The proof of (b) is straightforward; the order on $\tc_{0}$ is defined
recursively, so 
\[
\tc_{0}=\tc_{0}'\coprod\tc_{0}''
\]
(that is, the vertices in $\tc_{0}'$ appear before those in $\tc_{0}''$). 

We prove part (c). It is clear that $\Sym\left(\rho\right)$ acts
on $\left(\sfr,\rho\right)$-labeled trees, and this defines an action
on $\ts_{\sfr}^{\rho}$ using part (a). Suppose $\tau\in\Sym\left(\rho\right)$
sends $\tc^{1}\in\ts_{\basic}^{\rho}$ to $\tc^{2}\in\ts_{\basic}^{\rho}$.
We define a diffeomorphism $\tau.:\mm_{\tc^{1}}\to\mm_{\tc^{2}}$
by first relabeling and permuting the markings so $\sstar'_{j}$ maps
to $\sstar'_{\tau\left(j\right)}$ and $\sstar''_{j}$ by $\sstar''_{\tau\left(j\right)}$,
and then permuting factors so that the order of the factors agrees
with the order specified in part (b). Consider for example the following
component $\mm_{\tc}\subset\mm_{\sfr}^{\rho}$ for $\rho=\left(1,2\right)$:
\[
\mm_{\tc}=\mm_{\left(\left(\kf_{1},\lf_{1},\beta_{1}\right),\left(\sstar'_{2}\right)\right)}\times\left(\mm_{\left(\left(\kf_{2},\lf_{2},\beta_{2}\right),\left(\sstar_{1}'\right)\right)}\times\mm_{\left(\left(\kf_{3}\coprod\left(\sstar''_{1},\sstar''_{2}\right),\lf_{3},\beta_{3}\right),\emptyset\right)}\right)
\]
the non-trivial element $\tau\in\Sym\left(\rho\right)$ acts by first
applying a product of three diffeomorphisms, with codomain
\[
\mm_{\left(\left(\kf_{1},\lf_{1},\beta_{1}\right),\left(\sstar'_{1}\right)\right)}\times\left(\mm_{\left(\left(\kf_{2},\lf_{2},\beta_{2}\right),\left(\sstar_{2}'\right)\right)}\times\mm_{\left(\left(\kf_{3}\coprod\left(\sstar''_{1},\sstar''_{2}\right),\lf_{3},\beta_{3}\right),\emptyset\right)}\right).
\]
where the diffeomorphism on the right involves a permutation of the
tuple $\left(\partial D^{2}\right)^{\kf_{3}\coprod\left(\sstar''_{1},\sstar''_{2}\right)}$
(the left and middle diffeomorphism are just relabeling of values
in this case). Then we apply the associator and commutator for the
product to identify this with $\mm_{\tilde{\tc}}\subset\mm_{\sfr}^{\rho}$
for 
\[
\mm_{\tilde{\tc}}=\mm_{\left(\left(\kf_{2},\lf_{2},\beta_{2}\right),\left(\sstar_{2}'\right)\right)}\times\left(\mm_{\left(\left(\kf_{1},\lf_{1},\beta_{1}\right),\left(\sstar'_{1}\right)\right)}\times\mm_{\left(\left(\kf_{3}\coprod\left(\sstar''_{1},\sstar''_{2}\right),\lf_{3},\beta_{3}\right),\emptyset\right)}\right).
\]
We define a diffeomorphism $\check{\mm}_{\tc^{1}}\to\check{\mm}_{\tc^{2}}$
similarly by mapping $\sstar''_{j}$ to $\sstar''_{\tau\left(j\right)}$
and permuting factors. Clearly these maps commute with the $O_{\basic}$
action and the other maps.
\end{proof}
This lemma gives a manifestly $Sym\left(\rho\right)$-symmetric alternative
to Definition \ref{def:recursive definitions}.
\begin{lem}
\label{lem:ev product is b-transverse to diag}For every $S\subset\rho$
the product $\ed_{S}^{\sfr,\rho}:=\prod_{j\in S}\ed_{j}^{\sfr,\rho}:\mm_{\sfr}^{\rho}\to\left(L\times L\right)^{S}$
is b-transverse to the diagonal 
\[
\Delta_{L}^{\times S}:L^{\times S}\to\left(L\times L\right)^{\times S},
\]
so the fiber product
\[
\left(\ed_{S}^{\sfr,\rho}\right)^{-1}\left(\Delta\right):=L^{\times S}\fibp{\Delta_{L}^{S}}{\prod_{j\in S}\ed_{j}^{\sfr,\rho}}\mm_{\sfr}^{\rho}
\]
exists, and its corners are described by Eq (\ref{eq:corners as fibp in orb}).
\end{lem}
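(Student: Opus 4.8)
The plan is to reduce b-transversality of $\ed{}_S^{\sfr,\rho}$ to ordinary transversality of its restrictions to the corner strata, and then to prove the latter by an induction that peels leaves off the tree $\tc$ indexing the relevant component of $\mm_\sfr^\rho$, exploiting the transitive $O(2m+1)$-action on $L$. The first ingredient is a submersion statement for a single evaluation map: for any sturdy moduli specification $\sfr_0$ and any boundary marking $x$, the restriction of $\ev_x^{\sfr_0}\colon\mm_{\sfr_0}\to L$ to each corner stratum $S^k(\mm_{\sfr_0})$ is a submersion. This is the argument already used in $\S$\ref{subsec:The-forgetful-map} for a single marking: the $O(2m+1)$-action on $\mm_{\sfr_0}$ preserves the combinatorial type of a stable map, hence preserves each $S^k(\mm_{\sfr_0})$, and $\ev_x^{\sfr_0}$ is $O(2m+1)$-equivariant, so since $O(2m+1)$ acts transitively on $L$ the orbit of a point of $\mm_{\sfr_0}$ lies in one corner stratum and is carried onto all of $L$, forcing $d\ev_x^{\sfr_0}$ to be onto $T_pL$ already on $T_pS^k(\mm_{\sfr_0})$. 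Because $L^{\times S}$ and $(L\times L)^{\times S}$ are boundaryless, the criterion for b-transversality recalled in $\S$\ref{sec:appendix} reduces the claim to showing that, for every $k\ge0$, the restriction $\ed{}_S^{\sfr,\rho}|_{S^k(\mm_\sfr^\rho)}$ is transverse to $\Delta_L^{\times S}$ in the ordinary sense.

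By Lemma \ref{lem:labeled trees alternative def}, $\mm_\sfr^\rho=\coprod_{\tc\in\ts_\sfr^\rho}\mm_\tc$ with $\mm_\tc=\prod_{v\in\tc_0}\mm_{\sfr_\tc(v)}$ and $T_xS^k(\mm_\tc)=\bigoplus_v T_{x_v}S^{k_v}(\mm_{\sfr_\tc(v)})$ over decompositions $\sum_v k_v=k$, so it suffices to work on a single $\mm_\tc$. Let $v'_j,v''_j\in\tc_0$ be the tail and head of the edge $j\in\tc_1=\rho$, carrying the markings $\sstar'_j$ and $\sstar''_j$ respectively; since $\tc$ is a tree, $v'_j\ne v''_j$. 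Quotienting each factor $TL\oplus TL$ of $T((L\times L)^{\times S})$ by its diagonal, transversality of $\ed{}_S^{\sfr,\rho}|_{S^k}$ to $\Delta_L^{\times S}$ is equivalent to surjectivity, at every point $x$ of the fibered product, of
\[ \delta_S:=\Bigl(d\ev{}_{\sstar''_j}^{\sfr_\tc(v''_j)}-d\ev{}_{\sstar'_j}^{\sfr_\tc(v'_j)}\Bigr)_{j\in S}\colon T_xS^k(\mm_\tc)\longrightarrow\textstyle\prod_{j\in S}T_{p_j}L, \]
where each differential is taken on the indicated factor and extended by zero. I would prove this by induction on $|S|$, with $S=\emptyset$ vacuous. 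For $|S|\ge1$ the subforest $(\tc_0,S)$ of $\tc$ has a leaf vertex $u$, whose unique incident $S$-edge I call $j_0$; set $S'=S\setminus\{j_0\}$. Since $u$ meets no edge of $S'$, the summand $T_{x_u}S^{k_u}(\mm_{\sfr_\tc(u)})$ contributes to none of the $\delta_j$ with $j\in S'$, while by the submersion statement the differential of the evaluation at the marking of $u$ on $j_0$ maps $T_{x_u}S^{k_u}(\mm_{\sfr_\tc(u)})$ onto $T_{p_{j_0}}L$. Hence, given a target $(w_j)_{j\in S}$, the inductive hypothesis provides $\zeta'\in T_xS^k(\mm_\tc)$ with $\delta_j(\zeta')=w_j$ for $j\in S'$, and a vector $\zeta''$ supported on the $u$-factor then corrects the $j_0$-component to $w_{j_0}$ without disturbing the others, so $\zeta'+\zeta''$ is the required preimage.

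Once b-transversality is established, the existence of $(\ed{}_S^{\sfr,\rho})^{-1}(\Delta)$ as an orbifold with corners and the description of its corners, Eq.~(\ref{eq:corners as fibp in orb}), follow from the theory of b-transverse fibered products recalled in $\S$\ref{sec:appendix}; the description simplifies because $L^{\times S}$ is boundaryless, so $S^k$ of the fibered product is $\bigl(\ed{}_S^{\sfr,\rho}|_{S^k(\mm_\sfr^\rho)}\bigr)^{-1}(\Delta)$. I expect the only delicate point to be bookkeeping in the induction — tracking which vertex of $\tc$ feeds which diagonal factor — together with applying correctly the reduction of b-transversality to transversality on corner strata; the product structure of $\mm_\tc$ makes a leaf vertex's evaluation degrees of freedom genuinely independent of the remaining gluing constraints, so the correction step is immediate, and the underlying corner formalism is standard and dealt with in the appendix.
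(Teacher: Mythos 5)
Your proof is correct and takes essentially the same route as the paper's: reduce b-transversality to ordinary transversality on the corner strata via the appendix (since $L^{S}$ and $(L\times L)^{S}$ are boundaryless), then use equivariance of the evaluation maps under the transitive $O(2m+1)$-action together with the tree structure of $\tc$ to get surjectivity. The only difference is bookkeeping: the paper produces a lift of a normal vector all at once by fixing fundamental vector fields of the action indexed by the connected components of $\tc\setminus S$, whereas you run an induction peeling leaves of the subforest spanned by $S$; the two are equivalent.
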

\begin{proof}
$O\left(2m+1\right)^{\tc_{0}}$ acts on $\mm_{\tc}=\prod_{v\in\tc_{0}}\mm_{\sfr_{\tc}\left(v\right)}$,
and the maps $\ev_{x}^{\sfr,\rho}|_{\mm_{\tc}}:\mm_{\tc}\to L$ are
naturally $O\left(2m+1\right)^{\tc_{0}}\to O\left(2m+1\right)$ equivariant
with respect to the appropriate projection. The map linearization
of the action 
\[
\mathfrak{o}\left(2m+1\right)\to T_{y}L
\]
is surjective at every $y\in L$. 

Let $p\in S^{k}\left(\mm_{\sfr}^{\rho}\right)$ be a point of depth
$k$ with $\ed_{S}^{\sfr,\rho}\left(p\right)=\Delta_{L}^{\times S}\left(y\right)$
for $y=\left(y_{1},...,y_{s}\right)\in L^{S}$. Let $v=\left(v_{1},...,v_{s}\right)$,
$v_{i}\in T_{y_{i}}L$ represent a normal vector to $\Delta^{\times S}$
at $y$. More precisely, we use the isomorphism $N_{\Delta}^{L\times L}\simeq\pr_{2}^{*}TL$
where $\pr_{2}$ denotes projection on the head of the edge $i$.
Let $\tc\backslash S$ denote the tree with the edges corresponding
to $S$ removed. For each connected component $C_{j}\subset\tc\backslash S$
we fix an $O\left(2m+1\right)$-fundamental vector field $\theta_{j}$
on $L$ in such a way that $\theta_{j_{2}}|_{y_{i}}-\theta_{j_{1}}|_{y_{i}}=v_{i}$
whenever the tail and head of $i$ are incident to $C_{j_{1}}$ and
$C_{j_{2}}$ respectively. The corresponding lie algebra elements
define a lift of $v$ to $w\in T_{p}S^{k}\left(\mm_{\sfr}^{\rho}\right)$.
Now use Remark \ref{rem:easy b-transversality} and Lemma \ref{lem:fibp in orb}.
\end{proof}
\begin{rem}
For $\left|S\right|=1$, $\ed_{S}^{\sfr,\rho}$ is in fact a b-submersion. 
\end{rem}

\subsubsection{Resolutions and the boundary.}

As $\rho$ varies, the spaces $\mm_{\sfr}^{\rho}$ form a resolution
of the sturdy boundary in the sense that for $r>\max\rho$ there's
a map $g_{\sfr}^{\rho\coprod\left\{ r\right\} }:\partial_{-}\mm_{\sfr}^{\rho}\to\mm_{\sfr}^{\rho\coprod\left\{ r\right\} }$,
which we shall now define.

Comparing (\ref{eq:boundary decomposition}) and (\ref{eq:resolution recursively})
we see that for any $r\in\nn$ there's a map
\[
g!_{\sfr}^{\left\{ r\right\} }:\partial_{-}\mm_{\sfr}^{\emptyset}\to\mm_{\sfr}^{\left\{ r\right\} }
\]
sitting in a cartesian square
\[
\xymatrix{\partial_{-}\mm_{\sfr}^{\emptyset}\ar[r]^{g!_{\sfr}^{\left\{ r\right\} }}\ar[d] & \mm_{\sfr}^{\left\{ r\right\} }\ar[d]^{\ed_{r}^{\sfr,\left\{ r\right\} }}\\
L\ar[r]_{\Delta} & L\times L
}
\]

More generally for $r>\max\rho$ we define an $O_{\sfr}^{\rho}$-equivariant
map
\[
g!_{\sfr}^{\rho\coprod\left\{ r\right\} }:\partial_{-}\mm_{\sfr}^{\rho}\to\mm_{\sfr}^{\rho\coprod\left\{ r\right\} }
\]
by recursion on $\rho$. For $\rho=\emptyset$ we've already defined
it. For $\left|\rho\right|\geq1$ we set $g!_{\sfr}^{\rho\coprod\left\{ r\right\} }$
to be the composition of the following maps 
\begin{multline*}
\partial_{-}\mm_{\sfr}^{\rho}=\coprod'''_{\sfr,\hat{\rho}}\left(\left(\partial_{-}\mm_{\sfr'}^{\rho'}\times\mm_{\sfr''}^{\rho''}\right)\coprod\left(\mm_{\sfr'}^{\rho'}\times\partial_{-}\mm_{\sfr''}^{\rho''}\right)\right)\xrightarrow{\left(1\right)}\\
\xrightarrow{\left(1\right)}\coprod'''_{\sfr,\hat{\rho}}\left(\left(\mm_{\sfr'}^{\rho'\coprod\left\{ r\right\} }\times\mm_{\sfr''}^{\rho''}\right)\coprod\left(\mm_{\sfr'}^{\rho'}\times\mm_{\sfr''}^{\rho''\coprod\left(r\right)}\right)\right)=\mm_{\sfr}^{\rho\coprod\left(r\right)}.
\end{multline*}
The map (1) is $\coprod'''_{\sfr,\hat{\rho}}\left(g!_{\sfr'}^{\rho'\coprod\left\{ r\right\} }\times\id\coprod\id\times g!_{\sfr''}^{\rho''\coprod\left\{ r\right\} }\right)$. 

We define the map $g_{\sfr}^{\rho\coprod\left\{ r\right\} }$ as the
composition
\begin{equation}
\partial_{-}\mm_{\sfr}^{\rho\coprod\left\{ r\right\} }\xrightarrow{g!_{\sfr}^{\rho\coprod\left\{ r\right\} }}\mm_{\sfr}^{\rho\coprod\left\{ r\right\} }\xrightarrow{\tau.}\mm_{\sfr}^{\rho\coprod\left\{ r\right\} }\label{eq:def of g}
\end{equation}
where $\tau\in\Sym\left(\rho\coprod\left\{ r\right\} \right)$ is
the cyclic permutation that sends $r$ to $\min\rho$. $g_{\sfr}^{\rho\coprod r}$
is an $O_{\sfr}^{\rho}=O\left(2m+1\right)\times\Sym\left(\kf\right)\times\Sym\left(\lf\right)\times\Sym\left(\sigma\right)\times\Sym\left(\rho\right)$
equivariant map, which sits in a cartesian square 
\begin{equation}
\xymatrix{\partial_{-}\mm_{\sfr}^{\rho}\ar[r]^{g_{\sfr}^{\rho\coprod\left\{ r\right\} }}\ar[r]\ar[d] & \mm_{\sfr}^{\rho}\ar[d]^{\ed_{r}^{\sfr,\rho}}\\
L\ar[r]_{\Delta_{L}} & L\times L
}
\label{eq:boundary as pullback}
\end{equation}

Since $\For{}_{\sfr}^{\rho}$ is $\Sym\left(\rho\right)$ equivariant,
the decomposition $\partial\mm_{\sfr}^{\rho}=\partial_{-}\mm_{\sfr}^{\rho}\coprod\partial_{+}\mm_{\sfr}^{\rho}$
is $\Sym\left(\rho\right)$-equivariant. The following lemma will
allow us to compute the sign of the $\Sym\left(\rho\right)$ action
on local systems more easily.
\begin{lem}
\label{cor:bd^r as pullback}Let $\sfr$ be a sturdy moduli specification,
$\rho\subset\nn$ finite, and $r$ a non-negative integer. We denote
$\rho_{+r}=\rho\coprod\left\{ \max\rho+1,...,\max\rho+r\right\} $. 

(a) There exists a clopen component $\partial_{-}^{r}\mm_{\sfr}^{\rho}\subset\partial^{r}\mm_{\sfr}^{\rho}$
and an $O_{\sfr}^{\rho}$-equivariant map 
\[
g_{\sfr}^{\rho_{+r},r}:\partial_{-}^{r}\mm_{\sfr}^{\rho}\to\mm_{\sfr}^{\rho_{+r}}
\]
which sits in a cartesian square 
\begin{equation}
\xymatrix{\partial_{-}^{r}\mm_{\sfr}^{\rho}\ar[r]^{g_{\sfr}^{\rho_{+r},r}}\ar[d] & \mm_{\sfr}^{\rho_{+r}}\ar[d]^{\ed{}_{\left(1,...,r\right)}^{\sfr,\rho_{+r}}}\\
L^{r}\ar[r]_{\Delta_{L}^{\times r}} & \left(L\times L\right)^{r}
}
\label{eq:bd^r as pullback}
\end{equation}
where $\ed{}_{\left(1,...,r\right)}^{\sfr,\rho_{+r}}:=\ed{}_{1}^{\sfr,\rho_{+r}}\times\cdots\times\ed{}_{r}^{\sfr,\rho_{+r}}$
is b-transverse to $\Delta_{L}^{r}$.

(b) As a subgroup of $\Sym\left(\rho_{+r}\right)$, $\Sym\left(r\right)$
acts on $\mm_{\sfr}^{\rho_{+r}}$. $\Sym\left(r\right)$ also acts
on $\Delta_{L}^{r}$ and $\left(L\times L\right)^{r}$ by permuting
factors. The induced action on the fibered product $\partial_{-}^{r}\mm_{\sfr}^{\rho}$
is the restriction of the $Sym\left(r\right)$ action on $\partial^{r}\mm_{\sfr}^{\rho}$
permuting the local boundary components.
\end{lem}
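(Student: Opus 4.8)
The plan is to prove both parts by induction on $r$, peeling off one resolved node at each stage and invoking the one-node cartesian square (\ref{eq:boundary as pullback}). For the base case $r=0$ one takes $\partial_-^0\mm_\sfr^\rho=\mm_\sfr^\rho$, $\rho_{+0}=\rho$, and $g_\sfr^{\rho_{+0},0}=\id$, so (\ref{eq:bd^r as pullback}) is the tautologically cartesian square over $\operatorname{pt}\to\operatorname{pt}$, and (b) is vacuous.

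For the inductive step, assume (a) and (b) for some $r\geq0$. By the inductive hypothesis together with Lemma \ref{lem:ev product is b-transverse to diag} (with $S=\{1,\dots,r\}\subset\rho_{+r}$), the square (\ref{eq:bd^r as pullback}) identifies $\partial_-^r\mm_\sfr^\rho$ with the fibered product $L^{\times r}\fibp{\Delta_L^{\times r}}{\ed_{(1,\dots,r)}^{\sfr,\rho_{+r}}}\mm_\sfr^{\rho_{+r}}$, whose corners are described by (\ref{eq:corners as fibp in orb}). Since $L^{\times r}$ is boundaryless and $\ed_{(1,\dots,r)}^{\sfr,\rho_{+r}}$ is b-transverse to $\Delta_L^{\times r}$, that corner description collapses to
\[
\partial\bigl(\partial_-^r\mm_\sfr^\rho\bigr)=L^{\times r}\fibp{\Delta_L^{\times r}}{\ed_{(1,\dots,r)}^{\sfr,\rho_{+r}}}\partial\mm_\sfr^{\rho_{+r}},
\]
a clopen subset of $\partial^{r+1}\mm_\sfr^\rho$. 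Splitting the right-hand factor by the forgetful-map decomposition $\partial\mm_\sfr^{\rho_{+r}}=\partial_+\mm_\sfr^{\rho_{+r}}\coprod\partial_-\mm_\sfr^{\rho_{+r}}$, I would define $\partial_-^{r+1}\mm_\sfr^\rho$ to be the clopen summand $L^{\times r}\fibp{\Delta_L^{\times r}}{\ed_{(1,\dots,r)}^{\sfr,\rho_{+r}}}\partial_-\mm_\sfr^{\rho_{+r}}$.

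Next I would plug the one-node square (\ref{eq:boundary as pullback}), applied to the specification $\rho_{+r}$ with the fresh index $s=\max\rho+r+1$ so that $\rho_{+r}\coprod\{s\}=\rho_{+(r+1)}$, into this summand: it rewrites $\partial_-\mm_\sfr^{\rho_{+r}}$ as $L\fibp{\Delta_L}{\ed_s^{\sfr,\rho_{+(r+1)}}}\mm_\sfr^{\rho_{+(r+1)}}$ with structure map $g_\sfr^{\rho_{+(r+1)}}$, which by (\ref{eq:def of g}) already carries a cyclic shift. Substituting and composing the fibered products over the diagonal one factor at a time --- legitimate because Lemma \ref{lem:ev product is b-transverse to diag} makes $\ed_{(1,\dots,r+1)}^{\sfr,\rho_{+(r+1)}}$ b-transverse to $\Delta_L^{\times(r+1)}$ --- presents $\partial_-^{r+1}\mm_\sfr^\rho$ as $L^{\times(r+1)}\fibp{\Delta_L^{\times(r+1)}}{\ed_{(1,\dots,r+1)}^{\sfr,\rho_{+(r+1)}}}\mm_\sfr^{\rho_{+(r+1)}}$, up to the permutation of $\{1,\dots,r+1\}$ reconciling the order in which the nodes were resolved (and the cyclic shift of (\ref{eq:def of g})) with the product ordering of $\ed_{(1,\dots,r+1)}$. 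Absorbing this permutation, an element of $\Sym(\rho_{+(r+1)})$, into the projection defines $g_\sfr^{\rho_{+(r+1)},r+1}$ and makes (\ref{eq:bd^r as pullback}) cartesian. Equivariance under $O_\sfr^\rho$ is then formal: Joyce's functors $C(-)$ and $\partial$ are natural, every map entering the construction is $O_\sfr^\rho$-equivariant, and the permutations inserted above act only on labels outside $\rho$, hence commute with the $O_\sfr^\rho$-action. This proves (a).

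For (b), $\Sym(r+1)<\Sym(\rho_{+(r+1)})$ acts on $\mm_\sfr^{\rho_{+(r+1)}}$ by permuting the last $r+1$ edge-labels (Lemma \ref{lem:labeled trees alternative def}(c)), so through the fibered-product description of (a) it permutes the factors of $L^{\times(r+1)}$ and of $(L\times L)^{\times(r+1)}$ simultaneously; I would match this with the action on $\partial^{r+1}\mm_\sfr^\rho$ permuting local boundary components by invoking the inductive hypothesis for the subgroup $\Sym(r)$ and then checking that the one extra transposition, interchanging the $(r+1)$-st resolution with an earlier one, is precisely the transposition of local corner strata produced by iterating $\partial$ once more, which is built into the corner functor. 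I expect the permutation bookkeeping to be the only real obstacle: keeping the cyclic-shift convention of (\ref{eq:def of g}), the order of iterated boundary-taking, and the product ordering in $\ed_{(1,\dots,r+1)}$ mutually consistent throughout (a), and cleanly identifying the relabeling $\Sym(r+1)$-action on $\mm_\sfr^{\rho_{+(r+1)}}$ with the stratum-permuting action on $\partial^{r+1}\mm_\sfr^\rho$ in (b); everything else reduces to the one-node square (\ref{eq:boundary as pullback}), the corner formula (\ref{eq:corners as fibp in orb}), and the b-transversality of Lemma \ref{lem:ev product is b-transverse to diag}.
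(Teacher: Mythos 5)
Your proof is correct and takes essentially the same inductive route as the paper: apply the corner formula (\ref{eq:corners as fibp in orb}) to the cartesian square provided by the inductive hypothesis, carve out the $\partial_{-}$ clopen summand, and compose with the one-node square (\ref{eq:boundary as pullback}). The bookkeeping you flag — the cyclic-shift reconciliation and the need for $\ed_{(1,\dots,r)}$ to factor through the resolution — is handled in the paper by the definition of $g$ in (\ref{eq:def of g}) and by explicitly stacking the two cartesian squares of (\ref{eq:two squares}), so the differences are organizational rather than substantive.
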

\begin{proof}
We prove part (a), by induction on $r$. For $r=0$ we take $\partial_{-}^{0}\mm_{\sfr}^{\rho}=\partial^{0}\mm_{\sfr}^{\rho}=\mm_{\sfr}^{\rho}$
and $g_{\sfr}^{\rho,0}=\id$ so the claim is trivial.

For $r\geq1$, use the inductive hypothesis to obtain a cartesian
square 
\begin{equation}
\xymatrix{\partial\partial_{-}^{\left(r-1\right)}\mm_{\sfr}^{\rho}\ar[d]\ar[r]^{\partial g_{\sfr}^{\rho,r-1}} & \partial\mm_{\sfr}^{\rho_{+\left(r-1\right)}}\ar[d]^{\ed{}_{\left(z_{1},...,z_{r-1}\right)}^{\sfr,\rho_{+\left(r-1\right)}}\circ i_{\mm_{\sfr}^{\rho_{+\left(r-1\right)}}}^{\partial}}\\
\Delta^{r-1}\ar[r] & \left(L\times L\right)^{r-1}
}
.\label{eq:boundary of bd^(r-1)}
\end{equation}
We take 
\[
\partial_{-}^{r}\mm_{\sfr}^{\rho}:=\left(\partial g_{\sfr}^{\rho,r-1}\right)^{-1}\left(\partial_{-}\mm_{\sfr}^{\rho_{+\left(r-1\right)}}\right),
\]
where $\partial_{-}\mm_{\sfr}^{\rho_{+\left(r-1\right)}}:=\partial_{-}^{\For_{\sfr}^{\rho_{+\left(r-1\right)}}}\mm_{\sfr}^{\rho_{+\left(r-1\right)}}$
is the horizontal clopen component as in $\S$\ref{subsec:b-normal boundary decomp}.
We obtain a pair of cartesian squares 
\begin{equation}
\xymatrix{\partial_{-}^{r}\mm_{\sfr}^{\rho}\ar[d]\ar[r] & \partial_{-}\mm_{\sfr}^{\rho_{+\left(r-1\right)}}\ar[d]^{\ed{}_{\left[r-1\right]}\circ i^{\partial}}\\
\Delta^{r-1}\ar[r] & \left(L\times L\right)^{r-1}
}
\;\xymatrix{\partial_{-}\mm_{\sfr}^{\rho_{+\left(r-1\right)}}\ar[d]\ar[r] & \mm_{\sfr}^{\rho_{+\left(r-1\right)}\coprod\left\{ r\right\} }\ar[d]^{\ed_{r}}\\
\Delta\ar[r] & L\times L
}
.\label{eq:two squares}
\end{equation}
Note that $\ed{}_{\left[r-1\right]}=\ed{}_{\left(1,...,r-1\right)}^{\sfr,\rho_{+\left(r-1\right)}}$
factors through $\partial_{-}\mm_{\sfr}^{\rho_{+\left(r-1\right)}}\to\mm_{\sfr}^{\rho_{+\left(r-1\right)}\coprod\left(r\right)}=\mm_{\sfr}^{\rho_{+r}}$
and in each of the squares the bottom and right maps are b-transverse
to one another by Lemma \ref{lem:ev product is b-transverse to diag},
so the b-transverse cartesian square (\ref{eq:bd^r as pullback})
is obtained from (\ref{eq:two squares}). More precisely, a simple
diagram chase shows that given two cartesian squares
\[
\xymatrix{P\ar[r]\ar[d] & Q\ar[d]^{f}\\
A\ar[r]_{g} & B
}
\;\xymatrix{Q\ar[r]\ar[d] & E\ar[d]^{a}\\
C\ar[r]_{b} & D
}
\]
with $df\oplus dg:TQ\oplus TA\to TB$ and $da\oplus db:TE\oplus TC\to TD$
surjective, the square 
\[
\xymatrix{P\ar[r]\ar[d] & E\ar[d]^{r\times a}\\
A\times C\ar[r]_{g\times b} & B\times D
}
\]
is cartesian too and $d\left(r\times a\right)\oplus d\left(g\times b\right)$
is surjective. The result follows from this by Remark \ref{rem:easy b-transversality}.

Part (b) is easy to see.
\end{proof}
We now want discuss an analogous relationship between the spaces $\check{\mm}_{\sfr}^{\rho}$
for various $\rho$, but first we introduce some more notation. Let
$r>\max\rho$. We define a map 
\begin{equation}
\cnt=\cnt_{\sfr}^{\rho\coprod\left\{ r\right\} }:\ts_{\sfr}^{\rho\coprod\left\{ r\right\} }\to\ts_{\sfr}^{\rho}\label{eq:tree contraction}
\end{equation}
which one may think of as contracting the edge $e_{r}$ to some vertex
$v_{i}\in\tc_{0}=\cnt\left(\tc_{+}\right)_{0}$ which carries the
sum of the degrees and the disjoint union of the labels of the two
incident vertices, except for $\sstar'_{r},\sstar''_{r}$ which are
discarded. More precisely, the map sends a sturdy tree $\tc_{+}\in\ts_{\sfr}^{\rho\coprod\left\{ r\right\} }$
to the unique $\tc\in\ts_{\sfr}^{\rho}$ such that 
\[
\left(g_{\sfr}^{\rho\coprod\left\{ r\right\} }\right)^{-1}\left(\mm_{\tc_{+}}\right)\subset\partial\mm_{\tc}.
\]
Setting 
\[
\partial^{\tc_{+}}\mm_{\tc}:=\left(g_{\sfr}^{\rho\coprod\left\{ r\right\} }\right)^{-1}\left(\mm_{\tc_{+}}\right),
\]
we have 
\[
\partial_{-}\mm_{\tc}=\coprod_{\left\{ \tc_{+}|\cnt\tc_{+}=\tc\right\} }\partial^{\tc_{+}}\mm_{\tc}.
\]
Now consider some $S\subset\nn$, $r\not\in S$. We define\textbf{
}
\begin{equation}
\partial^{\tc_{+}}\acute{\mm}_{\tc}^{S}=\left(\acute{\For}_{\sfr}^{\rho,S}\right)_{-}\left(\partial^{\tc_{+}}\mm_{\tc}\right)\subset\partial\acute{\mm}_{\tc}^{S}.\label{eq:partial tc+ def}
\end{equation}
If $\tc_{1},\tc_{2}\in\ts_{\sfr}^{\rho}$ differ by moving $\sstar'_{i}$,
$i\not\in S$ from the head of $e_{r}$ to the tail of $e_{r}$ or
vice-versa, then $\partial^{\tc_{1}}\acute{\mm}_{\tc}^{S}=\partial^{\tc_{2}}\acute{\mm}_{\tc}^{S}$.
There's a map 
\begin{equation}
\acute{g}_{\tc_{+}}^{S}:\partial^{\tc_{+}}\acute{\mm}_{\tc}^{S}\to\acute{\mm}_{\tc_{+}}^{S\coprod\left\{ r\right\} }\label{eq:acute g_tc_+}
\end{equation}
sitting in cartesian squares 
\begin{equation}
\xymatrix{\partial^{\tc_{+}}\mm_{\tc}\ar[d]_{\left(\acute{\For}_{\sfr}^{\rho,S}\right)_{-}}\ar[r]^{g_{\basic}^{r+1}} & \mm_{\tc_{+}}\ar[d]^{\acute{\For}_{\sfr}^{\rho\coprod r,S\coprod r}}\\
\partial^{\tc_{+}}\check{\mm}_{\tc}\ar[r]_{\acute{g}_{\tc_{+}}^{S}}\ar[d] & \acute{\mm}_{\tc_{+}}^{S\coprod\left\{ r\right\} }\ar[d]^{\ed_{r}^{\sfr,\rho\coprod r,S\coprod r}}\\
L\ar[r]_{\Delta} & L\times L
}
\label{eq:acute g cartesian squares}
\end{equation}
whose composition is the restriction of (\ref{eq:boundary as pullback})
to $\mm_{\tc_{+}}$.

We define 
\begin{equation}
\check{g}_{\tc_{+}}:\partial^{\tc_{+}}\check{\mm}_{\tc}\to\check{\mm}_{\tc_{+}}\label{eq:check g definition}
\end{equation}
by $\check{g}_{\tc_{+}}=\grave{\For}_{\sfr}^{\rho,\left\{ r\right\} }\circ\acute{g}_{\tc_{+}}^{\emptyset}$.

\subsection{\label{subsec:Orienting-resolutions}Orienting resolutions}

\subsubsection{Overview}

Our goal in this section is to construct local system maps
\[
\ff_{\sfr}^{\rho}:\Or\left(T\mm_{\sfr}^{\rho}\right)\to\Or\left(T\check{\mm}_{\sfr}^{\rho}\right)\text{ over }\For_{\sfr}^{\rho}
\]
and
\[
\check{\mathcal{J}}_{\sfr}^{\rho}:\Or\left(T\check{\mm}_{\sfr}^{\rho}\right)\to\Or\left(TL\right)^{\boxtimes\kf\coprod\sstar''_{\rho}}\text{ over }\prod_{x\in\sstar''_{\rho}\coprod\kf}\check{\ev}_{x}^{\sfr,\rho}
\]
extending $\ff_{\sfr}=\ff_{\sfr}^{\emptyset}$ and $\check{\mathcal{J}}_{\sfr}=\check{\mathcal{J}}_{\sfr}^{\emptyset}$
so that $\mathcal{J}_{\sfr}^{\rho}:=\check{\mathcal{J}}_{\sfr}^{\rho}\circ\ff_{\sfr}^{\rho}$
can be used to integrate forms on $\mm_{\sfr}^{\rho}$. We will show
that the following two properties, essential to the proof of Stokes'
theorem, hold:

\begin{align*}
\text{(coherence)}\qquad & \left(\mathcal{J}_{\sfr}^{\rho\coprod\left(r\right)}\right)^{\to}\circ\mathcal{G}_{\sfr}^{\rho\coprod\left(r\right)}=\mathcal{J}_{\sfr}^{\rho}\circ\iota_{\mm_{\sfr}^{\rho}}^{\partial_{-}}.\\
\text{(}\Sym\left(\rho\right)\text{-equivariance)}\qquad & \mathcal{J}_{\sfr}^{\rho}\circ\Or\left(d\tau.\right)=\sgn\left(\tau\right)\cdot\mathcal{J}_{\sfr}^{\rho}
\end{align*}
Here the local system map
\[
\left(\mathcal{J}_{\sfr}^{\rho\coprod\left(r\right)}\right)^{\to}:\Or\left(T\mm_{\sfr}^{\rho\coprod\left(r\right)}\right)\otimes\left(\ev_{\sstar''_{r}}^{\rho\coprod\left(r\right)}\right)^{-1}\Or\left(TL\right)^{\vee}\to\Or\left(TL\right)^{\boxtimes\kf\coprod\sstar''_{\rho}}
\]
is obtained from $\mathcal{J}_{\sfr}^{\rho\coprod\left(r\right)}$
by the tensor-hom adjunction, and
\begin{equation}
\mathcal{G}_{\sfr}^{\rho\coprod\left(r\right)}:\Or\left(T\partial_{-}\mm_{\sfr}^{\rho}\right)\to\Or\left(T\mm_{\sfr}^{\rho\coprod\left(r\right)}\right)\otimes\left(\ev_{\sstar''_{r}}^{\rho\coprod\left(r\right)}\right)^{-1}\Or\left(TL\right)^{\vee}\label{eq:fiber square ls map}
\end{equation}
is induced from the cartesian square (\ref{eq:boundary as pullback}).
Orienting fiber products involves some convention. To this end,
we use the short exact sequences
\[
0\to\Or\left(T\partial_{-}\mm_{\sfr}^{\rho}\right)\to\left(g_{\sfr}^{\rho\coprod\left(r\right)}\right)^{-1}T\mm_{\sfr}^{\rho\coprod\left(r\right)}\oplus\ev_{\sstar'_{r}}^{-1}TL\to\ed_{r}^{-1}T\left(L\times L\right)\to0
\]
and
\[
0\to\ev_{\sstar'_{r}}^{-1}TL\to\ed_{r}^{-1}T\left(L\times L\right)\to\ev_{\sstar''_{r}}^{-1}TL\to0,
\]
in conjunction with (\ref{eq:ls map from ses}).

We will conclude this subsection with an explicit computation of $\mathcal{J}_{\sfr}^{\rho}|_{\mm_{\tc}}$
for $\tc\in\ts_{\sfr}^{\rho}$ a special kind of tree, which will
come up in the fixed point localization computation in \cite{fp-loc-OGW}.

\subsubsection{Construction of $\ff_{\sfr}^{\rho},\check{\mathcal{J}}_{\sfr}^{\rho}$}

Let $\sfr$ be a sturdy moduli specification, $\rho\subset\nn$ a
finite subset and $S\subset\nn$ any subset. Using the order on vertices
in Lemma \ref{lem:labeled trees alternative def}(b) we obtain an
isomorphism of local systems 
\begin{equation}
\Or\left(T\acute{\mm}_{\sfr}^{\rho,S}\right)\simeq\coprod_{\tc\in\ts_{\sfr}^{\rho}}\boxtimes_{v\in\tc_{0}}\Or\left(T\acute{\mm}_{\sfr_{\tc}\left(v\right)}^{S}\right)\label{eq:tangent direct sum decomposition}
\end{equation}
Using this and
\[
\coprod_{\tc\in\ts_{\sfr}^{\rho}}\boxtimes\ff_{\sfr_{\tc}\left(v\right)},\coprod\boxtimes\acute{\ff}_{\sfr_{\tc}\left(v\right)}^{S},\coprod\boxtimes\grave{\ff}_{\sfr_{\tc}\left(v\right)}^{S},\coprod\boxtimes\check{\mathcal{J}}_{\sfr_{\tc}\left(v\right)},\coprod\boxtimes\mathcal{J}_{\sfr_{\tc}\left(v\right)}
\]
 we obtain local system maps 

\[
\left(\prod\ff\right){}_{\sfr}^{\rho}:\Or\left(T\mm_{\sfr}^{\rho}\right)\to\Or\left(T\check{\mm}_{\sfr}^{\rho}\right)\text{ over }\For_{\sfr}^{\rho},
\]

\[
\left(\prod\acute{\ff}\right){}_{\sfr}^{\rho}:\Or\left(T\mm_{\sfr}^{\rho}\right)\to\Or\left(T\acute{\mm}_{\sfr}^{\rho,S}\right)\text{ over }\acute{\For}_{\sfr}^{\rho,S},
\]
\[
\left(\prod\grave{\ff}\right){}_{\sfr}^{\rho}:\Or\left(T\acute{\mm}_{\sfr}^{\rho,S}\right)\to\Or\left(T\check{\mm}_{\sfr}^{\rho}\right)\text{ over }\grave{\ff}_{\sfr}^{\rho,S},
\]
\[
\left(\prod\check{\mathcal{J}}\right)_{\sfr}^{\rho}:\Or\left(T\check{\mm}_{\sfr}^{\rho}\right)\to\Or\left(TL\right)^{\boxtimes\kf\coprod\sstar''_{\rho}}\text{ over }\prod_{x\in\kf\coprod\sstar''_{\rho}}\check{\ev}_{x}^{\sfr,\rho}
\]
and 

\[
\left(\prod\mathcal{J}\right)_{\sfr}^{\rho}:\Or\left(T\mm_{\sfr}^{\rho}\right)\to\Or\left(TL\right)^{\boxtimes\kf\coprod\sstar''_{\rho}}\text{ over }\prod_{x\in\kf\coprod\sstar''_{\rho}}\ev_{x}^{\sfr,\rho}
\]
with $\left(\prod\mathcal{J}\right)_{\sfr}^{\rho}=\left(\prod\check{\mathcal{J}}\right)_{\sfr}^{\rho}\circ\left(\prod\ff\right){}_{\sfr}^{\rho}$
and $\left(\prod\ff\right){}_{\sfr}^{\rho}=\left(\prod\grave{\ff}\right){}_{\sfr}^{\rho}\circ\left(\prod\acute{\ff}\right){}_{\sfr}^{\rho}$.

However, $\left(\prod\jc\right)_{\sfr}^{\rho}$ does \emph{not} satisfy
the coherence condition above. The plan is as follows. We will twist
$\left(\prod\ff\right){}_{\sfr}^{\rho},\left(\prod\check{\mathcal{J}}\right)_{\sfr}^{\rho}$
by certain signs to make them coherent (in a sense which we will make
precise shortly), and prove that this makes $\left(\prod\jc\right)_{\sfr}^{\rho}$
coherent. We will see $\Sym\left(\rho\right)$ equivariant follows
from coherence.
\begin{lem}
\label{lem:boundary of ls map}Let $\ff:\lc_{1}\to\lc_{2}$ be a local
system map lying over a b-normal map of orbifolds $f:\xx\to\yy$.
There's an induced local system map $\ff_{-}:\left(i_{\xx}^{\partial_{-}^{f}}\right)^{-1}\lc_{1}\to\left(i_{\yy}^{\partial}\right)^{-1}\lc_{2}$
over $f_{-}$. In case $\lc_{1}=Or\left(T\xx\right)$ and $\lc_{2}=Or\left(T\yy\right)$
we can use $\iota_{\xx}^{\partial_{-}^{f}},\iota_{\yy}^{\partial}$
to define a local system map 
\[
\partial\ff:Or\left(T\partial_{-}^{f}\xx\right)\to Or\left(T\partial\yy\right)
\]
over $f_{-}$, which satisfies $\ff\circ\iota_{\xx}^{\partial_{-}^{f}}=\iota_{\yy}^{\partial}\circ\partial\ff$.
\end{lem}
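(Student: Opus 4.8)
The plan is to produce $\ff_-$ by pulling the local system map $\ff$ back along the boundary structure map $i_\xx^{\partial_-^f}$, using that b-normality of $f$ makes this pullback compatible with $f_-$, and then, in the orientation case, to conjugate by the two boundary orientation isomorphisms.

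First I would isolate the one piece of corner-theoretic input. By Joyce's corner functor the map $C(f)\colon C(\xx)\to C(\yy)$ commutes with the base projections, $\Pi_\yy\circ C(f)=f\circ\Pi_\xx$, where $\Pi_\xx\colon C(\xx)\to\xx$ restricts on $C_1(\xx)=\partial\xx$ to $i_\xx^\partial$, and similarly for $\yy$. Since $f$ is b-normal, $C(f)$ carries $C_1(\xx)$ into $C_0(\yy)\coprod C_1(\yy)$, so the preimages of these two clopen pieces give the clopen decomposition $\partial\xx=\partial_+^f\xx\coprod\partial_-^f\xx$, and $f_-$ is the restriction of $C(f)$ to $\partial_-^f\xx$, with codomain $C_1(\yy)=\partial\yy$. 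Restricting $\Pi_\yy\circ C(f)=f\circ\Pi_\xx$ to $\partial_-^f\xx$ therefore gives
\[
f\circ i_\xx^{\partial_-^f}=i_\yy^\partial\circ f_-.
\]

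Next I would construct $\ff_-$. Recall that a local system map $\ff$ lying over $f$ is by definition a morphism of local systems $\lc_1\to f^{-1}\lc_2$ on $\xx$. Applying the pullback functor $\bigl(i_\xx^{\partial_-^f}\bigr)^{-1}$ and using the canonical identification
\[
\bigl(i_\xx^{\partial_-^f}\bigr)^{-1}f^{-1}\lc_2=\bigl(f\circ i_\xx^{\partial_-^f}\bigr)^{-1}\lc_2=\bigl(i_\yy^\partial\circ f_-\bigr)^{-1}\lc_2=f_-^{-1}\bigl(i_\yy^\partial\bigr)^{-1}\lc_2
\]
supplied by the previous display, I get a morphism of local systems on $\partial_-^f\xx$, that is, a local system map $\ff_-\colon\bigl(i_\xx^{\partial_-^f}\bigr)^{-1}\lc_1\to\bigl(i_\yy^\partial\bigr)^{-1}\lc_2$ lying over $f_-$. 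This step is pure functoriality of $(\cdot)^{-1}$ on local systems and their morphisms, so there is essentially nothing to verify.

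Finally, specialize to $\lc_1=\Or(T\xx)$, $\lc_2=\Or(T\yy)$. The boundary orientation map $\iota_\yy^\partial\colon\Or(T\partial\yy)\to\bigl(i_\yy^\partial\bigr)^{-1}\Or(T\yy)$ of (\ref{eq:boundary ls iso}) is an \emph{isomorphism}, coming from the short exact sequence $0\to T\partial\yy\to\bigl(i_\yy^\partial\bigr)^{-1}T\yy\to N\to 0$ with the normal line $N$ trivialized by the outward direction; restricting the analogous $\iota_\xx^\partial$ to the clopen component $\partial_-^f\xx$ gives $\iota_\xx^{\partial_-^f}$. I then define
\[
\partial\ff:=\bigl(\iota_\yy^\partial\bigr)^{-1}\circ\ff_-\circ\iota_\xx^{\partial_-^f}\colon\Or(T\partial_-^f\xx)\to\Or(T\partial\yy),
\]
which lies over $f_-$, and the desired identity $\ff\circ\iota_\xx^{\partial_-^f}=\iota_\yy^\partial\circ\partial\ff$ (where $\ff\circ\iota_\xx^{\partial_-^f}$ is shorthand for $\ff_-\circ\iota_\xx^{\partial_-^f}$, as is customary in the paper) holds by construction because $\iota_\yy^\partial$ is invertible. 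The only step carrying any real content is the compatibility $f\circ i_\xx^{\partial_-^f}=i_\yy^\partial\circ f_-$ extracted from the corner functor; everything downstream is bookkeeping, and crucially there is no orientation sign to chase, since the identical outward-normal convention is used on $\xx$ and on $\yy$.
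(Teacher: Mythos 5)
Your proposal is correct and is exactly the argument the paper's ``straightforward'' is pointing at: extract $f\circ i_\xx^{\partial_-^f}=i_\yy^\partial\circ f_-$ from the corner functor, pull back $\ff$ along $i_\xx^{\partial_-^f}$ to get $\ff_-$, and conjugate by the two outward-normal isomorphisms to define $\partial\ff$, at which point the stated identity holds by construction. You have correctly isolated the single nontrivial input (the compatibility from $C(f)$) and correctly observed that no sign arises because $\partial\ff$ is \emph{defined} to make the identity hold.
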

\begin{proof}
straightforward.
\end{proof}

Fix some $r>\max\rho$. For every $\tc_{+}\in\ts_{\sfr}^{\rho\coprod\left\{ r\right\} }$
we define 

\[
\acute{\gc}_{\tc_{+}}:\Or\left(\partial^{\tc_{+}}\check{\mm}_{\cnt\tc_{+}}\right)\to\Or\left(\acute{\mm}_{\tc_{+}}^{\left\{ r\right\} }\right)\otimes\left(\acute{\ev}_{\sstar''_{r}}^{\sfr,\rho\coprod r,\left\{ r\right\} }\right)^{-1}\Or\left(TL\right)
\]
over $\acute{g}_{\tc_{+}}^{\emptyset}$ by the bottom cartesian square
in (\ref{eq:acute g cartesian squares}), using the same convention
as in (\ref{eq:fiber square ls map}). We define a local system map
over $\check{g}_{\tc_{+}}=\grave{\For}_{\sfr}^{\rho,\left\{ r\right\} }\circ\acute{g}_{\tc_{+}}^{\left\{ r\right\} }$
by 
\[
\check{\mathcal{G}}_{\tc_{+}}=\left(\left(\prod\grave{\ff}\right)_{\sfr}^{\rho,\left\{ r\right\} }\otimes\alpha\right)\circ\acute{\gc}_{\tc_{+}}.
\]
Here 
\[
\alpha:\left(\acute{\ev}_{\sstar''_{r}}^{\sfr,\rho\coprod r,\left\{ r\right\} }\right)^{-1}\Or\left(TL\right)\to\left(\acute{\ev}_{\sstar''_{r}}^{\sfr,\rho\coprod r,\emptyset}\right)^{-1}\Or\left(TL\right)
\]
is the associator local system map lying over $\grave{\For}_{\sfr}^{\rho,\left\{ r\right\} }$,
associated with the factorization of the evaluation map ${\acute{\ev}_{\sstar''_{r}}^{\sfr,\rho\coprod r,\left\{ r\right\} }=\acute{\ev}_{\sstar''_{r}}^{\sfr,\rho,\emptyset}\circ\grave{\For}_{\sfr}^{\rho,\left\{ r\right\} }}$.
In what follows we will abuse notation and use $\alpha$ to denote
any such associator. We will also avoid excessive decorations and
write just $\ev_{\sstar''_{r}}$ if the domain is clear from the context.

The following proposition defines $\ff_{\sfr}^{\rho}$ and the coherence
condition that it satisfies.
\begin{prop}
\label{prop:F definition and coherence}There exists some function
$\delta_{\sfr}^{\rho}:\ts_{\sfr}^{\rho}\to\left\{ \pm1\right\} $
such that if we define
\[
\ff_{\sfr}^{\rho}=\left(\delta_{\sfr}^{\rho}\circ\pi_{\sfr}^{\rho}\right)\cdot\left(\prod\ff\right)_{\sfr}^{\rho},
\]
then we have for any $r>\max\rho$ and $\tc_{+}\in\ts_{\sfr}^{\rho\coprod r}$,
\begin{equation}
\left[\ff_{\sfr}^{\rho\coprod r}\otimes\alpha\right]\circ\mathcal{G}_{\sfr}^{\rho\coprod r}|_{\partial^{\tc_{+}}\mm_{\tc}}=\check{\gc}_{\tc_{+}}\circ\partial^{\tc_{+}}\ff_{\sfr}^{\rho}\label{eq:F coherence}
\end{equation}
where we denote $\tc=\cnt\tc_{+}$ and $\partial^{\tc_{+}}\ff_{\sfr}^{\rho}:=\partial\ff_{\sfr}^{\rho}|_{\partial^{\tc_{+}}\mm_{\tc}}.$
\end{prop}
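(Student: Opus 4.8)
\emph{The plan} is to \emph{define} the twisting signs $\delta_{\sfr}^{\rho}$ by a recursion on $\left|\rho\right|$ which makes (\ref{eq:F coherence}) hold tautologically, so that the genuine content becomes a sign computation that I will organise to reduce to a single base case. Set $\delta_{\sfr}^{\emptyset}\equiv1$, so $\ff_{\sfr}^{\emptyset}=\ff_{\sfr}$. Fix $r>\max\rho$; then $r=\max\left(\rho\coprod r\right)$, so \emph{every} instance of the coherence condition arises this way. For $\tc_{+}\in\ts_{\sfr}^{\rho\coprod r}$ write $\tc=\cnt\,\tc_{+}\in\ts_{\sfr}^{\rho}$. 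Since $\gc_{\sfr}^{\rho\coprod r}$ of (\ref{eq:fiber square ls map}), $\check{\gc}_{\tc_{+}}$, $\partial^{\tc_{+}}\left(\prod\ff\right)_{\sfr}^{\rho}$, $\left(\prod\ff\right)_{\sfr}^{\rho\coprod r}$ and $\alpha$ are isomorphisms of local systems lying over maps that commute (by Lemmas \ref{lem:composition decomposition}, \ref{lem:boundary of ls map} and the squares (\ref{eq:acute g cartesian squares})), the equation
\[
\bigl[\left(\textstyle\prod\ff\right)_{\sfr}^{\rho\coprod r}\otimes\alpha\bigr]\circ\gc_{\sfr}^{\rho\coprod r}\big|_{\partial^{\tc_{+}}\mm_{\tc}}=\varepsilon_{\sfr}^{\rho}(\tc_{+})\cdot\check{\gc}_{\tc_{+}}\circ\partial^{\tc_{+}}\left(\textstyle\prod\ff\right)_{\sfr}^{\rho}
\]
defines a locally constant function $\varepsilon_{\sfr}^{\rho}$ on $\partial^{\tc_{+}}\mm_{\tc}$. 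Granting (see below) that $\varepsilon_{\sfr}^{\rho}$ is constant and depends only on discrete data, so descends to $\ts_{\sfr}^{\rho\coprod r}\to\left\{\pm1\right\}$, I set $\delta_{\sfr}^{\rho\coprod r}(\tc_{+}):=\delta_{\sfr}^{\rho}(\tc)\cdot\varepsilon_{\sfr}^{\rho}(\tc_{+})$. Unwinding the twists in $\ff_{\sfr}^{\rho}=\left(\delta_{\sfr}^{\rho}\circ\pi_{\sfr}^{\rho}\right)\cdot\left(\prod\ff\right)_{\sfr}^{\rho}$, the identity (\ref{eq:F coherence}) reduces to $\delta_{\sfr}^{\rho\coprod r}(\tc_{+})\cdot\varepsilon_{\sfr}^{\rho}(\tc_{+})=\delta_{\sfr}^{\rho}(\tc)$, which holds by construction; the recursion terminates because $\left|\rho\right|$ decreases.

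The remaining task is to compute $\varepsilon_{\sfr}^{\rho}$, and the key is that it is \emph{local at the contracted edge}: $\cnt$ merges the two endpoints $v',v''$ of $e_{r}$ into a single vertex $v_{0}$ of $\tc$ and changes nothing else. Using the vertex-ordered decomposition (\ref{eq:tangent direct sum decomposition}), the recursive description of $g_{\sfr}^{\rho\coprod r}$ in \S\ref{subsec:resolutions} --- on $\partial^{\tc_{+}}\mm_{\tc}$ it is the identity on each factor $\mm_{\sfr_{\tc}(w)}$, $w\neq v_{0}$, composed with the single-edge gluing $\mm_{\sfr_{0}'}\times_{L}\mm_{\sfr_{0}''}\to\mm_{\sfr_{0}}^{\left\{r\right\}}$ (here $\sfr_{0}:=\sfr_{\tc}(v_{0})$, with its boundary decomposition $(\sfr_{0}',\sfr_{0}'')$ from (\ref{eq:boundary decomposition})) and the reshuffle $\tau.$ --- together with the analogous box-product forms of $\check{\gc}_{\tc_{+}}$, $\partial^{\tc_{+}}\left(\prod\ff\right)_{\sfr}^{\rho}$ and $\left(\prod\ff\right)_{\sfr}^{\rho\coprod r}$, every factor indexed by $w\neq v_{0}$ cancels between the two sides. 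What survives is the product of (a) the $\rho=\emptyset$ instance of the identity for $\sfr_{0}$, and (b) a shuffle sign from reordering the $\boxtimes$-factors so that $(v',v'')$ occupies the slot of $v_{0}$ in the vertex order of Lemma \ref{lem:labeled trees alternative def}(b). Both depend only on $\tc_{+}$, so this reduces the whole problem to the single-edge case.

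For that case I would compute the sign relating $\ff_{\sfr_{0}'}\boxtimes\ff_{\sfr_{0}''}$ on $\mm_{\sfr_{0}'}\times\mm_{\sfr_{0}''}$ to $\partial^{\tc_{+}}\ff_{\sfr_{0}}$ on the boundary component $\mm_{\sfr_{0}'}\times_{L}\mm_{\sfr_{0}''}\subset\partial_{-}\mm_{\sfr_{0}}$, transported through (\ref{eq:boundary as pullback}). The main tool is the factorisation $\ff_{\sfr_{0}}=\grave{\ff}_{\sfr_{0}}^{S}\circ\acute{\ff}_{\sfr_{0}}^{S}$ of (\ref{eq:F ls decomposition}) together with the description of the fibres of $\For$: over a boundary-nodal curve the fibre of $\For_{\sfr_{0}}$ is an open subset of $(\partial\Sigma')^{\sigma_{0}'}\times(\partial\Sigma'')^{\sigma_{0}''}$ with its boundary (complex) orientation. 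The sign then comes from four bookkeeping sources, which I would isolate one at a time: (i) the outward-normal convention built into $\iota_{\mm}^{\partial}$ in (\ref{eq:boundary as pullback}); (ii) the shuffle sign for writing $(\partial\Sigma)^{\sigma_{0}}$ as $(\partial\Sigma')^{\sigma_{0}'}\times(\partial\Sigma'')^{\sigma_{0}''}$ in the order induced from $\sigma_{0}\subset\sstar'_{\nn}$ --- the same type of sign as in (\ref{eq:sign of sigma permutation on F}); (iii) the two short exact sequences used just before the proposition to orient the fibre product, which insert the orientation of $N^{L\times L}_{\Delta_{L}}\simeq\ev_{\sstar''_{r}}^{*}TL$; and (iv) the associator $\alpha$. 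Each of these is a sign determined by $\dim L=2m$ and by the parities of $\beta_{0}',\beta_{0}''$ and of $\left|\kf_{0}'\right|,\left|\kf_{0}''\right|,\left|\sigma_{0}'\right|,\left|\sigma_{0}''\right|$, hence independent of the point of the moduli space; constancy of $\varepsilon_{\sfr}^{\rho}$ over the possibly disconnected $\partial^{\tc_{+}}\mm_{\tc}$ follows because both sides of its defining equation transform the same way under relabelling of boundary markings, by (\ref{eq:sign of sigma permutation on F}) and the equivariance of the gluing squares.

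I expect the genuine obstacle to be sources (iii) and (iv): reconciling the fibre-product orientation convention with the outward-normal convention and the associators is exactly where a stray factor of $\left(-1\right)^{\dim L}$ is easy to drop, and one must be disciplined about which evaluation map each copy of $\Or\left(TL\right)$ is pulled back along. Doing the reduction to the single-edge case before touching signs keeps the accounting finite and makes the consistency of the recursion defining $\delta_{\sfr}^{\rho}$ transparent; I do not anticipate any conceptual difficulty beyond that. (One also expects a closed formula for $\delta_{\sfr}^{\rho}(\tc)$ as a shuffle sign comparing the vertex-ordered concatenation $\coprod_{v}\sigma_{\tc}(v)$ with the $\sstar'_{\nn}$-ordered set $\sigma\coprod\sstar'_{\rho}$, giving a non-recursive proof, but the recursion above suffices.)
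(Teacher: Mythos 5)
Your proposal follows essentially the same route as the paper: the paper's own proof also sets $\delta_{\sfr}^{\emptyset}\equiv 1$ and defines $\delta_{\sfr}^{\rho\coprod r}=\upsilon\cdot\epsilon_{\sfr}^{\rho\coprod r}$ recursively, where $\upsilon(\tc_+)=\delta_{\sfr}^{\rho}(\cnt\,\tc_+)$ and $\epsilon_{\sfr}^{\rho\coprod r}$ (your $\varepsilon_{\sfr}^{\rho}$) is the constant discrepancy established in a separate lemma by factoring the relevant diagram through the un-shifted gluing $g!$ and the cyclic shift $\tau.$, and reducing the remaining square to the $\rho=\emptyset$ case via the Leibniz sign (\ref{eq:sign ON shift}) — which is the content of your ``locality at the contracted edge'' plus the extra shuffle. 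The only thing you leave unfinished is the explicit single-edge sign (the paper computes it to be $\sgn(\sigma'\backslash\sstar'_r,\sigma'')$, a deferral which is reasonable as the proposition only asserts existence of $\delta$, not a formula); your identification of the four sources (i)--(iv) is accurate and is exactly the bookkeeping the paper carries out.
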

The proof relies on the following lemma, which says that the failure
of $\prod\ff$ to satisfy coherence is measured by a function $\epsilon$
which is constant on components of the form $\partial^{\tc_{+}}\mm_{\cnt\tc_{+}}$.
We will then define the correction function $\delta$ recursively
by a kind of difference equation which depends on $\epsilon$.
\begin{lem}
\label{lem:prod F sign difference}There's some function $\epsilon_{\sfr}^{\rho\coprod r}:\ts_{\sfr}^{\rho\coprod r}\to\left\{ \pm1\right\} $
such that for every $\tc_{+}\in\ts_{\sfr}^{\rho\coprod r}$, $\tc=\cnt\tc_{+}$,
we have
\end{lem}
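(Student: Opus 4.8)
The plan is to make precise the statement that the failure of $\left(\prod\ff\right)_\sfr^\rho$ to be coherent is governed by a locally constant sign. I would first unravel both sides of the putative coherence identity \eqref{eq:F coherence} with $\delta\equiv 1$, i.e. compare $\left[\left(\prod\ff\right)_\sfr^{\rho\coprod r}\otimes\alpha\right]\circ\mathcal{G}_\sfr^{\rho\coprod r}|_{\partial^{\tc_+}\mm_\tc}$ with $\check{\gc}_{\tc_+}\circ\partial^{\tc_+}\left(\prod\ff\right)_\sfr^\rho$. Both are local system maps $\Or\left(T\partial^{\tc_+}\mm_\tc\right)\to\Or\left(TL\right)^{\boxtimes\kf\coprod\sstar''_\rho}$ over the \emph{same} base map (namely $\For_\sfr^\rho\circ g_\sfr^{\rho\coprod r}$, using \S\ref{subsec:The-forgetful-map} and the cartesian squares \eqref{eq:acute g cartesian squares}); since $\Or\left(TL\right)$ is a local system over a connected manifold and the domain $\partial^{\tc_+}\mm_\tc$ is connected once we restrict to a single tree $\tc_+$, the two maps differ by a global sign, which I would \emph{define} to be $\epsilon_\sfr^{\rho\coprod r}\left(\tc_+\right)$. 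The content of the lemma is thus essentially the observation that $\partial^{\tc_+}\mm_\tc$ is connected (or at least that the sign is constant on it) together with well-definedness, i.e. that the resulting value depends only on the isomorphism type $\tc_+\in\ts_\sfr^{\rho\coprod r}$ and not on auxiliary choices.

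The key steps, in order: (1) Check both composites are defined over the same base map and same local system, so that their ratio is a $\{\pm1\}$-valued locally constant function on $\partial^{\tc_+}\mm_\tc$; here I would invoke Lemma \ref{lem:boundary of ls map} to handle the boundary local system maps $\partial^{\tc_+}\ff$, the definition \eqref{eq:check g definition} of $\check g_{\tc_+}$, and the compatibility of the two cartesian squares in \eqref{eq:acute g cartesian squares} with the orientation conventions fixed in \eqref{eq:fiber square ls map}. (2) Observe that $\partial^{\tc_+}\mm_\tc=\left(g_\sfr^{\rho\coprod r}\right)^{-1}\left(\mm_{\tc_+}\right)$ and $\mm_{\tc_+}=\prod_{v\in\tc_{+,0}}\mm_{\sfr_\tc(v)}$ is a product of moduli spaces each of which is connected on its main component; by Lemma \ref{cor:bd^r as pullback}(a) (the $r=1$ case) $\partial^{\tc_+}\mm_\tc$ is the total space of the cartesian square \eqref{eq:boundary as pullback} restricted to $\mm_{\tc_+}$, hence is (a union of components each) connected, so the sign is genuinely constant. (3) Conclude that the sign is a function of $\tc_+$ alone, giving $\epsilon_\sfr^{\rho\coprod r}:\ts_\sfr^{\rho\coprod r}\to\{\pm1\}$, and record the resulting identity
\[
\left[\left(\prod\ff\right)_\sfr^{\rho\coprod r}\otimes\alpha\right]\circ\mathcal{G}_\sfr^{\rho\coprod r}|_{\partial^{\tc_+}\mm_\tc}=\epsilon_\sfr^{\rho\coprod r}\left(\tc_+\right)\cdot\check{\gc}_{\tc_+}\circ\partial^{\tc_+}\left(\prod\ff\right)_\sfr^\rho,
\]
which is exactly the displayed conclusion of the lemma (the excerpt cuts off right before the formula).

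The main obstacle I anticipate is bookkeeping the orientation conventions so that the comparison is between maps over \emph{literally the same} base map and local system — this requires carefully matching the short exact sequences used to orient the fiber product $\mathcal{G}_\sfr^{\rho\coprod r}$ (the two sequences displayed before \eqref{eq:fiber square ls map}) against the sequences implicit in $\acute{\gc}_{\tc_+}$ coming from the bottom square of \eqref{eq:acute g cartesian squares}, and tracking the associator $\alpha$ through the factorization $\acute{\ev}_{\sstar''_r}^{\sfr,\rho\coprod r,\{r\}}=\acute{\ev}_{\sstar''_r}^{\sfr,\rho,\emptyset}\circ\grave{\For}_\sfr^{\rho,\{r\}}$. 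Once everything is over the same base, the conclusion is immediate from connectedness; the only non-formal input is the connectedness (or locally-constant-sign) statement for $\partial^{\tc_+}\mm_\tc$, which follows from Lemma \ref{lem:labeled trees alternative def}(b) and Lemma \ref{cor:bd^r as pullback}. I would keep the write-up short, saying ``straightforward'' for the diagram chases of step (1) after pointing to the relevant conventions, and spend the actual words on step (2).
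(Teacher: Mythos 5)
Your setup in step (1) is correct — both composites are local system maps over the same base and into the same local system, so their comparison is a locally constant $\{\pm1\}$-valued function on $\partial^{\tc_+}\mm_\tc$. That much is cheap. The gap is in step (2): you pass from local constancy to global constancy by claiming $\partial^{\tc_+}\mm_\tc$ is connected, or ``a union of components each connected, so the sign is genuinely constant.'' The last inference is a non sequitur (local constancy never becomes global constancy just because each component is connected), and the connectedness claim itself is false. Each factor $\mm_{\sfr_\tc(v)}=\overline{\mm}_{0,\kf\coprod\sigma,\lf}(\beta)$ has multiple clopen components indexed by cyclic orderings of the boundary marked points (this is exactly why the paper defines $\check{\mm}_\sfr^{\text{main}}$ and then extends $\check{\jc}_\sfr$ over the other components via $\Sym(\kf)$ in \S\ref{subsec:J Construction}); products and fiber products with $L$ inherit this disconnection. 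So after step (1) what you have is a $\{\pm1\}$-valued function on the set of components of $\partial^{\tc_+}\mm_\tc$, and you must still show it is constant. That is the actual content of the lemma, and it is precisely what the paper establishes by explicit computation: it factors the comparison through the two squares in (\ref{eq:G commutator in two steps}), uses the Leibniz/shuffle rules (\ref{eq:sign ON shift}), (\ref{eq:sign of leibnitz}), (\ref{eq:sign of shuffle}) to reduce the left square to $\rho=\emptyset$, and then does an orientation calculation there — the point being that the resulting formulas are combinatorial in the data of $\tc_+$, hence a fortiori independent of which clopen component one is on. Your proposal omits this computation entirely.

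A secondary, smaller issue: the identity you record in step (3) is stated with $\check{\gc}_{\tc_+}$ and $\left(\prod\ff\right)_\sfr^{\rho\coprod r}$, whereas the lemma's displayed conclusion compares $\acute{\gc}_{\tc_+}$ with $\left(\prod\acute{\ff}\right)_\sfr^{\rho\coprod r,\{r\}}\otimes\alpha$ (the $\grave{\ff}$ factor appears only at the level of the proposition, after $\delta$ is introduced). This is fixable by factoring $\prod\ff$ via (\ref{eq:F ls decomposition}), but it signals that you are really sketching a proof of Proposition \ref{prop:F definition and coherence} with $\delta\equiv1$ rather than of the lemma as stated, and the intermediate bookkeeping at $\acute{\mm}_\sfr^{\rho\coprod r,\{r\}}$ is load-bearing for the recursion that defines $\delta$.
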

\begin{equation}
\acute{\mathcal{G}}_{\tc_{+}}\circ\partial^{\tc_{+}}\left(\prod\ff\right)_{\sfr}^{\rho}=\epsilon_{\sfr}^{\rho\coprod r}\left(\tc_{+}\right)\cdot\left[\left(\prod\acute{\ff}\right)_{\sfr}^{\rho\coprod r,\left\{ r\right\} }\otimes\alpha\right]\circ\mathcal{G}_{\sfr}^{\rho\coprod r}|_{\partial^{\tc_{+}}\mm_{\tc}}\label{eq:prod F sign diff}
\end{equation}

\begin{proof}
Consider some $\tc_{+}\in\ts_{\sfr}^{\rho\coprod r}$. Let $\tc_{+}^{!}=\tau.^{-1}\left(\tc_{+}\right)$
where $\tau\in\Sym\left(\rho\coprod r\right)$ is the cyclic shift
that sends $r$ to $\min\rho$. The four local system maps in (\ref{eq:prod F sign diff})
form the four outer edges of the following diagram
\begin{equation}
\xymatrix{\Or\left(T\partial^{\tc_{+}}\mm_{\tc}\right)\aru{r}{\mathcal{G}{}_{\tc_{+}^{!}}^{!}}\ar[d]^{\partial\left(\prod\ff\right)_{\sfr}^{\rho}} & \Or\left(T\mm_{\tc_{+}^{!}}\right)_{\otimes}\aru{r}{\Or\left(d\tau.\right)}\ar[d]^{\left[\left(\prod\acute{\ff}!\right)_{\sfr}^{\rho\coprod r,\left\{ r\right\} }\otimes\alpha\right]} & \Or\left(T\mm_{\tc_{+}}\right)_{\otimes}\ar[d]^{\left[\left(\prod\acute{\ff}\right)_{\sfr}^{\rho\coprod r,\left\{ r\right\} }\otimes\alpha\right]}\\
\Or\left(T\partial^{\tc_{+}}\check{\mm}_{\tc}\right)\ard{r}{\acute{\mathcal{G}}_{\tc_{+}^{!}}^{!}} & \Or\left(T\acute{\mm}_{\tc_{+}^{!}}^{\left\{ r\right\} }\right)_{\otimes}\ard{r}{\Or\left(d\tau.\right)} & \Or\left(T\acute{\mm}_{\sfr}^{\rho\coprod r,\left\{ r\right\} }\right)_{\otimes}
}
.\label{eq:G commutator in two steps}
\end{equation}
Here $\mathcal{G}{}_{\tc_{+}^{!}}^{!},\acute{\mathcal{G}}_{\tc_{+}^{!}}^{!}$
are induced from the cartesian squares for $g!_{\sfr}^{\rho}$ and
$\acute{g}!_{\tc_{+}}^{\left\{ r\right\} }$, respectively, and we
set 
\[
\Or\left(T\mm_{\sfr}^{\rho\coprod r}\right)_{\otimes}=\Or\left(T\mm_{\sfr}^{\rho\coprod r}\right)\otimes\ev_{r}^{-1}\Or\left(TL\right),
\]
and similarly for the other occurrences of subscript $\otimes$. The
map $\left(\prod\acute{\ff}!\right)_{\sfr}^{\rho\coprod r,\left\{ r\right\} }$
is defined similarly to $\left(\prod\acute{\ff}\right){}_{\sfr}^{\rho\coprod r,\left\{ r\right\} }$,
except we now treat $\sstar'_{r}$ as the first element of any $\sigma_{\tc}\left(v\right)$
containing it, so there's no sign twist in this case (compare to
(\ref{eq:F ls decomposition})). The compositions of the top (respectively,
bottom) row of arrows in this diagram give $\mathcal{G}_{\tc_{+}}$
(resp., $\acute{\mathcal{G}}_{\tc_{+}}$), and we reduce to showing
the two squares commute up to signs which depend only on $\tc_{+}$.

Consider the left square. We have
\begin{equation}
\iota_{\mm_{\sfr}^{\rho}}^{\partial}|_{\mm_{\sfr_{\tc}\left(v_{1}\right)}\times\cdots\times\partial\mm_{\sfr_{\tc}\left(v_{i}\right)}\times\cdots}=\left(-1\right)^{\sum_{j<i}\dim\mm_{\sfr_{\tc}\left(v_{j}\right)}}\cdot\id\otimes\iota_{\mm_{\sfr_{\tc}}\left(v_{i}\right)}^{\partial}\otimes\id\label{eq:sign ON shift}
\end{equation}
 and similarly for $\check{\mm}_{\sfr}^{\rho}$ in place of $\mm_{\sfr}^{\rho}$,
so that
\[
\partial\left(\prod\ff\right){}_{\sfr}^{\rho}|_{\mm_{\sfr_{\tc}\left(v_{1}\right)}\times\cdots\times\partial\mm_{\sfr_{\tc}\left(v_{i}\right)}\times\cdots}=\left(-1\right)^{\bullet}\ff_{\sfr_{\tc}\left(v_{1}\right)}\times\cdots\times\partial\ff_{\sfr_{\tc}\left(v_{i}\right)}\times\cdots\times\ff_{\sfr_{\tc}\left(v_{r+1}\right)}
\]
for 
\begin{equation}
\bullet=\sum_{j<i}\dim\mm_{\sfr_{\tc}\left(v_{j}\right)}=\sum_{j<i}\left|\sigma_{\tc}\left(v_{j}\right)\right|,\label{eq:sign of leibnitz}
\end{equation}
which depends only on $\tc_{+}$. Thus, we reduce to computing the
sign for $\rho=\emptyset$. So let $\tc_{+}^{!}$ be some tree with
a single edge labeled by $r$, and $\tc$ be the unique $\left(\sfr,\emptyset\right)$-labeled
tree. Then the oriented base for $T\partial^{\tc_{+}}\mm_{\tc}$ picked
out by going up then left starting from $\Or\left(T\acute{\mm}_{\tc_{+}^{!}}^{\left\{ r\right\} }\right)_{\otimes}$
at the bottom center in (\ref{eq:G commutator in two steps}) is defined
by the equation
\[
\partial^{\tc_{+}}\mm_{\tc}\times L=\mm_{\left(\left(\kf',\lf',\beta'\right),\sstar'_{r}\right)}\times\left(\partial D^{2}\right)^{\sigma'\backslash\sstar'_{r}}\times\check{\mm}_{\sfr''}\times\left(\partial D^{2}\right)^{\sigma''}
\]
Where we let $\xx$ stand for a local oriented base for $T\xx$. On
the other hand, if we go left then up, we obtain an oriented base
defined by the equation
\[
\partial_{-}\mm_{\sfr}^{\emptyset}\times L=\mm_{\left(\left(\kf',\lf',\beta'\right),\sstar'_{r}\right)}\times\check{\mm}_{\sfr''}\times\left(\partial D^{2}\right)^{\sigma}.
\]
Since $\kf''+\beta''=1\mod2$ we find that $\dim\check{\mm}_{\sfr''}=0\mod2$
so the sign discrepancy for the square is simply 
\begin{equation}
\sgn\left(\sigma'\backslash\sstar'_{r},\sigma''\right)\label{eq:sign of shuffle}
\end{equation}
where $\left(\sigma'\backslash\sstar'_{r},\sigma''\right)$ denotes
the shuffle permutation $\left(\sigma'\backslash\sstar'_{r},\sigma''\right)\to\sigma$,
which depends only on $\tc_{+}$.

It is not hard to check that the right square of (\ref{eq:G commutator in two steps})
also  depends only on $\tc_{+}$. We will not need the explicit
formula for this sign.
\end{proof}
\begin{proof}
[Proof of Proposition \ref{prop:F definition and coherence}]We define
$\delta_{\sfr}^{\rho}$ recursively. We set $\delta_{\sfr}^{\emptyset}\equiv1$.
Suppose $\delta_{\sfr}^{\rho}$ is given. Using (\ref{eq:tree contraction})
we can write
\[
\delta_{\sfr}^{\rho}\circ\pi_{\sfr}^{\rho}\circ i_{\mm_{\sfr}^{\hat{\rho}}}^{\partial}=\upsilon\circ\pi_{\sfr}^{\rho\coprod r}\circ g_{\sfr}^{\rho\coprod r}
\]
for some $\upsilon:\ts_{\sfr}^{\rho\coprod r}\to\left\{ \pm1\right\} $.
We then compute, using Lemma \ref{lem:prod F sign difference}, for
every $\tc_{+}\in\ts_{\sfr}^{\rho\coprod r}$ and $\tc=\cnt\tc_{+}$,

\begin{multline*}
\check{\gc}_{\tc_{+}}\circ\partial\ff_{\sfr}^{\rho}=\left(\delta_{\sfr}^{\rho}\left(\tc\right)\right)\cdot\check{\gc}_{\tc_{+}}\circ\partial\left(\prod\ff\right)_{\sfr}^{\rho}=\\
=\left(\upsilon\left(\tc_{+}\right)\right)\cdot\check{\mathcal{G}}_{\tc_{+}}\circ\partial\left(\prod\ff\right)_{\sfr}^{\rho}=\\
=\left(\left(\upsilon\cdot\epsilon_{\sfr}^{\rho\coprod r}\right)\left(\tc_{+}\right)\right)\cdot\left(\left(\prod\grave{\ff}\right)_{\sfr}^{\rho\coprod r}\otimes\alpha\right)\circ\left(\left(\prod\acute{\ff}\right){}_{\sfr}^{\rho\coprod r}\otimes\alpha\right)\circ\mathcal{G}_{\sfr}^{\rho\coprod r}=\\
=\left[\left(\left(\upsilon\cdot\epsilon_{\sfr}^{\rho\coprod r}\right)\left(\tc_{+}\right)\right)\cdot\left(\left(\prod\ff\right){}_{\sfr}^{\rho_{+}}\otimes\alpha\right)\right]\circ\mathcal{G}_{\sfr}^{\rho\coprod r},
\end{multline*}
so if we take $\delta_{\sfr}^{\rho\coprod r}=\upsilon\cdot\epsilon_{\sfr}^{\rho\coprod r}$
Eq (\ref{eq:F coherence}) holds for $\rho\coprod r$.
\end{proof}

\begin{prop}
\label{prop:check J def and coherence}There exists some function
$\zeta_{\sfr}^{\rho}:\ts_{\sfr}^{\rho}\to\left\{ \pm1\right\} $ so
that, setting
\[
\check{\mathcal{J}}_{\sfr}^{\rho}:=\left(\zeta_{\sfr}^{\rho}\circ\pi_{\sfr}^{\rho}\right)\cdot\left(\prod\check{\mathcal{J}}\right)_{\sfr}^{\rho},
\]
we have, for $\tc_{+}\in\ts_{\sfr}^{\rho\coprod r},\tc=\cnt\tc_{+}$,
\begin{equation}
\left(\check{\mathcal{J}}_{\sfr}^{\rho\coprod r}\right)^{\to}\circ\check{\gc}_{\tc_{+}}=\check{\mathcal{J}}_{\sfr}^{\rho}\circ\iota_{\check{\mm}_{\sfr}^{\rho}}^{\partial}.\label{eq:check J coherence}
\end{equation}
\end{prop}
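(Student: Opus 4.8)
The argument runs in close parallel to the proof of Proposition~\ref{prop:F definition and coherence}. Once the sign function $\zeta_{\sfr}^{\rho}$ is constructed, the formula $\check{\mathcal{J}}_{\sfr}^{\rho}:=\left(\zeta_{\sfr}^{\rho}\circ\check{\pi}_{\sfr}^{\rho}\right)\cdot\left(\prod\check{\mathcal{J}}\right)_{\sfr}^{\rho}$ defines $\check{\mathcal{J}}_{\sfr}^{\rho}$, so the task is to produce $\zeta_{\sfr}^{\rho}$ and then verify~(\ref{eq:check J coherence}). The first, and main, step is the analogue of Lemma~\ref{lem:prod F sign difference} for $\check{\mathcal{J}}$: there is a function $\mu_{\sfr}^{\rho\coprod r}\colon\ts_{\sfr}^{\rho\coprod r}\to\left\{\pm1\right\}$ such that, for every $\tc_{+}\in\ts_{\sfr}^{\rho\coprod r}$ with $\tc=\cnt\tc_{+}$,
\[
\left(\left(\prod\check{\mathcal{J}}\right)_{\sfr}^{\rho\coprod r}\right)^{\to}\circ\check{\gc}_{\tc_{+}}=\mu_{\sfr}^{\rho\coprod r}\left(\tc_{+}\right)\cdot\left(\prod\check{\mathcal{J}}\right)_{\sfr}^{\rho}\circ\iota_{\check{\mm}_{\sfr}^{\rho}}^{\partial}\quad\text{on }\partial^{\tc_{+}}\check{\mm}_{\tc}.
\]
As in the discussion preceding Lemma~\ref{lem:prod F sign difference}, one first checks that the two sides are isomorphisms between the same pair of rank-one $\zz/2$-local systems, namely $\Or\left(T\partial^{\tc_{+}}\check{\mm}_{\tc}\right)$ and $\Or\left(TL\right)^{\boxtimes\kf\coprod\sstar''_{\rho}}$, lying over the same underlying orbifold map; this uses that the boundary evaluation maps on $\check{\mm}_{\sfr}^{\rho}$ restrict through $\check{g}_{\tc_{+}}$ to those on $\partial^{\tc_{+}}\check{\mm}_{\tc}$ (cf.\ Proposition~\ref{thm:resolution summary}(d)) and the canonical self-duality $\Or\left(TL\right)^{\vee}\simeq\Or\left(TL\right)$. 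Hence their ratio is automatically a locally constant $\left\{\pm1\right\}$-valued function, and the content of the claim is that it is constant on $\partial^{\tc_{+}}\check{\mm}_{\tc}$.

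To see the constancy I would unwind both sides through the product decompositions $\check{\mm}_{\tc}=\prod_{v\in\tc_{0}}\check{\mm}_{\sfr_{\tc}\left(v\right)}$ and $\check{\mm}_{\tc_{+}}=\prod_{v\in\left(\tc_{+}\right)_{0}}\check{\mm}_{\sfr_{\tc_{+}}\left(v\right)}$ of Lemma~\ref{lem:labeled trees alternative def}(b). The map $\check{g}_{\tc_{+}}$ splits the single vertex $v_{i}$ of $\tc$ into the two vertices $v',v''$ of $\tc_{+}$ incident to $e_{r}$ and is the identity on the remaining factors, while $\iota_{\check{\mm}_{\sfr}^{\rho}}^{\partial}$, restricted to $\partial^{\tc_{+}}\check{\mm}_{\tc}$, is the identity tensored with the boundary inclusion of the factor $\check{\mm}_{\sfr_{\tc}\left(v_{i}\right)}$. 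So the discrepancy factors into three pieces, each visibly a function of $\tc_{+}$ alone: the Leibniz sign $\left(-1\right)^{\sum_{j<i}\dim\check{\mm}_{\sfr_{\tc}\left(v_{j}\right)}}$ of~(\ref{eq:sign ON shift})--(\ref{eq:sign of leibnitz}) (each $\dim\check{\mm}_{\sfr_{\tc}\left(v_{j}\right)}$ is even, by the orientability computation in the proof of Lemma~\ref{lem:prod F sign difference}, so this piece is in fact $+1$); the shuffle sign rearranging the ordered set $\kf_{\tc}\left(v_{i}\right)=\kf_{\tc_{+}}\left(v'\right)\coprod\kf_{\tc_{+}}\left(v''\right)$ against the global ordering of $\kf\coprod\sstar''_{\rho\coprod r}$, which is harmless by the $\Sym$-invariance~(\ref{eq:J Sym(k)-invariance}) of the $\check{\mathcal{J}}$'s on the factors; and the ``one-node'' discrepancy attached to the single factor $\check{\mm}_{\sfr_{\tc}\left(v_{i}\right)}$, i.e.\ the case $\rho=\emptyset$, $\left|\rho\coprod r\right|=1$, which compares $\check{\mathcal{J}}_{\sfr'}\boxtimes\check{\mathcal{J}}_{\sfr''}$ transported through the cartesian square~(\ref{eq:acute g cartesian squares}) defining $\check{g}_{\tc_{+}}$ (with the fiber-product orientation convention fixed by the two short exact sequences of~\S\ref{subsec:Orienting-resolutions}) against $\check{\mathcal{J}}_{\sfr_{\tc}\left(v_{i}\right)}\circ\iota^{\partial}$.

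The one-node discrepancy is the only non-formal ingredient, and I expect it to be the main obstacle. It asserts that Solomon's relative orientation of~\cite{JT} is compatible, up to a universal sign, with a single boundary-nodal degeneration of the disc. The cleanest route is via the explicit recipe recalled in~\S\ref{subsec:J Construction}: $\check{\mathcal{J}}^{main}$ is obtained by transporting an orientation of $\left(u\left(z\left(1\right)\right)\right)^{-1}TL$ around $\partial D^{2}$ and combining it with the fixed $Pin^{\pm}$ structure $\mathfrak{p}$; for a two-component nodal domain one transports around each of the two boundary circles and combines with $\mathfrak{p}$ restricted to each component, and the gluing axiom for $Pin^{\pm}$ structures, together with the freedom (guaranteed by the cyclic invariance~(\ref{eq:J cyclic invariance}) and the $\Sym\left(\kf\right)$-invariance~(\ref{eq:J Sym(k)-invariance})) to place the base points of the two transports compatibly, shows the two prescriptions agree after multiplication by a sign depending only on how the boundary and interior markings and the degree are distributed between $v'$ and $v''$ and on which side carries $\sstar'_{r}$; that is, only on $\tc_{+}$. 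Since orientation transport, the $Pin$ structure, and the splittings of the two short exact sequences all vary continuously, this sign is locally constant, and being a combinatorial quantity it is constant on $\partial^{\tc_{+}}\check{\mm}_{\tc}$.

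Granting the lemma, $\zeta_{\sfr}^{\rho}$ is produced by the same recursion as $\delta_{\sfr}^{\rho}$. Set $\zeta_{\sfr}^{\emptyset}\equiv1$. Given $\zeta_{\sfr}^{\rho}$ and $r>\max\rho$, the composite $\zeta_{\sfr}^{\rho}\circ\check{\pi}_{\sfr}^{\rho}\circ\iota_{\check{\mm}_{\sfr}^{\rho}}^{\partial}$ is constantly $\zeta_{\sfr}^{\rho}\left(\cnt\tc_{+}\right)$ on each $\partial^{\tc_{+}}\check{\mm}_{\tc}$, hence equals $\upsilon\circ\check{\pi}_{\sfr}^{\rho\coprod r}\circ\check{g}_{\tc_{+}}$ for $\upsilon\colon\ts_{\sfr}^{\rho\coprod r}\to\left\{\pm1\right\}$ with $\upsilon\left(\tc_{+}\right)=\zeta_{\sfr}^{\rho}\left(\cnt\tc_{+}\right)$; put $\zeta_{\sfr}^{\rho\coprod r}=\upsilon\cdot\mu_{\sfr}^{\rho\coprod r}$. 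Running the same chain of equalities as at the end of the proof of Proposition~\ref{prop:F definition and coherence} (with the lemma above in place of Lemma~\ref{lem:prod F sign difference}, and recalling $\check{\gc}_{\tc_{+}}=\left(\left(\prod\grave{\ff}\right)_{\sfr}^{\rho,\left\{r\right\}}\otimes\alpha\right)\circ\acute{\gc}_{\tc_{+}}$) then gives~(\ref{eq:check J coherence}) for $\rho\coprod r$. As every finite $\rho\subset\nn$ is reached from $\emptyset$ by repeatedly adjoining its current largest element, this defines $\zeta_{\sfr}^{\rho}$, and with it $\check{\mathcal{J}}_{\sfr}^{\rho}$, for all $\rho$, completing the proof.
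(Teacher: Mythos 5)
Your proposal follows the same route as the paper: the paper's own proof of this proposition is a one-line reference to the recursion in the proof of Proposition~\ref{prop:F definition and coherence}, with Lemma~\ref{lem:prod J sign difference} substituted for Lemma~\ref{lem:prod F sign difference}. You correctly reconstruct this scaffolding --- the recursive construction of $\zeta_{\sfr}^{\rho}$ from an analogous sign-discrepancy lemma for $\prod\check{\jc}$, and the reduction via $(\ref{eq:sign ON shift})$ to a single edge. (As a side note, your $\check{\pi}_{\sfr}^{\rho}$ in place of the paper's $\pi_{\sfr}^{\rho}$ in the defining formula for $\check{\mathcal{J}}_{\sfr}^{\rho}$ is actually the correct domain-matching choice, not an error.)

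The gap is in your treatment of the ``one-node discrepancy,'' which is where all the actual content of Lemma~\ref{lem:prod J sign difference} lives. You gesture at ``the gluing axiom for $Pin^{\pm}$ structures'' and continuity of orientation transport, but this does not establish that the sign is a well-defined constant on $\partial^{\tc_{+}}\check{\mm}_{\tc}$, because $\check{\mathcal{J}}^{main}$ is built out of a choice of base point for orientation transport and of a dummy-marking normalization, and the nodal degeneration interacts nontrivially with both (e.g.\ the disc bubbles off and the special points $\pm 1$ must be reassigned using $q_{1,k'+1}$ in place of $q_{1,2}$, introducing a determinant $(-1)^{k'}$). The paper handles this with a FOOO-style explicit chain of equalities of oriented bases, pivoting on the sign $W_{m}\left(\beta',\beta''\right)$ from \cite{jake-wdvv} that compares the fiber-product orientation of $\widetilde{\mm}\left(\beta'\right)\times_{L}\widetilde{\mm}\left(\beta''\right)$ with the boundary orientation of $\widetilde{\mm}\left(\beta'+\beta''\right)$; the final answer is the combinatorial sign $(-1)^{k'+\left(1+m\right)\beta'\beta''}$ of $(\ref{eq:=00005Bcheck J, G!=00005D})$. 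Neither the invocation of \cite{jake-wdvv} nor the reparametrization sign appears in your sketch, so while your high-level plan is correct, a reader could not actually complete the step that you yourself label ``the only non-formal ingredient.'' Everything surrounding that step --- the recursion, the Leibniz-sign triviality from even-dimensionality of the $\check{\mm}$ factors, and the observation that rank-one local system maps differ by a locally constant $\{\pm 1\}$-valued function --- matches the paper and is sound.
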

\begin{proof}
This is similar to the proof of Proposition \ref{prop:F definition and coherence}
above, except we use Lemma \ref{lem:prod J sign difference} below,
which computes the failure of coherence for $\prod\check{\jc}$.
\end{proof}
\begin{cor}
\label{cor:J coherence}$\mathcal{J}_{\sfr}^{\rho}:=\check{\jc}_{\sfr}^{\rho}\circ\ff_{\sfr}^{\rho}$
satisfies \emph{(coherence)}:
\[
\left(\mathcal{J}_{\sfr}^{\rho\coprod r}\right)^{\to}\circ\gc_{\sfr}^{\rho\coprod r}=\mathcal{J}_{\sfr}^{\rho}\circ\iota_{\mm_{\sfr}^{\rho}}^{\partial}.
\]
\end{cor}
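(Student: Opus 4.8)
The plan is to deduce the corollary formally from the two coherence statements just proved, Propositions~\ref{prop:F definition and coherence} and~\ref{prop:check J def and coherence}, together with the ``boundary of a local system map'' compatibility of Lemma~\ref{lem:boundary of ls map}. Since $\gc_{\sfr}^{\rho\coprod r}$ is defined only on $\partial_{-}\mm_{\sfr}^{\rho}$, and the latter is the disjoint union of the clopen pieces $\partial^{\tc_{+}}\mm_{\tc}$ as $\tc$ ranges over $\ts_{\sfr}^{\rho}$ and $\tc_{+}$ over the trees with $\cnt\tc_{+}=\tc$, it suffices to verify the identity after restriction to each such piece; there the relevant instance of $\iota_{\mm_{\sfr}^{\rho}}^{\partial}$ is of course $\iota_{\mm_{\sfr}^{\rho}}^{\partial}|_{\partial^{\tc_{+}}\mm_{\tc}}$, which agrees with $\iota_{\mm_{\sfr}^{\rho}}^{\partial_{-}}|_{\partial^{\tc_{+}}\mm_{\tc}}$.

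First I would record the compatibility of the tensor-hom adjunction $(-)^{\to}$ with the factorization $\mathcal{J}_{\sfr}^{\rho\coprod r}=\check{\jc}_{\sfr}^{\rho\coprod r}\circ\ff_{\sfr}^{\rho\coprod r}$. Because $\ff_{\sfr}^{\rho\coprod r}$ lies over $\For_{\sfr}^{\rho\coprod r}$, which does not forget the node marking $\sstar''_{r}$, one has $\ev_{\sstar''_{r}}^{\rho\coprod r}=\check{\ev}_{\sstar''_{r}}^{\rho\coprod r}\circ\For_{\sfr}^{\rho\coprod r}$, and adjointing out the $\left(\ev_{\sstar''_{r}}^{\rho\coprod r}\right)^{-1}\Or\left(TL\right)^{\vee}$ factor yields
\[
\left(\mathcal{J}_{\sfr}^{\rho\coprod r}\right)^{\to}=\left(\check{\jc}_{\sfr}^{\rho\coprod r}\right)^{\to}\circ\left(\ff_{\sfr}^{\rho\coprod r}\otimes\alpha\right),
\]
with $\alpha$ precisely the associator appearing in Proposition~\ref{prop:F definition and coherence}. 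This is a naturality statement about the tensor-hom adjunction for local system maps whose codomains live over manifolds (cf.\ Remark~\ref{rem:legal technicalities}); it is pure bookkeeping, but it is the one place where some care is needed, since the associators on the two sides of the display must be inserted consistently.

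Granting this, fix $\tc_{+}\in\ts_{\sfr}^{\rho\coprod r}$, set $\tc=\cnt\tc_{+}$, and restrict everything to $\partial^{\tc_{+}}\mm_{\tc}$. I would then chain the identities
\begin{align*}
\left(\mathcal{J}_{\sfr}^{\rho\coprod r}\right)^{\to}\circ\gc_{\sfr}^{\rho\coprod r}
&=\left(\check{\jc}_{\sfr}^{\rho\coprod r}\right)^{\to}\circ\left[\ff_{\sfr}^{\rho\coprod r}\otimes\alpha\right]\circ\gc_{\sfr}^{\rho\coprod r}\\
&=\left(\check{\jc}_{\sfr}^{\rho\coprod r}\right)^{\to}\circ\check{\gc}_{\tc_{+}}\circ\partial^{\tc_{+}}\ff_{\sfr}^{\rho}
&&\text{(Prop.~\ref{prop:F definition and coherence})}\\
&=\check{\jc}_{\sfr}^{\rho}\circ\iota_{\check{\mm}_{\sfr}^{\rho}}^{\partial}\circ\partial^{\tc_{+}}\ff_{\sfr}^{\rho}
&&\text{(Prop.~\ref{prop:check J def and coherence})}\\
&=\check{\jc}_{\sfr}^{\rho}\circ\ff_{\sfr}^{\rho}\circ\iota_{\mm_{\sfr}^{\rho}}^{\partial}
&&\text{(Lem.~\ref{lem:boundary of ls map})}\\
&=\mathcal{J}_{\sfr}^{\rho}\circ\iota_{\mm_{\sfr}^{\rho}}^{\partial},
\end{align*}
where the fourth equality uses $\ff_{\sfr}^{\rho}\circ\iota_{\mm_{\sfr}^{\rho}}^{\partial_{-}}=\iota_{\check{\mm}_{\sfr}^{\rho}}^{\partial}\circ\partial\ff_{\sfr}^{\rho}$ from Lemma~\ref{lem:boundary of ls map} (with $f=\For_{\sfr}^{\rho}$), restricted to $\partial^{\tc_{+}}\mm_{\tc}$, together with $\partial^{\tc_{+}}\ff_{\sfr}^{\rho}=\partial\ff_{\sfr}^{\rho}|_{\partial^{\tc_{+}}\mm_{\tc}}$, and the last equality is the definition $\mathcal{J}_{\sfr}^{\rho}=\check{\jc}_{\sfr}^{\rho}\circ\ff_{\sfr}^{\rho}$. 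Taking the disjoint union over all $\tc_{+}$ recovers the asserted identity on all of $\partial_{-}\mm_{\sfr}^{\rho}$.

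The hard part is essentially just the first displayed identity, namely the compatibility of $(-)^{\to}$ with composition through an associator; everything after that is the four-line diagram chase above, and no new induction on $\rho$ is required, since each ingredient proposition has already built its sign-correction function ($\delta_{\sfr}^{\rho}$ or $\zeta_{\sfr}^{\rho}$) recursively.
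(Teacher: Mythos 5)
Your proof is correct and follows essentially the same route as the paper: restrict to each piece $\partial^{\tc_{+}}\mm_{\tc}$, factor $\left(\mathcal{J}_{\sfr}^{\rho\coprod r}\right)^{\to}$ through $\left(\check{\jc}_{\sfr}^{\rho\coprod r}\right)^{\to}\circ(\ff_{\sfr}^{\rho\coprod r}\otimes\alpha)$, then chain Proposition~\ref{prop:F definition and coherence}, Proposition~\ref{prop:check J def and coherence}, and Lemma~\ref{lem:boundary of ls map}. The only cosmetic difference is that the paper keeps the intermediate factorization $\ff_{\sfr}^{\rho\coprod r}=\grave{\ff}\circ\acute{\ff}$ and the piece $\acute{\gc}_{\tc_{+}}$ visible in the chain, whereas you absorb them directly into $\check{\gc}_{\tc_{+}}$ via its definition.
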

\begin{proof}
Consider some $\tc_{+}\in\ts_{\sfr}^{\rho\coprod r}$ and $\tc=\cnt\tc_{+}$.
Restricting our attention to $\partial^{\tc_{+}}:=\partial^{\tc_{+}}\mm_{\tc}$
we have
\begin{multline*}
\left(\mathcal{J}_{\sfr}^{\rho\coprod r}\right)^{\to}\circ\gc_{\sfr}^{\rho\coprod r}|_{\partial^{\tc_{+}}\mm_{\tc}}=\left(\check{\mathcal{J}}_{\sfr}^{\rho\coprod r}\right)^{\to}\circ\left(\ff_{\sfr}^{\rho\coprod r}\otimes\alpha\right)\circ\gc_{\sfr}^{\rho\coprod r}|_{\partial^{\tc_{+}}}=\\
=\left(\check{\mathcal{J}}_{\sfr}^{\rho\coprod r}\right)^{\to}\circ\left(\grave{\ff}_{\sfr}^{\rho\coprod r,\left\{ r\right\} }\otimes\alpha\right)\circ\left(\acute{\ff}_{\sfr}^{\rho\coprod r,\left\{ r\right\} }\otimes\alpha\right)\circ\gc_{\sfr}^{\rho\coprod r}|_{\partial^{\tc_{+}}}=\\
=\left(\check{\mathcal{J}}_{\sfr}^{\rho\coprod r}\right)^{\to}\circ\left(\grave{\ff}_{\sfr}^{\rho\coprod r,\left\{ r\right\} }\otimes\alpha\right)\circ\acute{\gc}_{\tc_{+}}\circ\partial\ff_{\sfr}^{\rho}|_{\partial^{\tc_{+}}}=\\
\check{\mathcal{J}}_{\sfr}^{\rho}\circ\iota_{\check{\mm}_{\sfr}^{\rho}}^{\partial}\circ\partial\ff_{\sfr}^{\rho}|_{\partial^{\tc_{+}}}=\check{\mathcal{J}}_{\sfr}^{\rho}\circ\partial\ff_{\sfr}^{\rho}\circ\iota_{\mm_{\sfr}^{\rho}}^{\partial}=\jc_{\sfr}^{\rho}\circ\iota_{\mm_{\sfr}^{\rho}}^{\partial}.
\end{multline*}
\end{proof}
\begin{lem}
\label{lem:prod J sign difference}There exists a function $\eta_{\sfr}^{\rho\coprod r}:\ts_{\sfr}^{\rho\coprod r}\to\left\{ \pm1\right\} $
such that 
\[
\left(\prod\check{\jc}\right)_{\sfr}^{\rho}\circ\iota_{\check{\mm}_{\sfr}^{\rho}}^{\partial}|_{\partial^{\tc_{+}}\check{\mm}_{\tc}}=\left(\eta_{\sfr}^{\rho\coprod r}\left(\tc_{+}\right)\right)\left(\left(\prod\check{\jc}\right)_{\sfr}^{\rho\coprod r}\right)^{\to}\circ\check{\gc}_{\tc_{+}}
\]
for every $\tc_{+}\in\ts_{\sfr}^{\rho\coprod r},\tc=\cnt\tc_{+}$.
\end{lem}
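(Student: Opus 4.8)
The plan is to follow the template of the proof of Lemma~\ref{lem:prod F sign difference}. Fix $r>\max\rho$ and $\tc_+\in\ts_\sfr^{\rho\coprod r}$, and set $\tc=\cnt\tc_+$. The first point is that both sides of the asserted identity are isomorphisms of the rank-one local system $\Or(T\partial^{\tc_+}\check\mm_\tc)$ onto $\Or(TL)^{\boxtimes\kf\coprod\sstar''_\rho}$ lying over one and the same base map; this uses the compatibility of the evaluation maps with the gluing maps, $\check\ev_x^{\sfr,\rho}\circ\iota_{\check\mm_\sfr^\rho}^{\partial}=\check\ev_x^{\sfr,\rho\coprod r}\circ\check g_{\tc_+}$ for $x\in\kf\coprod\sstar''_\rho$, which follows from Proposition~\ref{thm:resolution summary}(d),(h) and the squares (\ref{eq:acute g cartesian squares}). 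Hence the two sides differ by a locally constant function $\partial^{\tc_+}\check\mm_\tc\to\{\pm1\}$, and the entire content of the lemma is that this function is \emph{constant}, so that it may be recorded as a single sign $\eta_\sfr^{\rho\coprod r}(\tc_+)$.

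To prove constancy I would first reduce to the case $\rho=\emptyset$. Let $v\in\tc_0$ be the vertex of $\tc$ produced by contracting the edge $e_r$, so that $\tc_+$ splits $v$ into its tail $v'$ and head $v''$. By Lemma~\ref{lem:labeled trees alternative def}(b) we have $\check\mm_\tc=\prod_{w\in\tc_0}\check\mm_{\sfr_\tc(w)}$ while $\check\mm_{\tc_+}$ is obtained by replacing the $v$-th factor with $\check\mm_{\sfr_{\tc_+}(v')}\times\check\mm_{\sfr_{\tc_+}(v'')}$; moreover $\partial^{\tc_+}\check\mm_\tc$ is the boundary of the $v$-th factor, and $(\prod\check\jc)_\sfr^\rho$, $(\prod\check\jc)_\sfr^{\rho\coprod r}$ and $\check\gc_{\tc_+}$ all act factor by factor. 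Pushing $\iota^\partial$ into the $v$-th tensor slot costs a Koszul sign $(-1)^{\sum_{w<v}\dim\check\mm_{\sfr_\tc(w)}}$ (the analogue of (\ref{eq:sign ON shift}) for $\check\mm$); but each $\sfr_\tc(w)$ is orientable, so $\dim\check\mm_{\sfr_\tc(w)}$ is even — as already observed in the proof of Lemma~\ref{lem:prod F sign difference} — and this sign equals $+1$. The factors $w\neq v$ therefore cancel between the two sides, and I am reduced to the case $\rho=\emptyset$: comparing $\check\jc_\sfr\circ\iota_{\check\mm_\sfr}^\partial$ on the boundary stratum $\partial^{\tc_+}\check\mm_\sfr$ with $(\check\jc_{\sfr'}\boxtimes\check\jc_{\sfr''})^{\to}\circ\check g_{\tc_+}$, where $\sfr',\sfr''$ are the labels of $v',v''$.

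In the case $\rho=\emptyset$, $\partial^{\tc_+}\check\mm_\sfr$ is the codimension-one boundary stratum of $\overline\mm_{0,k,l}(X,L,\beta)$ on which the disc degenerates to two discs joined at a boundary node, with $\beta'+\beta''=\beta$ and markings distributed as in (\ref{eq:decomposing moduli specifications}); it is a clopen union of finitely many connected pieces, which the symmetry groups $\Sym(\kf')$, $\Sym(\kf'')$ together with the cyclic shifts permute transitively. On the piece meeting the main component $\check\mm_\sfr^{main}$ I would establish the comparison by unwinding the construction of $\check\jc_\sfr$ recalled in $\S$\ref{subsec:J Construction}: the $Pin^{\pm}$-structure $\mathfrak p$ restricts to a $Pin^{\pm}$-structure on each disc component; the orientation transport along $\partial D^2$ starting from $z(1)$ is the composition of the transports along the two boundary arcs meeting at the node, so the chosen orientations of $TL$ at the node markings agree on the two sides; and, by \cite[Proposition 2.8]{JT} and the standard gluing isomorphism for determinant lines of linearized $\bar\partial$-operators, the determinant line on the nodal domain factors as the tensor product of the two determinant lines (using $\beta=\beta'+\beta''$ and $k+\beta=1\mod2$). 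The resulting discrepancy is a product of the outward-normal convention defining $\iota^\partial$, the shuffle permutation sorting $\kf=\kf'\coprod\kf''$, and the cyclic positions of $\sstar'_r,\sstar''_r$ — all of which depend only on $\tc_+$ — and the identity then propagates to all of $\partial^{\tc_+}\check\mm_\sfr$ using the $\Sym(\kf)$-equivariance (\ref{eq:J Sym(k)-invariance}) of $\check\jc$ together with the corresponding equivariances of $\iota^\partial$ and $\check g_{\tc_+}$. This gives constancy, hence $\eta_\sfr^{\rho\coprod r}$.

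The hard part will be this last ($\rho=\emptyset$) step: showing that Solomon's relative orientation splits at a boundary node up to a sign depending only on the combinatorial type. Once that splitting is in hand — either by the orientation-transport argument above or by quoting the boundary-coherence of the relative orientation from \cite{JT} — everything else is the Koszul- and shuffle-sign bookkeeping of the first two paragraphs, exactly parallel to Lemma~\ref{lem:prod F sign difference}; as there, the explicit value of $\eta_\sfr^{\rho\coprod r}$ will not be needed, only that it is a well-defined function on $\ts_\sfr^{\rho\coprod r}$.
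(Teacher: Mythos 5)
Your overall architecture matches the paper's proof closely, and steps (1)--(2) are essentially sound. In particular, both sides do lie over the same base map by Proposition~\ref{thm:resolution summary}(d),(h) and the cartesian squares (\ref{eq:acute g cartesian squares}), so the discrepancy is a locally constant $\{\pm1\}$-valued function, and your reduction to $\rho=\emptyset$ is valid: every factor $\check\mm_{\sfr_\tc(w)}$ ($w\ne v$) is even-dimensional by orientability of $\sfr_\tc(w)$, and $\Or(TL)$ has even rank $2m$, so the Koszul and shuffle signs from regrouping vertices and $\Or(TL)$-slots all vanish. (The paper achieves the same reduction by routing through $\acute\mm^{\{r\}}$ and observing that at most one factor is odd-dimensional, but the conclusion is identical.)

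There is, however, a genuine gap at precisely the step you flag as ``the hard part.'' The whole nontrivial content of the lemma is the $\rho=\emptyset$ claim that $\check\jc_\sfr\circ\iota^\partial_{\check\mm_\sfr}$ and $(\check\jc_{\sfr'}\boxtimes\check\jc_{\sfr''})^{\to}\circ\check g_{\tc_+}$ differ by a sign depending only on the combinatorial data $(\kf',\kf'',\beta',\beta'')$, and your write-up asserts this — ``the resulting discrepancy is a product of the outward-normal convention, the shuffle sorting $\kf$, and the cyclic positions of $\sstar'_r,\sstar''_r$'' — without verifying that these are the \emph{only} contributions. The risk is real: a priori the orientation transport, the $Pin^\pm$-gluing, and the identification of $\rr_{\text{out}}$ inside $\rr_{\beta'}\oplus\rr_{\beta''}$ could each introduce a further sign that depends on the point (or fails to be constant as you move between the pieces of $\partial^{\tc_+}\check\mm_\sfr$ that are not related by $\Sym(\kf)$ alone but require a cyclic shift, for which (\ref{eq:J cyclic invariance}) contributes a $(-1)^\beta$). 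The paper pins this down by an explicit FOOO-style string of oriented-basis equalities: fixing compatible orientation transports across the two discs (so the discrepancy between $\mm_{\sfr'}\times_L\mm_{\sfr''}$ and $\partial\widetilde\mm(\beta)$ is exactly the $Pin^\pm$-gluing sign $W_m(\beta',\beta'')$ of \cite{jake-wdvv}), accounting for the reparametrization determinants $(-1)^{k'}$ when shifting the anchor markings $\pm1$, and tracking the $\rr_\beta$-factors and $L$-slot; the outcome is $(-1)^{k'+(1+m)\beta'\beta''}$. You would need to carry out that computation (or genuinely reduce it to a cited boundary-coherence statement) rather than assert that it works out; as written, the claim that no non-combinatorial signs appear is exactly what needs to be proved.

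Two smaller points for reference: the correct citation for the determinant-line gluing sign used here is \cite{jake-wdvv}, not \cite{JT}; and although you are right that the explicit value of $\eta_\sfr^{\rho\coprod r}$ is not needed to state this lemma, the paper computes it anyway because it feeds directly into the explicit sign formulas of Lemma~\ref{lem:smoothing sign difference}, so even downstream you would eventually have to perform the same computation.
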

\begin{proof}
Consider the diagram
\begin{equation}
\xymatrix{\Or\left(T\partial^{\tc_{+}}\check{\mm}_{\tc}\right)\aru{r}{\acute{\gc}_{\tc_{+}^{!}}^{!}}\ar[d]_{\iota_{\check{\mm}_{\sfr}^{\rho}}^{\partial}} & \Or\left(T\acute{\mm}_{\tc_{+}^{!}}^{\left\{ r\right\} }\right)_{\otimes}\aru{r}{\Or\left(d\tau.\right)}\ar[d]^{\left(\prod\grave{\ff}!\right)_{\sfr}^{\rho\coprod r,\left\{ r\right\} }} & \Or\left(T\acute{\mm}_{\sfr}^{\rho\coprod r,\left\{ r\right\} }\right)_{\otimes}\ar[d]^{\left(\prod\grave{\ff}\right)_{\sfr}^{\rho\coprod r,\left\{ r\right\} }}\\
\Or\left(T\check{\mm}_{\sfr}^{\rho}\right)\ar[dr]_{\left(\prod\check{\jc}\right)_{\sfr}^{\rho}} & \ar[d]\Or\left(T\check{\mm}_{\sfr}^{\rho\coprod r}\right)_{\otimes}\ar[r]^{\Or\left(d\tau.\right)}\ar[d] & \Or\left(T\check{\mm}_{\sfr}^{\rho\coprod r}\right)_{\otimes}\ar[dl]^{\left(\prod\check{\jc}\right)_{\sfr}^{\rho}}\\
 & \Or\left(TL\right)^{\boxtimes\left(\kf\coprod\sstar''_{\rho}\right)}
}
\label{eq:diagram for check J commutator}
\end{equation}
where $\left(\prod\grave{\ff}!\right)_{\sfr}^{\rho\coprod r,\left\{ r\right\} }$
is defined similarly to $\left(\prod\grave{\ff}\right)_{\sfr}^{\rho\coprod r,\left\{ r\right\} }$,
with no sign twist, and we have
\[
\left(\prod\ff\right)_{\sfr}^{\rho\coprod r}=\left(\prod\grave{\ff}!\right)_{\sfr}^{\rho\coprod r,\left\{ r\right\} }\circ\left(\prod\acute{\ff}!\right)_{\sfr}^{\rho\coprod r,\left\{ r\right\} }.
\]
Using (\ref{eq:sign ON shift}) we reduce the computation of the sign
of the pentagon on the left to the case $\rho=\emptyset$; we may
assume $\sfr=\left(\left(\kf,\lf,\beta\right),\emptyset\right)$ for
$\kf=\left(1,...,k\right)$, so we're working over 
\[
\partial\check{\mm}_{\sfr}\to\mm_{\sfr'}\times\mm_{\sfr''}\subset\acute{\mm}_{\sfr}^{\left(1\right)}
\]
where $\sfr'=\left(\left(\kf',\lf',\beta'\right),\left\{ \sstar'_{1}\right\} \right)$
and $\sfr''=\left(\left(\kf'',\lf'',\beta''\right),\emptyset\right)$.
Using (\ref{eq:J Sym(k)-invariance}) for both the domain and codomain
of the map we may further assume (without effecting the sign) that
$\kf'=\left(1,...,k'\right)$ and $\kf''=\left(\sstar''_{1},k'+1,...,k\right)$
and that the image in $\mm_{\sfr'}\times\mm_{\sfr''}$ is represented
by 
\[
\left(\Sigma'=D^{2},\nu'=\id,\kappa',\lambda',u'\right),\left(\Sigma''=D^{2},\nu''=\id,\kappa'',\lambda'',u''\right)
\]
so that $\kappa'$ and $\kappa''$ both preserve the cyclic order.
We can also use the same initial orientation for $\mm_{\sfr'}$ and
$\mm_{\sfr}$ (both of the orientation transports begin at $1\in\underline{\kf'}\subset\underline{\kf}$;
see $\S$\ref{subsec:J Construction}), and continue the orientation
transport of $\mm_{\sfr'}$ to $\sstar'_{1}$ and use the orientation
of $\left(u'_{\sstar'_{1}}\right)^{-1}TL=\left(u''_{\sstar''_{1}}\right)^{-1}TL$
as the initial orientation for the orientation transport of $\mm_{\sfr''}$.
In particular, the orientations for the real boundary conditions $u|_{\partial\Sigma'}^{-1}TL$,
$u|_{\partial\Sigma''}^{-1}TL$ and $u|_{\partial\Sigma}^{-1}TL$,
if orientable, are compatible, so that by \cite{jake-wdvv} the corresponding
orientations for 
\[
T\left(\widetilde{\mm}\left(\beta'\right)\fibp{u\left(+1\right)}{u\left(-1\right)}\widetilde{\mm}\left(\beta''\right)\right),T\partial\widetilde{\mm}\left(\beta\right)
\]
differ by the sign 
\[
W_{m}\left(\beta',\beta''\right)=\begin{cases}
\left(-1\right)^{\beta'\beta''} & m=1\mod2\\
+1 & m=0\mod2
\end{cases}
\]
(recall the parity of $m$ affects whether $L=\rr\pp^{2m}$ is $Pin^{\pm}$).

From here the computation is a variation on the proof of \cite[Proposition 8.3.3]{FOOO},
and we try to use similar notation. We assume that $\delta'=\delta''=\emptyset$
and $\lf=\emptyset$, the computation in the other cases being similar
and giving the same result. We use subscript $L$ to denote the place
where we \emph{remove} (the pullback along $\ev_{\sstar'_{1}}=\ev_{\sstar''_{1}}$
of) an oriented base for $TL$ (this is picked out by the orientation
transport as discussed above; since $\dim L=2m$ we can shift this
without an additional sign). We let $\rr_{\beta'},\rr_{\beta''},\rr_{\beta'+\beta''}$
denote copies of the 1-parameter subgroup of $PSL_{2}\left(\rr\right)$
fixing $\pm1\in\partial D^{2}$. $\rr_{out}$ stands for the outward
normal boundary vector, whose positive generator is the image of 
\[
\left(1,-1\right)\in\mathfrak{lie}\left(\rr_{\beta'}\times\rr_{\beta''}\right)=\rr_{\beta'}\oplus\rr_{\beta''},
\]
see \cite{FOOO}. With this in place, the computation runs as follows.
\begin{align*}
 & \rr_{out}\times\mm_{\sfr'}\times_{L}\mm_{\sfr''}\times\rr_{\beta'+\beta''}=\\
=\left(-1\right)^{d'} & \left(\mm_{\sfr'}\times\rr_{\beta'}\right)\times_{L}\left(\mm_{\sfr''}\times\rr_{\beta''}\right)\\
=\left(-1\right)^{d'} & \left[\widetilde{\mm}\left(\beta'\right)\times\left(\partial D^{2}\right)^{\left(\widehat{1}^{+1},\widehat{2}^{-1},3,...,k',\sstar'_{r}\right)}\times\widetilde{\mm}\left(\beta''\right)\times\left(\partial D^{2}\right)^{\widehat{\kf''}}\right]_{L}\\
=\left(-1\right)^{d'+k'} & \left[\widetilde{\mm}\left(\beta'\right)\times\left(\partial D^{2}\right)^{\left(\widehat{1}^{-1},2,...,k',\widehat{\sstar'_{r}}^{+1}\right)}\times\widetilde{\mm}\left(\beta''\right)\times\left(\partial D^{2}\right)^{\widehat{\kf''}}\right]_{L}\\
=\left(-1\right)^{d'+k'+\tilde{d}''\cdot\left(k'-1\right)} & \left[\widetilde{\mm}\left(\beta'\right)\times\widetilde{\mm}\left(\beta''\right)\times\left(\partial D^{2}\right)^{\left(\widehat{1}^{-1},2,...,k',\widehat{k'+1}^{+1},k'+2,...,k\right)}\right]_{L}\\
=\left(-1\right)^{d'+\tilde{d}''\cdot\left(k'-1\right)} & \widetilde{\mm}\left(\beta'\right)\times_{L}\widetilde{\mm}\left(\beta''\right)\times\left(\partial D^{2}\right)^{\left(\widehat{1}^{+1},\widehat{2}^{-1},...,k\right)}\\
=\left(-1\right)^{d'+\tilde{d}''\left(k'-1\right)}\cdot W_{m}\left(\beta',\beta''\right) & \widetilde{\mm}\left(\beta'+\beta''\right)\times\left(\partial D^{2}\right)^{\left(\hat{1},\hat{2},...,k\right)}\\
=\left(-1\right)^{d'+\tilde{d}''\left(k'-1\right)}\cdot W_{m}\left(\beta',\beta''\right) & \mm_{\sfr}\times\rr_{\beta}
\end{align*}
This should be read as a string of equalities of oriented bases for
the tangent space at a point of ${\widetilde{\mm}\left(\beta'\right)\times_{L}\widetilde{\mm}\left(\beta'\right)\times\left(\partial D^{2}\right)^{k-2}}$.
For instance, $\mm_{\sfr'}$ denotes the pullback of an oriented base
for $\mm_{\sfr'}$ along the map 
\begin{align*}
q_{1,2}:\widetilde{\mm}\left(\beta'\right)\times\left(\partial D^{2}\right)^{\left(k'+1\right)-2} & \to\mm_{\sfr'}\\
\left(u',z_{1},...,z_{k'-1}\right) & \mapsto\left[D^{2},\emptyset,\kappa'=\left(+1,-1,z_{1},...,z_{k'-1}\right),\lambda'=\emptyset,u'\right]
\end{align*}
On the fourth line, we switch to a different point of $\widetilde{\mm}\left(\beta'\right)\times\left(\partial D^{2}\right)^{\left(k'+1\right)-2}$,
mapping to the same point of the moduli space but now using the map
\[
q_{1,k'+1}:\left(\tilde{u}',\tilde{z}_{1},...,\tilde{z}_{k'-1}\right)\mapsto\left[D^{2},\emptyset,\kappa'=\left(-1,\tilde{z}_{1},...,\tilde{z}_{k'-1},+1\right),\lambda'=\emptyset,\tilde{u}'\right].
\]
Consider $g\in PSL\left(2,\rr\right)$ which takes $\left(+1,-1,z_{1},...,z_{k'-1}\right)$
to $\left(-1,\tilde{z}_{1},...,\tilde{z}_{k'-1},+1\right)$. The action
of $g$ preserves the orientation of $\widetilde{\mm}\left(\beta'\right)$
by \cite[Proposition 2.8]{JT}. It is easy to see that the differential
of the action of $g$ is in the same connected component of $GL\left(k'-1,\rr\right)$
as the map 
\[
\begin{pmatrix} &  &  & -1\\
1 &  &  & -1\\
 & \ddots &  & \vdots\\
 &  & 1 & -1
\end{pmatrix}
\]
which has determinant $\left(-1\right)^{k'}$. A similar change occurs
on the sixth line, as indicated by the shifting of hats $\widehat{}$.
Finally, we have

\begin{align*}
d':=\dim\mm_{\sfr'} & =2m+\left(2m+1\right)\cdot\beta'-3+\left(k'+1\right)+2l'\overset{\mod2}{\equiv}\left(k'+1\right)+\beta'+1\equiv1\\
\tilde{d}'':=\dim\widetilde{\mm}\left(\beta''\right) & =\left(2m+1\right)\beta''\overset{\mod2}{\equiv}\beta'',
\end{align*}
so in sum, for $\hat{\rho}=\emptyset$ the left pentagon in (\ref{eq:diagram for check J commutator})
commutes up to 
\begin{equation}
\left(-1\right)^{k'+\beta''\left(k'-1\right)}W_{m}\left(\beta',\beta''\right)=\left(-1\right)^{k'+\beta'\cdot\beta''}W_{m}\left(\beta',\beta''\right)=\left(-1\right)^{k'+\left(1+m\right)\beta'\beta''},\label{eq:=00005Bcheck J, G!=00005D}
\end{equation}
which clearly factors through $\pi_{\sfr}^{\left(1\right)}$.

The square in (\ref{eq:diagram for check J commutator}) commutes
(with $+1$ sign), since the moduli factors appearing in $\check{\mm}_{\sfr}^{\rho\coprod r}$
are all even dimensional, so there's at most one odd dimensional moduli
factor in the spaces lying above them, $\mm_{\tc_{+}^{!}}^{!}$ and
$\acute{\mm}_{\sfr}^{\rho\coprod r,\left\{ r\right\} }$. The triangle
commutes up to a sign which factors through $\pi_{\sfr}^{\rho}\circ g_{\sfr}^{\rho}$
(we will not need an explicit formula for it).
\end{proof}

\subsubsection{$\Sym\left(\rho\right)$ equivariance}

\begin{prop}
For $\tau\in\Sym\left(\rho\right)$ we have
\begin{align}
\jc_{\sfr}^{\rho}\circ\Or\left(d\tau.\right) & =\sgn\left(\tau\right)\cdot\jc_{\sfr}^{\rho}\label{eq:J_s^rho equivariance}\\
\check{\jc}_{\sfr}^{\rho}\circ\Or\left(d\tau.\right) & =\sgn\left(\tau\right)\cdot\check{\jc}_{\sfr}^{\rho}\label{eq:check J equivariance}\\
\ff_{\sfr}^{\rho}\circ\Or\left(d\tau.\right) & =\Or\left(d\tau.\right)\circ\ff_{\sfr}^{\rho}\label{eq:ff equivariance}
\end{align}
\end{prop}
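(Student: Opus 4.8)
The plan is to prove all three equations by induction on $|\rho|$, deducing them from the coherence relations of Proposition~\ref{prop:F definition and coherence}, Proposition~\ref{prop:check J def and coherence} and Corollary~\ref{cor:J coherence}. The case $\rho=\emptyset$ is vacuous. For the inductive step write $r=\max\rho$ and $\rho'=\rho\setminus\{r\}$; since $\Sym(\rho)$ is generated by the subgroup $\Sym(\rho')$ fixing $r$ together with the transposition $\tau_{0}$ exchanging $r$ with the second-largest element of $\rho$, it is enough to verify \eqref{eq:J_s^rho equivariance} and \eqref{eq:check J equivariance} for these two kinds of $\tau$. Equation \eqref{eq:ff equivariance} then follows by cancelling the local system isomorphism $\check{\jc}_{\sfr}^{\rho}$ in the identity $\check{\jc}_{\sfr}^{\rho}\circ\ff_{\sfr}^{\rho}\circ\Or(d\tau.)=\check{\jc}_{\sfr}^{\rho}\circ\Or(d\tau.)\circ\ff_{\sfr}^{\rho}$, which is \eqref{eq:J_s^rho equivariance} combined with \eqref{eq:check J equivariance} together with $\jc_{\sfr}^{\rho}=\check{\jc}_{\sfr}^{\rho}\circ\ff_{\sfr}^{\rho}$.

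For $\tau\in\Sym(\rho')$: all the structure maps entering the coherence relations --- $g_{\sfr}^{\rho}$, $\ed_{r}^{\sfr,\rho}$, $\iota^{\partial}$, the maps $\gc_{\sfr}^{\rho}$ and $\check{\gc}_{\tc_{+}}$ built from the cartesian square \eqref{eq:boundary as pullback}, and the associators $\alpha$ --- are $\Sym(\rho')$-equivariant, because $\rho'$ does not involve the label $r$ and all of these were defined naturally and recursively with the symmetric group actions built in (cf.\ Lemma~\ref{lem:labeled trees alternative def}(c)). Precomposing Corollary~\ref{cor:J coherence} (resp.\ Proposition~\ref{prop:check J def and coherence}) with $\Or(d\tau.)$ and feeding in the inductive hypothesis for $\jc_{\sfr}^{\rho'}$ (resp.\ $\check{\jc}_{\sfr}^{\rho'}$) shows that $(\jc_{\sfr}^{\rho})^{\to}\circ\gc_{\sfr}^{\rho}$ (resp.\ $(\check{\jc}_{\sfr}^{\rho})^{\to}\circ\check{\gc}_{\tc_{+}}$) transforms by $\sgn(\tau)$. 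Since $\gc_{\sfr}^{\rho}$ is an isomorphism of local systems lying over the closed embedding $g_{\sfr}^{\rho}\colon\partial_{-}\mm_{\sfr}^{\rho'}\xrightarrow{\ \sim\ }(\ed_{r}^{\sfr,\rho})^{-1}(\Delta_{L})\hookrightarrow\mm_{\sfr}^{\rho}$ (a base change of $\Delta_{L}$), this forces \eqref{eq:J_s^rho equivariance} and \eqref{eq:check J equivariance} to hold over the sublocus $(\ed_{r}^{\sfr,\rho})^{-1}(\Delta_{L})$. To upgrade to all of $\mm_{\sfr}^{\rho}$: on each component $\mm_{\tc}=\prod_{v\in\tc_{0}}\mm_{\sfr_{\tc}(v)}$ the connected group $O(2m+1)^{\tc_{0}}$ acts preserving the component and the evaluation maps $\ev_{\sstar'_{r}},\ev_{\sstar''_{r}}$ are equivariant for the two projections to $O(2m+1)$; as $r$ labels an edge of the tree $\tc$ its endpoints are distinct vertices, so acting at one of them shows $(\ed_{r}^{\sfr,\rho})^{-1}(\Delta_{L})$ meets $\mm_{\tc}$ nontrivially. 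Since a map of local systems over a manifold is determined by its restriction to any subset meeting every connected component of the source, the equalities hold globally.

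For $\tau=\tau_{0}$: after an order-preserving relabeling of the resolved-node labels $\rho$, which all of the data respects, we may assume $\rho=\{1,\dots,n\}$ and $\tau_{0}=(n-1,n)$. By Lemma~\ref{cor:bd^r as pullback} applied with $r=2$ and $\rho$ replaced by $\{1,\dots,n-2\}$, the twofold resolution $g_{\sfr}^{\{1,\dots,n\},2}\colon\partial_{-}^{2}\mm_{\sfr}^{\{1,\dots,n-2\}}\to\mm_{\sfr}^{\{1,\dots,n\}}$ identifies the action of $\tau_{0}$ on $\mm_{\sfr}^{\{1,\dots,n\}}$ with the permutation of the two local boundary components on the source; the latter reverses orientation (swapping two boundary-collar coordinates has Jacobian $-1$), so $\iota^{\partial^{2}}\circ\Or(d\tau_{0}.)=-\iota^{\partial^{2}}$. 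Iterating the coherence relation twice --- noting that $\jc_{\sfr}^{\{1,\dots,n-2\}}$ does not involve the labels $n-1,n$ and so is untouched by $\tau_{0}$ --- and then passing from $(\ed_{(n-1,n)})^{-1}(\Delta_{L}^{2})$ to all of $\mm_{\sfr}^{\{1,\dots,n\}}$ exactly as in the previous paragraph, one obtains $\jc_{\sfr}^{\rho}\circ\Or(d\tau_{0}.)=-\jc_{\sfr}^{\rho}=\sgn(\tau_{0})\cdot\jc_{\sfr}^{\rho}$, and likewise for $\check{\jc}_{\sfr}^{\rho}$. This completes the induction.

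I expect the main obstacle to be the sign bookkeeping needed to make all of the above identifications hold \emph{on the nose}: one must check that the Koszul signs incurred by reordering the tensor factors of $\Or(TL)^{\boxtimes(\kf\coprod\sstar''_{\rho})}$ vanish (they do, since $\dim L=2m$ is even), that the conventions used to orient the iterated fiber products --- the short exact sequences displayed just before \eqref{eq:fiber square ls map} --- behave symmetrically under the $\Sym(2)$ that swaps the two resolved nodes and the two factors of $(L\times L)^{2}$, and that $\iota^{\partial}$, $g_{\sfr}^{\rho}$, $\gc_{\sfr}^{\rho}$, $\check{\gc}_{\tc_{+}}$ and the associators are genuinely $\Sym(\rho')$-equivariant rather than equivariant only up to sign. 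Once these are pinned down, the three equations drop out of coherence as above.
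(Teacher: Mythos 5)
Your proof is correct in outline but organizes the argument differently from the paper. The paper proves (\ref{eq:J_s^rho equivariance}) in one shot: it fixes $\rho=\{1,\dots,r\}$, passes via $\gc_\sfr^{[r],r}$ from the single cartesian square of Lemma~\ref{cor:bd^r as pullback} directly to the $\rho=\emptyset$ level, and exploits that both $\jc_\sfr^\emptyset$ and $\gc_\sfr^{[r],r}$ commute with the full $\Sym(r)$, and that $\iota_{\mm_\sfr^\emptyset}^{\partial^r}$ picks up $\sgn(\tau)$; no induction is needed, and $\Sym(r)$‑equivariance is only ever invoked at the bottom level where it is manifest. You instead induct one edge at a time and split $\Sym(\rho)$ into $\Sym(\rho')$ and an adjacent transposition, so you need $\Sym(\rho')$‑equivariance of $\gc_\sfr^\rho$, $\iota^\partial$, and the associators at each intermediate level — something the paper explicitly states and uses only at $\rho=\emptyset$ (as equation \eqref{eq:G^=00005Br=00005D,r equivariance}). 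You correctly flag this as an obligation you would have to discharge; it is true but not free, since $\gc_\sfr^\rho$ carries orientation conventions that were set up recursively. Your two‑fold‑boundary treatment of the transposition is essentially the paper's $r$‑fold argument specialized to $r=2$, so your route ends up costing both the inductive check and a special case of the paper's single lemma.

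Two smaller imprecisions worth noting. First, your extension from $(\ed_r^{\sfr,\rho})^{-1}(\Delta_L)$ to all of $\mm_\sfr^\rho$ requires that this sublocus meet every \emph{connected} component, not merely every clopen component $\mm_\tc$; the argument does go through, but one must act by the identity component $SO(2m+1)^{\tc_0}$ (which is connected and still acts transitively on $\rr\pp^{2m}$) rather than all of $O(2m+1)^{\tc_0}$ to stay within a connected component — the paper phrases this via the long exact sequence of a fibration instead. Second, your cancellation argument for (\ref{eq:ff equivariance}) is exactly the paper's, and is valid because $\check\jc_\sfr^\rho$ is an isomorphism of local systems.
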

\begin{proof}
We prove (\ref{eq:J_s^rho equivariance}). Assume without loss of
generality $\rho=\left\{ 1,...,r\right\} $. Let 
\[
\gc_{\sfr}^{\left[r\right],r}:\Or\left(T\partial_{-}^{r}\mm_{\sfr}^{\emptyset}\right)\to\Or\left(T\mm_{\sfr}^{\left\{ 1,...,r\right\} }\right)\otimes\bigotimes_{i=1}^{r}\ev_{\sstar''_{i}}^{-1}\Or\left(TL\right)
\]
be the local system map over $g_{\sfr}^{\left[r\right],r}$ associated
with the cartesian square (\ref{eq:bd^r as pullback}). If we equip
the domain with the $\Sym\left(r\right)$-action that permutes boundary
faces, we have 
\begin{equation}
\gc_{\sfr}^{\left[r\right],r}\circ\Or\left(d\tau.\right)=\Or\left(d\tau.\right)\circ\gc_{\sfr}^{\left[r\right],r}.\label{eq:G^=00005Br=00005D,r equivariance}
\end{equation}

Iteratively applying coherence we find that 
\begin{equation}
\left(\mathcal{J}_{\sfr}^{\left(1,...,r\right)}\right)^{\twoheadrightarrow}\circ\gc_{\sfr}^{\left[r\right],r}=\left(-1\right)^{\binom{r}{2}}\jc_{\sfr}^{\emptyset}\circ\iota_{\mm_{\sfr}^{\emptyset}}^{\partial^{r}}.\label{eq:iterated J coherence}
\end{equation}
Here the sign $\left(-1\right)^{\binom{r}{2}}$ comes from 
\[
\gc_{\sfr}^{\left[r\right],r}=\left(-1\right)^{\binom{r}{2}}\gc_{\sfr}^{\left(1,...,r\right)}\circ\partial\gc_{\sfr}^{\left(1,...,r-1\right)}\circ\cdots\circ\partial^{r-1}\gc_{\sfr}^{\left(1\right)}
\]
(the cartesian natural local system maps between pullbacks of $\Or\left(TL\right)$
are suppressed in this equation), which reflects a reversal of the
order of the outward normal vectors; at any rate all we need is that
this sign factor is constant on $\mm_{\sfr}^{\left[r\right]}$. Clearly,
$\jc_{\sfr}^{\emptyset}$ is $\Sym\left(r\right)$ invariant and $\iota_{\mm_{\sfr}^{\emptyset}}^{\partial^{r}}\circ\Or\left(d\tau.\right)=\sgn\left(\tau\right)\iota_{\mm_{\sfr}^{\emptyset}}^{\partial^{r}}$.
Combining this with (\ref{eq:iterated J coherence}) and (\ref{eq:G^=00005Br=00005D,r equivariance})
we deduce that (\ref{eq:J_s^rho equivariance}) holds on the image
of $g_{\sfr}^{\left[r\right],r}$. By the long exact sequence of
a fibration, $g_{\sfr}^{\left[r\right],r}$ visits every connected
component of $\mm_{\sfr}^{\left(1,...,r\right)}$, so (\ref{eq:J_s^rho equivariance})
holds everywhere and the proof of (\ref{eq:J_s^rho equivariance})
is complete. The proof of (\ref{eq:check J equivariance}) is similar,
and (\ref{eq:ff equivariance}) follows directly from (\ref{eq:J_s^rho equivariance})
and (\ref{eq:check J equivariance}).
\end{proof}

\subsubsection{Explicit signs for odd-even trees.}

We conclude the discussion of orientations with a result that will
be used in deriving the explicit formula for the fixed point contributions
in \cite{fp-loc-OGW}.

First we state a general lemma which aids the computation of $\jc$.
For a labeled tree $\tc\in\ts_{\basic}^{\left[r\right]}=\ts_{\basic}^{\left(1,...,r\right)}$,
we have 
\[
\jc_{\basic}^{\left[r\right]}|_{\mm_{\tc}}=\theta\left(\tc\right)\cdot\left(\prod\jc\right){}_{\basic}^{\left[r\right]}
\]
for $\theta\left(\tc\right)=\delta\left(\tc\right)\cdot\zeta\left(\tc\right)$
(see Propositions \ref{prop:F definition and coherence} and \ref{prop:check J def and coherence}).
The \emph{smoothing sequence }of $\tc$ is
\[
\tc=\tc^{\left(0\right)},\tc^{\left(1\right)},...,\tc^{\left(r\right)}
\]
where, for $1\leq j\leq r$, $\tc^{\left(j\right)}\in\ts_{\basic}^{\left(j+1,...,r\right)}$
is obtained from $\tc^{\left(j-1\right)}$ by contracting the edge
$j$; more precisely it is uniquely specified by requiring 
\[
\partial\mm_{\tc^{\left(j\right)}}\subset\left(g_{\basic}^{\left(j,...,r\right)}!\right)^{-1}\left(\mm_{\tc^{\left(j-1\right)}}\right).
\]

\begin{lem}
\label{lem:smoothing sign difference}We have 
\[
\theta\left(\tc\right)=\prod_{a=1}^{r}\xi\left(\tc^{\left(a-1\right)}\right)
\]
and 
\[
\zeta\left(\tc\right)=\prod_{a=1}^{r}\check{\xi}\left(\tc^{\left(a-1\right)}\right)
\]
where for $1\leq a\leq r$, $\xi\left(\tc^{\left(a-1\right)}\right),\check{\xi}\left(\tc^{\left(a-1\right)}\right)$
are computed as follows. We denote by $v',v''$ the tail and head,
respectively, of the edge $a$ in $\tc^{\left(a-1\right)}$, and let
\[
k'=\left|\kf_{\tc^{\left(a-1\right)}}\left(v'\right)\right|,\beta'=\beta_{\tc^{\left(a-1\right)}}\left(v'\right),\beta''=\beta_{\tc^{\left(a-1\right)}}\left(v''\right)
\]
Let $\tc_{0}^{\left(a\right)}=\left(v_{1},...,v_{r+1-a}\right)$ and
let $v_{i}$ be the vertex obtained from contracting the edge $a$.
We have 
\begin{align*}
\xi_{\tc}\left(\tc^{\left(a-1\right)}\right) & =\left(-1\right)^{r-a}\cdot\left(-1\right)^{k'+\left(1+m\right)\beta'\beta''}\cdot\left(-1\right)^{\sum_{j<i}\left|\sigma_{\tc^{\left(a\right)}}\left(v_{j}\right)\right|}\cdot\sgn\left(\sigma'\backslash\sstar'_{a},\sigma''\right)\\
\check{\xi}_{\tc}\left(\tc^{\left(a-1\right)}\right) & =\left(-1\right)^{r-a}\cdot\left(-1\right)^{k'+\left(1+m\right)\beta'\beta''}
\end{align*}
\end{lem}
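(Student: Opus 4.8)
The plan is to prove the two product formulas for $\theta(\tc)=\delta(\tc)\cdot\zeta(\tc)$ and for $\zeta(\tc)$ by induction on $r$, reading off each factor from the recursive definitions of $\delta$ and $\zeta$ in the proofs of Propositions \ref{prop:F definition and coherence} and \ref{prop:check J def and coherence}. Recall that $\delta_\sfr^{\rho\coprod r}=\upsilon\cdot\epsilon_\sfr^{\rho\coprod r}$, where $\upsilon$ is the restriction of $\delta_\sfr^\rho$ pulled back along $\pi_\sfr^{\rho\coprod r}\circ g_\sfr^{\rho\coprod r}$, and similarly $\zeta_\sfr^{\rho\coprod r}=\upsilon'\cdot\eta_\sfr^{\rho\coprod r}$ with $\upsilon'$ coming from $\zeta_\sfr^\rho$ and $\eta$ as in Lemma \ref{lem:prod J sign difference}. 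Passing from $\tc$ to $\tc^{(1)}=\cnt_\basic^{[r]}(\tc)$ is exactly the operation $\cnt$ that appears in these recursions (contracting the edge labeled $1$; note the smoothing sequence contracts edges in the order $1,2,\dots,r$, matching the recursion which peels off $\max\rho$, after applying the cyclic shift $\tau$ built into $g_\sfr^{\rho\coprod\{r\}}$ in (\ref{eq:def of g})). So the single-step relation to establish is $\theta(\tc)=\xi(\tc^{(0)})\cdot\theta(\tc^{(1)})\circ\pi$, and likewise for $\zeta$, where $\xi(\tc^{(0)})$ is the sign $\epsilon$ (times the $\delta$-from-$\iota^\partial$ shift sign) evaluated on the component $\partial^\tc\mm_{\tc^{(1)}}$, and $\check\xi(\tc^{(0)})$ is the corresponding value of $\eta$.

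First I would pin down $\check\xi$: the formula $\zeta_\sfr^{\rho\coprod r}=\upsilon'\cdot\eta_\sfr^{\rho\coprod r}$ together with $\zeta_\sfr^\emptyset\equiv1$ immediately yields $\zeta(\tc)=\prod_{a=1}^r\check\xi(\tc^{(a-1)})$ once we identify $\check\xi(\tc^{(a-1)})$ with $\eta_\sfr^{(a,\dots,r)}(\tc^{(a-1)})$. But Lemma \ref{lem:prod J sign difference}'s proof computes exactly this sign: it equals $(-1)^{k'+(1+m)\beta'\beta''}$ from (\ref{eq:=00005Bcheck J, G!=00005D}) times the Leibniz shift sign $(-1)^\bullet$ from (\ref{eq:sign ON shift})/(\ref{eq:sign of leibnitz}), which for $\check\mm$ equals $\sum_{j<i}\dim\check\mm_{\sfr_{\tc^{(a)}}(v_j)}$ — and every $\check\mm$-factor is even dimensional, so that shift sign is $+1$. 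The remaining $(-1)^{r-a}$ is the bookkeeping sign $(-1)^{\binom{r-a+1}{2}}/(-1)^{\binom{r-a}{2}}$ coming from iterating coherence as in (\ref{eq:iterated J coherence}) at level $a$ (reversing the order of the outward normals when we split off one boundary direction at a time). Collecting these gives the stated $\check\xi$.

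For $\xi$ I would argue the same way but now track the $\ff$-side: $\xi(\tc^{(a-1)})$ is $\theta$'s single-step ratio, i.e. the product of the $\epsilon$-sign from Lemma \ref{lem:prod F sign difference} and the $\eta$-sign above, together with the Leibniz shift sign — which for $\mm$ (not $\check\mm$) is $(-1)^{\sum_{j<i}\dim\mm_{\sfr_{\tc^{(a)}}(v_j)}}=(-1)^{\sum_{j<i}|\sigma_{\tc^{(a)}}(v_j)|}$ since $\dim\mm_{\sfr}=\dim\check\mm_\sfr+|\sigma|$ and the $\check\mm$ part is even. The proof of Lemma \ref{lem:prod F sign difference} identifies $\epsilon_\sfr^{\rho\coprod r}(\tc_+)$ with the shuffle sign $\sgn(\sigma'\backslash\sstar'_r,\sigma'')$ from (\ref{eq:sign of shuffle}) (the right square of (\ref{eq:G commutator in two steps}) contributing a $+1$ here, since I would check its factors are even dimensional just as in the $\check J$ case). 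Multiplying the $\ff$-part by $\check\xi$ gives $(-1)^{r-a}\cdot(-1)^{k'+(1+m)\beta'\beta''}\cdot(-1)^{\sum_{j<i}|\sigma_{\tc^{(a)}}(v_j)|}\cdot\sgn(\sigma'\backslash\sstar'_a,\sigma'')$, the claimed $\xi$. Finally $\theta(\tc)=\delta(\tc)\zeta(\tc)=\prod_a\bigl(\text{($\ff$-ratio)}\cdot\check\xi(\tc^{(a-1)})\bigr)=\prod_a\xi(\tc^{(a-1)})$, completing the induction.

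\textbf{Main obstacle.} I expect the delicate point to be the exact bookkeeping of the $(-1)^{r-a}$ factors: one must be careful that the cyclic shift $\tau$ in the definition (\ref{eq:def of g}) of $g_\sfr^{\rho\coprod\{r\}}$, the reversal-of-outward-normals sign $(-1)^{\binom r2}$ in (\ref{eq:iterated J coherence}), and the order in which the smoothing sequence contracts edges (ascending $1,2,\dots,r$, versus the recursion's descending $\max\rho$) all combine to give precisely $(-1)^{r-a}$ at step $a$ and not some other power. A clean way to handle this is to set up the iteration so that at each stage we have $r-a+1$ active node-edges and splitting off one contributes the increment $(-1)^{\binom{r-a+1}{2}-\binom{r-a}{2}}=(-1)^{r-a}$; I would verify this once at the level of the abstract iterated-coherence identity and then quote it. Beyond that, everything is a matter of assembling signs already computed in the two lemmas, so the argument is essentially mechanical.
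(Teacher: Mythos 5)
Your telescoping strategy is the right shape, and you correctly identify the three ingredients (the $(-1)^{k'+(1+m)\beta'\beta''}$ coming from (\ref{eq:=00005Bcheck J, G!=00005D}), the Leibniz shift $(-1)^{\sum_{j<i}|\sigma_{\tc^{(a)}}(v_j)|}$, the shuffle sign), as well as the reason the Leibniz sign drops out on the $\check\jc$ side (even-dimensionality of all $\check\mm$ factors). But two links in your chain are not actually established. First, the recursions $\delta_\sfr^{\rho\coprod r}=\upsilon\cdot\epsilon_\sfr^{\rho\coprod r}$ and $\zeta_\sfr^{\rho\coprod r}=\upsilon'\cdot\eta_\sfr^{\rho\coprod r}$ peel off $r=\max(\rho\coprod r)$, while the smoothing sequence passes from $\tc^{(a-1)}\in\ts_\basic^{(a,\dots,r)}$ to $\tc^{(a)}\in\ts_\basic^{(a+1,\dots,r)}$ by contracting $a=\min$. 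You say the cyclic shift $\tau$ ``built into'' $g_\sfr^{\rho\coprod r}$ in (\ref{eq:def of g}) bridges this, but that shift is already inside the definitions of $\cnt$ and of the recursion — it does not automatically turn a max-peeling recursion into a min-peeling one. Writing $\check\xi(\tc^{(a-1)}) = \eta_\sfr^{(a,\dots,r)}(\tc^{(a-1)})$ is therefore not well formed without a further relabeling argument. The paper does not unwind the recursion for $\delta,\zeta$ at all; it instead starts a fresh chain from $(-1)^{r-a}(\jc_\basic^{[r]})^\to\circ\acute\gc^!_{\tc^{(a-1)!}} = (\jc_\basic^{[r]})^\to\circ\Or(d\tau.)\circ\acute\gc^!_{\tc^{(a-1)!}}$, uses $\Sym(\rho)$-equivariance to produce the $(-1)^{r-a}=\sgn(\tau)$ \emph{directly}, and then invokes coherence (Corollary \ref{cor:J coherence}) and the explicit signs from the two diagrams.

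Second, your identification $\xi = \epsilon\cdot\check\xi$, with $\epsilon$ read off from Lemma \ref{lem:prod F sign difference} and $\check\xi=\eta$ from Lemma \ref{lem:prod J sign difference}, silently requires that the right square of (\ref{eq:G commutator in two steps}) and the triangle of (\ref{eq:diagram for check J commutator}) each contribute $+1$ (or cancel exactly against your $\binom{r-a+1}{2}$ bookkeeping). The paper explicitly declines to compute those two signs — it records only that they factor through $\pi$ — and its $(-1)^{\blacklozenge}$ uses only the \emph{left} square of (\ref{eq:G commutator in two steps}) and the \emph{left} trapezoid of (\ref{eq:diagram for check J commutator}), with the $\tau$-sign absorbing whatever your scheme would have to assign to the uncomputed pieces. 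In particular your even-dimensionality argument for the right square of (\ref{eq:G commutator in two steps}) being $+1$ does not transfer from the $\check\jc$ case: the factors of $\acute{\mm}_{\sfr}^{\rho\coprod r,\{r\}}$ are not all even-dimensional, since the one carrying $\sstar'_r$ has $\dim\check{\mm}_{\sfr'}+1$. You land on the correct formula, but the route you describe has two steps that would need a nontrivial verification the paper avoids by conjugating with $\Or(d\tau.)$ before invoking coherence.
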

\begin{proof}
We consider $\xi_{\tc}$ first. We have $\theta\left(\tc^{\left(r\right)}\right)=1$,
so it suffices to prove that $\theta\left(\tc^{\left(a-1\right)}\right)=\xi\left(\tc^{\left(a\right)}\right)\cdot\theta\left(\tc^{\left(a\right)}\right)$.
Without loss of generality, assume $a=1$. Let $\tau:\left(1,...,r\right)\mapsto\left(r,1,...,r-1\right)$
be the label-preserving bijection. We have
\begin{multline*}
\left(-1\right)^{r-1}\left(\jc_{\basic}^{\left[r\right]}\right)^{\to}\circ\acute{\gc}_{\tc^{\left(0\right)!}}^{!}=\left(\jc_{\basic}^{\left[r\right]}\right)^{\to}\circ\Or\left(d\tau.\right)\circ\acute{\gc}_{\tc^{\left(0\right)!}}^{!}=\\
=\left(\jc_{\basic}^{\left[r\right]}\right)^{\to}\circ\gc_{\basic}^{\left[r\right]}=\jc_{\basic}^{\left[r-1\right]}\circ\iota_{\mm_{\basic}^{\left[r-1\right]}}^{\partial}=\\
=\theta\left(\tc^{\left(1\right)}\right)\cdot\left(\prod\jc\right)_{\basic}^{\left[r-1\right]}\circ\iota_{\mm_{\basic}^{\left[r-1\right]}}^{\partial}=\left(-1\right)^{\blacklozenge}\theta\left(\tc^{\left(1\right)}\right)\cdot\left(\left(\prod\jc\right)_{\basic}^{\left[r\right]}\right)^{\to}\circ\acute{\gc}_{\tc^{\left(0\right)!}}^{!}
\end{multline*}
where the sign
\[
\left(-1\right)^{\blacklozenge}=\left(-1\right)^{k'+\left(1+m\right)\beta'\beta''}\cdot\left(-1\right)^{\sum_{j<i}\left|\sigma_{\tc^{\left(a\right)}}\left(v_{j}\right)\right|}\cdot\sgn\left(\sigma'\backslash\sstar'_{r},\sigma''\right)
\]
is given by (\ref{eq:sign of leibnitz},\ref{eq:sign of shuffle},\ref{eq:=00005Bcheck J, G!=00005D})
(this is the product of signs of the left square in (\ref{eq:G commutator in two steps})
and the left trapezoid in (\ref{eq:diagram for check J commutator})).
The result follows.

The computation of $\check{\xi}_{\tc}$ is similar:
\begin{multline*}
\left(-1\right)^{r-1}\left(\check{\jc}_{\basic}^{\left[r\right]}\right)^{\to}\circ\check{\gc}{}_{\tc^{\left(0\right)!}}^{!}=\left(\check{\jc}_{\basic}^{\left[r\right]}\right)^{\to}\circ\Or\left(d\tau.\right)\circ\check{\gc}{}_{\tc^{\left(0\right)!}}^{!}=\\
=\left(\check{\jc}_{\basic}^{\left[r\right]}\right)^{\to}\circ\check{\gc}{}_{\tc^{\left(0\right)}}=\check{\jc}_{\basic}^{\left[r-1\right]}\circ\iota_{\mm_{\basic}^{\left[r-1\right]}}^{\partial}=\\
=\zeta\left(\tc^{\left(1\right)}\right)\cdot\left(\prod\check{\jc}\right)_{\basic}^{\left[r-1\right]}\circ\iota_{\mm_{\basic}^{\left[r-1\right]}}^{\partial}=\left(-1\right)^{\check{\blacklozenge}}\zeta\left(\tc^{\left(1\right)}\right)\cdot\left(\left(\prod\check{\jc}\right)_{\basic}^{\left[r\right]}\right)^{\to}\circ\check{\gc}{}_{\tc^{\left(0\right)!}}^{!}.
\end{multline*}
Here $\check{\gc}{}_{\tc^{\left(0\right)!}}^{!}:=\Or\left(d\tau.\right)^{-1}\circ\check{\gc}{}_{\tc^{\left(0\right)}}$
(justifying the equality between the first and second line above).
The sign $\left(-1\right)^{\check{\blacklozenge}}$ is the sign of
the parallelogram on the bottom left of the following diagram (compare
(\ref{eq:diagram for check J commutator}))
\[
\xymatrix{\Or\left(T\partial^{\tc_{+}}\check{\mm}_{\tc}\right)\ar[dr]_{\check{\gc}_{\tc_{+}^{!}}^{!}}\aru{r}{\acute{\gc}_{\tc_{+}^{!}}^{!}}\ar[d]_{\iota^{\partial}}\ar@{-->}[drr] & \Or\left(T\acute{\mm}_{\tc_{+}^{!}}^{\left\{ r\right\} }\right)_{\otimes}\aru{r}{\Or\left(d\tau.\right)}\ar[d]^{\left(\prod\grave{\ff}!\right)_{\sfr}^{\rho\coprod r,\left\{ r\right\} }} & \Or\left(T\acute{\mm}_{\sfr}^{\rho\coprod r,\left\{ r\right\} }\right)_{\otimes}\ar[d]^{\left(\prod\grave{\ff}\right)_{\sfr}^{\rho}}\\
\Or\left(T\check{\mm}_{\sfr}^{\hat{\rho}}\right)\ar[dr]_{\left(\prod\check{\jc}\right)_{\sfr}^{\hat{\rho}}} & \Or\left(T\check{\mm}_{\sfr}^{\rho\coprod r}\right)_{\otimes}\ar[d]\ar[r]^{\Or\left(d\tau.\right)} & \Or\left(T\check{\mm}_{\sfr}^{\rho\coprod r}\right)_{\otimes}\ar[dl]^{\left(\prod\check{\jc}\right)_{\sfr}^{\rho}}\\
 & \Or\left(TL\right)^{\boxtimes\left(\kf\coprod\sstar''_{\rho}\right)}
}
\]
The dashed arrow is $\check{\gc}_{\tc_{+}}$, which by definition
is the map that makes the triangle directly above it commute. By the
definition of $\check{\gc}_{\tc_{+}^{!}}^{!}$, the triangle directly
below this map also commutes. We've argued above that the square commutes,
so $\check{\gc}{}_{\tc_{+}^{!}}^{!}=\left(\prod\grave{\ff}!\right)_{\sfr}^{\rho\coprod r,\left\{ r\right\} }\circ\acute{\gc}_{\tc_{+}^{!}}^{!}$,
so that 
\[
\left(-1\right)^{\check{\blacklozenge}}=\left(-1\right)^{k'+\left(1+m\right)\beta'\beta''}
\]
by (\ref{eq:=00005Bcheck J, G!=00005D}).
\end{proof}
\begin{defn}
(a) A labeled tree $\tc\in\ts_{\basic}^{\left[r\right]}$ will be
called an\emph{ odd-even tree }if all the moduli specifications $\left(\left(\kf,\lf,\beta\right),\sigma\right)=\sfr_{\tc}\left(v\right)$
for a vertex $v\in\tc_{0}$ satisfy the following condition: if $\beta=0\mod2$
then $\sigma=\emptyset$ and if $\beta=1\mod2$ then $\kf=\emptyset$
(in particular this means the tree is bipartite with respect to the
partition into odd degree and even degree vertices, which explains
the name). 

(b) An odd-even tree $\tc\in\ts_{\basic}^{\left[r\right]}$ will be
called \emph{sorted }if the graph spanned by the edges $1,...,a$
is connected for every $1\leq a\leq r$, and such that for any $v\in\tc_{0}$,
if ${\left\{ \sstar'_{i},\sstar'_{j}\right\} \subset\sigma_{\tc}\left(v\right)}$
for some $i<j$, then $\sstar'_{a}\in\sigma_{\tc}\left(v\right)$
for all $i\leq a\leq j$.
\end{defn}
For every odd-even tree $\tc\in\ts_{\basic}^{\left[r\right]}$ there
exists at least one $\tau\in\Sym\left(r\right)$ such that $\tau.\tc$
is sorted. The following facts are readily verified:
\begin{itemize}
\item Ordering the vertices of $\tc$, $\tc_{0}=\left(v_{1},...,v_{r+1}\right)$
as in Lemma \ref{lem:labeled trees alternative def}(b), if $\beta_{\tc}\left(v_{i}\right)=1\mod2$
and $\beta_{\tc}\left(v_{j}\right)=0\mod2$ then $i<j$, so the odd
vertices appear before the even vertices.
\item Let $\no$ denote the number of odd vertices. There are integers $r\geq s_{1}\geq s_{2}\geq\cdots\geq s_{\no}=0$
such that for $1\leq i\leq\no$ we have
\[
\sigma_{\tc}\left(v_{i}\right)=\left(\sstar'_{s_{i}+1},...,\sstar'_{s_{\left(i-1\right)}}\right)
\]
and $\sigma_{\tc}\left(v_{i}\right)=\emptyset$ for $\no+1\leq i\leq r+1$.
\item $\tc^{\left(j\right)}$ is sorted odd-even for every $\tc^{\left(j\right)}$
in the smoothing sequence of $\tc$.
\end{itemize}
\begin{prop}
Let $\basic=\left(\kf,\lf,\beta\right)$ be a basic moduli specification.
For a sorted odd-even\textup{ $\tc\in\ts_{\basic}^{\left[r\right]}:=\ts_{\left(\basic,\emptyset\right)}^{\left[r\right]}$
we have
\begin{align}
\jc_{\basic}^{\left[r\right]}|_{\mm_{\tc}} & =\left(-1\right)^{\left(1+m\right)\cdot\binom{o}{2}}\left(\prod\jc\right){}_{\basic}^{\left[r\right]}\label{eq:jc vs product}\\
\check{\jc}_{\basic}^{\left[r\right]}|_{\mm_{\tc}} & =\left(-1\right)^{\binom{r}{2}}\left(-1\right)^{\left(1+m\right)\cdot\binom{o}{2}}\left(\prod\check{\jc}\right){}_{\basic}^{\left[r\right]}\label{eq:check jc vs product}\\
\ff_{\basic}^{\left[r\right]}|_{\mm_{\tc}} & =\left(-1\right)^{\binom{r}{2}}\left(\prod\ff\right)_{\basic}^{\left[r\right]}\label{eq:ff vs product}
\end{align}
}
\end{prop}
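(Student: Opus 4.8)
The plan is to derive all three identities from Lemma \ref{lem:smoothing sign difference} by a sign count along the smoothing sequence $\tc=\tc^{(0)},\tc^{(1)},\dots,\tc^{(r)}$ of $\tc$.

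\emph{Reduction and telescoping.} The three equations are not independent: since $\jc_{\basic}^{[r]}=\check\jc_{\basic}^{[r]}\circ\ff_{\basic}^{[r]}$ and $\left(\prod\jc\right)_{\basic}^{[r]}=\left(\prod\check\jc\right)_{\basic}^{[r]}\circ\left(\prod\ff\right)_{\basic}^{[r]}$, equation (\ref{eq:jc vs product}) is the product of (\ref{eq:ff vs product}) and (\ref{eq:check jc vs product}), with compatible signs $(-1)^{\binom r2}(-1)^{\binom r2}(-1)^{(1+m)\binom o2}=(-1)^{(1+m)\binom o2}$. So it suffices to prove the latter two. Writing $\jc_{\basic}^{[r]}|_{\mm_\tc}=\theta(\tc)\left(\prod\jc\right)_{\basic}^{[r]}$, $\check\jc_{\basic}^{[r]}|_{\mm_\tc}=\zeta(\tc)\left(\prod\check\jc\right)_{\basic}^{[r]}$, $\ff_{\basic}^{[r]}|_{\mm_\tc}=\delta(\tc)\left(\prod\ff\right)_{\basic}^{[r]}$ with $\theta(\tc)=\delta(\tc)\cdot\zeta(\tc)$ (the $\delta,\zeta,\theta$ of Lemma \ref{lem:smoothing sign difference} and Propositions \ref{prop:F definition and coherence}, \ref{prop:check J def and coherence}), the goal becomes $\zeta(\tc)=(-1)^{\binom r2}(-1)^{(1+m)\binom o2}$ and $\delta(\tc)=(-1)^{\binom r2}$. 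By Lemma \ref{lem:smoothing sign difference}, $\zeta(\tc)=\prod_{a=1}^{r}\check\xi(\tc^{(a-1)})$ and $\delta(\tc)=\theta(\tc)/\zeta(\tc)=\prod_{a=1}^{r}\bigl(\xi(\tc^{(a-1)})/\check\xi(\tc^{(a-1)})\bigr)$, and — by the facts recorded just before the statement — every $\tc^{(a-1)}$ in the smoothing sequence is again sorted odd-even, so each factor can be read off from the bipartite data of the edge being contracted. The factor $(-1)^{r-a}$ common to $\xi$ and $\check\xi$ already multiplies to $\prod_{a=1}^{r}(-1)^{r-a}=(-1)^{\binom r2}$: the universal leading sign, which is exactly the reversal of the order of the $r$ outward normals, as in the derivation of (\ref{eq:bd^r as pullback}).

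\emph{The $W_m$-contribution in $\zeta$.} What remains for (\ref{eq:check jc vs product}) is $\prod_{a=1}^{r}(-1)^{k'_a+(1+m)\beta'_a\beta''_a}=(-1)^{(1+m)\binom o2}$. I would split this. The $(1+m)\beta'_a\beta''_a$-part is evaluated by a conservation argument that uses only that the degree of a merged piece is the sum of the degrees of its parts and that the last piece has total degree $\beta$: summing the elementary identity $\bar\beta'\bar\beta''=\tfrac12\bigl(\bar\beta'+\bar\beta''-(\bar\beta'\oplus\bar\beta'')\bigr)$ over the $r$ pairwise mergings telescopes $\sum_a(\beta'_a\bmod 2)(\beta''_a\bmod 2)$ to $\tfrac12\bigl(o-(\beta\bmod 2)\bigr)=\lfloor o/2\rfloor\equiv\binom o2\pmod 2$. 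The $k'_a$-part is pinned down by orientability of each vertex moduli specification, which forces $k'_a\equiv 1+\beta'_a\pmod 2$, combined with the odd/even dichotomy for the endpoints of the contracted edge; this forces the residual $k'$-contribution to be trivial. Hence $\prod_a(-1)^{k'_a+(1+m)\beta'_a\beta''_a}=(-1)^{(1+m)\binom o2}$ and $\zeta(\tc)=(-1)^{\binom r2}(-1)^{(1+m)\binom o2}$.

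\emph{The shuffle-contribution in $\delta$.} For (\ref{eq:ff vs product}) one has $\delta(\tc)=\prod_{a=1}^{r}(-1)^{\sum_{j<i}|\sigma_{\tc^{(a)}}(v_j)|}\cdot\sgn(\sigma'\setminus\sstar'_a,\sigma'')$, and here the extra ``sorted'' hypothesis on the sets $\sigma_\tc(v)$ is what is doing the work: these are consecutive blocks $\{\sstar'_{s_i+1},\dots,\sstar'_{s_{i-1}}\}$ laid out in decreasing order along the odd vertices, so contracting the minimal remaining edge $a$ at each stage amalgamates two already-adjacent, correctly-ordered blocks. I would check that under this hypothesis $\sgn(\sigma'\setminus\sstar'_a,\sigma'')=1$ and $\sum_{j<i}|\sigma_{\tc^{(a)}}(v_j)|\equiv r-a\pmod 2$ (the still-superfluous labels preceding the merged vertex in the vertex order are precisely those carried by odd vertices that have already been absorbed, which happens in even-degree blobs). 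This gives $\delta(\tc)=\prod_a(-1)^{r-a}=(-1)^{\binom r2}$, and then $\theta(\tc)=\delta(\tc)\zeta(\tc)=(-1)^{(1+m)\binom o2}$ yields (\ref{eq:jc vs product}).

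\emph{Main obstacle.} The crux is the $k'_a$-part of the $\zeta$-computation: one must verify that along the smoothing sequence the tail of the contracted edge always has the degree parity and $\kf$-size dictated by the sorted odd-even structure, so that $\prod_a(-1)^{k'_a}$ drops out and the accumulated $W_m(\beta'_a,\beta''_a)$-signs total exactly $(1+m)\binom o2$ rather than some other function of the combinatorics. This is precisely the point where one must use that sortedness (and enough of the bipartite structure) is inherited by each $\tc^{(a-1)}$; everything else is routine unwinding of Lemmas \ref{lem:prod F sign difference}, \ref{lem:prod J sign difference} and \ref{lem:smoothing sign difference}.
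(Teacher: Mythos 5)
Your overall strategy matches the paper's: both run along the smoothing sequence $\tc=\tc^{(0)},\dots,\tc^{(r)}$, apply Lemma~\ref{lem:smoothing sign difference}, and reduce everything to a sign count. Your reduction of (\ref{eq:jc vs product}) to the other two via $\jc=\check\jc\circ\ff$ is fine (the paper goes the other way, proving (\ref{eq:jc vs product}) and (\ref{eq:check jc vs product}) directly and deducing (\ref{eq:ff vs product}), but this is equivalent). For the $\check\xi$ factor, your parity-conservation telescoping via $\bar\beta'\bar\beta''=\tfrac12\bigl(\bar\beta'+\bar\beta''-(\bar\beta'\oplus\bar\beta'')\bigr)$ summed along the contractions is correct and is a genuinely different route from the paper's identity $\sum_a\beta'_a\beta''_a\equiv\sum_{i<j}\beta_\tc(v_i)\beta_\tc(v_j)$, arguably a bit slicker; your observation that the tail $v'$ of the edge being contracted always has $\sigma\neq\emptyset$ hence is odd, so $k'_a\equiv 1+\beta'_a\equiv 0\pmod 2$, is exactly what's needed and correct.

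The gap is in your treatment of $\delta$. You propose to show $\xi(\tc^{(a-1)})/\check\xi(\tc^{(a-1)})=f_3(a)f_4(a)=(-1)^{r-a}$ \emph{per step}, i.e.\ $\sgn(\sigma'\setminus\sstar'_a,\sigma'')=1$ and $\sum_{j<i}|\sigma_{\tc^{(a)}}(v_j)|\equiv r-a$ for each $a$. This is not what the paper's proof does, and it is not true in general. When an odd vertex carries several $\sigma$-labels (say $\sigma_\tc(v_j)=\{\sstar'_{s_j+1},\dots,\sstar'_{s_{j-1}}\}$ with more than one element), the label $\sstar'_a$ is already absorbed into the merged blob at step $b=s_j+1$, not at step $b=a$, so its contribution to $f_3(b)$ and to the shuffle $f_4(s_j+1)$ is distributed over the earlier steps $b\le s_j+1$ rather than concentrated at step $a$. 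Consequently $f_3(a)f_4(a)$ need not equal $(-1)^{r-a}$ for individual $a$ — for instance, when the first odd vertex carries both $\sstar'_1$ and $\sstar'_2$, you get $f_3(1)f_4(1)=+1$ from the per-step formula while your claim forces $-1$. What restores the total $(-1)^{\binom{r}{2}}$ is exactly the per-label bookkeeping the paper performs: for each fixed $a$, one tracks the contribution of the single label $\sstar'_a$ to $f_3(b)$ for $1\le b<s_j+1$ and to the shuffle at $b=s_j+1$, and these sum to $(-1)^{r-a}$ (equivalently $(-1)^{a-1}$); the product over $a$ is $(-1)^{\binom{r}{2}}$. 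Your parenthetical heuristic — ``the still-superfluous labels preceding the merged vertex ... are precisely those carried by odd vertices that have already been absorbed'' — does not hold when the step-$a$ merge absorbs a vertex that still carries unprocessed $\sigma$-labels. So the specific claim you flag as ``I would check'' is the one that fails; you would need to switch to the paper's per-label accounting (or find another route to $\prod_a f_3(a)f_4(a)=(-1)^{\binom r2}$) to complete this part.
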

where $o$ is the number of odd vertices.
\begin{proof}
We prove (\ref{eq:jc vs product}). Using the notation of Lemma
\ref{lem:smoothing sign difference}, and the properties of sorted
odd-even trees listed above, we have 
\[
\theta\left(\tc\right)=\prod_{a=1}^{r}\xi\left(\tc^{\left(a-1\right)}\right)
\]
for 
\[
\xi\left(\tc^{\left(a-1\right)}\right)=\underbrace{\left(-1\right)^{\left(1+m\right)\beta'\beta''}}_{f_{1}\left(a\right)}\underbrace{\left(-1\right)^{\left(r-a\right)}}_{f_{2}\left(a\right)}\underbrace{\left(-1\right)^{\sum_{j<i}\left|\sigma_{\tc^{\left(a\right)}}\left(v_{j}\right)\right|}}_{f_{3}\left(a\right)}\underbrace{\left(\sigma'\backslash\sstar'_{a},\sigma''\right)}_{f_{4}\left(a\right)}.
\]
It is easy to see that
\[
\prod_{a=1}^{r}f_{1}\left(a\right)=\left(-1\right)^{\left(1+m\right)\cdot\left(\sum_{i<j}\beta_{\tc}\left(v_{i}\right)\cdot\beta_{\tc}\left(v_{j}\right)\right)}=\left(-1\right)^{\left(1+m\right)\binom{o}{2}}.
\]
Fix some $1\leq a\leq r$. Let $1\leq j\leq o$. be such that $\sstar'_{a}\in\left(\sstar'_{s_{j}+1},...,\sstar'_{s_{\left(j-1\right)}}\right)=\sigma_{\tc}\left(v_{j}\right)$.
For every $1\leq b<s_{j}+1$, $\sstar'_{a}$ contributes to $f_{3}\left(b\right)$.
For $b=s_{j}+1$, $\sstar'_{a}$ contributes $\left(-1\right)^{a-\left(s_{j}+1\right)}$
to the sign of the shuffle $\left(\sigma'\backslash\sstar'_{s_{j}+1},\sigma''\right)$,
and for $b>s_{j}+1$ it contributes nothing (with the convention that
an element contributes to the shuffle sign when it is commuted past
elements preceding it, but not when elements succeeding it are commuted
past it). This shows that the total contribution of $\sstar'_{a}$
to $\prod_{b=1}^{r}\left(f_{3}\left(b\right)\cdot f_{4}\left(b\right)\right)$
is $\left(-1\right)^{\left(r-a\right)}$ which cancels $f_{2}\left(a\right)$.

The proof of (\ref{eq:check jc vs product}) is simpler, since $f_{3}\left(a\right)$
and $f_{4}\left(a\right)$ terms are absent. Equation (\ref{eq:ff vs product})
follows from the previous two equations since $\jc_{\basic}^{\left[r\right]}=\check{\jc}_{\basic}^{\left[r\right]}\circ\ff_{\basic}^{\left[r\right]}$
and ${\left(\prod\jc\right)_{\basic}^{\left[r\right]}=\left(\prod\check{\jc}\right)_{\basic}^{\left[r\right]}\circ\left(\prod\ff\right)_{\basic}^{\left[r\right]}}$.
\end{proof}
\begin{rem}
\label{rem:Pin factor verification}The factor $\left(-1\right)^{\left(1+m\right)\cdot\binom{o}{2}}$
was verified using relations derived from fixed-point localization,
see \cite[Example 35]{fp-loc-OGW}.
\end{rem}

\subsection{Proof of Proposition \ref{thm:resolution summary}}

Most of the theorem is obtained directly from the results of the previous
subsections, by specializing to $\rho=\left[r\right]$ and $\sfr=\left(\basic,\emptyset\right)$.
We now fill in the few remaining gaps.

By Lemma \ref{lem:labeled trees alternative def} and the stability
condition on the specification $\sfr_{\tc}\left(v\right)$ of the
vertices, $\ts_{\basic}^{r}=\emptyset$ for sufficiently large $r$,
and then $\mm_{\basic}^{r}=\emptyset$ too. 

We construct the involution $\inv_{\sfr}^{\rho}:\partial_{+}\mm_{\sfr}^{\rho}\to\partial_{+}\mm_{\sfr}^{\rho}$
recursively. We have 
\[
\partial_{+}^{For_{\sfr}^{\rho}}\mm_{\sfr}^{\rho}=\coprod'''\left(\left(\partial_{+}^{For_{\sfr'}^{\rho'}}\mm_{\sfr'}^{\rho'}\times\mm_{\sfr''}^{\rho''}\right)\coprod\left(\mm_{\sfr'}^{\rho'}\times\partial_{+}^{For_{\sfr''}^{\rho''}}\mm_{\sfr''}^{\rho''}\right)\right),
\]
so we can define recursively $\inv_{\sfr}^{\rho}$ by setting $\inv_{\sfr}^{\emptyset}=\inv_{\sfr}$
and 
\[
\inv_{\sfr}^{\rho}=\coprod'''\left(\left(\inv_{\sfr'}^{\rho'}\times\mbox{id}\right)\coprod\left(\mbox{id}\times\inv_{\sfr''}^{\rho''}\right)\right).
\]
A simple inductive proof based on (\ref{eq:F anti commutes with tau_sfr})
shows that $\left(\prod\ff\right)_{\sfr}^{\rho}\circ\iota_{\mm_{\sfr}^{\rho}}^{\partial_{+}}\circ Or\left(d\inv_{\sfr}^{\rho}\right)=\left(-1\right)\left(\prod\ff\right)_{\sfr}^{\rho}\circ\iota_{\mm_{\sfr}^{\rho}}^{\partial_{+}}$
and that $\pi_{\sfr}^{\rho}\circ\inv_{\sfr}^{\rho}=\pi_{\sfr}^{\rho}$.
Since $\left(\prod\ff\right)_{\sfr}^{\rho}$ and $\ff_{\sfr}^{\rho}$
differ by a sign which factors through $\pi_{\sfr}^{\rho}$ we find
that 
\[
\ff_{\sfr}^{\rho}\circ\iota_{\mm_{\sfr}^{\rho}}^{\partial_{+}}\circ Or\left(d\inv_{\sfr}^{\rho}\right)=\left(-1\right)\ff_{\sfr}^{\rho}\circ\iota_{\mm_{\sfr}^{\rho}}^{\partial_{+}}.
\]

This completes the proof of Proposition \ref{thm:resolution summary}.

\section{\label{sec:Extended-forms}Extended forms and Stokes' Theorem}

In this section we define an integration map and prove Stokes' theorem.
Throughout this section, we fix some stable basic moduli specification
$\basic=\left(\kf,\lf,\beta\right)$, and we may omit it from the
notation to avoid clutter. 

The \emph{internal }and \emph{external }local systems on ${\check{\mm}^{r}=\check{\mm}_{\basic}^{r}=\check{\mm}_{\basic}^{\left[r\right]}}$
are defined, respectively, by 
\[
\check{\mathcal{I}}_{\basic}^{r}=\bigotimes_{x\in\sstar''_{\left[r\right]}}\left(\ev{}_{x}^{\basic,r}\right)^{-1}Or\left(TL\right)\quad\mbox{and}\quad\check{\mathcal{E}}_{\basic}^{r}=\bigotimes_{x\in\kf}\left(\ev{}_{x}^{\basic,r}\right)^{-1}Or\left(TL\right).
\]
The internal and external local systems on $\mm_{\basic}^{r}$ are
given by $\mathcal{I}_{\basic}^{r}=\left(\For{}_{\basic}^{r}\right)^{-1}\check{\mathcal{I}}_{\basic}^{r}$
and $\mathcal{E}_{\basic}^{r}=\left(\For{}_{\basic}^{r}\right)^{-1}\check{\mathcal{E}}_{\basic}^{r}$,
respectively. 

\subsection{Statement of Stokes' theorem}

\begin{defn}
\label{def:extended form}An \emph{extended form} $\omega$ \emph{for}
$\basic$ is a sequence 
\[
\left\{ \check{\omega}_{r}\in\Omega\left(\check{\mm}_{\basic}^{r};\check{\mathcal{E}}_{\basic}^{r}\otimes_{\zz}\rr\left[\vec{\alpha}\right]\right)^{\tb}\right\} _{r\geq0},
\]
such that

(a) the following \emph{coherence property }holds for all $r\geq0$
and $\tc_{+}\in\ts_{\basic}^{r+1}$, 
\begin{equation}
\left(\left(i_{\check{\mm}_{\basic}^{r}}^{\partial}\right)^{*}\check{\omega}_{r}\right)|_{\partial^{\tc_{+}}\check{\mm}_{\tc}}=\left(\check{g}_{\tc_{+}}\right)^{*}\left(\check{\omega}_{r+1}|_{\check{\mm}_{\tc_{+}}}\right)\label{eq:check omega coherence}
\end{equation}
(see (\ref{eq:partial tc+ def},\ref{eq:check g definition})).

(b) $\check{\omega}_{r}$ is $Sym\left(r\right)$-invariant.

We denote by $\Omega_{\basic}$ the differential graded $\rr\left[\lambda_{1},...,\lambda_{m}\right]$-module
of all extended forms; the degree $\left(+1\right)$ differential
$D:\Omega_{\basic}\to\Omega_{\basic}$ is given by 
\[
D\left\{ \check{\omega}_{r}\right\} _{r\geq0}=\left\{ D\check{\omega}_{r}\right\} .
\]
\end{defn}
Our goal in this section is to define an $\rr\left[\vec{\alpha}\right]$-module
map 
\[
\int_{\basic}:\Omega_{\basic}\to\rr\left[\vec{\lambda}\right]=\rr\left[\lambda_{1},...,\lambda_{m}\right]
\]
of degree 
\[
-\left[\left(2m+1\right)\beta+2m+\left(k-3\right)+2l\right]=-\dim\mm_{\basic}^{0}
\]
 and prove 
\begin{thm}
\label{thm:stokes' theorem}(Stokes' Theorem) We have $\int_{\basic}D\omega=0$
for any $\omega\in\Omega_{\basic}$.
\end{thm}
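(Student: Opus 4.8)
The plan is to unwind the definition of $\int_{\basic}$ as the finite sum (\ref{eq:extended integral}) and apply the ordinary Stokes' theorem for orbifolds with corners to each summand $\int_{\widetilde{\mm}_{\basic}^{r}}$, then show that the resulting boundary contributions cancel in pairs. First I would record that, because $D=d-\sum_j\lambda_j\iota_{\xi_j}$ is the Cartan--Weil differential and the integrands are $\tb$-equivariant, the contraction terms integrate to zero (they lower form-degree below top degree, or one invokes the standard fact that $\int_{\xx}\iota_\xi\alpha=0$ for a compact $\xx$); hence $\int_{\basic}D\omega$ reduces to a sum of honest integrals of $d$ of the pulled-back integrands. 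Then, writing $\Theta_r:=(\For_{\basic}^r\circ\bu_{\basic}^r)^*\check\omega_r\cdot(\widetilde{\ed}_{\basic}^r)^*\Lambda^{\boxtimes r}$, ordinary Stokes gives $\int_{\widetilde{\mm}_{\basic}^r}d\Theta_r=\int_{\partial\widetilde{\mm}_{\basic}^r}\iota^{\partial *}\Theta_r$, where one must be careful to carry along the local-system twist via $\widetilde{\jc}_{\basic}^r$ of (\ref{eq:local system map}); the boundary integral is taken using the restricted local system map, which by construction of $\widetilde{\jc}$ is compatible with $\iota^\partial$.

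Next I would decompose $\partial\widetilde{\mm}_{\basic}^r$ using the blow-up square and the forgetful-map boundary decomposition $\partial\mm_{\basic}^r=\partial_+\mm_{\basic}^r\coprod\partial_-\mm_{\basic}^r$ pulled back under $\bu_{\basic}^r$, together with the exceptional divisor of the spherical blow-up. There are three types of boundary pieces to handle. (i) The \emph{wobbly} boundary $\partial_+$: here the fixed-point-free involution $\inv_{\basic}^r$ of Proposition \ref{thm:resolution summary}(c) (lifted to $\widetilde{\mm}_{\basic}^r$) reverses orientation, by (\ref{eq:J anti commutes with tau}), while preserving the integrand $\Theta_r$ — the latter because $\For_{\basic}^r\circ\inv=\For_{\basic}^r$ by (\ref{eq:For commutes w tau}) and because $\inv$ does not touch the node markings used by $\widetilde{\ed}$; hence each such integral equals its own negative and vanishes. (ii) The \emph{sturdy} boundary $\partial_-\mm_{\basic}^r$: by the cartesian square (\ref{eq:boundary as pullback})/(\ref{eq:bd^r as pullback}) and property (b) of Proposition \ref{thm:resolution summary}, $\partial_-\mm_{\basic}^r$ maps to $\mm_{\basic}^{r+1}$ via $g_{\basic}^{r+1}$, realizing it (after blow-up) as a fiber of $\mm_{\basic}^{r+1}$ over the diagonal; the coherence property (\ref{eq:check omega coherence}) of the extended form, the coherence of $\jc_{\basic}^r$ (Corollary \ref{cor:J coherence}), and property (e) relating $\jc^{r+1}$ to $\jc^r$ via $\mathcal{G}^{r+1}$, together identify the resulting integral with (minus) a piece of $\int_{\widetilde{\mm}_{\basic}^{r+1}}$ of a term coming from differentiating the kernel factor $\Lambda$. (iii) The exceptional-divisor boundary introduced by $\bu_\Delta$: here one uses the defining property of the equivariant homotopy kernel $\Lambda$ from $\S$\ref{subsec:Equivariant-homotopy-kernel} — namely that its restriction to the exceptional sphere bundle, integrated along the fiber, is the identity (a Poincaré-dual / unit property) — so that integrating $d$ of $\Lambda^{\boxtimes r}$ over this face produces precisely the term matching $\int_{\widetilde{\mm}_{\basic}^{r-1}}$ with one fewer resolved node, with the sign dictated by (\ref{eq:tilde J}).

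The combinatorial heart of the argument is a telescoping/cancellation: the contribution of the sturdy boundary of level $r$ is canceled against the exceptional-divisor contribution of level $r+1$, for every $r\geq0$, with the factor $\tfrac1{r!}$ versus $\tfrac1{(r+1)!}$ absorbed by the $(r+1)$ choices of which node is the "new" one — this is exactly where the $\Sym(r)$-invariance of $\check\omega_r$ (Definition \ref{def:extended form}(b)) and the $\Sym(r)$-equivariance/sign behavior of $\jc_{\basic}^r$ (equation (\ref{eq:J commutes with Sym(r)})) are used, since summing over the $(r+1)$ node-choices of a single $\Sym(r+1)$-orbit reproduces $\tfrac{1}{(r+1)!}\cdot(r+1)=\tfrac1{r!}$. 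I would organize this as: (1) reduce to $d$ via the Cartan argument; (2) apply Stokes level-wise; (3) split $\partial$ into wobbly, sturdy, and exceptional pieces; (4) kill the wobbly piece by the involution; (5) match sturdy-at-level-$r$ with exceptional-at-level-$r+1$ using coherence of $\omega$, coherence of $\jc$, the kernel's unit property, and the symmetric-group bookkeeping; (6) observe the sum telescopes to $0$ (the $r=0$ sturdy term has no lower exceptional partner, but $\mm_{\basic}^0$ has a genuine boundary, and the $r=0$ sturdy term is its correction; conversely for large $r$, $\mm_{\basic}^r=\emptyset$ by Proposition \ref{thm:resolution summary}(a), so the telescope closes up). The main obstacle I anticipate is step (5): getting all the orientation signs to line up — reconciling the outward-normal convention in $\iota^\partial$, the fiber-product orientation conventions used to define $\mathcal{G}_{\basic}^{r+1}$ via the short exact sequences in $\S$\ref{subsec:Orienting-resolutions}, the sign $\left(-1\right)$ in property (c), the Leibniz signs from differentiating a product of pulled-back forms with the kernel factors, and the exceptional-divisor orientation — and checking that the precise sign produced on the sturdy side is exactly cancelled by the one on the exceptional side of the next level. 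The coherence statements (\ref{eq:F coherence}), (\ref{eq:check J coherence}), Corollary \ref{cor:J coherence}, and the explicit sign lemmas of $\S$\ref{subsec:Orienting-resolutions} are designed precisely to make this sign-matching go through, so the proof amounts to assembling them carefully rather than discovering anything new.
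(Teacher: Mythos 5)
Your global plan — apply Stokes level-wise on $\widetilde{\mm}_{\basic}^{r}$, decompose the boundary into wobbly, sturdy, and exceptional-divisor faces, kill the wobbly part by the orientation-reversing involution $\tilde{\inv}_{\basic}^{r}$, and cancel the sturdy contribution at level $r$ against the exceptional contribution at level $r+1$ via the angular-form unit property, the coherence (\ref{eq:check omega coherence}) of $\omega$, Corollary \ref{cor:J coherence}, and the $\Sym\left(r+1\right)$-symmetry — is exactly the structure of the paper's proof. But there is a genuine gap in your opening reduction, and the paper has a dedicated lemma precisely to close it.

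You assert that, because contraction terms integrate to zero, $\int_{\basic}D\omega$ immediately becomes $\sum_{r}\frac{1}{r!}\int d\Theta_{r}$ with $\Theta_{r}=\left(\For_{\basic}^{r}\circ\bu_{\basic}^{r}\right)^{*}\check{\omega}_{r}\cdot\left(\widetilde{\ed}_{\basic}^{r}\right)^{*}\Lambda^{\boxtimes r}$. That is not immediate: $D\omega=\left\{ D\check{\omega}_{r}\right\}$ differentiates only $\check{\omega}_{r}$, not the kernel factors, and $D\Lambda\neq0$ — rather $D\Lambda=\widetilde{\pr}_{2}^{*}\rho_{0}$ by (\ref{eq:D Lambda}). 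The standard vanishing $\int_{\xx}\iota_{\xi}\alpha=0$ disposes of the contraction of the \emph{whole} product, $\iota_{\xi}\bigl(\omega_{r}\Lambda^{\boxtimes r}\bigr)$, not of the \emph{partial} contraction $\left(\iota_{\xi}\omega_{r}\right)\cdot\Lambda^{\boxtimes r}$, so it does not entitle you to replace $D\omega_{r}\cdot\Lambda^{\boxtimes r}$ by $d\Theta_{r}$. Whether you proceed by the Leibniz rule for $D$ or by tracking the partial contraction, you pick up an interior correction term of the form
\[
\int_{\widetilde{\mm}_{\basic}^{r}}\bigl(\For_{\basic}^{r}\circ\bu_{\basic}^{r}\bigr)^{*}\check{\omega}_{r}\cdot\bigl(\widetilde{\ed}_{\basic}^{r}\bigr)^{*}\Bigl(\Lambda^{\boxtimes\left(j-1\right)}\boxtimes D\Lambda\boxtimes\Lambda^{\boxtimes\left(r-j\right)}\Bigr),
\]
and without a separate argument that this vanishes, the boundary telescope in your steps (i)--(iii) does not close. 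This is precisely what Lemma \ref{lem:D hits Lambda vanishes} supplies: since $D\Lambda$ is pulled back along $\bu_{\Delta}$, the displayed integrand is pulled back from the orbifold obtained from $\mm_{\basic}^{r}$ by forgetting $\sstar'_{j}$ and blowing up only the remaining diagonals, which has dimension $\dim\widetilde{\mm}_{\basic}^{r}-1$, so the integral is zero. That is the content of equality $\left(1\right)$ in the paper's proof, and your reduction genuinely needs it. A smaller imprecision: at item (iii) the exceptional-divisor face contributes through the restriction formula (\ref{eq:Lambda asymptotics}) for $\Lambda$ on the sphere bundle and then integration of $\phi$ along the fiber — not by ``integrating $d$ of $\Lambda^{\boxtimes r}$''.
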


\begin{rem}
\label{rem:non-equivariant}One can also consider the complex of non-equivariant
extended forms $\Omega_{\basic}^{\text{ne}}$, consisting of sequences
of forms $\check{\omega}_{r}\in\Omega\left(\check{\mm}_{\basic}^{r};\check{\mathcal{E}}_{\basic}^{r}\otimes_{\zz}\rr\right)$
satisfying the same two conditions as in Definition \ref{def:extended form}
above; the differential is given by the level-wise action of the exterior
derivative $d$, integration is defined using the same formula as
in Definition \ref{def:integration} below, except we replace $\Lambda$
by 
\[
\Lambda^{\text{ne}}:=\Lambda\mod\left(\lambda_{1},...,\lambda_{m}\right).
\]
The proof of Stokes' theorem carries through and we find 
\begin{equation}
\int_{\basic}d\omega=0.\label{eq:non-equiv stokes}
\end{equation}
The map $\rr\left[\vec{\lambda}\right]\to\rr$ lifts to a map $R_{\basic}:\Omega_{\basic}\to\Omega_{\basic}^{\text{ne}}$
commuting with integration.

We mention this for two reasons. First, we do not know whether the
induced map $H\left(R_{\basic}\right):H\left(\Omega_{\basic}\right)\to H\left(\Omega_{\basic}^{\text{ne}}\right)$,
is surjective, so there may be non-equivariant invariants which do
not admit an equivariant extension. Note, however, that $X^{l}\times L^{k}$
has cohomology in even degrees so the map 
\[
H_{\tb}^{\bullet}\left(X^{l}\times L^{k}\right)\to H^{\bullet}\left(X^{l}\times L^{k}\right)
\]
is\emph{ }surjective (cf. \cite[Proposition 32]{twA8}). This means
the equivariant open Gromov-Witten invariants we considered in $\S$\ref{subsec:equiv open GW invts}
exhaust both equivariant and non-equivariant invariants defined by
pull back along the evaluation maps. 

The second reason to consider non-equivariant invariants is that (\ref{eq:non-equiv stokes})
represents a stronger invariance property, which can be used to offer
a geometric interpretation of the invariants with ${\deg I\left(k,l,\beta\right)=0}$,
as we discussed in Remark \ref{rem:geometric invariants}.
\end{rem}

\subsection{\label{subsec:Resolution-blow-ups}Resolution blow ups}

In this subsection we write $G$ for the group $O\left(2m+1\right)$.
Fix some $G$-invariant Riemannian metric on $L\times L$, and construct
a $G$-equivariant tubular neighborhood 
\begin{equation}
N_{\Delta}\overset{j_{\Delta}}{\supset}V_{\Delta}\overset{\gamma_{\Delta}}{\longrightarrow}L\times L.\label{eq:tube of diagonal}
\end{equation}
Let $\widetilde{L\times L}\overset{\bu_{\Delta}}{\longrightarrow}L\times L$
denote the blow up of the diagonal $L\overset{\Delta_{L}}{\subset}L\times L$
(cf. \cite[Definition 37]{twA8}). It is a $G$-equivariant map of
manifolds with corners. Explicitly, 
\[
\widetilde{L\times L}=S\left(N_{\Delta}\right)\times[0,\epsilon)\bigcup_{S\left(N_{\Delta}\right)\times(0,\epsilon)}\left(L\times L\backslash\Delta_{L}\right).
\]

For each $\tc\in\ts_{\basic}^{r}$, $G^{r+1}$ acts on $\mm_{\tc}$
where the $i$'th factor of $G$ acts on the $i$'th factor of ${\mm_{\tc}=\prod_{v\in\tc_{0}}\mm_{\sfr_{\tc}\left(v\right)}}$,
making $\ev_{\sstar'_{i}}|_{_{\mm_{\tc}}},\ev_{\sstar''_{i}}|_{\mm_{\tc}}$
into $\left(G^{r+1}\to G\right)$-equivariant maps for each $i$,
with respect to a suitable projection $\left(G^{r+1}\to G\right)$.
This implies that $\ed_{\basic}^{r}:\mm_{\basic}^{r}\to\left(L\times L\right)^{r}$
is b-transverse to $\bu_{\Delta}^{r}:\left(\widetilde{L\times L}\right)^{r}\to\left(L\times L\right)^{r}$
(recall that this means that the restrictions of these two maps to
corner strata are also transverse, see Remark \ref{rem:easy b-transversality}),
and we use Lemma \ref{lem:fibp in orb} to construct the cartesian
square

\begin{equation}
\xymatrix{\widetilde{\mm}_{\basic}^{r}\ar[r]^{\bu_{\basic}^{r}}\ar[d]_{\widetilde{\ed}_{\basic}^{r}} & \mm_{\basic}^{r}\ar[d]^{\ed{}_{\basic}^{r}}\\
\left(\widetilde{L\times L}\right)^{r}\ar[r]_{\bu_{\Delta}^{r}} & \left(L\times L\right)^{r}
}
.\label{eq:blow up as single pullback}
\end{equation}
Since the right and bottom map are $G\times\Sym\left(r\right)$ equivariant,
there's a natural $G\times\Sym\left(r\right)$ action on $\widetilde{\mm}_{\basic}^{r}$
making the entire square equivariant.

By Lemma \ref{lem:fibp in orb}(c), since $\bu_{\Delta}^{r}$ is
b-normal so is $\bu_{\basic}^{r}$. In particular we have a decomposition
${\partial\widetilde{\mm}_{\basic}^{r}=\partial_{+}^{\bu_{\basic}^{r}}\widetilde{\mm}_{\basic}^{r}\coprod\partial_{+}^{\sim}\widetilde{\mm}_{\basic}^{r}\coprod\partial_{-}^{\sim}\widetilde{\mm}_{\basic}^{r}}$,
where 
\[
\partial_{\pm}^{\sim}\widetilde{\mm}_{\basic}^{r}=\left(\bu_{\basic}^{r}\right)_{-}^{-1}\left(\partial_{\pm}^{For_{\basic}^{r}}\mm_{\basic}^{r}\right).
\]
Writing $\bu_{\basic}^{r}$ as a composition of maps blowing up a
single edge at a time, using (\ref{eq:multi composition decomposition})
to further break down $\partial_{+}^{\bu_{\basic}^{r}}\widetilde{\mm}_{\basic}^{r}$,
we obtain the following decomposition of the blow up boundary:

\[
\partial\widetilde{\mm}_{\basic}^{r}=\partial_{+}^{\sim}\widetilde{\mm}_{\basic}^{r}\coprod\partial_{-}^{\sim}\widetilde{\mm}_{\basic}^{r}\coprod\coprod_{j=1}^{r}\left(\widetilde{\ed}{}_{j}^{\basic,r}\right)^{-1}S\left(N_{\Delta}\right).
\]

We turn to discuss orientations. Clearly $d\bu_{\basic}^{r}$ is a
diffeomorphism away from the boundary of $\partial\widetilde{\mm}_{\basic}^{r}$,
so it induces (see Lemma \ref{lem:ls extensions}(a)) a $G\times\Sym\left(r\right)$
equivariant local system map
\[
\Or\left(T\widetilde{\mm}_{\basic}^{r}\right)\to\Or\left(T\mm_{\basic}^{r}\right).
\]
By a slight abuse of notation we denote this map by $\Or\left(d\bu_{\basic}^{r}\right)$.

We set 
\[
\widetilde{\mathcal{J}}_{\basic}^{r}:\Or\left(T\widetilde{\mm}_{\basic}^{r}\right)\to\Or\left(TL\right)^{\boxtimes\left(\kf\coprod\sstar''_{\left[r\right]}\right)}
\]
to be the composition
\begin{equation}
\widetilde{\jc}_{\basic}^{r}=\jc_{\basic}^{r}\circ\Or\left(d\bu_{\basic}^{r}\right);\label{eq:tilde J}
\end{equation}
it follows that
\begin{equation}
\widetilde{\mathcal{J}}_{\basic}^{r}\circ\Or\left(d\tau.\right)=\sgn\left(\tau\right)\widetilde{\mathcal{J}}_{\basic}^{r}.\label{eq:tilde J Sym(r)-equivariance}
\end{equation}

Since $\partial_{+}^{\sim}\widetilde{\mm}_{\basic}^{r}$ is the fiber
product of $\ed_{\basic}^{r}|_{\partial_{+}\mm_{\basic}^{r}}$ with
$\bu_{\Delta}^{r}$ the pair of maps 
\[
\left(\id_{\left(\widetilde{L\times L}\right)^{r}},\inv_{\basic}^{r}\right)
\]
define an involution 
\[
\tilde{\inv}_{\basic}^{r}:\partial_{+}^{\sim}\widetilde{\mm}_{\basic}^{r}\to\partial_{+}^{\sim}\widetilde{\mm}_{\basic}^{r}
\]
such that $\bu_{\basic}^{r}\circ i_{\widetilde{\mm}_{\basic}^{r}}^{\partial_{+}^{\sim}}\circ\tilde{\inv}_{\basic}^{r}=\inv_{\basic}^{r}\circ\bu_{\basic}^{r}\circ i_{\mm_{\basic}^{r}}^{\partial_{+}}$.
It follows that 
\begin{equation}
\widetilde{\mathcal{J}}\circ\iota_{\widetilde{\mm}_{\basic}^{r}}^{\partial_{+}^{\sim}}\circ\Or\left(d\tilde{\inv}_{\basic}^{r}\right)=\left(-1\right)\widetilde{\mathcal{J}}\circ\iota_{\widetilde{\mm}_{\basic}^{r}}^{\partial_{+}^{\sim}}.\label{eq:tilde tau reverses orientation}
\end{equation}

\subsection{\label{subsec:Equivariant-homotopy-kernel}Equivariant homotopy kernel}

We fix an \emph{equivariant homotopy kernel} \linebreak{}
${\Lambda\in\Omega\left(\widetilde{L\times L};\tilde{\mbox{pr}}_{2}^{*}\left(Or\left(TL\right)\right)\otimes_{\zz}\rr\left[\vec{\lambda}\right]\right)^{\tb}}$for
$L$. Namely, 
\[
\Lambda=\sigma\left(\mbox{pr}_{S\left(N_{\Delta}\right)}^{\widetilde{N}_{\Delta}}\right)^{*}\phi+\bu_{\Delta}^{*}\Upsilon
\]
for $\phi$ an equivariant angular form for $S\left(N_{\Delta}\right)$,
$\sigma:[0,\infty)\to[0,1]$ a smooth, compactly supported cutoff
function with $\sigma\left(0\right)=+1$, and $\Upsilon$ chosen so
that 
\begin{equation}
D\Lambda=\widetilde{\pr}_{2}^{*}\rho_{0}\in\im\left(\widetilde{\pr}{}_{2}^{*}\right)\subset\im\left(\bu_{\Delta}^{*}\right),\label{eq:D Lambda}
\end{equation}
where $\rho_{0}$ is an equivariant form representing the point class.
It follows that 
\begin{equation}
\left(i_{\widetilde{L\times L}}^{\partial}\right)^{*}\Lambda=\phi+\left(\pi_{\Delta}^{S\left(N_{\Delta}\right)}\right)^{*}\Upsilon|_{\Delta}.\label{eq:Lambda asymptotics}
\end{equation}

\begin{rem}
Compare this to Definition 55 and Proposition 56 in \cite{twA8}.
First, there is a minus sign introduced in $\sigma$ for convenience.
More importantly, here we require only conditions (\ref{eq:D Lambda},
\ref{eq:Lambda asymptotics}), whereas in \cite{twA8} $\Lambda$
(denoted $\Lambda'$ there) depended on a particular choice of form
$\rho$ representing the point class, and the associated homotopy
operator $h'$ was modified further in order to satisfy the side conditions
(see Definition 23 \emph{ibid.}). If one can construct a unital cyclic
retraction that is represented by a smooth kernel $\Lambda$ as above,
then the open Gromov-Witten invariants we define here also encode
the unital cyclic homotopy type of the twisted equivariant Fukaya
$A_{\infty}$ algebra of $\rr\pp^{2m}\hookrightarrow\cc\pp^{2m}$.
See \cite[\S 1.6]{fp-loc-OGW} for a detailed discussion.
\end{rem}

\subsection{Integration of extended forms}

\begin{defn}
\label{def:integration}Let $\omega=\left\{ \check{\omega}_{r}\right\} \in\Omega_{\basic}$
be an extended form. We define
\[
\int_{\basic}\omega=\sum_{r\geq0}\frac{1}{r!}\int_{\widetilde{\mm}_{\basic}^{r}}\left(\bu_{\basic}^{r}\right)^{*}\omega_{r}\cdot\left(\widetilde{ed}_{b}^{r}\right)^{*}\Lambda^{\boxtimes r},
\]
where $\omega_{r}:=\left(\For{}_{\basic}^{r}\right)^{*}\check{\omega}_{r}$
and $\left(\widetilde{\ed}_{b}^{r}\right)^{*}\Lambda^{\boxtimes r}$
are forms with values in ${\left(\bu_{\basic}^{r}\right)^{-1}\mathcal{E}_{\basic}^{r}\otimes_{\zz}\rr\left[\lambda_{1},...,\lambda_{m}\right]}$
and in ${\left(\bu_{\basic}^{r}\right)^{-1}\mathcal{I}_{\basic}^{r}\otimes_{\zz}\rr\left[\lambda_{1},...,\lambda_{m}\right]}$,
respectively, so that the integrand takes values in 
\[
{\left(\bu_{\basic}^{r}\right)^{-1}\left(\mathcal{E}_{\basic}^{r}\otimes\mathcal{I}_{\basic}^{r}\right)\otimes_{\zz}\rr\left[\vec{\alpha}\right]=\bigotimes_{x\in k\coprod\sstar''_{\rho}}\left(\widetilde{\ev}{}_{x}^{\basic,r}\right)^{-1}Or\left(TL\right)}
\]
and the integral is computed using $\widetilde{\mathcal{J}}_{\basic}^{r}$.
More precisely, integration of real-valued forms is defined as pushforward
along the horizontally-submersive map $\widetilde{\mm}_{\basic}^{r}\to\pt$,
see \cite[Eq (24)]{mod2hom}, which becomes an oriented map using
$\widetilde{\mathcal{J}}_{\basic}^{r}$. The integral is then extended
$\rr\left[\vec{\alpha}\right]$-linearly to define integration of
equivariant forms.
\end{defn}

\begin{proof}
[Proof of Theorem \ref{thm:stokes' theorem} (Stokes' Theorem).]The
computation goes as follows.
\begin{multline*}
\int_{\basic}D\omega=\sum_{r\geq0}\frac{1}{r!}\int_{\widetilde{\mm}_{\basic}^{r}}\left(\bu_{\basic}^{r}\right)^{*}D\left(\omega_{r}\right)\cdot\left(\widetilde{\ed}_{b}^{r}\right)^{*}\Lambda^{\boxtimes r}\overset{\left(1\right)}{=}\\
=\sum_{r\geq0}\frac{1}{r!}\int_{\widetilde{\mm}_{\basic}^{r}}D\left[\left(\bu_{\basic}^{r}\right)^{*}\omega_{r}\cdot\left(\widetilde{\ed}_{b}^{r}\right)^{*}\Lambda^{\boxtimes r}\right]\overset{\left(2\right)}{=}\sum_{r\geq0}\frac{1}{r!}\int_{\partial\widetilde{\mm}_{\basic}^{r}}\left(\bu_{\basic}^{r}\right)^{*}\omega_{r}\cdot\left(\widetilde{\ed}_{b}^{r}\right)^{*}\Lambda^{\boxtimes r}\overset{\left(3\right)}{=}\\
=\sum_{r\geq0}\frac{1}{r!}\int_{\partial_{-}^{\sim}\mm_{\basic}^{r}}\left(\bu_{\basic}^{r}\right)^{*}\omega_{r}\cdot\left(\widetilde{\ed}_{b}^{r}\right)^{*}\Lambda^{\boxtimes r}+\\
+\sum_{\begin{array}{c}
r\geq0\\
0\leq i\leq r+1
\end{array}}\frac{1}{\left(r+1\right)!}\int_{\left(\widetilde{\ed}_{i}^{\basic,r+1}\right)^{-1}\left(S\left(N_{\Delta}\right)\right)}\left(\bu_{\basic}^{r+1}\right)^{*}\omega_{r+1}\cdot\left(\widetilde{\ed}_{b}^{r}\right)^{*}\Lambda^{\boxtimes\left(r+1\right)}\overset{\left(4\right)}{=}\\
=\sum_{r\geq0}\frac{1}{r!}\bigg[\int_{\partial_{-}^{\sim}\mm_{\basic}^{r}}\left(\bu_{\basic}^{r}\right)^{*}\omega_{r}\cdot\left(\widetilde{\ed}_{b}^{r}\right)^{*}\Lambda^{\boxtimes r}+\\
\int_{\left(\widetilde{\ed}_{r+1}^{\basic,r+1}\right)^{-1}\left(S\left(N_{\Delta}\right)\right)}\left(\bu_{\basic}^{r+1}\right)^{*}\omega_{r+1}\cdot\left(\ed_{1,...,r}^{\basic,r+1}\right)^{*}\Lambda^{\boxtimes r}\,\left(\widetilde{\ed}_{r+1}^{\basic,r+1}\right)^{*}\phi\bigg]\\
\overset{\left(5\right)}{=}0.
\end{multline*}

The equality marked (1) is justified by Lemma \ref{lem:D hits Lambda vanishes}
below. The equality marked (2) is the usual Stokes' theorem. To justify
the equality (3), we argue that 
\[
\int_{\partial_{+}^{\sim}\mm_{\basic}^{r}}\left(\bu_{\basic}^{r}\right)^{*}\omega_{r}\cdot\left(\widetilde{\ed}_{b}^{r}\right)^{*}\Lambda^{\boxtimes r}=0.
\]
For this use (\ref{eq:tilde tau reverses orientation}) and observe
that the integrand is $\tilde{\inv}_{\basic}^{r}$-invariant since
\[
{\For_{b}^{r}\circ\bu_{\basic}^{r}\circ i_{\mm_{\basic}^{r}}^{\partial_{+}^{\sim}}\circ\widetilde{\inv}_{\basic}^{r}=\For_{b}^{r}\circ\bu_{\basic}^{r}\circ i_{\mm_{\basic}^{r}}^{\partial_{+}^{\sim}}}
\]
and 
\[
\widetilde{\ed}_{\basic}^{r}\circ i_{\mm_{\basic}^{r}}^{\partial_{+}^{\sim}}\circ\tilde{\inv}_{\basic}^{r}=\widetilde{\ed}_{\basic}^{r}\circ i_{\mm_{\basic}^{r}}^{\partial_{+}^{\sim}}.
\]
We justify the equality (4). Write the expression on the fourth line
as 
\[
\sum_{r\geq0}\sum_{i=0}^{r+1}\frac{I_{r+1}\left(i\right)}{\left(r+1\right)!},\;I_{r+1}\left(i\right):=\int_{\left(\widetilde{\ed}_{i}^{\basic,r+1}\right)^{-1}\left(S\left(N_{\Delta}\right)\right)}\left(\bu_{\basic}^{r+1}\right)^{*}\omega_{r+1}\cdot\left(\widetilde{\ed}_{b}^{r}\right)^{*}\Lambda^{\boxtimes\left(r+1\right)}.
\]
We claim that 
\begin{equation}
I_{r+1}\left(i\right)=I_{r+1}\left(r+1\right),\label{eq:i to r+1}
\end{equation}
so $\sum_{i=0}^{r+1}\frac{I_{r+1}\left(i\right)}{\left(r+1\right)!}=\frac{I_{r+1}\left(r+1\right)}{r!}$,
which immediately gives the equality (4). To prove (\ref{eq:i to r+1}),
pullback the integrand by some $\tau\in\Sym\left(r+1\right)$ with
$\tau\left(i\right)=r+1$. $\widetilde{\jc}_{\basic}^{r}$ picks up
a sign, $\sgn\left(\tau\right)$, by (\ref{eq:tilde J Sym(r)-equivariance}),
which cancels the sign of permuting the odd-degree $\Lambda$'s.

Eq (\ref{eq:check omega coherence}) implies $\omega_{r+1}=\left(g_{\basic}^{r+1}\right)^{*}\omega_{r}$.
Using this and Corollary \ref{cor:J coherence}, we obtain equality
(5) by integrating out $\phi$.
\end{proof}
\begin{lem}
\label{lem:D hits Lambda vanishes}$\int_{\widetilde{\mm}_{\basic}^{r}}\left(\bu_{\sfr}^{\rho}\right)^{*}\omega_{r}\cdot\left(\widetilde{ed}_{b}^{r}\right)^{*}\left(\Lambda^{\boxtimes\left(j-1\right)}\boxtimes D\Lambda\boxtimes\Lambda^{\boxtimes\left(r-j\right)}\right)=0.$
\end{lem}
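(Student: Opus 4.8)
The plan is to exploit the special form of $D\Lambda$. By (\ref{eq:D Lambda}), $D\Lambda=\widetilde{\pr}_2^*\rho_0=\bu_\Delta^*\left(\pr_2^*\rho_0\right)$ is pulled back from the \emph{head} factor of $\widetilde{L\times L}$, so — using the cartesian square (\ref{eq:blow up as single pullback}) — the $j$-th slot of the integrand becomes
\[
\left(\widetilde{\ed}{}_\basic^r\right)^*\left(\,\cdots\boxtimes D\Lambda\boxtimes\cdots\right)\text{ has }j\text{-th factor }\left(\widetilde{\ed}{}_j^{\basic,r}\right)^*D\Lambda=\left(\bu_\basic^r\right)^*\left(\ed{}_j^{\basic,r}\right)^*\left(\pr_2^*\rho_0\right)=\left(\bu_\basic^r\right)^*\left(\ev{}_{\sstar''_j}^{\basic,r}\right)^*\rho_0,
\]
which involves only the head marking $\sstar''_j$ and not the node‑tail marking $\sstar'_j$. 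First I would record that the whole integrand is then independent of $\sstar'_j$: the form $\omega_r=\left(\For{}_\basic^r\right)^*\check\omega_r$ is pulled back along the forgetful map, and for $k\neq j$ the map $\widetilde{\ed}{}_k^{\basic,r}$ only sees the $k$-th node, which — in the product‑over‑trees description of $\mm_\basic^r$ (Lemma \ref{lem:labeled trees alternative def}) — is carried by moduli factors, and by markings, disjoint from $\sstar'_j$.

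Next I would make this precise by exhibiting the integrand as a pullback along a map that forgets $\sstar'_j$. Set $M_{\hat{\jmath}}=\acute{\mm}_\basic^{r,\nn\setminus\{j\}}$ (forget $\sstar'_j$ from $\mm_\basic^r$) and let $\widetilde M_{\hat{\jmath}}$ be its blow up along the $r-1$ node‑directions $k\neq j$ — this exists since $\prod_{k\neq j}\ed{}_k$ is b‑transverse to $\Delta^{r-1}$, by the argument of Lemma \ref{lem:ev product is b-transverse to diag}. Blowing up the $j$-th direction last, $\widetilde{\mm}_\basic^r$ is the blow up of $\widetilde{\mm}_\basic^{r,\hat{\jmath}}$ (${}=\mm_\basic^r$ blown up in the directions $k\neq j$) along $\left(\ed{}_j^{\basic,r}\right)^{-1}(\Delta)$, and I expect $\acute{\For}{}_\basic^{r,\nn\setminus\{j\}}$ to lift — compatibly with the blow ups, its preimage carrying the remaining blow up centers — to a b‑fibration $\tilde f\colon\widetilde{\mm}_\basic^{r,\hat{\jmath}}\to\widetilde M_{\hat{\jmath}}$ of relative dimension $1$ (the positions of $\sstar'_j$ on the boundary circle). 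The observations above should then give the integrand as $q^*\tilde f^*\psi$ for some form $\psi$ on $\widetilde M_{\hat{\jmath}}$, where $q\colon\widetilde{\mm}_\basic^r\to\widetilde{\mm}_\basic^{r,\hat{\jmath}}$ is the $j$-th blow‑down.

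Finally, since $q$ is proper of degree $+1$ and restricts to an orientation‑preserving diffeomorphism off a measure‑zero set (with the orientations of (\ref{eq:tilde J})), one has $\int_{\widetilde{\mm}_\basic^r}q^*\eta=\int_{\widetilde{\mm}_\basic^{r,\hat{\jmath}}}\eta$; and the projection formula for $\tilde f$ gives
\[
\int_{\widetilde{\mm}_\basic^r}\left(\bu_\basic^r\right)^*\omega_r\cdot\left(\widetilde{\ed}{}_\basic^r\right)^*\left(\Lambda^{\boxtimes(j-1)}\boxtimes D\Lambda\boxtimes\Lambda^{\boxtimes(r-j)}\right)=\int_{\widetilde M_{\hat{\jmath}}}\psi\wedge\tilde f_!(1)=0,
\]
because $\tilde f$ has positive relative dimension and hence $\tilde f_!$ kills the $0$-form $1$. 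I expect the only real work to be in the middle step — the compatibility of the spherical blow ups in the directions $k\neq j$ with forgetting $\sstar'_j$, so that each $\widetilde{\ed}{}_k^{\basic,r}$ descends to $\widetilde M_{\hat{\jmath}}$ — which is exactly where the tree‑product structure of $\mm_\basic^r$ and the b‑transversality of Lemma \ref{lem:ev product is b-transverse to diag} enter.
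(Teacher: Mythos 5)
Your proposal is correct and follows essentially the same route as the paper: both identify that the integrand is pulled back from the orbifold obtained by forgetting $\sstar'_{j}$ and blowing up only the directions $i\neq j$ (your $\widetilde M_{\hat{\jmath}}$, the paper's $\mm'$). The paper simply concludes from $\dim\mm'=\dim\widetilde{\mm}_{\basic}^{r}-1$ that the pullback cannot have a top-degree component, rather than factoring through $q$ and $\tilde f$ and invoking the projection formula as you do.
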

\begin{proof}
By (\ref{eq:D Lambda}), the integrand is pulled back from the orbifold
$\mm'$, obtained from $\mm_{\basic}^{r}$ by forgetting $\sstar'_{j}$
and blowing up $\left(\ed_{i}^{\basic,r}\right)^{-1}\Delta$ for $i\neq j$.
Since 
\[
\dim\mm'=\dim\widetilde{\mm}_{\basic}^{r}-1,
\]
the integral vanishes.
\end{proof}

\section{\label{sec:appendix}Appendix: Orbifolds with Corners}

This appendix summarizes briefly some definitions and results from
\cite[\S 3]{mod2hom}. The reader should consult that reference for
the proofs and more detail. 

\subsection{Manifolds with corners}

We refer the reader to Joyce \cite[\S 2]{joyce-generalized} for the
terminology we use regarding manifolds with corners. The manifolds
we'll consider have ``ordinary'' corners (as opposed to generalized
corners), which are modeled on $\rr_{k}^{n}:=[0,\infty)^{k}\times\rr^{n-k}$. 

A \emph{weakly smooth} map $f:U\to V$ between open subsets $U\subset\rr_{k}^{m}$
and $V\subset\rr_{l}^{n}$ is a continuous map $f=\left(f_{1},...,f_{n}\right)$
such that all the partial derivatives \linebreak{}
${\frac{\partial^{a_{1}+\cdots+a_{m}}}{\partial u_{1}^{a_{1}}\cdots\partial u_{m}^{a_{m}}}f_{j}:U\to\rr}$
exist and are continuous (including one-sided derivatives where applicable). 

An $n$-dimensional \emph{manifold with corners} $X$ is a second
countable Hausdorff space equipped with a maximal $n$-dimensional
atlas of charts $\left(U,\phi\right)$ where $U\subset X$ is open
and $\phi:U\to\rr_{k}^{n}$ is a homeomorphism ($n$ is fixed, $k$
may vary), with weakly smooth transitions. A \emph{weakly smooth map
$f:X\to Y$} between manifolds with corners is a continuous map which
is of this form in every coordinate patch. A weakly smooth map $f:X\to Y$
is said to be \emph{smooth}, \emph{strongly smooth, interior, b-normal,
simple}, or a \emph{b-fibrations} as in \cite[Definitions 2.1, 4.3]{joyce-generalized}.
``A map'' between manifolds with corners will always be assumed
to be smooth unless specifically stated otherwise, and we denote by
$\manc$ the category of manifolds with corners with smooth maps.

The \emph{depth }of a point ${x=\left(x_{1},...,x_{n}\right)\in\rr_{k}^{n}}$
is defined by ${\mbox{depth}\left(x\right)=\#\left\{ 1\leq i\leq k|x_{i}=0\right\} }$.
It is easy to see that the transitions preserve the depth, so we can
speak of the depth of a point $x\in X$. We define $S^{k}\left(X\right)=\left\{ x\in X|\mbox{depth}\left(x\right)=k\right\} $.
A \emph{local k-corner component $\gamma$ }of $X$ at $x$ is a local
choice of connected component of $S^{k}\left(X\right)$ near $x$
(cf. \cite[Definition 2.7]{joyce-generalized}); a local 1-corner
component is also called a \emph{local boundary component}. 

We have manifolds with corners
\[
\partial X=C_{1}\left(X\right)=\left\{ \left(x,\beta\right)|x\in X,\,\mbox{\ensuremath{\beta}\ is a local boundary component of \ensuremath{X}at \ensuremath{x}}\right\} 
\]
and, for every $k\geq0$,
\[
C_{k}\left(X\right)=\left\{ \left(x,\gamma\right)|x\in X,\,\gamma\text{ is a local \ensuremath{k}-corner component of \ensuremath{X} at \ensuremath{x}}\right\} .
\]
Letting $\partial^{k}X$ denote the iterated boundary, we find that
$C_{k}\left(X\right)\simeq\partial^{k}X/\Sym\left(k\right)$ where
$\Sym\left(k\right)$ acts by permuting the local boundary components. 

We can consider $C\left(X\right)=\coprod_{k\geq0}C_{k}\left(X\right)$
as a \emph{local manifold with corners }(or ``manifold with corners
of mixed dimension'', in Joyce's terms). These form a category and
the various properties of maps can be used to describe maps between
local manifolds with corners. If $f:X\to Y$ is a smooth map of manifolds
with corners, there's an induced interior map 
\[
C\left(f\right):C\left(X\right)\to C\left(Y\right)
\]
We denote by $i_{X}^{\partial}:\partial X\to X$ the map defined by
$i_{X}^{\partial}\left(\left(x,\beta\right)\right)=x$. Even if $X$
is connected, $\partial X$ may be disconnected and $i_{X}^{\partial}$
may not be injective. Sometimes we abbreviate $i^{\partial}=i_{X}^{\partial}$. 

A strongly smooth map $f:X\to Y$ between manifolds with corners is
\emph{a submersion }if, whenever $x$ of depth $k$ maps to $y=f\left(x\right)$
of depth $l$, both $df|_{x}:T_{x}X\to T_{y}Y$ and $df|_{x}:T_{x}S^{k}\left(X\right)\to T_{y}S^{l}\left(Y\right)$
are surjective (see \cite[Definition 3.2]{joyce-fibered}; beware
that a ``smooth map'' there is what we call a strongly smooth map,
see \cite[Remark 2.4,(iii)]{joyce-generalized}). We say a map $f:X\to Y$
is \emph{perfectly simple }if it is simple and maps points of depth
$k$ to points of depth $k$, and is \emph{étale }if it is a local
diffeomorphism.

If $X$ is a manifold with corners its tangent bundle $TX$ is defined
in the obvious way. In addition, one can consider the \emph{b-tangent
bundle} $^{b}TX$ . It is a vector bundle on $X$ whose sections can
be identified with sections $v\in C^{\infty}\left(TX\right)$ such
that $v|_{S^{k}\left(X\right)}$ is tangent to $S^{k}\left(X\right)$
for all $k$ (cf. \cite[Definition 2.15]{joyce-generalized}). If
$f:X\to Y$ is an interior map of orbifolds with corners, there's
an induced map $^{b}df:^{b}TX\to^{b}TY$. Two interior maps $f:X\to Z$
and $g:Y\to Z$ are called \emph{b-transverse }if for any $x\in S^{j}\left(X\right),\,y\in S^{k}\left(Y\right)$
such that $f\left(x\right)=g\left(y\right)=z$, the map 
\[
^{b}df\oplus^{b}dg:{}^{b}T_{x}X\oplus^{b}T_{y}Y\to^{b}T_{z}Z
\]
is surjective. 
\begin{rem}
\label{rem:easy b-transversality}In case $\partial Z=\emptyset$,
$f,g$ are b-transverse if and only if for every $x\in S^{j}\left(X\right),y\in S^{k}\left(Y\right)$
with $f\left(x\right)=g\left(y\right)=z$ the map
\[
df|_{TS^{k}\left(X\right)}\oplus dg|_{TS^{l}\left(Y\right)}:TS^{k}\left(X\right)\oplus TS^{l}\left(Y\right)\to T_{z}Z
\]
is surjective.
\end{rem}
\begin{lem}
\label{prop:fibered product in manc}Let $X,Y,Z$ be manifolds with
corners and let $f:X\to Z$ and $g:Y\to Z$ be continuous. Consider
the topological fiber product 
\[
P=X\fibp{f}{g}Y=\left\{ \left(x,y\right)\in X\times Y|f\left(x\right)=g\left(y\right)\right\} .
\]
Suppose at least one of the following conditions holds.

(i) $f$ is a b-normal submersion and $g$ is strongly smooth and
interior,

(ii) $f$ is étale, $g$ is a smooth map,

(iii) $f$ is a b-submersion, $g$ is perfectly simple, or

(iv) $\partial Z=\emptyset$, $f,g$ are b-transverse and smooth.

Then $P$ admits a unique structure of a manifold with corners making
it the fiber product in $\manc$, and we have
\begin{equation}
C_{i}\left(W\right)=\coprod_{j,k,l\geq0;i=j+k-l}C_{j}^{l}\left(X\right)\times_{C_{l}\left(Z\right)}C_{k}^{l}\left(Y\right)\label{eq:corners as fibered prods}
\end{equation}
where $C_{j}^{l}\left(X\right)=C_{j}\left(X\right)\cap C\left(f\right)^{-1}\left(C_{l}\left(Z\right)\right)$
and $C_{k}^{l}\left(Y\right)=C_{k}\left(Y\right)\cap C\left(g\right)^{-1}\left(C_{l}\left(Z\right)\right)$,
and the fiber product is taken over $C\left(f\right),C\left(g\right)$. 

Moreover, if $X\xrightarrow{f}Z$ (respectively, $Y\xrightarrow{g}Z$)
is b-normal then so is $P\xrightarrow{f'}Y$ (resp., $P\xrightarrow{g'}X$).
\end{lem}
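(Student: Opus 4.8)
The plan is to prove all three assertions — the existence of a manifold-with-corners structure on $P$ realizing the fibered product in $\manc$, the corner formula (\ref{eq:corners as fibered prods}), and the $b$-normality of the projections — by passing to local coordinate charts, since every claim is local on $P$. Uniqueness is then automatic: if two manifold-with-corners structures both make $P$ the fibered product, the universal property forces the comparison map between them to be the identity on the underlying set, so the two structures coincide. It therefore suffices to fix a point $\left(x,y\right)\in P$ with $z:=f\left(x\right)=g\left(y\right)$, choose charts $\phi:U\to\rr_{j}^{m}$, $\psi:V\to\rr_{k}^{n}$, $\chi:W\to\rr_{l}^{p}$ around $x$, $y$, $z$ respectively, and produce a chart on $P\cap\left(U\times V\right)$ compatible with the projections and satisfying (\ref{eq:corners as fibered prods}).

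In cases (i)--(iii) I would put $f$ (or the pair $f,g$) into a local normal form using the structure theorems of Joyce \cite{joyce-generalized,joyce-fibered}, which trivializes the fibered product. For (ii), $f$ étale means $f|_{U}$ is a diffeomorphism onto an open neighbourhood of $z$; shrinking $V$, the set $P\cap\left(U\times V\right)$ is the graph of the smooth map $\left(f|_{U}\right)^{-1}\circ g$, hence an open sub-manifold-with-corners of $V$, and $C\left(f\right)$ being a local diffeomorphism on corners forces $j=l$ in (\ref{eq:corners as fibered prods}). For (i), a $b$-normal submersion is, in suitable coordinates, the projection $F\times\rr_{l}^{p}\to\rr_{l}^{p}$ off a manifold-with-corners factor $F$; writing $f$ this way identifies $P\cap\left(U\times V\right)$ with $F\times\left\{\left(w,v\right):w=g\left(v\right)\right\}\cong F\times V$, a manifold with corners, where $g$ strongly smooth and interior is used to see the identification is smooth with the asserted corner behaviour. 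For (iii), combining the local model of a $b$-submersion with the normal form of the perfectly simple map $g$ (which matches up the $\left[0,\infty\right)$-coordinates) again straightens $P\cap\left(U\times V\right)$ to a product model, from which (\ref{eq:corners as fibered prods}) is read off and the last sentence of the lemma is visible.

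Case (iv) is the transverse case, where $l=0$ and $W\subset\rr^{p}$ is an ordinary open set. I would set $\Phi=\chi\circ f-\chi\circ g:U\times V\to\rr^{p}$, so that $P\cap\left(U\times V\right)=\Phi^{-1}\left(0\right)$. By Remark \ref{rem:easy b-transversality}, $b$-transversality of $f$ and $g$ says precisely that $d\Phi$ restricted to each corner stratum $S^{a}\left(U\right)\times S^{b}\left(V\right)$ is surjective onto $\rr^{p}$. The key step is then a ``corners implicit function theorem'': near a point $q_{0}$ of such a stratum, the coordinate directions not vanishing at $q_{0}$ span a subspace on which $d\Phi$ is already surjective, so one may solve $\Phi=0$ for $p$ of those coordinates by the (weakly smooth, one-sided) implicit function theorem, obtaining coordinates in which the inequalities cutting out $U\times V$ descend unchanged to $\Phi^{-1}\left(0\right)$; this exhibits $\Phi^{-1}\left(0\right)$ near $q_{0}$ as $\rr_{a+b}^{m+n-p}$, places $q_{0}$ in $S^{a+b}\left(P\right)$, and yields (\ref{eq:corners as fibered prods}) with $l=0$. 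One then checks directly that this structure satisfies the universal property of the fibered product in $\manc$.

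Finally, for the $b$-normality claim, suppose $f$ is $b$-normal; then $C\left(f\right)$ cannot increase depth, so $C_{j}^{l}\left(X\right)=\emptyset$ unless $l\leq j$, and hence every non-empty stratum $C_{j}^{l}\left(X\right)\times_{C_{l}\left(Z\right)}C_{k}^{l}\left(Y\right)$ of $C_{i}\left(P\right)$ has $i=j+k-l\geq k$. Thus $C\left(g'\right)$ maps $C_{i}\left(P\right)$ into $\coprod_{k\leq i}C_{k}\left(Y\right)$, which by \cite[Proposition 2.11]{joyce-generalized} (quoted in \S\ref{subsec:b-normal boundary decomp}) is exactly the statement that $g':P\to Y$ is $b$-normal; the symmetric statement for $f'$ follows by exchanging the roles of $X$ and $Y$. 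I expect the main obstacle to be the case analysis of the local normal forms — in particular case (iii), where the models of $b$-submersions and perfectly simple maps must be combined — together with the bookkeeping needed to match each local model with the corner formula (\ref{eq:corners as fibered prods}); the complete details are carried out in \cite[\S 3]{mod2hom}.
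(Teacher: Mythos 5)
The paper does not prove this lemma itself: the appendix opens by saying it "summarizes briefly some definitions and results from \cite[\S 3]{mod2hom}. The reader should consult that reference for the proofs and more detail." So there is no in-paper proof to compare against. Your sketch is a reasonable reconstruction of the standard argument carried out in that reference: uniqueness by the universal property; a case-by-case local analysis in charts; normal forms for $f$ in cases (i)--(iii); and a transversality/implicit-function argument in case (iv), with the corner formula read off from each local model. The final $b$-normality observation is the right one (depth is non-increasing along $C\left(f\right)$, hence $C_{j}^{l}\left(X\right)\neq\emptyset$ forces $l\leq j$, hence $i=j+k-l\geq k$ on every non-empty stratum) -- though note that the lemma names the induced projection $P\to Y$ as $f'$, not $g'$ as you write.

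The places where your sketch glosses over real work, before it could be accepted as a proof, are: (a) in case (i), the assertion that a $b$-normal submersion has local normal form $F\times\rr_{l}^{p}\to\rr_{l}^{p}$ needs either a citation to the relevant proposition in \cite{joyce-fibered} or an argument -- this is true, but it is not a restatement of the definition; (b) case (iii) is dispatched in a single vague sentence ("which matches up the $\left[0,\infty\right)$-coordinates") without explaining how the perfectly simple structure of $g$ interacts with the $b$-submersion structure of $f$ to produce a coherent corner model; and (c) in case (iv) the ``corners implicit function theorem'' you invoke -- solving $\Phi=0$ for coordinates drawn from the span of the stratum-tangent directions, and checking that the remaining $\left[0,\infty\right)$-constraints descend unchanged -- is correct in spirit but would need a careful statement about which coordinates can be eliminated and why the resulting charts have weakly smooth transitions. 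Since you explicitly defer all of this to \cite[\S 3]{mod2hom}, which is where the paper itself points the reader, there is no genuine gap in your \emph{plan}; but a reader of your sketch alone would not be able to verify case (iii) at all, and would have to supply the normal-form citations in cases (i) and (iv) themselves.
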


In what follows the discussion diverges from \cite{joyce-generalized}
(see more specifically $\S$4.2 there). More precisely we introduce
a stronger notion of a closed immersion, that has the implicit function
theorem built into it. This is the only kind of closed immersion that
we need to consider, and makes the discussion considerably simpler.
\begin{defn}
\label{def:cl im}A map $f:X\to Y$ of manifolds with corners is called
a \emph{closed immersion} if for every $p\in X$ there exists an open
neighborhood $p\in U\subset X$, an open neighborhood $f\left(U\right)\subset V\subset Y$,
and a strongly smooth submersion $h:V\to\rr^{N}$ for some integer
$N\geq0$ such that the following square is cartesian 
\[
\xymatrix{U\ar[r]^{f|_{U}}\ar[d] & V\ar[d]^{h}\\
0\ar[r] & \rr^{N}
}
\]
(it follows that $N=\dim Y-\dim X$). The fiber product exists by
Lemma \ref{prop:fibered product in manc} since $h$ is (vacuously)
b-normal, and $0\to\rr^{N}$ is strongly smooth and interior.
\end{defn}
\begin{rem}
\label{rem:b-submersive is enough}Any b-submersion to a manifold
without boundary is automatically a strongly smooth submersion, so
in Definition \ref{def:cl im} it suffices to assume that $h$ is
a b-submersion. 
\end{rem}

\begin{defn}
A map $f:X\to Y$ of manifolds with corners is called \emph{a closed
embedding }if it is a closed immersion, has a closed image, and induces
a homeomorphism on its image.
\end{defn}

\begin{defn}
A map $f:X\to Y$ of manifolds with corners is an \emph{open embedding}
if it is étale and injective.
\end{defn}
\begin{defn}
(a) Let $f:X\to Y$ be a map of manifolds with corners. We say $f$
is \emph{horizontally submersive }if for every $\tilde{x}\in X$ the
germ $f_{\tilde{x}}$ is isomorphic to the projection $\rr_{k}^{n}\to\rr_{k'}^{n'}$,
\[
\left(x_{1},...,x_{n}\right)\mapsto\left(x_{1},...,x_{k'},x_{k+1},...,x_{k+n'}\right).
\]

(b) Let $f:X\to Y$ be a b-normal map. We call
\[
C_{k}^{hor}\left(X\right):=\left(C\left(f\right)^{-1}\left(C_{0}\left(Y\right)\right)\cap C_{k}\left(X\right)\right)
\]
the \emph{horizontal k-corners }of $X$ with respect to $f$. 
\end{defn}
\begin{lem}
A map $f:X\to Y$ is horizontally submersive if and only if it is
b-normal and the induced map $C_{k}^{hor}\left(X\right)\xrightarrow{C\left(f\right)}Y$is
a submersion for every $k$; that is, 
\[
T_{x}C_{k}^{hor}\left(X\right)\xrightarrow{dC\left(f\right)}T_{y}Y
\]
is surjective for all $x\in C_{k}^{hor}\left(X\right)$.
\end{lem}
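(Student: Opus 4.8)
Since horizontal submersivity is a condition on germs, the plan is to pass to boundary charts $\rr^n_k$ near a point $\tilde x\in X$ and $\rr^{n'}_{k'}$ near $y=f(\tilde x)$, and to compare $f$ with the model projection $p\colon(x_1,\dots,x_n)\mapsto(x_1,\dots,x_{k'},x_{k+1},\dots)$ appearing in the definition of ``horizontally submersive''. Both directions then become local computations, with the substantive content in the reverse implication.

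For the forward implication, if $f$ is horizontally submersive it equals such a $p$ in suitable charts, so it suffices to check the two properties for $p$. The depth-$|R|$ corner stratum of $\rr^n_k$ indexed by $R\subseteq\{1,\dots,k\}$ is carried by $C(p)$ into the stratum of $\rr^{n'}_{k'}$ indexed by $R\cap\{1,\dots,k'\}$, of depth $\le|R|$, so $C(p)\bigl(C_r(\rr^n_k)\bigr)\subseteq\coprod_{r'\le r}C_{r'}(\rr^{n'}_{k'})$ and $p$ is b-normal by the criterion recalled in \S\ref{subsec:b-normal boundary decomp}. Moreover a depth-$m$ corner lies in $C_m^{hor}$ exactly when $R\cap\{1,\dots,k'\}=\emptyset$, and over such a stratum $p$ still projects identically onto $x_1,\dots,x_{k'},x_{k+1},\dots$, whose differentials span $T_yY$; hence $C_m^{hor}(\rr^n_k)\to\rr^{n'}_{k'}$ is a submersion for every $m$. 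This settles the forward direction.

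For the reverse implication, assume $f$ is b-normal and every $C_k^{hor}(X)\xrightarrow{C(f)}Y$ is a submersion, and fix $\tilde x$ of depth $k$ with $y=f(\tilde x)$ of depth $k'\le k$. In charts with boundary defining functions $x_1,\dots,x_k$ and $y_1,\dots,y_{k'}$, interiority of $f$ gives $f^*y_i=x^{\alpha_i}u_i$ with $\alpha_i\in\zz_{\ge0}^k\setminus\{0\}$, $u_i>0$, and b-normality lets us take the supports $\operatorname{supp}(\alpha_i)$ pairwise disjoint (Joyce \cite{joyce-generalized}); set $S=\bigcup_i\operatorname{supp}(\alpha_i)$. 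A corner $\{x_j=0:j\in J\}$ is horizontal iff $J\cap S=\emptyset$, so $C:=\{x_j=0:j\in\{1,\dots,k\}\setminus S\}$ is the deepest horizontal corner at $\tilde x$, and the hypothesis forces $dC(f)\colon T_{\tilde x}C\to T_yY$ to be surjective. Since $\tilde x$ is the origin, $d(f^*y_i)|_{\tilde x}=u_i(\tilde x)\,d(x^{\alpha_i})|_0$, which vanishes unless $|\alpha_i|=1$; surjectivity onto the $y_i$-directions therefore forces $|\alpha_i|=1$ for all $i$, so by disjointness $|S|=k'$, and after reindexing $\alpha_i=e_i$, $S=\{1,\dots,k'\}$, $f^*y_i=x_iu_i$. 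Replacing $x_i$ by $x_iu_i$ for $i\le k'$ is a corner-respecting change of chart near the origin (it fixes the zero locus of each $x_i$ and has invertible Jacobian there), so we may assume $f^*y_i=x_i$; writing $f^*y_l=g_l$ for $l>k'$ and applying surjectivity of $dC(f)$ to the $y_l$-directions forces the block $\bigl[\partial_{x_p}g_l(0)\bigr]_{k'<l\le n',\,k<p\le n}$ to have full row rank $n'-k'$. After permuting and linearly recombining the interior coordinates and using the implicit function theorem, the functions $(x_1,\dots,x_k,g_{k'+1},\dots,g_{n'},x_{k+n'-k'+1},\dots,x_n)$ form a chart in which $f$ is exactly $p$, so $f$ is horizontally submersive at $\tilde x$; since $\tilde x$ was arbitrary, this completes the proof.

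The hard part is the reverse implication, and specifically extracting the normal form: one needs the local expression $f^*y_i=x^{\alpha_i}u_i$ for interior maps and Joyce's characterization of b-normality by disjointness of $\operatorname{supp}(\alpha_i)$, and then the key observation that it is surjectivity of $dC(f)$ along the corner strata --- not merely along the open interior --- that both eliminates the higher-degree monomials $|\alpha_i|\ge2$ and turns the residual components $g_l$ into a submersion. A secondary point to keep straight is that each coordinate change used respects the manifold-with-corners structure, which holds because these changes either fix the boundary defining functions or modify them only by positive units.
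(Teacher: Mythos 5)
The paper itself gives no proof of this lemma: the appendix explicitly states that it only summarizes material from \cite{mod2hom} and refers the reader there for proofs. So your argument cannot be compared line-by-line with the paper; it must be judged on its own merits, and it holds up.

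Your proof follows the natural route: reduce to the local normal form $f^*y_i = x^{\alpha_i}u_i$ for an interior map, invoke Joyce's description of b-normality as pairwise disjointness of the supports $\operatorname{supp}(\alpha_i)$, identify the deepest horizontal corner $C$ at $\tilde x$ as $\{x_j=0 : j\notin S\}$, and then read off from surjectivity of $dC(f)|_{T_{\tilde x}C}$ that each $|\alpha_i|=1$ (killing higher monomials) and that the interior block $B=\bigl[\partial_{x_p}g_l(0)\bigr]_{l>k',p>k}$ has full row rank, after which a corner-respecting change of chart exhibits $f$ as the model projection. The forward direction is a direct check on the model $p$. The two changes of chart you use -- replacing $x_i$ by $x_iu_i$ for $i\le k'$, and recombining the interior coordinates so that $(x_1,\dots,x_k,g_{k'+1},\dots,g_{n'},x_{k+n'-k'+1},\dots,x_n)$ is a chart -- both preserve the boundary defining hypersurfaces and have invertible Jacobian at the origin, so they are legitimate.

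Two small remarks on wording. First, b-normality \emph{forces} the supports to be pairwise disjoint; it is a property of $f$, not a choice one makes. Second, you only use the surjectivity hypothesis at the single deepest horizontal corner through $\tilde x$, not at every depth $m$; that is enough for your argument, but it should be stated that the hypothesis for that one $m$ is what is used. Neither affects correctness.
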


Suppose now $X,Y$ are manifolds with corners, $f$ is horizontally
submersive with oriented fibers, and let $\omega$ be a compactly
supported differential form on $X$. In this case we can define $f_{*}\omega$
by integration along the fiber.

\subsection{Orbifolds with corners}
\begin{defn}
A groupoid $\left(G_{0},G_{1},s,t,e,i,m\right)$ is a category where
every arrow is invertible. Namely, $G_{0}$ is a class of \emph{points}
and $G_{1}$ is a class of \emph{arrows}. The\emph{ }maps $s,t:G_{1}\to G_{0}$
take an arrow to its \emph{source }and \emph{target }objects, respectively.
The \emph{composition} map ${m:\left\{ \left(f,g\right)\in G_{1}\times G_{1}|t\left(f\right)=s\left(g\right)\right\} \to G_{1}}$
takes a pair of composable arrows to their composition. The \emph{identity
map} $e:G_{0}\to G_{1}$ takes an object to the identity arrow and
the \emph{inverse map} $i:G_{1}\to G_{1}$ takes an arrow to its inverse. 
\end{defn}
The equivalence classes of the equivalence relation ${\im\left(s\times t\right)\subset G_{0}\times G_{0}}$
are called \emph{the orbits} of the groupoid; the class of all orbits
is denoted $G_{0}/G_{1}$. We will use different notations for groupoids,
depending on how much of the structure we want to label:
\[
\left(G_{0},G_{1},s,t,e,i,m\right)=G_{\bullet}=G_{1}\overset{s,t}{\rightrightarrows}G_{0}.
\]

\begin{defn}
\label{def:groupoids in ManC}A groupoid $\left(X_{0},X_{1},s,t,e,i,m\right)$
will be called \emph{étale }if $X_{0},X_{1}$ are objects of $\manc$,
and the maps $s,t,e,i,m$ are all étale (in fact, it suffices to require
that $s:X_{1}\to X_{0}$ is étale). An étale groupoid will be called
\emph{proper }if the map $s\times t:X_{1}\to X_{0}\times X_{0}$ is
proper. We will mostly be interested in proper étale groupoids, or
PEG's for short.

Let $X_{\bullet}$ be a PEG. The set of orbits $X_{0}/X_{1}$, taken
with the quotient topology, forms a locally compact Hausdorff space.
$X_{\bullet}$ is called \emph{compact }if $X_{0}/X_{1}$ is compact. 
\end{defn}
Let $X_{\bullet},Y_{\bullet}$ be two PEG's. A\emph{ smooth functor}
$X_{\bullet}\xrightarrow{F_{\bullet}}Y_{\bullet}$ consists of a pair
of  smooth maps $F_{0}:X_{0}\to Y_{0}$ and $F_{1}:X_{1}\to Y_{1}$
which is a functor between the underlying categories. If $F_{\bullet},G_{\bullet}:X_{\bullet}\to Y_{\bullet}$
are two functors a\emph{ smooth transformation }$\alpha:F_{\bullet}\Rightarrow G_{\bullet}$\textbf{
}is a smooth map $X_{0}\to Y_{1}$ which is a natural transformation
between the underlying functors. In this way we obtain a bicategory
(see \cite{benabou}) $\mathbf{PEG}$, whose objects, or \emph{0-cells},
are proper étale groupoids, morphisms (or \emph{1-cells}) are smooth
functors, and 2-cells are natural transformations. A \emph{refinement
}$R_{\bullet}:X_{\bullet}\to X'_{\bullet}$ is a smooth functor which
is an equivalence of categories and such that $R_{0}$ (hence also
$R_{1}$) is an étale map.
\begin{lem}
As a subset of the 1-cells of $\mathbf{PEG}$ the refinements admit
a right calculus of fractions, in the sense of \cite[\S 2.1]{pronk}.
\end{lem}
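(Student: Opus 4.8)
The plan is to verify the five conditions defining a right calculus of fractions in \cite[\S2.1]{pronk}, which I will refer to as (BF1)--(BF5), for the bicategory $\mathbf{PEG}$ with $W$ the class of refinements. Conditions (BF1), (BF2) and (BF5) are handled by a short dimension argument. For (BF2), a composite of refinements is an equivalence of the underlying categories and has étale object map, since étale maps are closed under composition. For (BF1), one checks that every equivalence $w_\bullet\colon X_\bullet\to Y_\bullet$ in $\mathbf{PEG}$ is a refinement: it is automatically an equivalence of underlying categories, so it suffices to see $w_0$ is étale. Picking a smooth quasi-inverse $v_\bullet$ and smooth $2$-isomorphisms $v_\bullet w_\bullet\cong\operatorname{id}$, $w_\bullet v_\bullet\cong\operatorname{id}$, the components of these natural transformations are smooth maps $X_0\to X_1$ and $Y_0\to Y_1$ whose composites with the (étale) target maps are identities; since $\dim X_1=\dim X_0$ and $\dim Y_1=\dim Y_0$, these components are themselves étale, hence $v_0w_0$ and $w_0v_0$ are étale, and therefore $dw_0$ is everywhere bijective, i.e.\ $w_0$ is étale. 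For (BF5), if $w_\bullet$ is a refinement and $\alpha\colon w_\bullet\Rightarrow w'_\bullet$ a $2$-isomorphism, then the component $X_0\to Y_1$ of $\alpha$ is étale (its composite with the étale source map equals $w_0$), so $w'_0$, its composite with the étale target map, is étale; and $w'_\bullet$ is an equivalence of underlying categories, being conjugate to $w_\bullet$.

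Condition (BF4) follows from the fact that a refinement $w_\bullet\colon C_\bullet\to D_\bullet$ is fully faithful, so that $C_1\cong C_0\times C_0\times_{D_0\times D_0}D_1$ is cartesian. Given $f_\bullet,g_\bullet\colon B_\bullet\to C_\bullet$ and a $2$-cell $\mu\colon w_\bullet f_\bullet\Rightarrow w_\bullet g_\bullet$, i.e.\ a smooth natural transformation $\mu\colon B_0\to D_1$ with source $w_0f_0$ and target $w_0g_0$, the triple $(f_0,g_0,\mu)$ factors uniquely through $C_1$ and defines a $2$-cell $\beta\colon f_\bullet\Rightarrow g_\bullet$ with $w_\bullet\ast\beta=\mu$; naturality of $\beta$ follows from that of $\mu$ together with faithfulness of $w_\bullet$. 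Thus (BF4) holds with $v_\bullet=\operatorname{id}_{B_\bullet}$ and with $\beta$ unique, so the essential-uniqueness clause is automatic.

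The substance of the argument is condition (BF3): given a refinement $w_\bullet\colon C_\bullet\to D_\bullet$ and an arbitrary $1$-cell $f_\bullet\colon B_\bullet\to D_\bullet$, I will produce a $2$-commutative square whose leg over $f_\bullet$ is a refinement. Take $P_\bullet$ to be the weak fibre product of $C_\bullet\xrightarrow{w}D_\bullet\xleftarrow{f}B_\bullet$, with $P_0=B_0\times_{f_0,D_0,s}D_1\times_{t,D_0,w_0}C_0$ and $P_1$ the analogous space of pairs of compatible arrows, together with the projections $h_\bullet\colon P_\bullet\to C_\bullet$ and $g_\bullet\colon P_\bullet\to B_\bullet$ and the tautological $2$-isomorphism $w_\bullet h_\bullet\cong f_\bullet g_\bullet$. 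Since $w_0$ and the structure maps of $D_\bullet$ are étale, each fibre product appearing in $P_0$ and $P_1$ exists in $\manc$ by Lemma \ref{prop:fibered product in manc}(ii), and the structure maps of $P_\bullet$ are étale as iterated base changes of étale maps; in particular $g_0\colon P_0\to B_0$ is étale, being a base change of an étale map. Properness of $s_P\times t_P\colon P_1\to P_0\times P_0$ follows from properness of the corresponding maps of $B_\bullet$, $C_\bullet$, $D_\bullet$, as it is assembled from these by base change and composition, so $P_\bullet$ is a PEG. Finally, because $w_\bullet$ is an equivalence of underlying categories so is $g_\bullet$: it is surjective on objects by essential surjectivity of $w_\bullet$, and fully faithful because any $C_\bullet$-arrow is determined by its image under the fully faithful $w_\bullet$. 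Hence $g_\bullet$ is a refinement, which completes (BF3).

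The main obstacle is (BF3), and within it the verification that the weak fibre product $P_\bullet$ stays inside the class of proper étale groupoids while keeping $g_0$ étale; the existence of the underlying fibre products of manifolds with corners is provided by the étale case of Lemma \ref{prop:fibered product in manc}, and the remaining bookkeeping — matching source and target data, naturality of the comparison $2$-cell, and the equivalence-of-categories argument for $g_\bullet$ — is routine. Carrying out these details, following \cite[\S3]{mod2hom}, completes the proof.
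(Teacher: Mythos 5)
The paper itself does not include a proof of this lemma; it summarizes results from \cite[\S 3]{mod2hom} and defers there for the argument. So a line-by-line comparison is not possible, but your outline follows what is essentially the only reasonable route and is consistent with the machinery (Lemma \ref{lem:fibp in orb}) that the paper's appendix sets up, so I believe it tracks the cited proof closely.

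Your verification of BF2, BF4, BF5 and the construction for BF3 are correct, and BF3 is indeed the heart of the matter: the weak 2-fibre product $P_\bullet$ exists as a PEG precisely because $w_0$ and the structure maps $s,t$ of $D_\bullet$ are \'etale, which places the needed fibre products squarely in case (ii) of Lemma \ref{prop:fibered product in manc}; the projection $g_0$ is then \'etale as an iterated base change of \'etale maps, and $g_\bullet$ inherits full faithfulness and essential surjectivity from $w_\bullet$. Two points deserve a little more care than you give them. In BF1, passing from ``$v_0w_0$ and $w_0v_0$ are \'etale'' to ``$dw_0$ is everywhere bijective'' requires an intermediate step: $v_0w_0$ \'etale gives $dw_0$ injective, $w_0v_0$ \'etale gives $dv_0$ injective, and the two injections force $\dim X_0=\dim Y_0$, whence the injection $dw_0$ is a bijection; as stated, the inference is too quick, since $w_0v_0$ being \'etale only controls $dw_0$ at points in the image of $v_0$. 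In BF3, the phrase ``assembled from these by base change and composition'' for properness of $(s_P,t_P)\colon P_1\to P_0\times P_0$ hides the actual mechanism: using full faithfulness of $g_\bullet$ (itself a consequence of full faithfulness of $w_\bullet$), one identifies $P_1$ with the pullback of $(s_B,t_B)\colon B_1\to B_0\times B_0$ along $g_0\times g_0\colon P_0\times P_0\to B_0\times B_0$, so $(s_P,t_P)$ is proper as a base change of the proper map $(s_B,t_B)$. With those two points spelled out, the argument is complete.
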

We define the category $\mathbf{Orb}$ of orbifolds (always with corners,
unless specifically mentioned otherwise) to be the 2-localization
of $\mathbf{PEG}$ by the refinements. We usually denote orbifolds
by calligraphic letters $\xx,\yy,\mm$... They are given by proper
étale groupoids. Maps $\xx\to\yy$ are given by fractions $F_{\bullet}|R_{\bullet}$
with $X_{\bullet}\xleftarrow{R_{\bullet}}X_{\bullet}'$ a refinement
and $X'_{\bullet}\xrightarrow{F_{\bullet}}Y_{\bullet}$ a smooth functor.
We refer the reader to \cite{pronk} for further details, including
the definition of the 2-cells, the composition operations, etc.
\begin{defn}
\label{def:properties of orbi-maps}We say $f$ is\emph{ strongly-smooth,
étale, interior, b-normal, submersive, b-submersive, horizontally
submersive, simple }or\emph{ perfectly simple }if $F_{0}$ has the
corresponding property as a map of manifolds with corners. It is
easy to check that these properties are preserved by 2-cells (and
thus are properties of the homotopy class of $f$). The map $f$ is
called a\emph{ b-fibration} if it is b-normal and b-submersive (cf.
\cite[Definition 4.3]{joyce-generalized}).

For $i=1,2$ let $f^{i}=F^{i}|R^{i}:\xx^{i}\to\yy$ be an interior
map. We say $f^{1}$ and $f^{2}$ are \emph{b-transverse }if $F_{0}^{1},F_{0}^{2}$
are b-transverse (as maps of manifolds with corners). 
\end{defn}
An equivalence in $\mathbf{Orb}$ is called \emph{a diffeomorphism.}
We say $f=F|R:\xx\to\yy$ is \emph{full}, \emph{essentially surjective},
or \emph{faithful} if $F$ is full, essentially surjective, or faithful,
respectively.

If $\xx=X_{1}\rightrightarrows X_{0}$ is an orbifold with corners,
$\partial\xx=\partial X_{1}\rightrightarrows\partial X_{0}$ is naturally
an orbifold with corners and the smooth functor $\left(i_{X_{1}}^{\partial},i_{X_{0}}^{\partial}\right)$
induces a map $i_{\xx}^{\partial}:\partial\xx\to\xx$. We denote 
\[
i_{\xx}^{\partial^{c}}:=i_{\xx}^{\partial}\circ i_{\partial\xx}^{\partial}\circ\cdots\circ i_{\partial^{c-1}\xx}^{\partial}:\partial^{c}\xx\to\xx.
\]
 Since the maps $s,t,e,i,m$ are étale, they preserve the depth and
we obtain orbifolds with corners
\[
C_{k}\left(\xx\right)=C_{k}\left(X_{1}\right)\rightrightarrows C_{k}\left(X_{0}\right)
\]
for all $k$. A \emph{local orbifold with corners }$\xx=\coprod\xx_{n}$
(or just an $l$-orbifold) is a disjoint union of orbifolds with corners
with $\dim\xx_{n}=n$. It is obvious how to turn this into a category
and extend the definitions of various types of maps to this situation.
If $\xx$ is an orbifold with corners, we can consider $C\left(\xx\right)=\coprod_{k\geq0}C_{k}\left(\xx\right)$
as an l-orbifold. A smooth map $f:\xx\to\yy$ induces an interior
map $C\left(f\right):C\left(\xx\right)\to C\left(\yy\right)$.

We turn to a discussion of the weak fibered product in $\text{\textbf{Orb}}$. 
\begin{lem}
\label{lem:fibp in orb}Let
\begin{equation}
f:\xx\xleftarrow{R}\xx'\xrightarrow{F}\zc\text{ and }g:\yy\xleftarrow{S}\yy'\xrightarrow{G}\zc\label{eq:f and g fracs}
\end{equation}
be two 1-cells in $\text{\textbf{Orb}}$. Suppose at least one of
the following conditions holds.

(i) $F$ is a b-normal submersion and $G$ is strongly smooth and
interior,

(ii) $F$ is étale, $G$ is a smooth map,

(iii) $F$ is a b-submersion, $G$ is perfectly simple, or

(iv) $\partial\zc=\emptyset$, $F$ and $G$ are b-transverse (see
Remark \ref{rem:easy b-transversality} for an equivalent condition)
and smooth. 

Then 

(a) The weak fiber product $\mathcal{P}=\xx\fibp{f}{g}\yy$ exists
in $\text{\textbf{Orb}}$. In fact, we can take 
\[
\mathcal{P}=\xx'\fibp{F}{G}\yy'
\]
the weak fiber product in $\text{\textbf{PEG}}$, given by the groupoid
$P_{1}\rightrightarrows P_{0}$ where 
\begin{eqnarray*}
P_{0}=X_{0}'\fibp{F_{0}}{s}Z_{1}\fibp{t}{G_{0}}Y_{0}',\\
P_{1}=X_{1}'\fibp{s\circ F_{1}}{s}Z_{1}\fibp{t\circ G_{1}}{s}Y_{1}'.
\end{eqnarray*}
Here an element of $P_{1}$ specifies the three solid arrows in the
diagram below, 
\[
\xymatrix{x^{1}\ar[d]_{a} & F_{0}\left(x^{1}\right)\ar@{-->}[d]_{F_{1}\left(a\right)}\ar[r] & G_{0}\left(y^{1}\right)\ar@{-->}[d]^{G_{1}\left(b\right)} & y^{1}\ar[d]^{b}\\
x^{2} & F_{0}\left(x^{2}\right)\ar@{-->}[r] & G_{0}\left(y^{2}\right) & y^{2}
}
.
\]
The horizontal dashed arrow is uniquely determined by requiring the
square to be commutative; $s,t:P_{1}\to P_{0}$ are the projections
on the top and bottom rows of the diagram, respectively, and the
other structure maps are computed similarly.

(b) We have
\begin{equation}
C_{i}\left(\mathcal{P}\right)=\coprod_{j,k,l\geq0;i=j+k-l}C_{j}^{l}\left(\xx\right)\times_{C_{l}\left(Z\right)}C_{k}^{l}\left(\yy\right)\label{eq:corners as fibp in orb}
\end{equation}
where $C_{j}^{l}\left(\xx\right)=C_{j}\left(\xx\right)\cap C\left(f\right)^{-1}\left(C_{l}\left(\zc\right)\right)$
and $C_{k}^{l}\left(\yy\right)=C_{k}\left(\yy\right)\cap C\left(g\right)^{-1}\left(C_{l}\left(\zc\right)\right)$,
and the weak fiber product is taken over $C\left(f\right),C\left(g\right)$. 

(c) If we assume, in addition, that $\xx\xrightarrow{f}\zc$ (respectively,
$\yy\xrightarrow{g}\zc$) is b-normal then so is $\mathcal{P}\xrightarrow{f'}\yy$
(resp., $\mathcal{P}\xrightarrow{g'}\xx$).
\end{lem}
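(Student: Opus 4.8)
The plan is to reduce the statement to its manifold-with-corners counterpart, Lemma~\ref{prop:fibered product in manc}, and then transport the construction through the bicategorical localization $\mathbf{PEG}\to\mathbf{Orb}$; the full details are carried out in \cite[\S3]{mod2hom}, so I only indicate the structure of the argument. First I would build the groupoid $P_\bullet$ of the statement and show it is a PEG. Representing $f$ and $g$ by the fractions in (\ref{eq:f and g fracs}), I form $P_0$ and $P_1$ by the iterated fiber products displayed above. Each stage is a fiber product in $\manc$ whose existence is guaranteed by Lemma~\ref{prop:fibered product in manc}: the pullbacks taken over the structure maps $s,t\colon Z_1\to Z_0$ fall under clause~(ii), since $s$ and $t$ are étale, while the remaining pullback over $G_0$ is handled by clause~(i), (ii), (iii) or (iv) according to which of the four hypotheses on $(F,G)$ is in force. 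Here one uses that the intermediate leg inherits from $F_0$ the property of being a b-normal submersion, étale, or b-submersion (base change along an étale map, then composition with $t$), and that in case~(iv) b-transversality is stable under this intermediate étale base change, which is exactly the Morita-invariance noted after Definition~\ref{def:properties of orbi-maps} (in case~(iv) one also invokes Remark~\ref{rem:easy b-transversality}). The structure maps of $P_\bullet$ are assembled from étale maps, hence étale, so $P_\bullet$ is an étale groupoid; properness of $s\times t\colon P_1\to P_0\times P_0$ is a routine check from properness of $X'_\bullet$, $Y'_\bullet$, $Z_\bullet$, using that base change preserves properness. Thus $P_\bullet$ is a PEG and $\mathcal P:=P_\bullet$ is an orbifold with corners.

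Next I would check that $\mathcal P$, equipped with its two projections and the tautological $2$-cell over $\zc$, is the weak (i.e.\ bicategorical) fiber product in $\mathbf{Orb}$. At the strict level, the arrow-chasing square in the statement shows that $P_\bullet$ is the $2$-pullback of $F_\bullet$ and $G_\bullet$ in $\mathbf{PEG}$. To descend this to $\mathbf{Orb}$ I would use that the refinements admit a right calculus of fractions (the lemma following \cite[\S2.1]{pronk}): given a competing cone over $f,g$ out of an orbifold $\mathcal T$, one first refines $\mathcal T$ so that both legs and the mediating $2$-cell are represented by honest smooth functors into $X'_\bullet$, $Y'_\bullet$, $Z_\bullet$, then applies the strict universal property of $P_\bullet$ in $\mathbf{PEG}$, and finally transports the result back. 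The point that must be pinned down — and which I expect to be the main obstacle — is that the formation of $P_\bullet$ is stable under refinement of any of $X'_\bullet$, $Y'_\bullet$, $Z_\bullet$: such a refinement induces, by the same iterated-fiber-product formula together with Lemma~\ref{prop:fibered product in manc}(ii), a refinement of $P_\bullet$, and these refinements fit together coherently. With this in hand, independence of $\mathcal P$ from all choices of presentation, and hence the $2$-universal property, is formal.

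Finally, parts (b) and (c). For (b): since $C_k$ of an orbifold is computed by applying $C_k$ to the objects and arrows of a presenting groupoid and $C$ is functorial, $C(\mathcal P)$ is the $l$-groupoid obtained by applying $C$ to $P_0$ and $P_1$; but $C$ of an iterated fiber product in $\manc$ decomposes by the corner formula (\ref{eq:corners as fibered prods}) applied at each stage, and collecting the depth indices — absorbing the redundant $Z_\bullet$-factor into the identification $C_l(\zc)=C_l(Z_1)\rightrightarrows C_l(Z_0)$ — yields exactly (\ref{eq:corners as fibp in orb}). For (c): by Definition~\ref{def:properties of orbi-maps}, b-normality of a map of orbifolds is tested on the object map of a functor representative; the projection $\mathcal P\to\yy$ is represented by $P_0\to Y_0'$, which is a base change of the manifold-with-corners fiber product whose leg $\xx\xrightarrow{f}\zc$ is represented by $F_0$. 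Hence if $f$ is b-normal then $F_0$ is b-normal, and the ``moreover'' clause of Lemma~\ref{prop:fibered product in manc} gives that $P_0\to Y_0'$ is b-normal, so $\mathcal P\to\yy$ is b-normal; the claim for $\mathcal P\to\xx$ is symmetric.
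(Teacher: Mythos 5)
The paper states Lemma~\ref{lem:fibp in orb} without proof: the appendix explicitly defers to \cite[\S 3]{mod2hom}, so there is no in-paper argument to compare yours against. Evaluated on its own terms, your outline follows the expected route --- build $P_\bullet$ by iterated $\manc$-fiber products via Lemma~\ref{prop:fibered product in manc}, check that it is a PEG, descend the strict $2$-pullback to $\mathbf{Orb}$ using the right calculus of fractions, read off the corner strata from (\ref{eq:corners as fibered prods}) applied levelwise, and prove b-normality of the projections on the atlas level --- and each of (a), (b), (c) is reduced to the right ingredient. Two points should be tightened.

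First, in case (iv) the intermediate pullback is not quite as innocent as you suggest. After forming $Q=X_0'\fibp{F_0}{s}Z_1$ by case~(ii), the map you need to be b-transverse to $G_0$ is $t\circ\pi_2:Q\to Z_0$, whose image of the (b-)differential at a point $(x,\alpha)$ is the arrow-transport $\bigl(dt\circ (ds)^{-1}\bigr)\bigl(\im dF_0|_x\bigr)$. The hypothesis as stated in Definition~\ref{def:properties of orbi-maps} gives transversality only at pairs with $F_0(x)=G_0(y)$ literally equal; you need it "up to an arrow," i.e.\ at all triples $(x,\alpha,y)$ with $s(\alpha)=F_0(x)$ and $t(\alpha)=G_0(y)$. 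This does follow after a suitable refinement of $X'_\bullet$ (or by arguing that b-transversality at each point in an orbit propagates along arrows, since arrows act as local diffeomorphisms), but it is a genuine step, not an automatic base-change observation, and deserves a line.

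Second, you invoke "Morita-invariance noted after Definition~\ref{def:properties of orbi-maps}" for b-transversality, but the remark there concerns only the single-map properties (strongly smooth, \'etale, b-normal, etc.); b-transversality of a \emph{pair} of maps is defined separately in the next paragraph with no such invariance statement attached. The invariance you want is true but you should derive it rather than cite it from a place where it is not claimed.
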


\begin{defn}
A map $F|R:\xx\to\yy$ of orbifolds with corners is a \emph{closed
immersion} if $F_{0}$ is a closed immersion. In this case, the same
holds for any map homotopic to $F|R$.
\end{defn}

A manifold with corners $M$ specifies an orbifold $\underline{M}=M\rightrightarrows M$
with only identity morphisms, and this extends to a 2-fully-faithful
pseudofunctor $\manc\to\text{\textbf{Orb}}$ (namely, it restricts
to an equivalence $\manc\left(X,Y\right)\simeq\text{\textbf{Orb}}\left(\underline{X},\underline{Y}\right)$
for any pair $X,Y$ of objects of $\manc$). We say an orbifold ``is''
a manifold with corners if it is in the essential image of this functor.
\begin{defn}
Let $\xx$ be an orbifold with corners. \emph{An atlas} \emph{for
$\xx$ }is a map $p:\underline{M}\to\xx$ where $M$ is some manifold
with corners, such that for any other map $f:\underline{N}\to\xx$
from a manifold with corners, $\underline{M}\times_{\xx}\underline{N}$
is a manifold with corners and the projection $\underline{M}\times_{\xx}\underline{N}\overset{p'}{\longrightarrow}\underline{N}$
is étale and surjective (as a map of $\manc$).
\end{defn}
The obvious map $\underline{X_{0}}\to\left(X_{1}\rightrightarrows X_{0}\right)$
is an atlas. Conversely, any atlas $\underline{M}\to\xx$ defines
an orbifold equivalent to $\xx$, whose objects are $\underline{M}$
and morphisms are $\underline{M}\times_{\xx}\underline{M}$.
\begin{defn}
\label{def:closed embedding}A map $f:\xx\to\yy$ of orbifolds with
corners is a \emph{closed} (respectively, \emph{open}) \emph{embedding}
if for some (hence any) atlas $p:\underline{M}\to\yy$, the 2-pullback
$\underline{M}\fibp{p}{f}\xx$ is a manifold with corners  and the
map $\underline{M}\fibp{p}{f}\xx\to\underline{M}$ is a closed (resp.
open) embedding of manifolds with corners.
\end{defn}
If $f:\xx\to\yy$ is a closed embedding we may refer to $\xx$ as
a \emph{suborbifold} of $\yy$.

The notion of a sheaf on an orbifold $\xx$ is the same as the notion
of a sheaf on the underlying topological orbifold (see \cite{moerdijk-classifying-groupoids,pronk}).
A vector bundle $E$ on an orbifold with corners $\xx=X_{1}\overset{s,t}{\rightrightarrows}X_{0}$
is given by $\left(E_{0},\phi\right)$ where $E_{0}$ is a smooth
vector bundle on $X_{0}$ and 
\[
\phi:s^{*}E_{0}\to t^{*}E_{0}
\]
is an isomorphism satisfying some obvious compatibility requirements
with the groupoid structure. The sections of $\left(E_{0},\phi\right)$
form a sheaf over $\xx$. A \emph{local system }on an orbifold $\xx$
is a sheaf which is locally isomorphic to the constant sheaf $\underline{\zz}$.
We extend the conventions set forth in \cite[\S1.1, \S 6.1]{twA8}
to proper étale groupoids with corners in the obvious way\footnote{Note there we had to work with $\cc$-valued local systems, but for
the purposes of this paper we can work with $\zz$-valued local systems.}. In particular, for every vector bundle $E$ on $X_{\bullet}$ there's
a local system $\Or\left(E\right)$ on $X_{\bullet}$. The \emph{orientation
local system }of $X_{\bullet}$ is $\Or\left(TX_{\bullet}\right)$.
We have a local system isomorphism 
\begin{equation}
\iota_{\xx}^{\partial}:\Or\left(T\partial X{}_{\bullet}\right)\to\Or\left(TX_{\bullet}\right)\label{eq:boundary ls iso}
\end{equation}
lying over $i_{X_{\bullet}}^{\partial}:\partial X_{\bullet}\hookrightarrow X_{\bullet}$,
defined by appending the outward normal vector at the beginning of
the oriented base. Given a short exact sequence of vector bundles
\[
0\to E_{1}\xrightarrow{f}E\xrightarrow{q}E_{2}\to0
\]
on $\xx$, we obtain a local system isomorphism 
\begin{equation}
\Or\left(E_{1}\right)\otimes\Or\left(E_{2}\right)\to\Or\left(E\right),\label{eq:ls map from ses}
\end{equation}
which, using oriented bases to represent orientation, can be expressed
by
\[
\left[e_{1}^{1},...,e_{1}^{n_{1}}\right]\otimes\left[e_{2}^{1},...,e_{2}^{n_{2}}\right]\mapsto\left[f\left(e_{1}^{1}\right),...,f\left(e_{1}^{n_{1}}\right),g\left(e_{2}^{1}\right),...,g\left(e_{2}^{n_{2}}\right)\right]
\]
where $g:E_{2}\to E$ is any section of $q$.

Maps of local systems are \emph{always} assumed to be cartesian,
so to specify a local system map ${\lc_{1}\xrightarrow{\ff}\lc_{2}}$
over ${\xx_{1}\xrightarrow{f}\xx_{2}}$ is equivalent to giving an
isomorphism $\lc_{1}\to f^{-1}\lc_{2}$. In this case we may say that
 $\ff$ \emph{lies over }$f$.
\begin{lem}
\label{lem:ls extensions}Let $\xx$ be an orbifold with corners.
We denote by $\mathring{\xx}:=S^{0}\left(\xx\right)$ the orbifold
(without boundary or corners) consisting of points of depth zero,
and by $j:\mathring{\xx}\hookrightarrow\xx$ be the inclusion. 

(a) The pushforward and inverse image functors $j_{*},j^{-1}$ form
an adjoint equivalence of groupoids between local systems on $\mathring{\xx}$
and local systems on $\xx$.

(b) $Or\left(dj\right):Or\left(T\mathring{\xx}\right)\to j^{-1}Or\left(T\xx\right)$
is an isomorphism.

Let $f:\xx\to\yy$ be a b-normal map of orbifolds with corners. 

(c) There exists a unique map $\mathring{f}:\mathring{\xx}\to\mathring{\yy}$
with $f\circ j_{\xx}=j_{\yy}\circ\mathring{f}$.

Let $\lc$ be a local system on $\xx$ and let $\lc'$ be a local
system on $\yy$, and denote by $\mathring{\lc}=j_{\xx}^{-1}\lc$,
$\mathring{\lc}':=j_{\yy}^{-1}\lc'$ their restrictions to $\mathring{\xx},\mathring{\yy}$,
respectively. Define a map taking a map of sheaves $\ff:\lc\to\lc'$
over $f$ to the map $\mathring{\ff}:\mathring{\lc}\to\mathring{\lc}'$
over $\mathring{f}$ given by the composition 
\[
j_{\xx}^{-1}\lc\overset{j_{\xx}^{-1}\ff}{\longrightarrow}j_{\xx}^{-1}f^{-1}\lc'\simeq\mathring{f}^{-1}j_{\yy}^{-1}\lc'.
\]

(d) $\ff\mapsto\mathring{\ff}$ is a bijection
\[
\left\{ \mbox{maps }\ff:\lc\to\lc'\mbox{ over }f\right\} \simeq\left\{ \mbox{maps }\mathring{\ff}:\mathring{\lc}\to\mathring{\lc}'\mbox{ over }\mathring{f}\right\} .
\]
and together with $\lc\mapsto\mathring{\lc}$ forms a functor from
the category of sheaves (respectively, local systems) over orbifolds
with corners with b-normal maps to the category of sheaves (resp.
local systems) over orbifolds.
\end{lem}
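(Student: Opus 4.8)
The plan is to reduce the whole lemma to a short local statement for manifolds with corners, via the groupoid presentation. Present $\xx$ by a proper étale groupoid $X_{1}\rightrightarrows X_{0}$; its structure maps are étale, hence preserve depth, so $\mathring{\xx}$ is presented by the full subgroupoid $S^{0}(X_{1})\rightrightarrows S^{0}(X_{0})$ and $j_{\xx}$ by the (étale, open) inclusion functor. A local system on $\xx$ is a local system on $X_{0}$ together with descent data over $X_{1}$, and likewise for $\mathring{\xx}$; the functors $j_{*},j^{-1}$, the morphism $\Or(dj)$, and the assignment $\ff\mapsto\mathring{\ff}$ are all defined level-wise over $X_{0}$ (and $X_{1}$) and are natural, so (a), (b) and the Hom-bijection in (d) will follow from the corresponding statements for the manifold with corners $X_{0}$, once one checks --- routinely --- that these constructions are compatible with descent data.

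For (a) on a manifold with corners $M$, set $\mathring{M}=S^{0}(M)$; it is open and dense, is itself a manifold without boundary, and $T\mathring{M}=TM|_{\mathring{M}}$. Since $j$ is an open immersion, the counit $j^{-1}j_{*}\Rightarrow\id$ is an isomorphism (restricting a pushforward along an open immersion back to the open set gives nothing new). For the unit $\id\Rightarrow j_{*}j^{-1}$, I would fix a local system $\mathcal{M}$ on $M$ and $x\in S^{k}(M)$, pick a chart carrying a neighborhood of $x$ into $\rr_{k}^{n}=[0,\infty)^{k}\times\rr^{n-k}$ with $x\mapsto 0$ and trivializing $\mathcal{M}$, and observe that the basic neighborhoods $V_{\epsilon}=[0,\epsilon)^{k}\times B_{\epsilon}(0)$ satisfy that $V_{\epsilon}\cap\mathring{M}=(0,\epsilon)^{k}\times B_{\epsilon}(0)$ is nonempty and connected; hence each restriction $\mathcal{M}(V_{\epsilon})\to(j^{-1}\mathcal{M})(V_{\epsilon}\cap\mathring{M})$ is a bijection and, passing to the colimit over $\epsilon$, the stalk map $\mathcal{M}_{x}\to(j_{*}j^{-1}\mathcal{M})_{x}$ is an isomorphism. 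So $(j^{-1},j_{*})$ is an adjoint equivalence, and (b) follows by applying the functor $\Or(-)$ to the bundle isomorphism $dj\colon T\mathring{M}\xrightarrow{\sim}TM|_{\mathring{M}}$.

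For (c), I would work directly with orbifolds: since $f$ is b-normal it is interior, so it sends depth-zero points to depth-zero points --- concretely, the induced interior map $C(f)\colon C(\xx)\to C(\yy)$ satisfies $C(f)(C_{0}(\xx))\subseteq C_{0}(\yy)$ by the characterization of b-normality recalled above, and under the identifications $C_{0}(\xx)\cong\xx$, $C_{0}(\yy)\cong\yy$ (and using that $C(f)$ is interior) this gives $f(\mathring{\xx})\subseteq\mathring{\yy}$. The corestriction $\mathring{f}$ is then the unique map with $f\circ j_{\xx}=j_{\yy}\circ\mathring{f}$, since $j_{\yy}$ is a monomorphism. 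For (d): unwinding definitions, $\lc\mapsto\mathring{\lc}$, $\ff\mapsto\mathring{\ff}$ is restriction along $j_{\xx}$ composed with the canonical identification $j_{\xx}^{-1}f^{-1}\lc'\simeq\mathring{f}^{-1}j_{\yy}^{-1}\lc'$ coming from (c), hence a functor on sheaves (respectively local systems); it is a bijection on morphism sets when $\lc,\lc'$ are local systems because $j_{\xx}^{-1}$ is then fully faithful by (a). Compatibility with composition is immediate once one notes that $f\mapsto\mathring{f}$ respects composition, again by the uniqueness in (c).

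The only step that is not pure bookkeeping is the stalk computation in (a): its content is that $\mathring{M}$ meets every basic neighborhood of a corner point not merely densely but in a \emph{connected} set. Granting that, the rest --- the level-wise nature of the functors over the groupoid presentation, their compatibility with descent data and with the fraction presentation of $f$, and the formal manipulations of (d) --- should make for a short write-up.
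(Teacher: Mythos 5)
The paper does not supply a proof of this lemma: the appendix explicitly refers the reader to the cited reference for proofs, so there is no in-paper argument to compare against. Your proposal is correct and is the natural one. The reduction to groupoid presentations and then to the manifold-with-corners case is standard; the key content in (a) is exactly what you isolate, namely that in a chart $\rr_{k}^{n}$ the basic neighborhoods satisfy $V_{\epsilon}\cap\mathring{M}=(0,\epsilon)^{k}\times B_{\epsilon}(0)$, which is nonempty and connected (indeed contractible), so the unit $\id\Rightarrow j_{*}j^{-1}$ is a stalkwise isomorphism, while the counit is an isomorphism simply because $j$ is an open embedding; (b) follows from $TM|_{\mathring{M}}=T\mathring{M}$; and (d) is formal once (a) and (c) are available, by full faithfulness of $j^{-1}$ on local systems and uniqueness in (c).

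One remark on (c): the $r=0$ instance of the b-normality criterion, $C(f)(C_{0}(\xx))\subseteq C_{0}(\yy)$, does not by itself say anything about depth --- under the identification $C_{0}(\yy)\cong\yy$ this component contains points of every depth. It must be combined with the fact that $C(f)$ is interior, which sends $\mathring{C_{0}(\xx)}=S^{0}(\xx)$ into $\coprod_{l}\mathring{C_{l}(\yy)}=\coprod_{l}S^{l}(\yy)$; the two facts together pin down $f(S^{0}(\xx))\subseteq S^{0}(\yy)$. You gesture at this in the parenthetical ``and using that $C(f)$ is interior,'' but this is where the actual work of depth-zero preservation happens and it would be worth writing the two ingredients out separately before combining them.
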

Let $\xx$ be an orbifold with corners and $\lc$ a local system on
$\xx$. We define the complex of \emph{differential forms on $\xx$
with values in $\lc$}
\[
\Omega\left(\xx;\lc\right)=\Gamma\left(C^{\infty}\left(\bigwedge T\xx\right)\otimes_{\zz}\lc\right)
\]
as the global sections of the sheaf of sections of the vector bundle
$\bigwedge T\xx$, twisted by $\lc$. 

Suppose $\xx,\yy$ are compact orbifolds with corners, $\mathcal{K},\lc$
are local systems on $\xx$ and on $\lc$, respectively, and $f:\left(\xx,\mathcal{K}\right)\to\left(\yy,\lc\right)$
is an \emph{oriented }map, which means it is a local system map $\mathcal{K}\to\lc$
lying over a smooth map of orbifolds with corners $\xx\to\yy$. We
have a pullback operation

\begin{equation}
\Omega\left(\yy;\lc\right)\xrightarrow{f^{*}}\Omega\left(\xx;\mathcal{K}\right).\label{eq:pullback}
\end{equation}
If, in addition, we assume that $f$ is horizontally submersive, then
there's a pushforward operation

\begin{equation}
\Omega\left(\xx;\kk\otimes Or\left(T\xx\right)^{\vee}\right)\xrightarrow{f_{*}}\Omega\left(\yy;\lc\otimes Or\left(T\yy\right)^{\vee}\right).\label{eq:pushforward}
\end{equation}
We now sketch how these operations are constructed. Define the complex
of compactly supported differential forms\emph{ }on $\xx$ by
\[
\Omega_{c}\left(\xx;\lc\right):=\coker\left(t_{*}-s_{*}:\Omega_{c}\left(X_{1};s^{*}\lc_{0}\right)\to\Omega_{c}\left(X_{0};\lc_{0}\right)\right),
\]
where on the right hand side, $\Omega_{c}$ denotes the usual complex
of compactly supported forms on a manifold with corners. In case
$f=\left(F_{0},F_{1}\right):\xx\to\yy$ is a smooth functor, $F_{0}^{*}$
induces a pullback map (\ref{eq:pullback}) and (if $f$ is horizontally
submersive) $\left(F_{0}\right)_{*}$ induces a pushforward map of
compactly supported forms, 
\begin{equation}
\Omega_{c}\left(\xx;\kk\otimes Or\left(T\xx\right)^{\vee}\right)\xrightarrow{f_{*}}\Omega_{c}\left(\yy;\lc\otimes Or\left(T\yy\right)^{\vee}\right).\label{eq:pushforward compact support}
\end{equation}
In defining the operations $F_{0}^{*},\left(F_{0}\right)_{*}$ (for
forms on manifolds with corners) we follow the conventions in \cite{twA8}.
A \emph{partition of unity} \emph{for $\xx$} is a smooth map\emph{
}$\rho:X_{0}\to\left[0,1\right]$ such that $\operatorname{supp}\left(s^{*}\rho\right)\cap t^{-1}\left(K\right)$
is compact for every compact subset $K\subset X_{0}$ and $t_{*}s^{*}\rho\equiv1$
(the fiber of $t$ is discrete, hence canonically oriented). Partitions
of unity always exist; since $\xx$ is assumed to be compact we can
require that $\rho$ has compact support in $X_{0}$, and use this
to construct an isomorphism
\begin{equation}
\Omega\left(\xx;\lc\right)\simeq\Omega_{c}\left(\xx;\lc\right),\label{eq:Poincare duality}
\end{equation}
see Behrend \cite{behrend}. The isomorphism (\ref{eq:Poincare duality})
allows us to define (\ref{eq:pushforward}) using (\ref{eq:pushforward compact support}).
Now if $f=\xx\xleftarrow{R}\xx'\xrightarrow{F}\yy$ is a general oriented
map, we define (\ref{eq:pullback}) by
\[
f^{*}=R_{*}F^{*},
\]
pulling back along the smooth functor $F$ and then pushing forward
along the refinement $R$ ($R$ is a horizontally submersive since
it is étale; moreover, any refinement defines an equivalence between
the categories of local systems on $\xx$ and on $\xx'$, so orientations
for $f$ are in natural bijection with orientations for $F$). If
$f$ is oriented and horizontally submersive we define the pushforward
(\ref{eq:pushforward}) by 
\[
f_{*}=F_{*}R^{*}.
\]
By construction, the operations (\ref{eq:pullback}, \ref{eq:pushforward})
extend the operations defined in \cite{twA8} for the case $\xx,\yy$
are manifolds, and they satisfy the same relations.

To make the paper more readable, outside of this appendix we will
sometimes abuse notation and refer to maps which have a specified
isomorphism as being equal. For example, if $G$ acts on $\xx$ (see
$\S$\ref{subsec:Group-actions} below) we may write 
\[
g.h.=\left(gh\right).
\]
even though in general the two sides differ by a (specified) 2-cell.
The same goes for orbifolds which are canonically equivalent (that
is, with a given equivalence, or with an equivalence which is specified
up to a unique 2-cell). For example we may write
\[
\left(\mm_{1}\times\mm_{2}\right)\times\mm_{3}=\mm_{1}\times\left(\mm_{2}\times\mm_{3}\right).
\]
When we write $p\in\xx$ we mean $p\in X_{0}$, where $\xx=X_{1}\rightrightarrows X_{0}$.

\subsection{\label{subsec:Hyperplane-Blowup}Hyperplane Blowup}

An important step in the construction of the moduli spaces of discs
from the moduli spaces of curves, is the notion of a hyperplane blowup,
which we now discuss.

\subsubsection{Hyperplane blowup of manifolds}
\begin{defn}
\label{def:hyper}(a) Let $h:W\to X$ be a proper closed immersion
between manifolds without boundary. Write $h^{-1}\left(x\right)=\left\{ w_{1},...,w_{r}\right\} $
(this is finite since $h$ is proper), and let $N_{w_{i}}^{\vee}=\ker\left(T_{x}^{\vee}X\xrightarrow{dh|_{w_{i}}^{\vee}}T_{w_{i}}^{\vee}W\right)$
denote the conormal bundle to $h$. We say $h$ has \emph{transversal
self-intersection} \emph{at $x\in X$} if the induced map 
\[
\bigoplus_{i=1}^{r}N_{w_{i}}^{\vee}\to T_{x}^{\vee}X
\]
is injective. We say $h$ has transversal self-intersection\emph{
}if it has transversal self intersection at every $x\in X$. 

(b) Let $h:W\to X$ be a proper closed immersion which has transversal
self intersection. Suppose further that $h$ is \emph{codimension
one}, i.e. $\dim X-\dim W=1$. In this case we call $E=\im h$ a \emph{hyper
subset}, and call $h$ a \emph{hyper map}. Since the conditions on
$h$ can be checked locally on the codomain $X$, being a hyper subset
is a local property. Moreover, it follows from Proposition \ref{prop:orthant charts exist}
below that the map $h$ is essentially unique: if $W\xrightarrow{h}X,W'\xrightarrow{h'}X$
are two hyper maps with $\im h=\im h'$ then there's a unique diffeomorphism
$W\xrightarrow{\phi}W'$ such that $h=h'\circ\phi$.

(c) Let $Y\to X$ be a smooth map of manifolds without boundary, and
let $E\subset X$ be a hyper subset. We say $f$ is \emph{multi-transverse
to $E$ }if for some (hence any) hyper map $h$ such that $E=\im h$,
$f$ is transverse to $h$ and the pullback $f^{-1}W\xrightarrow{f^{-1}h}Y$
has transversal self intersection (so in fact, since $f^{-1}h$ is
necessarily a codimension one proper closed immersion, $f^{-1}E\subset Y$
is a hyper subset).
\end{defn}

\begin{defn}
(a) Let $X$ be a manifold without boundary, let $U\subset X$ be
an open subset. Consider the set of germs of connected components,\emph{
}
\[
I\left(X,U\right)=\bigcup_{x\in X}\left\{ x\right\} \times\lim_{x\in V\subset X}\pi_{0}^{x}\left(V\cap U\right)
\]
where for $V$ an open neighborhood of $x\in X$, $\pi_{0}^{x}\left(V\cap U\right)$
denotes the set of connected components $C\subset V\cap U$ with $x\in\overline{C}$
in the closure. If $V_{1}\subset V_{2}$ are two such neighborhoods,
there's an induced map $\pi_{0}^{x}\left(V_{1}\cap U\right)\to\pi_{0}^{x}\left(V_{2}\cap U\right)$,
and $\lim_{x\in V\subset X}\pi_{0}^{x}\left(V\cap U\right)$ denotes
the \emph{inverse} limit of this system of sets.

(b) If $\left(X_{1},U_{1}\right)\to\left(X_{2},U_{2}\right)$ is a
map of pairs there's an induced map $I\left(X_{1},U_{1}\right)\to I\left(X_{2},U_{2}\right)$
making $I$ a functor; there's an obvious natural transformation $I\left(X,U\right)\to X$.

(c) Let $E\subset X$ be a hyper subset. As a set, \emph{the blow
up of $X$ along $E$ }is given by\emph{ 
\[
B\left(X,E\right)=I\left(X,X\backslash E\right).
\]
}The associated natural transformation is denoted $B\left(X,E\right)\xrightarrow{\beta_{\left(X,E\right)}}X$,
and if $Y\xrightarrow{f}X$ is multi-transverse to $E$ write 
\[
B\left(Y,f^{-1}E\right)\xrightarrow{B\left(f\right)}B\left(X,E\right)
\]
for the induced map.
\end{defn}

\begin{prop}
\label{prop:hyper bu functor ManC}Let $\text{\textbf{Man}}^{+}$
denote the category of \emph{marked manifolds,} whose objects are
pairs $\left(X,E\right)$ where $X$ is a manifold without boundary
and $E$ is a hyper subset of $X$, and where an arrow $\left(X_{1},E_{1}\right)\to\left(X_{2},E_{2}\right)$
is given by a map $X_{1}\xrightarrow{f}X_{2}$ which is multi-transverse
to $E_{2}$ and such that $f^{-1}E_{2}=E_{1}$. Let $\manc_{ps}$denote
the category of manifolds with corners with perfectly simple maps.
Then blowing up gives a faithful functor
\[
B:\text{\textbf{Man}}^{+}\to\manc_{ps}
\]
together with a natural transformation $B\left(X,E\right)\xrightarrow{\beta_{\left(X,E\right)}}X$.

Moreover, if $\left(X_{1},E_{1}\right),\left(X_{2},E_{2}\right)$
are any two objects of $\text{\textbf{Man}}^{+}$, any étale map $f:X_{1}\to X_{2}$
is a morphism of $\text{\textbf{Man}}^{+}$ and $B\left(f\right)$
is also étale in this case.
\end{prop}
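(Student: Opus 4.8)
The plan is to reduce the entire statement to a local computation, using the normal form for hyper subsets. Call a chart $\psi\colon V\xrightarrow{\sim}W\subseteq\rr^n$ of $X$ an \emph{orthant chart} if $\psi(V\cap E)=W\cap\{y_1\cdots y_r=0\}$ for some $r\ge0$; by the local normal form [Proposition \ref{prop:orthant charts exist}] orthant charts exist around every point of $X$, the integer $r$ being the number of local branches of $E$ there. (This is an inverse-function-theorem consequence of the transversal self-intersection hypothesis: the finitely many smooth hypersurface branches of $E$ through a point meet transversally, hence can be simultaneously straightened to coordinate hyperplanes.)

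In an orthant chart $W\subseteq\rr^n$ the set $I\bigl(W,W\setminus\{y_1\cdots y_r=0\}\bigr)$ is the disjoint union, over sign vectors $\vec\epsilon\in\{\pm1\}^r$, of the pieces $W_{\vec\epsilon}=\{y\in W:\epsilon_iy_i\ge0\text{ for }i\le r\}$, each equipped with the evident germ of connected component of the complement; the map $y\mapsto(\epsilon_1y_1,\dots,\epsilon_ry_r,y_{r+1},\dots,y_n)$ identifies $W_{\vec\epsilon}$ with an open subset of $[0,\infty)^r\times\rr^{n-r}$. I would take these identifications as the charts of $B(X,E)$; with this structure $\beta_{(X,E)}$ restricts on each piece to a diffeomorphism onto the corresponding open orthant of $W$, and restricts over $X\setminus E$ to a diffeomorphism onto $X\setminus E$.

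The technical core is one lemma: if $\theta$ is a smooth map between open subsets of Euclidean spaces, multi-transverse to a divisor $D'=\{y'_1\cdots y'_{r'}=0\}$, with $\theta^{-1}(D')=\{y_1\cdots y_r=0\}=:D$, then in the orthant charts above $B(\theta)=I(\theta)$ is weakly smooth, perfectly simple, and \'etale whenever $\theta$ is. The proof is the standard ``a function vanishing on a smooth hypersurface is divisible by its equation'': multi-transversality together with $\theta^{-1}(D')=D$ forces each $y'_j$ to pull back, near a given point $p$, either to a nowhere-zero function (when the preimage of that branch of $D'$ misses $p$) or to $u_j\cdot y_{i(j)}$ with $u_j(p)\ne0$, the assignment $i$ being a bijection onto the branches of $D$ through $p$. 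In the charts of the previous paragraph this exhibits $B(\theta)$ in the form $t'_j=|u_j|\,t_{i(j)}$, resp.\ $t'_j=$ nowhere-zero, plus a smooth map on the remaining coordinates; this is visibly weakly smooth, sends depth-$k$ points to depth-$k$ points, is b-normal, and induces a bijection of local boundary components at each point, hence is simple.

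Granting the lemma, the rest is formal. Applying it to the transition maps of an atlas of orthant charts on $X$ shows the charts of the second paragraph are mutually compatible with \'etale transitions, so $B(X,E)$ is a manifold with corners and $\beta_{(X,E)}$ is smooth; the cocycle condition is forced by the uniqueness of the lifts in the lemma together with functoriality of $I$. Applying the lemma to local models of a morphism $f$ of $\text{\textbf{Man}}^{+}$ shows $B(f)=I(f)$ is perfectly simple, and functoriality and identity-preservation of $B$ are inherited from those of $I$, while naturality of $\beta$ is that of the natural transformation $I(X,U)\to X$ supplied with $I$. Faithfulness is then immediate, since $\beta_{(X,E)}$ is surjective ($E$ being nowhere dense) and $\beta_{(X_2,E_2)}\circ B(f)=f\circ\beta_{(X_1,E_1)}$. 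For the last clause, an \'etale $f\colon X_1\to X_2$ with $f^{-1}(E_2)=E_1$ is automatically transverse to any hyper map with image equal to $E_2$, and its pullback, being $E_1$, has transversal self-intersection by hypothesis, so $f$ is a morphism of $\text{\textbf{Man}}^{+}$; the \'etale clause of the lemma gives that $B(f)$ is \'etale. I expect the only real difficulty to lie in the lemma, and within it in translating ``multi-transverse to $E_2$ with $f^{-1}(E_2)=E_1$'' into the divisibility statement and then doing the sign/orthant bookkeeping that pins down which orthant chart receives each piece, preserves depth, and yields simplicity rather than mere b-normality; patching the local structures and deducing functoriality, naturality and faithfulness is routine.
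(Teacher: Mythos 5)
Your proof is correct, and it follows exactly the route the paper signals through its surrounding definitions: the appendix does not actually supply a proof of this proposition (it is summarized from \cite[\S 3]{mod2hom}), but the notions it does display --- orthant charts via \emph{Proposition \ref{prop:orthant charts exist}} and the characterization of the corner structure on $B(X,E)$ as the unique one making $\beta_{(X,E)}$ \emph{rectilinear} (Definition \ref{def:rectilinear map}) --- are precisely the devices you use. Your local divisibility lemma (multi-transversality forcing $y'_j\circ\theta = u_j\,y_{i(j)}$ with $u_j$ non-vanishing and $i$ a bijection of branches) is the right engine for weak smoothness, simplicity, and depth-preservation of $B(\theta)$, and the reductions to faithfulness, functoriality and the \'etale clause are as stated.
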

The following definition characterizes the manifold with corners structure
on the blow up. More precisely, $B\left(X,E\right)$ will be equipped
with the unique manifold with corners structure on the set $B\left(X,E\right)$
making the map $\beta_{\left(X,E\right)}$ \emph{rectilinear}:
\begin{defn}
\label{def:rectilinear map}Let $C$ be a manifold with corners, $M$
a manifold without boundary. A map $f:C\to M$ will be called \emph{rectilinear
}if the restriction of $f$ to interior points is an injective map
$\mathring{C}\to M$, and for every $c\in C$ there exist a non-negative
integer $k$ and coordinate charts $U\xrightarrow{\varphi}\rr_{k}^{n},c\in U,\varphi\left(c\right)=0$
and $V\xrightarrow{\psi}\rr^{n},f\left(c\right)\in V,\psi\left(f\left(c\right)\right)=0$
such that $f\left(U\right)\subset V$ and $\psi\circ f\circ\varphi^{-1}$
is the standard embedding of $\rr_{k}^{n}$ to $\rr^{n}$, restricted
to $\varphi\left(U\right)$.
\end{defn}

\subsubsection{\label{subsec:orbifold hyper bu}Hyperplane blowup of orbifolds}

We consider the bicategory $\text{\textbf{PEG}}^{+}$ of \emph{marked
proper étale groupoids}. The objects of $\text{\textbf{PEG}}^{+}$
are pairs $\left(X_{1}\rightrightarrows X_{0},E\right)$ where $X_{1}\rightrightarrows X_{0}$
is a proper étale groupoid without boundary, and $E\subset X_{0}$
is a hyper subset which is a union of orbits, $s^{-1}E=t^{-1}E$.

If $\left(X_{\bullet}^{\left(1\right)},E^{\left(1\right)}\right),\left(X_{\bullet}^{\left(2\right)},E^{\left(2\right)}\right)$
are two objects, a 1-cell of $\text{\textbf{PEG}}^{+}$ consists of
a smooth functor 
\[
X_{\bullet}^{\left(1\right)}\xrightarrow{F=\left(F_{0},F_{1}\right)}X_{\bullet}^{\left(2\right)}
\]
such that $F_{0}$ is multi-transverse to $E^{\left(2\right)}$ and
$E^{\left(1\right)}=F_{0}^{-1}E^{\left(2\right)}$. Every étale map,
and in particular every refinement, satisfies this condition. The
2-cells in $\text{\textbf{PEG}}^{+}$ are all the 2-cells of $\text{\textbf{PEG}}$
spanned by the 1-cells specified above. 

For emphasis, in this subsection we denote the bicategory of proper
étale groupoids and orbifolds in the category $\manc$ by $\text{\textbf{PEG}}^{c}$
and $\text{\textbf{Orb}}^{c}$, respectively. We denote by $\text{\textbf{PEG}}_{ps}^{c},\text{\textbf{Orb}}_{ps}^{c}$
the subcategories whose maps are perfectly simple maps. 

If $X_{1}\rightrightarrows X_{0}$ is a groupoid, we write 
\[
X_{2}=X_{1}\fibp{t}{s}X_{1}
\]
for the manifold with corners parameterizing composable arrows
\[
x_{1}\xrightarrow{a}x_{2}\xrightarrow{b}x_{3}
\]
and, for $i=1,2,3$, we denote by $p_{i}:X_{2}\to X_{0}$ the map
sending a composable arrow as above to $x_{i}$.
\begin{thm}
The functor $B$ extends to a strict 2-functor
\[
B:\text{\textbf{PEG}}^{+}\to\text{\textbf{PEG}}_{ps}^{c}
\]
which takes
\[
\left(\xx=X_{2}\xrightarrow{m}\overset{\overset{i}{\curvearrowleft}}{X_{1}}\begin{array}{c}
\overset{s,t}{\rightrightarrows}\\
\underset{e}{\leftarrow}
\end{array}X_{0},E\right)
\]
to 
\[
B\left(\xx\right)=B\left(X_{2},p_{1}^{-1}E\right)\overset{B\left(m\right)}{\to}B\overset{\overset{B\left(i\right)}{\curvearrowleft}}{\left(X_{1},s^{-1}E\right)}\begin{array}{c}
\overset{B\left(s\right),B\left(t\right)}{\rightrightarrows}\\
\underset{B\left(e\right)}{\leftarrow}
\end{array}B\left(X_{0},E\right)
\]
together with the obvious strict natural transformation $B\left(\xx\right)\xrightarrow{\beta_{\left(\xx,E\right)}}\xx$.

This functor takes refinements to refinements, and thus there's an
induced functor between the 2-localization of these categories 
\[
B:\text{\textbf{Orb}}^{+}\to\text{\textbf{Orb}}_{ps}^{c}.
\]
\end{thm}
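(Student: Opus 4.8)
The plan is to deduce everything from the functoriality of the hyperplane blowup on marked manifolds, Proposition~\ref{prop:hyper bu functor ManC}, using the single structural fact that in a proper \'etale groupoid every structure map $s,t,e,i,m$ is \'etale. By that proposition an \'etale map of manifolds without boundary is automatically a morphism of $\text{\textbf{Man}}^{+}$ (for compatible hyper subsets), its blowup lands in $\manc_{ps}$ and is again \'etale. The one point I would isolate first is the strengthening that $B$ is \emph{stable under pullback along \'etale maps}: if $\pi\colon X'\to X$ is \'etale and $E'=\pi^{-1}E$, then $B(X',E')=X'\fibp{\pi}{\beta}B(X,E)$, with $B(\pi)$ the second projection. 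This is immediate from the description $B(X,E)=I(X,X\setminus E)$ as a fiberwise limit of sets of germs of connected components of $X\setminus E$ — an \'etale map is a local diffeomorphism carrying $E'$ to $E$, so it identifies these germs locally — together with the rectilinearity characterization (Definition~\ref{def:rectilinear map}) of the corner structure, which is likewise local and stable under \'etale pullback.

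Given a marked PEG $(\xx,E)=(X_{2}\rightrightarrows X_{1}\rightrightarrows X_{0},E)$, put $s^{-1}E=t^{-1}E$ on $X_{1}$ and $p_{1}^{-1}E$ on $X_{2}$ (here $p_{1}^{-1}E=p_{2}^{-1}E=p_{3}^{-1}E$ because $E$ is a union of orbits); then $s,t,e,i,m$ all become morphisms of $\text{\textbf{Man}}^{+}$ — e.g.\ $s\circ m=p_{1}$, so $m$ is a morphism $(X_{2},p_{1}^{-1}E)\to(X_{1},s^{-1}E)$ — and I would set $B(\xx)$ to be the levelwise blowup $B(m),B(s),B(t),B(e),B(i)$, with the strict natural transformation $\beta_{(\xx,E)}\colon B(\xx)\to\xx$ assembled from the component transformations. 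Then I would check $B(\xx)\in\text{\textbf{PEG}}_{ps}^{c}$: it is a groupoid because $B(X_{2},p_{1}^{-1}E)$ is the object of composable arrows of $B(X_{1},s^{-1}E)\rightrightarrows B(X_{0},E)$, which follows by applying the \'etale-pullback fact to the \'etale projections $X_{2}=X_{1}\fibp{t}{s}X_{1}\to X_{1}$; its structure maps are \'etale, hence perfectly simple, by Proposition~\ref{prop:hyper bu functor ManC}; and it is proper because $B(s)\times B(t)$ factors as the inclusion of a clopen subset (the compatibility of germs is a locally constant condition) of the base change $B(X_{0},E)^{2}\fibp{\beta^{2}}{s\times t}X_{1}$, followed by the projection to $B(X_{0},E)^{2}$, which is proper as a base change of the proper map $s\times t$. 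Finally, the groupoid axioms for $B(\xx)$ are identities between smooth functors built from $s,t,e,i,m$ and identities, hence are preserved because $B$ is a functor on $\text{\textbf{Man}}^{+}$.

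Next I would promote $B$ to a strict $2$-functor $B\colon\text{\textbf{PEG}}^{+}\to\text{\textbf{PEG}}_{ps}^{c}$. A $1$-cell $F=(F_{0},F_{1})$ has $F_{0}$ multi-transverse to the target hyper subset with $F_{0}^{-1}E^{(2)}=E^{(1)}$, so $F_{0},F_{1}$ (and $F_{2}$) are morphisms of $\text{\textbf{Man}}^{+}$, $B(F):=(B(F_{0}),B(F_{1}))$ is a smooth functor into $\text{\textbf{PEG}}_{ps}^{c}$, and the functoriality of $B$ gives the functor identities. A $2$-cell $\alpha\colon F\Rightarrow G$ is a smooth map $X^{(1)}_{0}\to X^{(2)}_{1}$, automatically a morphism of $\text{\textbf{Man}}^{+}$, and $B(\alpha)$ is a natural transformation $B(F)\Rightarrow B(G)$ because $B$ carries the naturality squares of $\alpha$ to those of $B(\alpha)$. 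Strict preservation of identity cells and of the vertical, horizontal and whiskering compositions is automatic since $B$ is strict on $\text{\textbf{Man}}^{+}$ and everything is applied levelwise. Finally $B$ sends refinements to refinements: a refinement $R$ has $R_{0}$ \'etale, meets every orbit, and recovers $X_{1}$ as the \'etale pullback $X_{0}\times_{X_{0}'}X_{1}'\times_{X_{0}'}X_{0}$; applying $B$ and the \'etale-pullback fact (all the maps $R_{0},s',t'$ in sight are \'etale) shows $B(R)$ has the same description with $B(R_{0})$ \'etale and orbit-meeting, hence is again a refinement.

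Finally, to obtain $B\colon\text{\textbf{Orb}}^{+}\to\text{\textbf{Orb}}_{ps}^{c}$ I would invoke the universal property of the $2$-localization, exactly as in the construction of $\text{\textbf{Orb}}$: the refinements in $\text{\textbf{PEG}}^{+}$ admit a right calculus of fractions by the same argument quoted earlier for $\text{\textbf{PEG}}$, so $\text{\textbf{Orb}}^{+}$ exists and the localization pseudofunctor $\text{\textbf{PEG}}^{+}\to\text{\textbf{Orb}}^{+}$ is initial among pseudofunctors inverting refinements. The composite $\text{\textbf{PEG}}^{+}\xrightarrow{\,B\,}\text{\textbf{PEG}}_{ps}^{c}\to\text{\textbf{Orb}}_{ps}^{c}$ inverts refinements (they become equivalences in $\text{\textbf{Orb}}_{ps}^{c}$), hence factors through $\text{\textbf{Orb}}^{+}$, yielding the desired $2$-functor, and $\beta$ descends along the localization. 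I expect the only genuinely non-formal point to be the \'etale-pullback statement for the blowup together with the attendant clopen-subset-of-a-proper-base-change argument for $B(s)\times B(t)$; once those are secured, the rest is bicategorical bookkeeping — essentially verifying that applying a strict functor on $\text{\textbf{Man}}^{+}$ levelwise preserves all the groupoid, functor and $2$-cell structure in sight.
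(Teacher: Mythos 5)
The paper itself does not give a proof of this theorem: it is stated in the appendix, whose preamble says the results are summarized from \cite[\S 3]{mod2hom} and that the reader should consult that reference for proofs. So I cannot compare your argument line-for-line against the paper's own; I can only assess it on its merits.

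Your argument is essentially correct and identifies the right organizing principle. The one genuinely nontrivial input — that the hyperplane blowup commutes with \'etale pullback, i.e.\ for $\pi\colon X'\to X$ \'etale and $E'=\pi^{-1}E$ one has $B(X',E')\cong X'\fibp{\pi}{\beta}B(X,E)$ with $B(\pi)$ the projection — is exactly what is needed to make the levelwise construction compatible with the groupoid structure, and you correctly derive it from the germ description $B(X,E)=I(X,X\setminus E)$ together with locality of the rectilinear characterization. The identification $X_2=X_1\fibp{t}{s}X_1$ blown up levelwise being the object of composable arrows of $B(X_1)\rightrightarrows B(X_0)$, the perfectly-simple/\'etale assertions via Proposition \ref{prop:hyper bu functor ManC}, and the clopen-inclusion-into-a-base-change argument for properness of $B(s)\times B(t)$ are all fine, and the passage to $\text{\textbf{Orb}}^{+}\to\text{\textbf{Orb}}_{ps}^{c}$ via the universal property of the $2$-localization is standard. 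Two points you should tighten if this were to be turned into a full proof: you implicitly use that for a smooth functor $F=(F_0,F_1)$ with $F_0$ multi-transverse to $E^{(2)}$, the map $F_1$ is automatically multi-transverse to $s^{-1}E^{(2)}$ — this needs the (true but unproved) lemma that pre- and post-composition by \'etale maps preserves multi-transversality; and "strict on $\text{\textbf{Man}}^{+}$" should just read "a functor", since $\text{\textbf{Man}}^{+}$ is a $1$-category. With those small caveats, the approach is sound and closely mirrors what a direct verification would look like.
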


\subsubsection{A hyper map between orbifolds}

There's a natural way to construct objects and arrows in $\text{\textbf{Orb}}^{+}$.
Let $h:\mathcal{W}\to\xx$ be a map of orbifolds without boundary,
given by a pair of smooth functors
\begin{equation}
\wc\xleftarrow{S}\left(\tilde{\wc}=\tilde{W}_{1}\rightrightarrows\tilde{W}_{0}\right)\xrightarrow{H=\left(H_{1},H_{0}\right)}\left(\xx=X_{1}\rightrightarrows X_{0}\right)\label{eq:h as a fraction}
\end{equation}
with $S$ a refinement. 

Let $W_{0}'=X_{1}\fibp{t}{H_{0}}\tilde{W}_{0}$ . Since $t$ is étale
this fiber product exists. We let $H_{0}':W_{0}'\to X_{0}$ denote
the composition $X_{1}\fibp{t}{H_{0}}\tilde{W}_{0}\to X_{1}\xrightarrow{s}X_{0}$.
We call the image of $H_{0}'$ the\emph{ essential image} of $h$,
and denote it $\im h$. Fix some point $x\in X_{0}$. The \emph{essential
fiber }of $h$ over $x$ is a topological groupoid, with object space
$\left(H_{0}'\right)^{-1}\left(x\right)$ and with arrows between
$\left(x\xrightarrow{\alpha}H_{0}\left(w\right),w\right)$ and $\left(x\xrightarrow{\alpha'}H_{0}\left(w'\right),w'\right)$
consisting of the arrows in $\tilde{W}_{1}$ between $w$ and $w'$
(this is a special case of the weak fiber product, see Lemma \ref{lem:fibp in orb}).
If $R$ is a refinement, the essential fiber of $h$ over $x$ and
of $R\circ h$ over $R\left(x\right)$ are equivalent. The essential
image and, up to bijection the essential fiber, depend only on the
homotopy class of $H$ (in particular, they do not depend on $S$).
\begin{defn}
\label{def:orbimaps properies 2}We say that $h$ is \emph{hyper}
if the following five conditions are met (cf. Definition \ref{def:hyper})
\begin{itemize}
\item $h$ is \emph{faithful}, which means $H_{0}$ is faithful. This implies
the essential fiber over every point is equivalent to a set (with
a topology).
\item $h$ is a \emph{closed immersion}, which means $H_{0}$ is a closed
immersion. This implies the orbit space of each essential fiber has
the discrete topology.
\item $h$ is \emph{proper}, which means the essential fibers have compact
orbit spaces. Given our previous assumptions this means the\emph{
}essential fiber is equivalent to a finite set\emph{,} and and we
fix representatives $\left\{ q_{i}=x\xrightarrow{\alpha_{1}}H_{0}\left(w_{1}\right)\right\} _{i=1}^{r}$.
\item We require that $h$ has \emph{transversal self-intersection},\emph{
}that is, we require the map
\[
\bigoplus_{i=1}^{r}N_{w_{i}}^{\vee}\to T_{x}^{\vee}X_{0}
\]
be injective, where $N_{w_{i}}^{\vee}=\ker\left(T_{w_{i}}^{\vee}W_{0}\to T_{x}^{\vee}X_{0}\right)$;
this is independent of the choice of representatives.
\item $h$ has \emph{codimension one}, meaning $\dim\xx-\dim\wc=1$.
\end{itemize}
If $\yy$ is another orbifold without boundary, we say a map $\yy\xrightarrow{f}\xx$
is \emph{multi-transverse} to $h$ if $h$ is (b-)transverse to $f$
and the 2-pullback $f^{-1}h$ has transversal self-intersection (it
is automatically a proper, faithful closed immersion). 
\end{defn}
\begin{lem}
(a) Let $\wc\xrightarrow{h}\xx$ be a hyper map. Then $\left(\xx,\im h\right)$
is an object of $\text{\textbf{Orb}}^{+}$.

(b) If $f$ is multi-transverse to $h$, $\left(\yy,\im f^{-1}h\right)\xrightarrow{f}\left(\xx,\im h\right)$
is an arrow in $\text{\textbf{Orb}}^{+}$.
\end{lem}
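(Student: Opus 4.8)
The plan is to verify, one bullet at a time, that the conditions packaged into Definition~\ref{def:orbimaps properies 2} imply the corresponding conditions in the definition of an object (resp.\ arrow) of $\text{\textbf{Orb}}^{+}$, i.e.\ of $\text{\textbf{PEG}}^{+}$ after choosing presentations. Recall that an object of $\text{\textbf{PEG}}^{+}$ is a pair $\left(X_{1}\rightrightarrows X_{0},E\right)$ with $X_{1}\rightrightarrows X_{0}$ a proper \'etale groupoid without boundary and $E\subset X_{0}$ a hyper subset which is a union of orbits, $s^{-1}E=t^{-1}E$. So for part (a) I must produce a presentation of $\xx$ and show that $\im h$, pulled back to the object space of that presentation, is a hyper subset in the sense of Definition~\ref{def:hyper}(b), and is invariant under the groupoid. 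First I would fix the fraction (\ref{eq:h as a fraction}) for $h$ and, replacing $\wc$ by its refinement $\tilde{\wc}$, reduce to the case where $h$ is given by an honest smooth functor $H=(H_{1},H_{0}):\tilde{\wc}\to\xx=(X_{1}\rightrightarrows X_{0})$; this is harmless since the essential image and essential fibers depend only on the homotopy class of $H$. Then, with $W_{0}'=X_{1}\fibp{t}{H_{0}}\tilde{W}_{0}$ and $H_{0}':W_{0}'\to X_{0}$ the induced map as in the text, I claim $E:=\im h=\im H_{0}'$ is the hyper subset we want.

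The key observations are the following. The map $H_{0}'$ is a proper closed immersion of manifolds without boundary: properness is exactly the ``$h$ proper'' hypothesis (essential fibers are finite sets), and $H_{0}'$ is a closed immersion because $H_{0}$ is and $t$ is \'etale, so $H_{0}'$ is locally a composition of a closed immersion with a local diffeomorphism. Over a point $x\in X_{0}$ the fiber of $H_{0}'$ is, after choosing representatives $q_{i}$, the finite set $\{w_{1},\dots,w_{r}\}$, and the conormal spaces $N_{w_{i}}^{\vee}$ appearing in Definition~\ref{def:orbimaps properies 2} agree with those of $H_{0}'$ by \'etaleness of $t$; the ``transversal self-intersection'' hypothesis is then literally the injectivity condition of Definition~\ref{def:hyper}(a), and the ``codimension one'' hypothesis gives $\dim X_{0}-\dim W_{0}'=1$. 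Hence $H_{0}'$ is a hyper map and $E=\im H_{0}'$ is a hyper subset of $X_{0}$. Invariance $s^{-1}E=t^{-1}E$ follows because $E$ is, by construction, a union of orbits: a point $x$ lies in $E$ iff there is some arrow $x\to H_{0}(w)$ in $X_{1}$, and this condition is manifestly invariant under composing with arrows of $X_{1}$. This establishes (a). (One also checks that a different presentation of $\xx$, or refining $h$, changes $E$ only by the induced \'etale map, which is a morphism of $\text{\textbf{Man}}^{+}$, so the object of $\text{\textbf{Orb}}^{+}$ is well defined up to equivalence.)

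For part (b), given $f:\yy\to\xx$ multi-transverse to $h$, I must exhibit $f$ as a $1$-cell $\left(\yy,\im f^{-1}h\right)\to\left(\xx,\im h\right)$ of $\text{\textbf{PEG}}^{+}$: presenting $f$ by a smooth functor $F_{0}:Y_{0}\to X_{0}$ (after suitable refinements), I need $F_{0}$ multi-transverse to the hyper subset $E=\im h$ and $F_{0}^{-1}E=\im f^{-1}h$. Transversality of $F_{0}$ to the hyper map $H_{0}'$ is the b-transversality hypothesis on $f$ and $h$ (note $\partial$ is empty here so Remark~\ref{rem:easy b-transversality} applies and b-transversality is ordinary transversality); the pullback $F_{0}^{-1}H_{0}'$ is then automatically a codimension-one proper closed immersion, and the hypothesis that $f^{-1}h$ has transversal self-intersection is exactly the remaining condition in Definition~\ref{def:hyper}(c). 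Thus $F_{0}$ is multi-transverse to $E$. The identity $F_{0}^{-1}E=\im f^{-1}h$ is a diagram chase using that $\im f^{-1}h$ is computed from the $2$-pullback $\yy\fibp{f}{h}\wc$, whose object space maps onto $F_{0}^{-1}(E)\subset Y_{0}$; functoriality of $\im(-)$ together with the explicit description of the weak fiber product in Lemma~\ref{lem:fibp in orb}(a) gives the equality. The main obstacle, and the one point that needs genuine care rather than bookkeeping, is the reduction to honest smooth functors and checking that all of the ``essential'' notions (essential image, essential fiber, the conormal maps) are independent of the presentation and stable under refinement; once that is in hand, each of the five bullets of Definition~\ref{def:orbimaps properies 2} matches a bullet of Definition~\ref{def:hyper} on the object level, and the verification is routine.
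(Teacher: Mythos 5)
Your proposal has a genuine gap at the step where you assert that $H_0'$ itself is a hyper map of manifolds. The transversal-self-intersection bullet of Definition~\ref{def:orbimaps properies 2} is a condition on a \emph{choice of orbit representatives} $\{q_i\}_{i=1}^r$ of the essential fiber; but for $H_0'$ to satisfy Definition~\ref{def:hyper}(a) one would need injectivity of $\bigoplus N^\vee \to T_x^\vee X_0$ summed over \emph{every} point of $(H_0')^{-1}(x)$. If an orbit of the essential fiber contains two objects $(\alpha_1,w_1),(\alpha_2,w_2)$ linked by some $b\in\tilde{W}_1$ with $\alpha_2=H_1(b)\circ\alpha_1$, then the two local pieces of $\im H_0'$ near $x$ coincide as germs --- the arrow $H_1(b)$ identifies them --- so their conormals in $T_x^\vee X_0$ agree, and the sum over the full fiber cannot be injective. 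Concretely, take $\xx=[\rr/(\zz/2)]$ presented by $X_0=\rr$ with the reflection action, and let $\wc=[\mathrm{pt}/(\zz/2)]$ be included at the fixed point, so $\tilde{W}_0=\mathrm{pt}$, $H_0(\mathrm{pt})=0$, and $H_1$ is a bijection onto $X_1(0,0)$. This $h$ meets all five bullets of Definition~\ref{def:orbimaps properies 2} (the essential fiber over $0$ has a single orbit, $r=1$), yet $W_0'=t^{-1}(0)$ has two points, both with conormal $T_0^\vee\rr$; so $H_0'$ fails Definition~\ref{def:hyper}(a), even though the conclusion of the lemma ($\{0\}$ is a hyper subset of $\rr$) is true.

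The repair is to pass to the orbit space before invoking Definition~\ref{def:hyper}. The groupoid on $W_0'$ whose arrows over a fixed $x$ are the compatible $b\in\tilde{W}_1$ is proper \'etale and, by the faithfulness bullet of Definition~\ref{def:orbimaps properies 2}, has trivial isotropy; its quotient $W_0''$ is therefore a manifold, and the induced $H_0'':W_0''\to X_0$ has fiber over $x$ in bijection with $\{q_i\}_{i=1}^r$, at which point your identification of conormal conditions does give transversal self-intersection and a codimension-one proper closed immersion with image $\im h$. Alternatively, and more in the spirit of Definition~\ref{def:hyper}(b) (which notes that being a hyper subset is local), one argues directly that $\im h\cap V$, for $V$ a small neighborhood of $x$, is the image of $\coprod_{i=1}^r(\text{nbhd of }w_i)\to V$, with the pieces meeting transversally by hypothesis. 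Your part~(b), which again uses $H_0'$ as the hyper map representing $\im h$, inherits the same gap and is repaired the same way; the rest of the outline (reduction to honest functors, invariance under refinement, matching of multi-transversality conditions) is sound.
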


\subsection{\label{subsec:Group-actions}Group actions}

Let $G$ be a compact lie group, with multiplication \linebreak{}
${m:G\times G\to G}$ and identity $e:\pt\to G$. Given a bicategory
of spaces $\cl$ such as\footnote{More precisely, we need to be able to consider $G$ as a group object
in $\cl$ and certain products to exist. For $\cl=\text{\textbf{Orb}}^{+}$
we consider $G$ as having the trivial marking $\emptyset$, and we
have 
\[
\left(\xx,E\right)\times\left(\yy,F\right)=\left(\xx\times\yy,E\times\yy\coprod\xx\times F\right).
\]
} $\manc,\text{\textbf{Orb}},\text{\textbf{Orb}}^{+}$, we construct
a category $G-\cl$ of $G$-equivariant objects following Romagny
\cite{stacky-action}. We briefly explain how to translate his definitions
to our setup, and refer the reader to \cite{stacky-action} for more
details. A 0-cell of $G-\cl$ is given by a 4-tuple $\left(\xx,\mu,\alpha,a\right)$
where $\xx$ is a 0-cell of $\cl$, $\mu:G\times\xx\to\xx$ is a 1-cell,
and $\alpha$ and $a$ are 2-cells filling in, respectively, the following
square and triangle:
\[
\xymatrix{G\times G\times\xx\aru{r}{m\times\id_{\xx}}\ar[d]_{\id_{G}\times\mu} & G\times\xx\ar[d]^{\mu}\\
G\times\xx\ar[r]_{\mu} & \xx
}
\;\xymatrix{G\times\xx\ar[r]^{\mu} & \xx\\
\xx\ar[ur]_{\id_{\xx}}\ar[u]^{e\times\id_{\xx}}
}
.
\]
A 1-cell (or \emph{$G$-equivariant map})
\[
\left(\xx,\mu,\alpha,a\right)\to\left(\xx',\mu',\alpha',a'\right)
\]
is given by a pair $\left(\xx\xrightarrow{f}\xx',\sigma\right)$ where
$\sigma$ is a 2-cell filling in the square 
\[
\xymatrix{G\times\xx\ar[r]^{\mu}\ar[d]_{\id_{G}\times f} & \xx\ar[d]^{f}\\
G\times\xx'\ar[r]_{\mu'} & \xx'
}
.
\]
A 2-cell $\left(f,\sigma\right)\Rightarrow\left(f',\sigma'\right)$
is given by a 2-cell $f\xRightarrow{\beta}f'$. As usual, the 2-cells
$\alpha,a,\sigma,\beta$ are required to satisfy some coherence conditions,
cf. \cite[Definition 2.1]{stacky-action}. 

\bibliographystyle{amsabbrvc}
\bibliography{localization}

\end{document}